\def\<{\langle}
\def\>{\rangle}
\def\eps{\varepsilon}
\def\RR{\mathbb{R}}
\def\tr{\operatorname{Tr\,}}
\def\id{\operatorname{id}}
\def\Div{\operatorname{div}}
\def\I{\mathfrak{T}}
\def\tA{{\tilde A}}
\def\tT{{\tilde T^\sharp}}
\def\T{T^\sharp}
\def\tH{{\tilde H}}
\def\Hn{{H}/{n}}
\def\tHp{{{\tilde H}}/{p}}
\def\Ric{\operatorname{Ric}}
\def\vol{\operatorname{vol}}
\def\eq{\hspace*{-2.mm}&=&\hspace*{-2.mm}}
\def\plus{\hspace*{-1.5mm}&+&\hspace*{-1.5mm}}
\def\dt{\partial_t}
\newcommand\mD{{\cal D}}
\newcommand\Ht{H}
\newcommand\Hb{{\tilde H}}
\newcommand\Tb{T}
\newtheorem{theorem}{Theorem}[section]
\newtheorem{corollary}{Corollary}[section]
\newtheorem{lemma}{Lemma}[section] 
\newtheorem{proposition}{Proposition}[section]
\newtheorem{example}{Example}[section]
\newtheorem{definition}{Definition}[section]
\newtheorem{remark}{Remark}[section]
\title{The Einstein-Hilbert type action \\ on metric-affine almost-product manifolds}
\author{
       Vladimir Rovenski\footnote{Mathematical Department, University of Haifa, Mount Carmel, Haifa, 31905, Israel
       \newline e-mail: {\tt vrovenski@univ.haifa.ac.il}}\ \
        \ and \
       Tomasz Zawadzki
       \footnote{Uniwersytet \L\'{o}dzki, ul. Banacha 22, 90-238 \L\'{o}d\'{z}, Poland
        \newline e-mail: {\tt tomasz.zawadzki@wmii.uni.lodz.pl}}
      }
\begin{document}

\date{}

\maketitle

\begin{abstract}
We continue our study of the mixed Einstein-Hilbert action as a functional of a pseudo-Riemannian metric and a linear connection.
Its geometrical part is the total mixed scalar curvature on a smooth manifold endowed with a~distribution or a foliation.
We develop variational formulas for quantities of extrinsic geometry of a distribution on a metric-affine space
and use them to derive Euler-Lagrange equations (which in the case of space-time are analogous to those in Einstein-Cartan theory) and to characterize critical points of this action on vacuum space-time.
Together with arbitrary variations of metric and connection, we consider also variations that
partially preserve the metric, e.g., along the distribution, and also variations among distinguished classes of connections
(e.g., statistical and metric compatible, and this is expressed in terms of restrictions on contorsion tensor).
One of Euler-Lagrange equations of the mixed Einstein-Hilbert action is an analog of the Cartan spin connection equation,
and the other can be presented in the form similar to the Einstein equation, with Ricci curvature replaced by the new Ricci type tensor. This tensor generally has a complicated form, but is given in the paper explicitly for variations among semi-symmetric~connections.
\end{abstract}

\vskip 2mm\noindent
\textbf{Keywords}:
Pseudo-Riemannian metric,
distribution, foliation, totally umbilical,
variation,
mixed scalar curvature, affine connection,
mixed Einstein-Hilbert action, Sasaki manifold.

\vskip1mm\noindent
\textbf{MSC (2010)} {\small Primary 53C12; Secondary 53C44.}


\section{Introduction}

We study the mixed Einstein-Hilbert action as a functional of two variables:
a pseudo-Riemannian metric and a linear connection.
Its geometrical part is the total mixed scalar curvature on a smooth manifold endowed with a~distribution or a foliation.
Our goals are to obtain the Euler-Lagrange equations of the action, present them in the classical form of Einstein equations and find their solutions for the vacuum case.

\smallskip

\textbf{1.1. State-of-the-art}.
The {Metric-Affine~Geo\-metry} (founded by E.\,Cartan) generalizes pseudo-Riemannian Geometry:
it uses a linear connection $\bar\nabla$ with torsion, instead of the Levi-Civita connection $\nabla$ of metric $g=\<\cdot,\,\cdot\>$
on a manifold $M$,
e.g.,~\cite{mikes}, and appears in such context as almost Hermitian and Finsler manifolds and theory of gravity.
To~describe geometric properties of $\bar\nabla$, we use the~difference $\I=\bar\nabla-\nabla$ (called the \textit{contorsion tensor}) and also auxiliary (1,2)-tensors $\I^*$ and $\I^\wedge$~defined by
\[
 \<\I^*_X Y,Z\> = \<\I_X Z, Y\>,\quad \I^\wedge_X Y = \I_Y X,
 \quad X,Y,Z\in\mathfrak{X}_M .
\]
The~following distinguished classes of metric-affine manifolds $(M,g,\bar\nabla)$ are considered important.

$\bullet$~\textit{Riemann-Cartan manifolds}, where the $\bar\nabla$-parallel transport along the curves preserves the metric, i.e., $\bar\nabla g =0$, e.g., \cite{gps,rze-27b},
This condition is equivalent to $\I^*=-\I$ and $\bar \nabla$ is then called a metric compatible (or: metric) connection. 
e.g.,~\cite{cb19}, where the torsion tensor
is involved in the {Cartan spin connection equation}, see \eqref{Eq-EC-nabla}.
More specific types of metric connections
(e.g., the \textit{semi-symmetric connections} \cite{FR, Yano} and \textit{adapted metric connections} \cite{bf})
also find applications in geometry and theoretical physics.

$\bullet$~\textit{Statistical manifolds}, where the tensor $\bar\nabla g$ is symmetric in all its entries and connection $\bar\nabla$ is  torsion-free, e.g., \cite{ch12,op2016,pss-2020}.
These conditions are equivalent to $\I^\wedge=\I$ and $\I^* = \I$.
The theory of affine hypersurfaces in $\RR^{n+1}$ is a natural source of such manifolds; they also find applications in theory of probability and statistics.

\smallskip

The above classes of connections admit a natural definition of the sectional curvature: in case of metric connections by the same formula as for the Levi-Civita connection, and for statistical connections by the analogue introduced in \cite{op2016}.
For the curvature tensor $\bar R_{X,Y}=[\bar\nabla_Y,\bar\nabla_X]+\bar\nabla_{[X,Y]}$ of an affine connection $\bar\nabla$,
we have
\begin{equation}\label{E-RC-2}
 \bar R_{X,Y} -R_{X,Y} = (\nabla_Y\,\I)_X -(\nabla_X\,\I)_Y +[\I_Y,\,\I_X],
\end{equation}
where $R_{X,Y}=[\nabla_Y,\nabla_X]+\nabla_{[X,Y]}$ is the~Riemann curvature tensor of $\nabla$.
Similarly as in Riemannian geometry, one can also consider the scalar curvature $\overline{\rm S}$ of $\bar R$.

Many notable examples of pseudo-Riemannian metrics come (as critical points) from variational problems, a particularly famous of which is the \textit{Einstein-Hilbert action}, e.g., \cite{besse}. Its Einstein-Cartan genera\-lization
in the framework of metric-affine geometry, given (on a smooth manifold $M$) by
\begin{equation}\label{Eq-EH}
 \bar J: (g,\I) \to \int_M \big\{\frac1{2\mathfrak{a}}\,(\overline{\rm S}-2{\Lambda})+{\cal L}\big\}\,{\rm d}{\rm vol}_g ,
\end{equation}
extends the original formulation of general relativity and provides interesting examples of metrics as well as connections.
 Here, $\Lambda$ is a constant (the ``cosmological constant"), ${\cal L}$ is Lagrangian describing the matter contents,
and
$\mathfrak{a}=8\pi G/c^{4}$ -- the coupling constant involving the gravitational constant $G$ and the speed of light $c$.
To deal also with non-compact manifolds, 
it is assumed that the 
integral above is taken over $M$ if it converges;
otherwise, one integrates over arbitrarily large, relatively compact domain $\Omega\subset M$, which also contains supports
of variations of $g$ and $\I$. The Euler-Lagrange equation for \eqref{Eq-EH} when $g$ varies is
\begin{subequations}
\begin{equation}\label{Eq-EC-R}
 \overline\Ric -\,(1/2)\,\overline{\rm S}\cdot g +\,\Lambda\,g  = \mathfrak{a}\,\Xi
\end{equation}
(called the Einstein equation) with the non-symmetric Ricci curvature $\overline\Ric$
and the asymmetric ener\-gy-momentum tensor $\Xi$ (generalizing the stress tensor of Newtonian physics), given in a coordinates by
$\Xi_{\mu\nu}=-2\,{\partial{\cal L}}/{\partial g^{\mu\nu}} +g_{\mu\nu}{\cal L}$.
The Euler-Lagrange equation for \eqref{Eq-EH} when $\I$ varies is an algebraic constraint with the torsion tensor ${\cal S}$
of $\bar\nabla$ and the spin tensor $s_{\mu\nu}^c=2\,{\partial{\cal L}}/{\partial \I_{\mu\nu}^c}$
(used to describe the intrinsic angular momentum of particles in spacetime, e.g.,~\cite{tr}):
\begin{equation}\label{Eq-EC-nabla}
 {\cal S}(X,Y) +\tr({\cal S}(\cdot,Y) - {\cal S}(X,\cdot)) = \mathfrak{a}\,s(X,Y),\quad X,Y\in\mathfrak{X}_M.
\end{equation}
\end{subequations}
Since ${\cal S}(X,Y)=\I_XY-\I_YX$, \eqref{Eq-EC-nabla} can be rewritten using the contorsion tensor.
The solution of (\ref{Eq-EC-R},b) is a pair $(g,{\mathfrak T})$, satisfying this system, where the pair of tensors $(\Xi,s)$ (describing a specified type of matter) is given. In vacuum space-time, Einstein and Einstein-Cartan theories coincide.
The~classification of solutions of (\ref{Eq-EC-R},b) is a deep and largely unsolved problem~\cite{besse}.

\smallskip

\textbf{1.2. Objectives}.
On a manifold equipped with an additional structure
(e.g., almost product, complex or contact), one can consider an analogue of \eqref{Eq-EH} adjusted to that structure. In pseudo-Riemannian geometry, it may mean restricting $g$ to a certain class of metrics (e.g., conformal to a given one, in the Yamabe problem \cite{besse}) or even constructing a new, related action (e.g., the Futaki functional on a Kahler manifold \cite{besse}, or several actions on contact manifolds \cite{Blairsurvey}), to cite only few examples.
The~latter approach was taken in authors' previous papers, where the scalar curvature in the Einstein-Hilbert action on a pseudo-Riemannian manifold was replaced by the mixed scalar curvature of a given distribution or a foliation.

In this paper, a similar change in \eqref{Eq-EH} will be considered on
a connected smooth $(n+p)$-dimensional manifold $M$ endowed with an affine connection
and a smooth $n$-dimensional distribution $\widetilde\mD$ (a subbundle of the tangent bundle $TM$).
Distributions and foliations (that can be viewed as integrable distributions) on 
manifolds appear in various situations, e.g., \cite{bf,rov-m}. 
When a pseudo-Riemannian metric $g$ on $M$ is non-degenerate along $\widetilde\mD$, it defines the orthogonal $p$-dimensional distribution $\mD$ such that both distributions span the tangent bundle: $TM=\widetilde\mD\oplus\mD$
and define a Riemannian almost-product structure on $(M,g)$, e.g., \cite{g1967}.
From a mathematical point of view, a \textit{space-time} of general relativity is a 
$(n+1)$-dimensional 
time-oriented (i.e., with a given timelike vector field) Lorentzian manifold, see~\cite{bee}.
A~space-time admits a global \textit{time function} (i.e., increasing function along each future directed nonspacelike curve) if and only if it is stable causal; in particular, a {glo\-bally hyperbolic} spacetime is naturally
endowed with a codimension-one foliation (the level hypersurfaces of a given time-function), see \cite{bs,fs}.

The~\textit{mixed Einstein-Hilbert action} on $(M,\widetilde\mD)$,
\begin{equation}\label{Eq-Smix}
 \bar J_{{\mD}}: (g,{\mathfrak T})\mapsto\!\int_{M} \Big\{\frac1{2\mathfrak{a}}\,
 ( \overline{\rm S}_{{\rm mix}} -2\,{\Lambda}) +{\cal L}\Big\}\,{\rm d}\vol_g,
\end{equation}
is an analog of \eqref{Eq-EH}, where $\overline{\rm S}\,$ is replaced by the mixed scalar curvature $\overline{\rm S}_{\,\rm mix}$, see~\eqref{eq-wal2}, for the affine connection $\bar\nabla=\nabla+\I$.
The physical meaning of \eqref{Eq-Smix} is discussed in~\cite{bdrs}  
for the case of $\I=0$. 

In view of the formula $\overline{\rm S}=\overline{\rm S}_{\rm mix}+\overline{\rm S}^{\,\top}\!+\overline{\rm S}^{\,\bot}$,
where $\overline{\rm S}^{\,\top}$ and $\overline{\rm S}^{\,\bot}$ are the scalar curvatures along the distributions $\widetilde\mD$ and $\mD$,
one can combine the actions \eqref{Eq-EH} and \eqref{Eq-Smix} to obtain the new \textit{perturbed Einstein-Hilbert action} on $(M,\widetilde\mD)$:  
$\bar J_{\eps}: (g,{\mathfrak T})\mapsto\!\int_{M} \big\{\frac1{2\mathfrak{a}}\,(\overline{\rm S}+\eps\,\overline{\rm S}_{\rm mix}
 -2\,{\Lambda}) +{\cal L}\big\}\,{\rm d}\vol_g$ with $\eps\in\RR$,
whose critical points may describe geometry of the space-time in an extended theory of gravity.

The mixed scalar curvature (being an averaged mixed sectional curvature) is one of the simplest curvature invariants of a pseudo-Riemannian almost-product structure. If~a distribution is spanned by a unit vector field $N$, i.e., $\<N,N\>=\eps_N\in\{-1,1\}$,
then $\overline{\rm S}_{\rm mix} = \eps_N\overline\Ric_{N,N}$, where $\overline\Ric_{N,N}$ is the Ricci curvature in the $N$-direction.
If $\dim M=2$ and $\dim\mD=1$, then obviously $\overline{\rm S}_{\rm mix}=\overline{\rm S}$.
If~${\mathfrak T}=0$ then $\overline{\rm S}_{{\rm mix}}$ reduces to the mixed scalar curvature ${\rm S}_{{\rm mix}}$ of $\nabla$,
see~\eqref{eq-wal2-0}, which can be defined as a sum of sectional curvatures of planes that non-trivially intersect with both of the distributions.
Investigation of ${\rm S}_{{\rm mix}}$ led to multiple results regarding the existence of foliations and submersions with interesting geometry, e.g., integral formulas and splitting results, curvature prescribing and variational problems, see survey \cite{rov-5}.
The~trace of the partial Ricci curvature (rank 2) tensor $r_{\cal D}$ is ${\rm S}_{\rm mix}$, see Section~\ref{sec:prel}.
The understanding of the mixed curvature, especially, $r_{\cal D}$ and ${\rm S}_{\rm mix}$, is a fundamental problem of extrinsic geometry of foliations, see~\cite{rov-m}.

Varying \eqref{Eq-Smix} with fixed $\I=0$, as a functional of $g$ only, 
we obtain the Euler-Lagrange equations in the form similar to \eqref{Eq-EC-R},
see \cite{bdrs} for space-times, and for $\widetilde{\mD}$ of any~dimension, see \cite{rz-1,rz-2}, i.e., 
\begin{equation}\label{E-gravity}
\Ric_{\,{\mD}} -\,(1/2)\,{\rm S}_{\,\mD}\cdot g +\,\Lambda\,g = \mathfrak{a}\,\Xi,
\end{equation}
where the Ricci and scalar curvature are replaced by the \textit{mixed Ricci curvature} $\Ric_{\,{\mD}}$, see \eqref{E-main-0ij},
and its~trace ${\rm S}_{\mD}$.
In~\cite{RZconnection}, we obtained the Euler--Lagrange equations for \eqref{Eq-Smix} with fixed $g$ and variable $\I$,
see (\ref{ELconnection1}-h),
and examined critical contorsion tensors (and corresponding connections) in general and in distinguished classes of (1,2)-tensors.
We have shown that $\I$ is critical for \eqref{Eq-Smix}
with fixed $g$ if and only if $\I$ obeys certain system of algebraic equations,
however, unlike \eqref{Eq-EC-nabla}, these equations heavily involve also the pseudo-Riemannian geometry of the distributions.
 In the article we generalize these results, considering variations of \eqref{Eq-Smix} with respect to both $g$ and $\I$, at their arbitrary values.
As we are less inclined to discuss particular physical theories, we basically confine ourselves to studying
the total mixed scalar curvature -- the geometric part of the mixed Einstein-Hilbert action,
i.e., we set $\Lambda={\cal L}=0$ in \eqref{Eq-Smix}, which in physics correspond to vacuum space-time and no ``cosmological~constant":
\begin{equation}\label{actiongSmix}
\bar J_{{\rm mix}} : (g, \I) \mapsto \int_M \overline{\rm S}_{\rm mix}\,{\rm d}\vol_g.
\end{equation}
Considering variations of the metric that preserve the volume of the manifold, we can also obtain the Euler-Lagrange equations for \eqref{actiongSmix}, that coincide with those for \eqref{Eq-Smix} with ${\cal L}=0$ and $\Lambda \neq 0$.

The terms of $\bar{\rm S}_{\,\rm mix}$ without covariant derivatives of $\I$ make up the \textit{mixed scalar $\I$-curvature},
see Section~\ref{sec:prel}, which we find interesting on its own. In particular, ${\rm S}_{\,\I}$ can be viewed as the Riemannian mixed scalar curvature of a distribution with all sectional curvatures of planes replaced by their $\I$-curvatures (see \cite{op2016}), and for statistical connections we have $\bar{\rm S}_{\,\rm mix} = {\rm S}_{\,\rm mix} + {\rm S}_{\,\I}$.
Thus, we also study (in Section~\ref{sec: 2-1}) the following, closely related to \eqref{actiongSmix}, action on $(M,\widetilde{\mD})$:
\begin{equation}\label{actiongISmix}
I: (g, \I) \mapsto \int_M {{\rm S}}_{\,\I}\,{\rm d} \vol_g .
\end{equation}
 For each of the examined actions \eqref{actiongSmix} and \eqref{actiongISmix}, we obtain the Euler-Lagrange equations and formulate results about existence and examples of their solutions, that we describe in more detail further below.
In~particular, from \cite{RZconnection} we know that if 
$\I$ is critical for the action \eqref{actiongSmix}, then $\mD$ and $\widetilde\mD$
are totally umbilical with respect to $\nabla$ -- and to express this together with other conditions, a pair of equations like
(\ref{Eq-EC-R},b) is not sufficient.
Due to this fact, only 
in the special case of semi-symmetric connections we present the 
Euler-Lagrange equation in the form, which directly generalizes \eqref{E-gravity}:
\begin{equation}\label{E-gravity-gen}
\overline\Ric_{\,{\mD}} -\,(1/2)\,\overline{\rm S}_{\,\mD}\cdot g +\Lambda\,g = \mathfrak{a}\,\Xi
\end{equation}
and a separate condition (\ref{ELconnection1}-h), similar to \eqref{Eq-EC-nabla}, for the vector field parameterizing this type of connection.
In~the paper we study solutions of \eqref{E-gravity-gen} and (\ref{ELconnection1}-h)
for the vacuum case.

\smallskip

\textbf{1.3. Structure of the paper}.
The article has the Introduction and three other Sections.

Section~\ref{sec:prel} contains background definitions and necessary results from \cite{bdr,rz-1,rz-2,RZconnection},
among them the notions of the mixed scalar curvature and the mixed and the partial Ricci tensors are~central.

Section~\ref{sec:main} contains the main results, described in detail below.

Section~\ref{sec:aux} contains auxiliary lemmas with necessary, but lengthy computations, and the References include 30 items.

In~Section~\ref{sec:main}, we derive the Euler--Lagrange equations for \eqref{actiongSmix} and \eqref{actiongISmix}
and find some of their solutions
-- critical pairs $(g,\I)$ 
for different kinds of variations of metric and connection. Apart from varying among all metrics that are non-degenerate on $\widetilde\mD$,
we also restrict to the case when metric remains fixed on the distribution, and the complementary case when metric varies only on the distribution -- preserving its orthogonal complement and the metric on it.
This approach (first, applied in \cite{RWa-1} for codimension one foliations) can be used to finding an optimal extension of a metric given only on the distribution -- which is the problem of the relationship between sub-Riemannian and Riemannian geometries. Moreover, in analogy to the Einstein-Hilbert action, all variations are considered in two kinds: with and without
preserving the volume of the manifold, see~\cite{besse}. In~addition, together with arbitrary variations of connection, we consider variations among such distinguished classes as statistical and metric connections, and this is expressed in terms of constraints on $\I$.

Section~\ref{sec:main} is divided into four subsections, according to additional conditions we impose on connections (e.g., metric, adapted and statistical) or actions we consider (defined by the mixed scalar curvature $\overline{\rm S}_{\,\rm mix}$ and
the algebraic curvature-type invariant of a contorsion tensor ${\rm S}_{\,\I}$).

In Section \ref{sec: 2-1}, we vary functional \eqref{actiongISmix} with respect to metric  $g$.
Compared to its variation with fixed $g$, which was considered in \cite{RZconnection}, we obtain additional conditions for general and metric connections.
On the other hand, a metric-affine doubly twisted product is critical for \eqref{actiongISmix} if and only if it is critical for the action with fixed~$g$. Similarly, restricting \eqref{actiongISmix} to pairs of metrics and statistical connections also does not give any new Euler-Lagrange equations than those obtained in \cite{RZconnection}.

In Section~\ref{sec: 2-2},
for arbitrary variations of $(g,\I)$
we show that statistical connections critical for \eqref{actiongSmix} on a closed $M$ are exactly those that are critical for \eqref{actiongSmix} with fixed $g$, and for $n+p>2$ these exist only on metric products.
On the other hand, for every $g$ critical for \eqref{actiongSmix} with fixed $\I=0$, there exist statistical connections,
satisfying algebraic conditions {\rm(\ref{ELSmixIstat1},b}),
such that $(g,\I)$ is critical for \eqref{actiongSmix} restricted to all metrics, but only statistical connections. Note that \eqref{ELSmixIstat2} is equivalent to $\I$ acting invariantly on each distribution, i.e., with only components $\I:\widetilde{\mD}\times\widetilde{\mD}\rightarrow\widetilde{\mD}$ and $\I:{\mD}\times{\mD}\rightarrow{\mD}$.
Equations {\rm(\ref{ELSmixIstat1},b}) imply also that the traces $\tr^\top\I$ and $\tr^\perp\I$ vanish, and these are the only restrictions for $\I$ critical among statistical connections.

In Section \ref{sec: 2-3} we show that for $n,p>1$ the critical value of \eqref{actiongSmix} attained by $(g, \I)$, where $\I$ corresponds to a metric connection, depends only on $g$ and is non-negative on a Riemannian manifold. In other words, pseudo-Riemannian geometry determines the  mixed scalar curvature of any critical metric connection.
For general metric connections, we consider only adapted variations of the metric (see Definition \ref{defintionvariationsofg}) due to complexity of the
variational formulas. Compared to \eqref{actiongSmix} with fixed $g$, we get a new condition \eqref{ELmetric}, involving the symmetric part of $\I|_{{\mD}\times{\mD}}$ and of $\I|_{\widetilde{\mD}\times\widetilde{\mD}}$ in the dual equation.
Under some assumptions, trace of \eqref{ELmetric} depends only on the pseudo-Riemannian geometry of $(M,g,\widetilde\mD)$ and thus gives a necessary condition for the metric to admit a critical point of \eqref{actiongSmix} in a large class of connections (e.g., adapted), or for integrable distributions $\mD$.
On the other hand, in the case of adapted variations, antisymmetric parts of $(\I|_{{\mD}\times{\mD}})^\perp$ and $(\I|_{\widetilde{\mD}\times\widetilde{\mD}})^\top$ remain free parameters of any critical metric connection, as they do not appear in Euler-Lagrange equations (note that these components define part of the critical connection's torsion).
Thus, for a given metric $g$ that admits critical points of \eqref{actiongSmix}, one can expect to have multiple critical metric connections, and examples in Section~\ref{sec: 2-3} confirm~that.

Section~\ref{sec:2-4} deals with a semi-symmetric connection (parameterized by a vector field), as a simple case of a metric connection. Although such connections are critical for the action \eqref{actiongSmix} and arbitrary variations of connections only on metric-affine products, when we restrict variations of the mixed scalar curvature to semi-symmetric connections, we obtain meaningful Euler-Lagrange equations (in Theorem~\ref{propUconnectionEL}), which allow us to explicitly present the mixed Ricci tensor -- analogous to the Ricci tensor in the Einstein equation.

\section{Preliminaries}
\label{sec:prel}

Here, we recall definitions of some functions and tensors, used also in \cite{bdr,rz-1,rz-2,RZconnection,wa1}, and introduce several new notions related to geometry of $(M,g,\bar\nabla)$  endowed with a non-degenerate distribution.

\smallskip

\textbf{2.1. The mixed scalar curvature}.
Let ${\rm Sym}^2(M)$ be the space of symmetric $(0,2)$-tensors tangent to a smooth connected manifold~$M$.
A~\textit{pseudo-Riemannian metric} $g=\<\cdot,\cdot\>$ of index $q$ on $M$ is an element $g\in{\rm Sym}^2(M)$
such that each $g_x\ (x\in M)$ is a {non-degenerate bilinear form of index} $q$ on the tangent space $T_xM$.
For~$q=0$ (i.e., $g_x$ is positive definite) $g$ is a Riemannian metric and for $q=1$ it is called a Lorentz metric.
Let~${\rm Riem}(M)\subset{\rm Sym}^2(M)$ be the subspace of pseudo-Riemannian metrics of a given signature.

 A smooth subbundle $\widetilde{\mD}\subset TM$ (that is a regular distribution) is \textit{non-degenerate},
if $g_x$ is non-degenerate on $\widetilde{\mD}_x\subset T_x M$ for $x\in M$;
in this case, the orthogonal complement ${\mD}$ of~$\widetilde{\mD}$ is also non-degenerate,
and we have $\widetilde{\mD}_x\cap\,{\mD}_x=0$, $\widetilde{\mD}_x\oplus\,{\mD}_x=T_xM$ for all $x \in M$.
Let~$\mathfrak{X}_M, \mathfrak{X}^\bot,\mathfrak{X}^\top$ be the modules over $C^\infty(M)$ of sections (vector fields)
of $TM,{\mD}$ and $\widetilde{\mD}$, respectively. 

Let ${\rm Riem}(M,\widetilde{\mD},{\mD})\subset {\rm Riem}(M)$ be the subspace of pseudo-Riemannian metrics making
$\widetilde{\mD}$ and ${\mD}$ (of ranks $\dim\widetilde{\mD}=n\ge1$ and $\dim{\mD}=p\ge1$) orthogonal and non-degenerate.
Given $g\in{\rm Riem}(M,\widetilde{\mD},{\mD})$, a local adapted orthonormal frame $\{E_a,\,{\cal E}_{i}\}$, where $\{E_a\}\subset\widetilde{\mD}$ and $\eps_i=\<{\cal E}_{i},{\cal E}_{i}\>\in\{-1,1\}$, $\eps_a=\<E_a,E_a\>\in\{-1,1\}$, always exists on $M$.
The~following convention is adopted for the range of~indices:
\begin{equation*}
 a,b,c\ldots{\in}\{1\ldots n\},\quad i,j,k\ldots{\in}\{1\ldots p\}.
\end{equation*}
All the  quantities defined below  with the use of an adapted orthonormal frame do not depend on the choice of this frame.
We~have $X=\widetilde{X} + X^\perp$, where $\widetilde{X} \equiv X^\top$ is the $\widetilde{\mD}$-component of $X\in\mathfrak{X}_M$ (respectively, $X^\perp$ is the  ${\mD}$-component of $X$) with respect to $g$. Set~$\id^\top(X)=X^\top$ and $\,\id^\bot(X)=X^\bot$.

\begin{definition}\rm
 The function on $(M,g,\bar\nabla)$ endowed with a non-degenerate distribution $\widetilde{\mD}$,
\begin{equation}\label{eq-wal2}
 \bar{\rm S}_{\,\rm mix} = \frac{1}{2}\sum\nolimits_{a,i} \eps_a \eps_i \big(\<{\bar R}_{\,E_a, {\cal E}_i} E_a, {\cal E}_i\>
 +\<{\bar R}_{\,{\cal E}_i, E_a}\, {\cal E}_i, E_a\> \big),
\end{equation}
is called the \textit{mixed scalar curvature with respect to connection} $\bar\nabla$.
In particular case of the Levi-Civita connection $\nabla$, the function on $(M,g)$,
\begin{equation}\label{eq-wal2-0}
 {\rm S}_{\rm mix} = \tr_{g}{r}_{\mD} =\sum\nolimits_{\,a,i}\eps_a \eps_i\,\<R_{\,E_a, {\cal E}_{i}}\,E_a,\, {\cal E}_{i}\>
\end{equation}
is called the \textit{mixed scalar curvature} (with respect to $\nabla$).
 The symmetric $(0,2)$-tensor
\begin{equation}\label{E-Rictop2}
 {r}_{{\mD}}(X,Y) = \sum\nolimits_{a} \eps_a\, \<R_{\,E_a,\,X^\perp}\,E_a, \, Y^\perp\>, \quad X,Y\in \mathfrak{X}_M,
\end{equation}
is called the \textit{partial Ricci tensor} related to $\widetilde\mD$.
\end{definition}

Remark that on $(M,\widetilde\mD)$, the ${\rm S}_{\rm mix}$ and $g$-orthogonal complement to $\widetilde{\mD}$ are determined by the choice of metric~$g$. In particular,
if $\dim\widetilde\mD=1$ then $r_\mD=\eps_N\,R_N$,
where $R_N=R_{N,\,^\centerdot}\,N$ is the Jacobi operator,
and if $\dim\mD=1$ then $r_\mD=\Ric_{N,N}g^\bot$,
where the symmetric (0,2)-tensor $g^\perp$ is defined by
 $g^\perp (X,Y) = \< X^\perp, Y^\perp\>$
 for $X,Y \in \mathfrak{X}_M$.

We use the following convention for components of various $(1,1)$-tensors in an adapted orthonormal frame $\{E_a , {\cal E}_i \}$:
$\I_a = \I_{E_a} ,\ \I_i = \I_{{\cal E}_i}$,
etc.
Following the notion of $\I$-{sectional curvature} of a symmetric $(1,2)$-tensor $\I$ on a vector space endowed with a scalar product and a cubic form, see~\cite{op2016}, we define the {mixed scalar $\I$-curvature} by \eqref{E-SK},
as a sum of $\I$-sectional curvatures of planes that non-trivially intersect with both of the distributions,
\begin{equation}\label{E-SK}
 {\rm S}_{\,\I} = \sum\nolimits_{\,a,i} \eps_a \eps_i ( \<[\I_i,\, \I_a] E_a, {\cal E}_i\> +\<[\I_a,\, \I_i]\, {\cal E}_i, E_a\> ).
\end{equation}
The definitions \eqref{E-SK}, \eqref{eq-wal2}--\eqref{eq-wal2-0} do not depend on the choice of an adapted local orthonormal frame.
Thus, we can consider $\bar{\rm S}_{\,\rm mix}$ and ${\rm S}_{\I}$
on $(M,\widetilde\mD)$ as functions of $g$ and $\I$.
 If ${\cal T}$ is either symmetric or anti-symmetric then \eqref{E-SK} reads as
 ${\rm S}_{\,\I} = \sum\nolimits_{a,i}\eps_a\eps_i\,\<[\I_i,\,\I_a]\, E_a, {\cal E}_i\>$.
As was mentioned in the Introduction,
the~\textit{mixed scalar $\I$-curvature}
(for the contorsion tensor $\I$) is a part of $\bar{\rm S}_{\,\rm mix}$, in~fact we have \cite[Eq.~(6)]{RZconnection}:
\begin{equation}\label{barSmix}
 \bar{\rm S}_{\,\rm mix} = {\rm S}_{\,\rm mix} + {\rm S}_{\,\I} + {\bar Q}/2,
\end{equation}
where ${\bar Q}$ consists of all terms with covariant derivatives of $\I$,
\begin{equation*}
 {\bar Q} = \sum\nolimits_{a,i}\eps_a\eps_i\big(\<(\nabla_i \I)_a E_a, {\cal E}_i\> -\<(\nabla_a \I)_i E_a, {\cal E}_i\>
 +\<(\nabla_a \I)_i\, {\cal E}_i, E_a\> -\<(\nabla_i \I)_a\, {\cal E}_i, E_a\>\big).
\end{equation*}
The formulas for the mixed scalar curvature in the next two lemmas are essential in our calculations.
The~lemmas use tensors defined in \cite{rz-1}, which are briefly recalled below. 
\begin{proposition}
The following presentation of the partial Ricci tensor in \eqref{E-Rictop2} is valid, see \cite{bdr,rz-1}:
\begin{equation}\label{E-genRicN}
 r_{{\mD}}=\Div\tilde h +\<\tilde h,\,\tilde H\>-\widetilde{\cal A}^\flat-\widetilde{\cal T}^\flat-\Psi+{\rm Def}_{\mD}\,H.
\end{equation}
Tracing \eqref{E-genRicN}, we have, see {\rm \cite{wa1}},
 \begin{equation}\label{eq-ran-ex}
 {\rm S}_{\rm mix} = \<H,H\> +\<{\tilde H}, {\tilde H}\> -\<h,h\> -\< {\tilde h},{\tilde h}\>
 +\<T,T\> +\<\tilde T,\tilde T\> +\Div(H+\tilde H)\,.
\end{equation}
For totally umbilical distributions, i.e.,
$h=\frac1nH\,{g^\top}$ and $\tilde h=\frac1p\,\tilde H\,{g^\bot}$, \eqref{eq-ran-ex} reads as
\begin{equation}\label{E-PW-Smix-umb}
 {\rm S}_{\rm mix} = \frac{n-1}{n} \<H,H\> +\frac{p-1}{p} \<\tH,\tH\> +\<T, T\> +\< {\tilde T}, {\tilde T} \> +\Div(H +\tH),
\end{equation}
\end{proposition}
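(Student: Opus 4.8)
The plan is to treat the Proposition's three displays in turn, noting that only the last is genuinely new. Formula \eqref{E-genRicN}, the ``integral-formula'' presentation of the partial Ricci tensor $r_{\mD}$ of \eqref{E-Rictop2}, is quoted from \cite{bdr,rz-1}; its $g$-trace \eqref{eq-ran-ex} (the Walczak formula for ${\rm S}_{\rm mix}=\tr_g r_{\mD}$) is quoted from \cite{wa1}. In the write-up I would recall these and indicate only that \eqref{eq-ran-ex} follows by applying $\tr_g(\cdot)=\sum_i\eps_i(\cdot)({\cal E}_i,{\cal E}_i)$ to \eqref{E-genRicN} and invoking the standard trace identities for $\tilde h$, $\widetilde{\cal A}^\flat$, $\widetilde{\cal T}^\flat$, $\Psi$ and ${\rm Def}_{\mD}H$ collected in \cite{rz-1,wa1}, which reassemble into the right-hand side of \eqref{eq-ran-ex}. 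The actual task is then just to specialize \eqref{eq-ran-ex} to totally umbilical $\widetilde{\mD}$ and $\mD$, i.e.\ to derive \eqref{E-PW-Smix-umb}.

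For that, I would fix an adapted orthonormal frame $\{E_a,{\cal E}_i\}$ and first record the normalizations $\<g^\top,g^\top\>=n$ and $\<g^\bot,g^\bot\>=p$. Indeed $g^\top(E_a,E_b)=\eps_a\delta_{ab}$ while $g^\top$ vanishes on every pair containing a $\mD$-direction, so $\<g^\top,g^\top\>=\sum_{a,b}\eps_a\eps_b\,(g^\top(E_a,E_b))^2=\sum_a\eps_a^2\cdot\eps_a^2=n$, and the computation for $g^\bot$ is identical; the indefiniteness of $g$ is harmless since each sign factor $\eps_a$ enters squared. Now the umbilicity relation $h=\frac1nH\,g^\top$ gives $\<h,h\>=\frac1{n^2}\<H,H\>\,\<g^\top,g^\top\>=\frac1n\<H,H\>$, hence $\<H,H\>-\<h,h\>=\frac{n-1}{n}\<H,H\>$; symmetrically $\tilde h=\frac1p\,\tH\,g^\bot$ gives $\<\tilde h,\tilde h\>=\frac1p\<\tH,\tH\>$ and $\<\tH,\tH\>-\<\tilde h,\tilde h\>=\frac{p-1}{p}\<\tH,\tH\>$. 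Substituting these two simplifications into \eqref{eq-ran-ex}, while the terms $\<T,T\>+\<\tilde T,\tilde T\>+\Div(H+\tH)$ stay unchanged, yields exactly \eqref{E-PW-Smix-umb}. (As a consistency check, $\tr_g h=\frac1nH\,\tr_g g^\top=H$ matches $H$ being the mean curvature of $\widetilde{\mD}$.)

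I do not expect a real obstacle here: once \eqref{eq-ran-ex} is available, the passage to \eqref{E-PW-Smix-umb} is essentially a one-line computation, the only mild point being the sign bookkeeping in $\<g^\top,g^\top\>=n$ and $\<g^\bot,g^\bot\>=p$ for an indefinite metric, which is resolved above. The only place where substantial work could hide is the derivation of \eqref{E-genRicN} and \eqref{eq-ran-ex} themselves -- expanding curvature identities of the type \eqref{E-RC-2} (here with $\I=0$) into the extrinsic tensors $h,\tilde h,T,\tilde T$ and carefully tracking all divergence and ${\rm Def}_{\mD}H$ contributions -- but this is classical and lies outside the scope of the present, essentially expository, Proposition.
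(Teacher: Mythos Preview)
Your proposal is correct and matches the paper's treatment: the Proposition is stated in the Preliminaries as a compilation of known results, with \eqref{E-genRicN} and \eqref{eq-ran-ex} cited from \cite{bdr,rz-1,wa1} and no proof given, so the only content is exactly the elementary specialization $\<h,h\>=\frac1n\<H,H\>$, $\<\tilde h,\tilde h\>=\frac1p\<\tH,\tH\>$ that you carry out. Your sign bookkeeping for $\<g^\top,g^\top\>=n$ in the indefinite case is fine (each $\eps_a$ enters squared), and the resulting substitution into \eqref{eq-ran-ex} yields \eqref{E-PW-Smix-umb} as claimed.
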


Denote by $\<B,C\>_{|V}$ the inner product of tensors $B,C$ restricted to 
$V=(\tilde{\mD}\times{\mD})\cup({\mD}\times\tilde{\mD})$.

\begin{proposition}[see \cite{r-affine}]\label{L-QQ-first}
We have using \eqref{E-RC-2},
\begin{equation}\label{E-Q1Q2-gen}
 2\,(\bar{\rm S}_{\,\rm mix} -{\rm S}_{\,\rm mix})
 = \Div\big( (\tr^\top(\I -\I^*))^\bot +(\tr^\bot(\I -\I^*))^\top \big) - Q,
\end{equation}
where
\begin{eqnarray}\label{E-defQ}
\nonumber
 && Q = - \<\tr^\bot\I,\, \tr^\top\I^*\> -\< \tr^\top\I,\,\tr^\bot\I^*\> +\<\I^*, \I^\wedge\>_{\,|\,V}\\
 && \hskip-5mm -\,\<\tr^\top(\I- \I^*) -\tr^\bot(\I -\I^*),\, H -{\tilde H}\>
 -\< \I -\I^* +\I^\wedge - \I^{* \wedge} ,\ {\tilde A}-{\tilde T}^\sharp + A-T^\sharp\> .
\end{eqnarray}
and the partial traces of $\,\I$ (similarly, for $\I^*$, etc.) are given~by
\begin{equation}\label{E-defTT}
 \tr^\top\I = \sum\nolimits_a\eps_a \I_a E_a,\quad
 \tr^\bot\I = \sum\nolimits_i\eps_i\, \I_i \,{\cal E}_i.
\end{equation}
\end{proposition}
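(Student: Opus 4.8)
The plan is to expand the difference $\bar{\rm S}_{\,\rm mix} - {\rm S}_{\,\rm mix}$ directly from the definition \eqref{eq-wal2} and formula \eqref{E-RC-2}, and then organize the resulting terms into a divergence part and the algebraic remainder $-Q$. First I would substitute \eqref{E-RC-2} into the averaged-sectional-curvature expression \eqref{eq-wal2} for $\bar R$, subtract the corresponding expression \eqref{eq-wal2-0} for $R$, and obtain
\[
 2(\bar{\rm S}_{\,\rm mix} - {\rm S}_{\,\rm mix}) = \sum\nolimits_{a,i}\eps_a\eps_i\Big(
 \<(\nabla_{E_a}\I)_{{\cal E}_i}E_a - (\nabla_{{\cal E}_i}\I)_{E_a}E_a,\, {\cal E}_i\>
 + \<[\I_{E_a},\I_{{\cal E}_i}]E_a,\, {\cal E}_i\> + (\text{swap } E_a\leftrightarrow{\cal E}_i)\Big).
\]
The bracket (zeroth-order in $\nabla\I$) terms will reassemble into ${\rm S}_{\,\I}$ of \eqref{E-SK} and the part of $Q$ that does not involve divergences; the first-order terms $\nabla\I$ must be turned into a divergence plus lower-order terms. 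This is exactly the role played in the excerpt by the quantity $\bar Q$ in \eqref{barSmix}, so a cleaner route is to start from \eqref{barSmix}, which already isolates $\bar{\rm S}_{\,\rm mix}-{\rm S}_{\,\rm mix} = {\rm S}_{\,\I} + \bar Q/2$, and then show
\[
 \bar Q = \Div\big((\tr^\top(\I-\I^*))^\bot + (\tr^\bot(\I-\I^*))^\top\big) - Q - 2\,{\rm S}_{\,\I}\,,
\]
i.e. reduce everything to an identity for $\bar Q$.

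The key step is the ``integration by parts'' on each term of $\bar Q$: for instance $\sum_{a,i}\eps_a\eps_i\<(\nabla_{{\cal E}_i}\I)_{E_a}E_a,{\cal E}_i\>$ is rewritten, using the product rule for $\nabla_{{\cal E}_i}$ acting on $\<\I_{E_a}E_a,{\cal E}_i\>$ together with the definition of $\tr^\top\I = \sum_a\eps_a\I_{E_a}E_a$, as ${\cal E}_i(\<\tr^\top\I,{\cal E}_i\>)$ minus correction terms coming from $\nabla_{{\cal E}_i}E_a$ and $\nabla_{{\cal E}_i}{\cal E}_i$. Collecting $\sum_i\eps_i\,{\cal E}_i(\<\tr^\top\I,{\cal E}_i\>)$ over $i$ produces (up to the frame-connection terms) $\Div^\perp$ of the $\mD$-component of $\tr^\top\I$, and the analogous manipulation on the other three terms of $\bar Q$ yields the full divergence $\Div\big((\tr^\top(\I-\I^*))^\bot + (\tr^\bot(\I-\I^*))^\top\big)$; here one uses $\<\I_X Z,Y\> = \<\I^*_X Y,Z\>$ to turn the ``lower-slot'' contractions into traces of $\I^*$. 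The correction terms involving $\nabla E_a$, $\nabla{\cal E}_i$ are precisely where the second fundamental forms $h,\tilde h$, their traces $H,\tilde H$, the integrability tensors $T,\tilde T$ and the operators $A,\tilde A,T^\sharp,\tilde T^\sharp$ enter — these must be bookkept carefully and matched against the corresponding terms in \eqref{E-defQ}. One also needs to expand $\sum_{a,i}\eps_a\eps_i\<[\I_{E_a},\I_{{\cal E}_i}]E_a,{\cal E}_i\>$ and separate the piece $\<\I^*,\I^\wedge\>_{|V}$ and the products $\<\tr^\bot\I,\tr^\top\I^*\>$ etc. from the genuine $\I$-sectional-curvature piece ${\rm S}_{\,\I}$.

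The main obstacle is the combinatorial/index bookkeeping: there are four derivative terms in $\bar Q$ (two with $E_a$ in the differentiated slot, two with ${\cal E}_i$), each producing a divergence term plus several curvature-type corrections, and all of these, together with the quadratic-in-$\I$ terms, must cancel against the explicit eight or so summands in \eqref{E-defQ}. The risk is sign errors and mismatches between the ``$\I$ vs. $\I^*$ vs. $\I^\wedge$'' contractions, since the auxiliary tensors $\I^*,\I^\wedge$ are defined by moving entries around and each integration by parts on a lower slot converts $\I$ into $\I^*$. Once the dictionary between the three auxiliary tensors and the frame-derivative corrections is set up correctly, the verification is a finite (if lengthy) computation, and frame-independence of both sides is guaranteed since \eqref{eq-wal2}, \eqref{eq-wal2-0}, \eqref{E-SK} and all the tensors in \eqref{E-defQ} are frame-independent; this is relegated to the auxiliary lemmas referenced in the excerpt, with the cited paper \cite{r-affine} providing the detailed calculation.
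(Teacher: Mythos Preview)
Your plan is sound and matches the standard route; the paper itself does not reprove this identity but cites \cite{r-affine}, so there is no in-paper proof to compare against. Your outline---substitute \eqref{E-RC-2} into \eqref{eq-wal2}, isolate the commutator terms as $2{\rm S}_{\,\I}$, then rewrite each covariant-derivative term of $\bar Q$ via the Leibniz rule as a frame divergence plus corrections involving $\nabla_{E_a}{\cal E}_i$ and $\nabla_{{\cal E}_i}E_a$ (hence $A,\tilde A,T^\sharp,\tilde T^\sharp,H,\tilde H$)---is exactly the mechanism behind \eqref{E-Q1Q2-gen}, and the conversion $\<\I_X Z,Y\>=\<\I^*_X Y,Z\>$ is indeed what produces the $\I^*$ traces in the divergence term. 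The only thing to watch is that when you turn $\sum_i\eps_i\,{\cal E}_i(\<\tr^\top\I,{\cal E}_i\>)$ into a divergence you pick up both $\Div^\perp$ and a $\<\,\cdot\,,H\>$ correction via \eqref{E-divN}, and similarly for the $\widetilde{\mD}$-side; these $H,\tilde H$ terms are precisely what assemble into the second line of \eqref{E-defQ}.
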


The tensors used in the above lemmas (and other ones) are defined below
for one of the distributions (say, ${\mD}$; similar tensors for $\widetilde{\mD}$ are denoted using $^\top$ or $\ \widetilde{}\ $ notation).

The integrability tensor and the second fundamental form
$T, h:\widetilde{\mD}\times \widetilde{\mD}\to{\mD}$ of $\widetilde{\mD}$ are given by
\begin{equation*}
 T(X,Y)=(1/2)\,[X,\,Y]^\perp,\quad h(X,Y) = (1/2)\,(\nabla_X Y+\nabla_Y X)^\perp, \quad X, Y \in \mathfrak{X}^\top.
\end{equation*}
The mean curvature vector field of $\widetilde{\mD}$ is given by
 $H=\tr_{g} h=\sum\nolimits_a\eps_a h(E_a,E_a)$.
We call $\widetilde{\mD}$ {totally umbilical}, {minimal}, or {totally geodesic}, if
$h=\frac1nH\,{g^\top},\ H =0$, or $h=0$, respectively.

The ``musical" isomorphisms $\sharp$ and $\flat$ will be used for rank one and symmetric rank 2 tensors.
For~example, if $\omega \in\Lambda^1(M)$ is a 1-form and $X,Y\in {\mathfrak X}_M$ then
$\omega(Y)=\<\omega^\sharp,Y\>$ and $X^\flat(Y) =\<X,Y\>$.
For arbitrary (0,2)-tensors $A$ and $B$ we also have $\<A, B\> =\tr_g(A^\sharp B^\sharp)=\<A^\sharp, B^\sharp\>$.

The Weingarten operator $A_Z$ of $\widetilde{\mD}$ with $Z\in\mathfrak{X}^\bot\mathfrak{\mathfrak{}}$, and the operator $T^\sharp_Z$ are defined~by
\[
 \<A_Z(X),Y\>= \,h(X,Y),Z\>,\quad \<T^\sharp_Z(X),Y\>=\<T(X,Y),Z\>, \quad X,Y \in \mathfrak{X}^\top .
\]
The norms of tensors are obtained using
\begin{equation*}
 \<h,h\>=\sum\nolimits_{\,a,b}\eps_a\eps_b\,\<h({E}_a,{E}_b),h({E}_a,{E}_b)\>, \quad
 \<T,T\>=\sum\nolimits_{\,a,b}\eps_a\eps_b\,\<T({E}_a,{E}_b),T({E}_a,{E}_b)\>,\quad {\rm etc}.
\end{equation*}
The \textit{divergence} of a vector field $X\in\mathfrak{X}_M$ is given by
\begin{equation}\label{eq:div}
 (\Div X)\,{\rm d}\vol_g = {\cal L}_{X}({\rm d}\vol_g),
\end{equation}
where ${\rm d} \vol_g$ is the volume form of $g$. One may show that
\[
 \Div X=\sum\nolimits_{i}\eps_i\,\<\nabla_{i}\,X, {\cal E}_i\> +\sum\nolimits_{a}\eps_a\,\<\nabla_{a}\,X, {E}_a\>.
\]
The~${\mD}$-\textit{divergence} of a vector field $X\in\mathfrak{X}_M$ is given by
$\Div^\perp X=\sum\nolimits_{i} \eps_i\,\<\nabla_{i}\,X, {\cal E}_i\>$.
Thus,
$\Div X=\tr(\nabla X) = \Div^\perp X +\widetilde{\Div}\,X$.
Observe that for $X\in\mathfrak{X}^\bot$ we have
\begin{equation}\label{E-divN}
 {\Div}^\bot X = \Div X +\<X,\,H\>.
\end{equation}
For a $(1,2)$-tensor $P$ define a $(0,2)$-tensor ${\Div}^\bot P$ by
\[
 ({\Div}^\bot P)(X,Y) = \sum\nolimits_i \eps_i\,\<(\nabla_{i}\,P)(X,Y), {\cal E}_i\>,\quad X,Y \in \mathfrak{X}_M.
\]
For a~${\mD}$-valued $(1,2)$-tensor $P$, similarly to \eqref{E-divN}, we have
\begin{eqnarray*}
 && ({\Div}^\top P)(X,Y) =\sum\nolimits_a \eps_a\,\<(\nabla_{a}\,P)(X,Y), E_a\>
  =-\<P(X,Y), H\>,\\
 && {\Div}^\bot P = \Div P+\<P,\,H\>\,,
\end{eqnarray*}
where $\<P,\,H\>(X,Y)=\<P(X,Y),\,H\>$ is a $(0,2)$-tensor.
For example, $\Div^\perp h = \Div h+\<h,\,H\>$.
For~a~function $f$ on $M$, we use the notation $\nabla^{\perp} f = (\nabla f)^{\perp}$ of the projection of $\nabla f$ onto $\mD$.

The ${\mD}$-\textit{deformation tensor} ${\rm Def}_{\mD}\,Z$ of $Z\in\mathfrak{X}_M$
is the symmetric part of $\nabla Z$ restricted to~${\mD}$,
\begin{equation*}
 2\,{\rm Def}_{\mD}\,Z(X,Y)=\<\nabla_X Z, Y\> +\<\nabla_Y Z, X\>,\quad X,Y\in \mathfrak{X}^\bot.
\end{equation*}
The self-adjoint $(1,1)$-tensors: ${\cal A}$ (the \textit{Casorati type operator})
and ${\cal T}$ and the symmetric $(0,2)$-tensor $\Psi$, see \cite{bdr,rz-1}, are defined by
\begin{eqnarray*}
 && {\cal A}=\sum\nolimits_{\,i}\eps_i A_{i}^2,\quad
 {\cal T}=\sum\nolimits_{\,i}\eps_i(T_{i}^\sharp)^2,\\
 && \Psi(X,Y) = \tr (A_Y A_X+T^\sharp_Y T^\sharp_X), \quad X,Y\in\mathfrak{X}^\bot.
\end{eqnarray*}
 For readers' convenience, we gather below also definitions of all other basic tensors that will be used in further parts of the paper.
We define a self-adjoint $(1,1)$-tensor ${\cal K}$ by the formula
\[
 {\cal K} = \sum\nolimits_i \eps_{\,i} [\T_i, A_i] = \sum\nolimits_{\,i} \eps_i (\T_i A_i - A_i \T_i),
\]
and the $(1,2)$-tensors $\alpha,\theta$ and ${\tilde\delta}_{Z}$ (defined for a given vector field $Z \in \mathfrak{X}_M$) on $(M, \widetilde{\mD}, g)$:
\begin{eqnarray*}
	&& \alpha(X,Y) = \frac{1}{2}\,(A_{X^{\perp}} (Y^{\top}) + A_{Y^{\perp}} (X^{\top})), \quad
	\theta(X,Y) = \frac{1}{2}\,(T^{\sharp}_{X^{\perp}}(Y^{\top}) + T^{\sharp}_{Y^{\perp}}(X^{\top})),\\
	&& {\tilde\delta}_{Z}(X,Y) = \frac{1}{2}\,\big(\<\nabla_{X^{\top}} Z,\, Y^{\perp}\> +\<\nabla_{Y^{\top}} Z,\, X^{\perp}\>\big),
	\quad X,Y \in \mathfrak{X}_M.
\end{eqnarray*}
For any $(1,2)$-tensors $P,Q$ and a $(0,2)$-tensor $S$ 
on $TM$, define the following $(0,2)$-tensor $\Upsilon_{P,Q}$:
\[
\<\Upsilon_{P,Q}, S\> =  \sum\nolimits_{\,\lambda, \mu} \eps_\lambda\, \eps_\mu\,
[S(P(e_{\lambda}, e_{\mu}), Q( e_{\lambda}, e_{\mu})) + S(Q(e_{\lambda}, e_{\mu}), P( e_{\lambda}, e_{\mu}))],
\]
where on the left-hand side we have the inner product of $(0,2)$-tensors induced by $g$,
$\{e_{\lambda}\}$ is a local orthonormal basis of $TM$ and $\eps_\lambda = \<e_{\lambda}, e_{\lambda}\>\in\{-1,1\}$. Note that
\[
 \Upsilon_{P,Q} = \Upsilon_{Q,P},\quad
 \Upsilon_{P, fQ_{1} + Q_{2}} = f\Upsilon_{P,Q_1}+\Upsilon_{P,Q_2}.
\]
Finally, for the contorsion tensor and $X \in TM$ we define ${\I}^\top_X : \tilde{\mD} \rightarrow \tilde{\mD}$ by
\[
{\I}^\top_X Y = (\I_X (Y^\top))^\top , \quad Y \in TM.
\]

\begin{remark} \label{remarkepsilons} \rm
From now on, we shall omit factors $\eps_\mu$ in all expressions with sums over an adapted frame (or its part), effectively identifying symbols $\sum_\mu$ with $\sum_\mu \eps_\mu$ etc.
As we assume in this paper that $g$ is non-degenerate on the distribution 
$\widetilde{\mD}$, 
the presence of factors $\eps_\mu$ in the sums is the only difference in formulas 
with adapted frames for a Riemannian and a pseudo-Riemannian metric $g$. With the definitions 
given in this section, all tensor equations that follow look exactly the same in both these cases. In more complicated formulas we shall also omit summation indices, assuming that every sum is taken over all indices that appear repeatedly after the summation sign, and contains appropriate factors $\eps_\mu$. 
\end{remark}

\textbf{2.2. The mixed Ricci curvature}.
Let $(M,g)$ be a pseudo-Riemannian manifold endowed with a non-degenerate distribution $\widetilde{\mD}$.
We consider smooth $1$-parameter variations $\{g_t\in{\rm Riem}(M):\,|t|<\eps\}$ of the metric $g_0 = g$.
Let the infinitesimal variations, represented by a symmetric $(0,2)$-tensor
\[
 {B}_t\equiv\partial g_t/\partial t,
\]
be supported in a relatively compact domain $\Omega$ in $M$,
i.e., $g_t =g$ outside $\Omega$ for all $|t|<\eps$.
We~call a variation $g_t$ \emph{volume-preserving} if ${\rm Vol}(\Omega,g_t) = {\rm Vol}(\Omega,g)$ for all $t$.
 We~adopt the notations $\partial_t \equiv \partial/\partial t,\ {B}\equiv{\dt g_t}_{\,|\,t=0}=\dot g$,
but we shall also write $B$ instead of $B_t$ to make formulas easier to read, wherever it does not lead to confusion.
Since $B$ is symmetric, then $\<C,\,B\>=\<{\rm Sym}(C),\,B\>$ for any  $(0,2)$-tensor $C$.
We denote by $\otimes$ the product of tensors and use the symmetrization operator to define
the symmetric product of tensors: $B\odot C = {\rm Sym}(B\otimes C)=\frac12\,(B\otimes C+ C\otimes B)$.

\begin{definition} \label{defintionvariationsofg} \rm
A family of metrics $\{g_t\in{\rm Riem}(M):\, |t|<\eps\}$ such that $g_0 =g$ will be called

(i) $g^\pitchfork$-\textit{variation} if
 $g_{t}(X,Y)= g_0(X,Y)$ for all $X,Y\in \mathfrak{X}^\top$ and $|t|<\eps$.

(ii) \textit{adapted variation},
if the $g_t$-orthogonal complement ${\mD}_t$ remain $g_0$-orthogonal to $\widetilde{\mD}$ for all~$t$.

(iii) \textit{${{g^\top}}$-variation}, if it is adapted
and $g_{t}(X,Y)=g_0(X,Y)$ for all $X,Y\in \mathfrak{X}^\bot$ and $|t|<\eps$.

(iv) \textit{${{g^\perp}}$-variation}, if it is adapted $g^\pitchfork$-variation.
\end{definition}

In other words, for $g^\pitchfork$-variations the metric on $\widetilde{\mD}$ is preserved.
For adapted variation we have $g_t\in{\rm Riem}(M,\widetilde{\mD},{\mD})$ for all~$t$.
For ${{g^\top}}$-variations only the metric on $\widetilde{\mD}$ changes,
and for ${g^\bot}$-\textit{variations} only the metric on ${\mD}$ changes, and ${\mD}$ remains to be $g_t$-orthogonal to~$\widetilde{\mD}$.

The symmetric tensor $B_t=\dot g_t$ (of any variation) can be decomposed into the sum of derivatives of
$g^\pitchfork$- and ${g^\top}$-variations, see \cite{rz-2}. Namely,
$B_t=B_t^\pitchfork + {\tilde B}_t$, where
\[
 B^\pitchfork_t =\bigg(\begin{array}{cc}
   B_{ t \,|\,{\cal D}\times{\cal D}} &
   {B}_{ t \,|\,{\cal D}\times\widetilde{\cal D}} \\
   {B}_{ t \,|\,\widetilde{\cal D}\times{\cal D}} & 0
 \end{array}\bigg),
 \quad
 {\tilde B}_t =\bigg(\begin{array}{cc}
   0 & 0 \\ 0 & {B}_{ t \,|\,\widetilde{\cal D}\times\widetilde{\cal D}}
 \end{array}\bigg).
\]
Thus, for $g^\pitchfork$-variations $B(X,Y) =0$ for all $X,Y \in \mathfrak{X}^\top$.
Denote by $^\top$ and $^\perp$ the $g_t$-orthogonal projections of vectors onto $\widetilde{\mD}$
and ${\mD}(t)$ (the $g_t$-orthogonal complement of $\widetilde{\mD}$), respectively.

\begin{proposition}[see \cite{rz-2}]\label{prop-Ei-a}
Let $g_t$ be a $g^\pitchfork$-variation of $g\in{\rm Riem}(M,\widetilde{\mD},{\mD})$.
Let $\{E_a,\,{\cal E}_{i}\}$ be a local $(\widetilde{\mD},\,{\mD})$-adapted and orthonormal for $t=0$ frame, that evolves according to
\begin{equation}\label{E-frameE}
 \dt E_a = 0,\qquad
 \dt {\cal E}_{i}=-(1/2)\, ({B}_t^\sharp({\cal E}_{i}))^{\perp} -({B}_t^\sharp({\cal E}_{i}))^{\top}.
\end{equation}
Then, for all $\,t, $ $\{E_a(t),{\cal E}_{i}(t)\}$ is a $g_t$-orthonormal frame adapted to $(\widetilde{\mD},{\mD}(t))$.
\end{proposition}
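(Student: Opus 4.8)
The plan is to verify by a direct computation that along the flow \eqref{E-frameE} each of the functions $t\mapsto g_t(E_a,E_b)$, $t\mapsto g_t(E_a,{\cal E}_i)$ and $t\mapsto g_t({\cal E}_i,{\cal E}_j)$ is constant; since the frame is $g_0$-orthonormal and adapted to $(\widetilde{\mD},{\mD})$ at $t=0$, this is all that is needed. The tools are the product rule $\frac{d}{dt}\,g_t(V,W)=B_t(V,W)+g_t(\dt V,W)+g_t(V,\dt W)$, the defining relation $g_t(B_t^\sharp X,Y)=B_t(X,Y)$ of the musical isomorphism (which, like the projections $^\top,{}^\perp$ occurring in \eqref{E-frameE}, is understood throughout with respect to the \emph{moving} metric $g_t$ and its splitting $TM=\widetilde{\mD}\oplus{\mD}(t)$), and the two features of a $g^\pitchfork$-variation: $B_t$ vanishes on $\widetilde{\mD}\times\widetilde{\mD}$, and $g_t|_{\widetilde{\mD}}=g_0|_{\widetilde{\mD}}$. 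I would first remark that the latter keeps $\widetilde{\mD}$ non-degenerate for every $g_t$, so ${\mD}(t)$ and the projections $^\top,{}^\perp$ depend smoothly on $t$ and \eqref{E-frameE} is a linear ODE in each ${\cal E}_i$ with coefficients smooth on $|t|<\eps$; hence the evolving frame exists and is unique on the whole interval.

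The $\widetilde{\mD}\times\widetilde{\mD}$ part is immediate: $\dt E_a=0$ gives $E_a(t)\equiv E_a(0)\in\widetilde{\mD}$, and $g_t(E_a,E_b)=g_0(E_a,E_b)=\eps_a\delta_{ab}$ because a $g^\pitchfork$-variation fixes $g|_{\widetilde{\mD}\times\widetilde{\mD}}$. For the mixed products I would use \eqref{E-frameE} together with the facts that $E_a\in\widetilde{\mD}$ is $g_t$-orthogonal to $(B_t^\sharp{\cal E}_i)^\perp\in{\mD}(t)$ and that $g_t(E_a,(B_t^\sharp{\cal E}_i)^\top)=g_t(E_a,B_t^\sharp{\cal E}_i)=B_t(E_a,{\cal E}_i)$, obtaining
\[
 g_t(E_a,\dt{\cal E}_i)
 =-\tfrac12\,g_t\big(E_a,(B_t^\sharp{\cal E}_i)^\perp\big)-g_t\big(E_a,(B_t^\sharp{\cal E}_i)^\top\big)
 =-B_t(E_a,{\cal E}_i);
\]
hence $\frac{d}{dt}\,g_t(E_a,{\cal E}_i)=B_t(E_a,{\cal E}_i)+g_t(E_a,\dt{\cal E}_i)=0$, so $g_t(E_a,{\cal E}_i)\equiv0$. (This is where the asymmetric coefficients $-\tfrac12$ and $-1$ in \eqref{E-frameE} are needed: the full $-1$ on the $^\top$-part is exactly what cancels the $B_t(E_a,{\cal E}_i)$ coming from $\dt g_t$.) In particular ${\cal E}_i(t)$ is $g_t$-orthogonal to $\widetilde{\mD}$, i.e. ${\cal E}_i(t)\in{\mD}(t)$.

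Finally I would compute $\frac{d}{dt}\,g_t({\cal E}_i,{\cal E}_j)$, and here the previous step is essential: once ${\cal E}_i(t),{\cal E}_j(t)\in{\mD}(t)$, the term $g_t\big((B_t^\sharp{\cal E}_i)^\top,{\cal E}_j\big)$ vanishes and $g_t\big((B_t^\sharp{\cal E}_i)^\perp,{\cal E}_j\big)=g_t(B_t^\sharp{\cal E}_i,{\cal E}_j)=B_t({\cal E}_i,{\cal E}_j)$, so $g_t(\dt{\cal E}_i,{\cal E}_j)=-\tfrac12 B_t({\cal E}_i,{\cal E}_j)$ and, symmetrically, $g_t({\cal E}_i,\dt{\cal E}_j)=-\tfrac12 B_t({\cal E}_i,{\cal E}_j)$; the product rule then gives $\frac{d}{dt}\,g_t({\cal E}_i,{\cal E}_j)=B_t({\cal E}_i,{\cal E}_j)-\tfrac12 B_t({\cal E}_i,{\cal E}_j)-\tfrac12 B_t({\cal E}_i,{\cal E}_j)=0$, so $g_t({\cal E}_i,{\cal E}_j)\equiv\eps_i\delta_{ij}$. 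Collecting the three steps, $\{E_a(t),{\cal E}_i(t)\}$ is a $g_t$-orthonormal family of $n+p=\dim M$ vectors with $E_a(t)\in\widetilde{\mD}$ and ${\cal E}_i(t)\in{\mD}(t)$, hence a $g_t$-orthonormal frame adapted to $(\widetilde{\mD},{\mD}(t))$.

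I do not expect a real obstacle here: the calculations are short and mechanical. The only points that need attention are the order in which the three inner products are treated — the ${\cal E}_i,{\cal E}_j$ step genuinely uses ${\cal E}_i(t),{\cal E}_j(t)\in{\mD}(t)$, which is the conclusion of the mixed-product step — and the bookkeeping that $\sharp$ and $^\top,{}^\perp$ are taken with respect to the varying $g_t$ and its orthogonal splitting, not to $g_0$; the initial remark about well-posedness of the linear ODE \eqref{E-frameE} on $|t|<\eps$ is what lets us speak of the evolving frame for all $t$.
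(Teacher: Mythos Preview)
Your argument is correct and complete. The paper itself does not prove this proposition---it is quoted from \cite{rz-2} without proof---so there is no ``paper's own proof'' to compare against; your direct verification via the product rule, treating the three blocks $g_t(E_a,E_b)$, $g_t(E_a,{\cal E}_i)$, $g_t({\cal E}_i,{\cal E}_j)$ in that order, is exactly the natural approach and matches what one finds in the cited reference. Your care about the order of the steps (the ${\cal E}_i,{\cal E}_j$ computation relying on ${\cal E}_i(t)\in{\mD}(t)$ from the mixed step) and about $\sharp,{}^\top,{}^\perp$ being taken with respect to the moving $g_t$ is well placed and consistent with the paper's conventions.
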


For any $g^\pitchfork$-variation of metric
the evolution of ${\mD}(t)$ gives rise to the evolution of both $\widetilde{\mD}$- and ${\mD}(t)$-components of any $X\in\mathfrak{X}_M$:
\begin{equation*}
 \dt (X^{\top}) = (\dt X)^{\top} + (B^{\sharp} (X^{\perp}))^{\top},\quad
 \dt (X^{\perp}) = (\dt X)^{\perp} -(B^{\sharp} (X^{\perp}))^{\top}.
\end{equation*}

The Divergence Theorem (with $X\in\mathfrak{X}_M$) states that
\begin{equation}\label{E-DivThm}
 \int_{M} (\Div X)\,{\rm d}\vol_g =0,
\end{equation}
when $M$ is closed (compact and without boundary); this is also true if $M$ is open and $X$ is supported in a relatively compact domain
$\Omega\subset M$.
 For any variation $g_t$ of metric $g$ on $M$ with $B=\dt g$ we have
\begin{equation}\label{E-dotvolg}
 \partial_t\,\big({\rm d}\vol_{g}\!\big) = \frac12\,(\tr_{g} B)\,{\rm d}\vol_{g},
\end{equation}
e.g., \cite{topp}.
By Lemma~\ref{L-divX} and \eqref{E-DivThm}--\eqref{E-dotvolg},
\begin{equation}\label{E-DivThm-2}
 \frac{d}{dt}\int_M (\Div X)\,{\rm d}\vol_g  =\int_M \Div\big(\dt X+\frac12\,(\tr_g B) X\big)\,{\rm d}\vol_g = 0
\end{equation}
for any variation $g_t$ of metric with ${\rm supp}\,(\dt g)\subset\Omega$,
and $t$-dependent $X\in\mathfrak{X}_M$ with ${\rm supp}\,(\dt X)\subset\Omega$.

Let ${\rm V}$ be the linear subspace of $TM\times TM$ spanned by
$({\cal D}\times\widetilde{\cal D})\cup(\widetilde{\cal D}\times{\cal D})$.
Thus, the product $TM\times TM$ is the sum of three subbundles, $\widetilde{\mD}\times\widetilde\mD$, ${\mD}\times\mD$ and ${\rm V}$.
Using this decomposition, we define the tensor in \eqref{E-gravity}.

\begin{definition}[see \cite{r2018}]\label{D-Ric-D}\rm
The symmetric $(0,2)$-tensor $\Ric_{\,\mD}$ in \eqref{E-gravity}, defined by its restrictions on three complementary subbundles of $TM\times TM$,
is referred to as the \textit{mixed Ricci curvature}:
\begin{equation}\label{E-main-0ij}
 \left\{\begin{array}{c}
 \Ric_{\,\mD\,|\,\mD\times\mD} = {r}_{{\mD}}
 -\<\tilde h,\,\tilde H\> +\widetilde{\cal A}^{\,\flat} -\widetilde{\cal T}^{\,\flat} +\Psi
 -{\rm Def}_{\cal D}\,H +\widetilde{\cal K}^{\,\flat} \\
 \hskip10mm +\,H^\flat\otimes H^\flat -\frac{1}{2}\,\Upsilon_{\,h,h} -\frac12\,\Upsilon_{\,T,T}
 -\frac{n-1}{p+n-2}\,\Div(\tilde H-{H})\,g^\perp, \\
 \Ric_{\,\mD\,|\,V}
 =  -4\<\theta,\, {\tilde H}\> -2(\Div(\alpha - \tilde\theta))_{\,|{\rm V}} -2\<{\tilde\theta} - {\tilde\alpha}, H\> \\
 \hskip10mm -\,2\,{\rm Sym}(H^{\flat}\otimes{\tilde H}^{\flat}) +2\,\tilde\delta_{H} - 4\,\Upsilon_{{\tilde\alpha}, \theta} - 2\,\Upsilon_{\alpha, {\tilde\alpha}} - 2\,\Upsilon_{\theta, {\tilde\theta}},\\
  \Ric_{\,\mD|\,\widetilde\mD\times\widetilde\mD} = {r}_{\widetilde{\mD}}-\<h,\,H\>+{\cal A}^\flat-{\cal T}^\flat
  +\widetilde\Psi -{\rm Def}_{\widetilde{\cal D}}\,\tilde H +{\cal K}^\flat \\
  \hskip10mm +\,\tilde H^\flat\otimes \tilde H^\flat -\frac{1}{2}\,\Upsilon_{\,\tilde h,\tilde h} -\frac12\,\Upsilon_{\,\tilde T,\tilde T} +\frac{p-1}{p+n-2}\,\Div(\tilde H-{H})\,g^\top.
 \end{array} \right.
\end{equation}
Here \eqref{E-main-0ij}$_3$ is dual to \eqref{E-main-0ij}$_1$
with respect to interchanging distributions $\widetilde{\cal D}$ and $\cal D$,
and their last terms vanish if $n=p=1$. 
Also,
 $\,{\rm S}_{\mD} := \tr_g\Ric_{\,\mD} = {\rm S}_{\rm mix} + \frac{p-n}{n+p-2}\,\Div(H-\tilde{H})$.
\end{definition}

The following theorem, which allows us to restore the partial Ricci curvature \eqref{E-main-0ij},
is based on calculating the variations with respect to $g$ of components in \eqref{eq-ran-ex}
and using \eqref{E-DivThm-2} for divergence~terms.
According to this theorem and Definition~\ref{D-Ric-D} we conclude that
a metric $g\in{\rm Riem}(M,\widetilde{\mD})$ is critical for the action~\eqref{actiongSmix} with fixed $\I =0$ (i.e., considered as a functional of $g$ only), with respect to volume-preserving variations of metric if and only if \eqref{E-gravity} holds.

\begin{theorem}[see \cite{rz-2}]
\label{T-main00}
A metric $g\in{\rm Riem}(M,\widetilde{\mD})$ is critical for the action~\eqref{actiongSmix} with fixed $\I =0$,
with respect to volume-preserving ${g}^\pitchfork$-variations if and only if
\begin{subequations}
\begin{eqnarray}\label{E-main-0i}
\nonumber
 &&\hskip-12mm {r}_{\mD} -\<\tilde h,\,\tilde H\> +\widetilde{\cal A}^\flat -\widetilde{\cal T}^\flat
 +\Psi -{\rm Def}_{\mD}\,H + \widetilde{\cal K}^\flat
 + H^\flat\otimes H^\flat -\frac{1}{2}\,\Upsilon_{h,h} -\frac12\,\Upsilon_{T,T} \\
 && -\frac12\,\big({\rm S}_{\rm mix} +\Div(\tilde H -H)\big)\,g^\perp = \lambda\,g^\perp, \\
 &&\hskip-15mm -4 \<\theta,\,{\tilde H}\> - 2(\Div(\alpha -\tilde \theta))_{\,|{\rm V}} -2\<{\tilde\theta} -{\tilde\alpha}, H\>
 - 2\,H^{\flat}\odot{\tilde H}^{\flat}
 +2\,{\tilde\delta}_{H} - 4\Upsilon_{{\tilde\alpha},\theta} -2\,\Upsilon_{\alpha,{\tilde\alpha}} -2\,\Upsilon_{\theta,{\tilde\theta}} = 0
\end{eqnarray}
\end{subequations}
for some $\lambda\in\RR$.
The Euler-Lagrange equation for volume-preserving ${g}^\top$-variations is dual to \eqref{E-main-0i}.
\end{theorem}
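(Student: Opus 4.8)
The plan is to differentiate $\bar J_{\rm mix}(g_t)=\int_M {\rm S}_{\rm mix}\,{\rm d}\vol_{g_t}$ along a $g^\pitchfork$-variation $g_t$ with $B=\dot g_t$ supported in a relatively compact $\Omega$ and vanishing on $\widetilde{\mD}\times\widetilde{\mD}$, using the pointwise expression \eqref{eq-ran-ex} for ${\rm S}_{\rm mix}$ and \eqref{E-dotvolg} for the volume form. First I would fix a local frame $\{E_a,{\cal E}_i\}$ evolving by \eqref{E-frameE}, so that it stays $g_t$-orthonormal and adapted to $(\widetilde{\mD},{\mD}(t))$ (Proposition~\ref{prop-Ei-a}); then the problem reduces to computing $\partial_t$ at $t=0$ of each summand of \eqref{eq-ran-ex} --- $\<H,H\>$, $\<{\tilde H},{\tilde H}\>$, $\<h,h\>$, $\<{\tilde h},{\tilde h}\>$, $\<T,T\>$, $\<{\tilde T},{\tilde T}\>$ and $\Div(H+{\tilde H})$ --- together with the term $\frac12(\tr_g B)\,{\rm S}_{\rm mix}$ coming from \eqref{E-dotvolg}. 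The divergence term is disposed of at once: by \eqref{E-DivThm-2}, $\frac{d}{dt}\int_M\Div(H+{\tilde H})\,{\rm d}\vol_g=0$ for any variation, so it drops out of the integrated derivative.

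For the quadratic terms one needs the variation formulas (recalled from \cite{rz-1,rz-2}, and also collected in Section~\ref{sec:aux}) for $h,H,T$ and their tilded analogues under a $g^\pitchfork$-variation. The point requiring care is that for such variations the complement ${\mD}(t)$ itself moves, so every $^\perp$-projection picks up an extra $B^\sharp$-term; this is exactly where the auxiliary tensors $\alpha,\theta,{\tilde\delta}_Z$ and the Casorati-/${\cal K}$-type operators appear. Inserting these, expanding, and pushing the emerging divergences ($\Div h$, $\Div\alpha$, $\Div\tilde\theta$, etc.) onto $B$ via the Divergence Theorem \eqref{E-DivThm}, one arrives at an identity $\frac{d}{dt}\big|_{0}\bar J_{\rm mix}(g_t)=\int_M\<E,B\>\,{\rm d}\vol_g$ with a symmetric $(0,2)$-tensor $E$; unwinding it shows that $E_{\,|\,{\mD}\times{\mD}}$ is the tensor on the left of \eqref{E-main-0i}a before subtracting $\lambda\,g^\perp$, and $E_{\,|\,V}$ is the tensor on the left of \eqref{E-main-0i}b.

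Since $g_t$ is a $g^\pitchfork$-variation, $B$ ranges over all symmetric $(0,2)$-tensors vanishing on $\widetilde{\mD}\times\widetilde{\mD}$, i.e. over the ${\mD}\times{\mD}$- and $V$-components, while the volume-preserving constraint only reads $\int_M\tr_g B\,{\rm d}\vol_g=0$; here $\tr_g B=\<g^\perp,B\>$, which involves only the ${\mD}\times{\mD}$-part. By the usual Lagrange-multiplier argument, $g$ is critical among volume-preserving $g^\pitchfork$-variations if and only if there is $\lambda\in\RR$ with $E_{\,|\,V}=0$ and $E_{\,|\,{\mD}\times{\mD}}=\lambda\,g^\perp$, which is precisely \eqref{E-main-0i}. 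For duality, note that \eqref{eq-ran-ex} is symmetric under interchanging $\widetilde{\mD}\leftrightarrow{\mD}$ (that is $h\leftrightarrow\tilde h$, $H\leftrightarrow\tilde H$, $T\leftrightarrow\tilde T$), so the same computation with $g^\top$ in place of $g^\pitchfork$ yields the dual equation.

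The main obstacle is the middle step --- honestly computing $\partial_t\<h,h\>$, $\partial_t\<T,T\>$ and their tilded duals with a moving complement ${\mD}(t)$, and then the bookkeeping needed to recognize the assembled $E$ as the exact combination of $r_{\mD}$, $\<\tilde h,\tilde H\>$, $\widetilde{\cal A}^\flat$, $\widetilde{\cal T}^\flat$, $\Psi$, ${\rm Def}_{\mD}H$, $\widetilde{\cal K}^\flat$, $H^\flat\!\otimes\!H^\flat$, $\Upsilon_{h,h}$, $\Upsilon_{T,T}$ on ${\mD}\times{\mD}$ and of the $\theta,\alpha,{\tilde\delta}_H,\Upsilon_{{\tilde\alpha},\theta},\Upsilon_{\alpha,{\tilde\alpha}},\Upsilon_{\theta,{\tilde\theta}}$-terms on $V$ appearing in \eqref{E-main-0i}; these lengthy computations are deferred to Section~\ref{sec:aux}.
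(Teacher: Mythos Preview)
Your sketch is correct and matches the approach the paper indicates: the theorem is quoted from \cite{rz-2}, and the paper's only description of the argument is that it ``is based on calculating the variations with respect to $g$ of components in \eqref{eq-ran-ex} and using \eqref{E-DivThm-2} for divergence terms'' --- exactly the strategy you outline (vary each summand of the Walczak formula in a moving adapted frame, integrate by parts, then apply the Lagrange-multiplier argument for the volume constraint and note the $\widetilde{\mD}\leftrightarrow{\mD}$ duality).
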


\begin{example}\label{Ex-2-1}\rm
For a \textit{space-time} $(M^{p+1},g)$ endowed with ${\widetilde\mD}$ spanned by a timelike unit vector field $N$, the tensor $\Ric_{\mD}$, see \eqref{E-main-0ij} with $n=1$, and its trace have the following particular form:
\begin{eqnarray}\label{E-RicD-flow}
 && \left\{\begin{array}{c}
 \Ric_{\,\mD\,|\,\mD\times\mD}
  = \eps_{N}(R_N +(\widetilde A_N)^2 -(\widetilde T^{\sharp}_N)^2 +[\,\widetilde T^{\sharp}_N,\,\widetilde A_N])^\flat
  +H^\flat\otimes H^\flat -\tilde\tau_1\,\widetilde h_{sc} -{\rm Def}_{\mD}\,H,\\
 {\Ric_{\,\mD}(\cdot\,,N)}_{\,|\,\mD} = {\Div}^{\perp}\widetilde T^{\sharp}_{N}|_{\,\cal D} +2\,(\widetilde T^{\sharp}_{N}({H}))^{\flat}, \\
 \Ric_{\,\mD}(N,N) = \eps_{N}\Ric_{N,N} -2\,\|\widetilde{T}\|^2 -\Div H,
\end{array} \right. \\
\label{E-RicD-flow-S}
 &&\quad\ {\rm S}_{\,\mD} = \eps_N\Ric_{N,N}+\Div(\eps_N\,\tilde\tau_1 N - {H}).
\end{eqnarray}
Here $\tilde\tau_i=\tr((\widetilde A_N)^i)$, $\widetilde A_N$ is the shape operator,
$\widetilde T$ is the integrability tensor
and $\widetilde h_{sc}$ is the scalar second fundamental form of $\mD$.
Note that the right-hand side of \eqref{E-RicD-flow}$_2$ 
vanishes when $\mD$ is integrable.
\end{example}

\textbf{2.3. Variations with respect to $\I$}.
The next theorem is based on calculating the variations with respect to $\I$ of components ${\rm S}_{\,\I}$ and ${\bar Q}/2$ in \eqref{barSmix} and using \eqref{E-DivThm-2} for divergence~terms.
Here $\{ e_\lambda \}$ are vectors of an adapted frame, without distinguishing distribution to which they belong.

\begin{theorem}
The Euler-Lagrange equation for \eqref{Eq-Smix} with fixed $g$, considered as a functional of an arbitrary $(1,2)$-tensor $\I$,
for all variations of \,$\I$, is the following algebraic system with spin tensor
$s_{\mu\nu}^c=2\,{\partial{\cal L}}/{\partial {\mathfrak T}_{\mu\nu}^c}$
$($hence $s_{\alpha\beta}^\gamma=\<s(e_\alpha,e_\beta),e_\gamma\>\,)$:
\begin{subequations}
\begin{eqnarray}\label{ELconnection1}
 \<\tr^\bot\I^*-\Hb, Z\>\,\<X,Y\> + \<\tr^\bot\I+\Hb, Y\>\,\<X,Z\> =-(\mathfrak{a}/2)\,\<s(X,Y),Z\>,\\
 \label{ELconnection2}
 \<\tr^\top\I^*-\Ht, W\>\,\<U,V\> + \<\tr^\top\I+\Ht, V\>\,\<U,W\>=-(\mathfrak{a}/2)\,\<s(U,V),W\>,\\
 \label{ELconnection3}
 \<\tr^\bot\I^*+\Ht,\, U\>\,\<X,Y\> -\<({A}_U - {T}^\sharp_U + 
  \I_U) X,\, Y\> = -(\mathfrak{a}/2)\,\<s(X,Y),U\>,\\
 \label{ELconnection4}
 \<\tr^\top\I^*+\Hb,\, X\>\,\<U,V\> -\<( \tA_X - \tT_X + 
  \I_X) U,\, V\> = -(\mathfrak{a}/2)\,\<s(U,V),X\>,\\
 \label{ELconnection5}
 \<\tr^\bot\I-\Ht,\, U\>\,\<X,Y\> + \<({A}_U + {T}^\sharp_U-\I_U) Y,\, X\> = -(\mathfrak{a}/2)\,\<s(X,U),Y\>,\\
 \label{ELconnection6}
 \<\tr^\top\I-\Hb,\, X\>\,\<U,V\> +\<({{\tA}}_X + {{\tT}}_X-\I_X) V,\, U\> =-(\mathfrak{a}/2)\,\<s(U,X),V\>,\\
 \label{ELconnection7}
 2\,\<{{\tT}}_X\,U,\, V\> + \<\I_U\, V + \I_V^*\, U,\, X\> = (\mathfrak{a}/2)\,\<s(X,U),V\>,\\
 \label{ELconnection8}
  2\,\<{T}^\sharp_U X,\, Y\> + \<\I_X Y + \I_Y^* X,\, U\> = (\mathfrak{a}/2)\,\<s(U,X),Y\>,
\end{eqnarray}
\end{subequations}
for all $X,Y,Z\in\widetilde\mD$ and $U,V,W\in\mD$,
see {\rm \cite[Eqs.~(15a-h)]{RZconnection}}, where variations of Lagrangian ${\cal L}$, i.e., spin tensor in {\rm (\ref{ELconnection1}-h)},
are omitted.
Here, {\rm (\ref{ELconnection2},d,f,h)} are dual to {\rm (\ref{ELconnection1},c,e,g)}.
\end{theorem}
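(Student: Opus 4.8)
The plan is to compute the first variation of the action $\bar J_{\mD}$ at a pair $(g,\I)$ along a variation $\I_t$ of the contorsion tensor, supported in $\Omega$, with $\partial_t\I_{|t=0}=\delta\I$, holding the metric $g$ fixed; then the volume form ${\rm d}\vol_g$, the Levi-Civita connection $\nabla$ and any chosen local adapted orthonormal frame $\{E_a,{\cal E}_i\}$ are all $t$-independent. In the decomposition \eqref{barSmix}, $\bar{\rm S}_{\,\rm mix}={\rm S}_{\,\rm mix}+{\rm S}_{\,\I}+\bar Q/2$, the summand ${\rm S}_{\,\rm mix}$ does not involve $\I$, so only ${\rm S}_{\,\I}$ and $\bar Q$ need be varied, while the matter Lagrangian contributes the spin tensor through $s=2\,\partial{\cal L}/\partial\I$. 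Hence $\frac{d}{dt}\big|_{t=0}\bar J_{\mD}=\int_M\big\{\frac{1}{2\mathfrak a}\,\partial_t({\rm S}_{\,\I}+\bar Q/2)+\frac12\,\<s,\delta\I\>\big\}\,{\rm d}\vol_g$, and it remains to bring the first two pieces into the pointwise form $\<\Phi(\I),\delta\I\>$ modulo an exact divergence.

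First I would vary ${\rm S}_{\,\I}$. By \eqref{E-SK} it is a fixed quadratic expression in $\I$ assembled from the commutators $[\I_i,\I_a]$, so $\partial_t{\rm S}_{\,\I}$ is linear in $\delta\I$; transferring one factor of $\I$ across each inner product (using the definitions of $\I^*,\I^\wedge$ and $\<A,B\>=\<A^\sharp,B^\sharp\>$) rewrites it pointwise as $\<\Phi_1(\I),\delta\I\>$ with a $(1,2)$-tensor $\Phi_1(\I)$ that is linear in $\I$ and algebraically built from the frame. This produces the algebraic $\I$- and $\I^*$-terms ($\I_U,\I_X,\I^*_V,\tr^\top\I,\tr^\bot\I,\tr^\top\I^*,\tr^\bot\I^*$) on the left-hand sides of (\ref{ELconnection1}-h), and this step is routine.

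Next I would vary $\bar Q$, which is the essential part. Each of its four summands is linear in $\nabla\I$, so $\partial_t\bar Q$ is a sum of terms $\<(\nabla_{e_\lambda}\delta\I)(e_\mu,e_\nu),e_\rho\>$ over the fixed adapted frame. Integrating each by parts with the Divergence Theorem \eqref{E-DivThm}, the $t$-derivative of the action absorbs a $\Div$ of a $\delta\I$-dependent vector field, which drops out, and $\nabla_{e_\lambda}$ is transferred onto the pairing and onto the frame arguments. Since the adapted frame is not parallel, the covariant derivatives $\nabla_{e_\lambda}e_\mu$ decompose into the second fundamental forms $h,\tilde h$, the integrability tensors $T,\tilde T$ and connection coefficients inside each distribution; after simplifying the vector-field divergences with \eqref{E-divN}, these reorganize precisely into the Weingarten-type operators $A_U,\T_U,\tA_X,\tT_X$ and the mean curvature vectors $\tilde H,H$ occurring in (\ref{ELconnection1}-h). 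The outcome is a second pointwise contraction $\<\Phi_2(\I),\delta\I\>$, again modulo an exact divergence.

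Finally I would collect $\Phi_1+\Phi_2$ together with the spin term and conclude that, $g$ being fixed, $(g,\I)$ is critical for \eqref{Eq-Smix} if and only if the single $(1,2)$-tensor identity $\Phi_1(\I)+\Phi_2(\I)=-(\mathfrak a/2)\,s$ holds, the value of $\delta\I$ at each point being arbitrary. Decomposing this identity according to which of $\widetilde\mD,{\mD}$ contains each of the three slots of the tensor yields $2^3=8$ scalar equations, which after collecting like terms are exactly (\ref{ELconnection1}-h). The manifest symmetry of $\bar{\rm S}_{\,\rm mix}$ under interchanging the roles of $\widetilde\mD$ and ${\mD}$ (swapping $H\leftrightarrow\tilde H$, $A\leftrightarrow\tA$, $\T\leftrightarrow\tT$, and the two slots of $\I$) identifies (\ref{ELconnection2},d,f,h) with the duals of (\ref{ELconnection1},c,e,g), so only four equations need be verified directly; the $s=0$ case of this bookkeeping is carried out in \cite{RZconnection}. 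The main obstacle is the integration-by-parts step for $\bar Q$: one must expand $\nabla_{e_\lambda}$ on the non-parallel adapted frame, recognize and discard the exact-divergence pieces, and keep careful track of the surviving projections $(\cdot)^\top,(\cdot)^\bot$, so that the resulting mass of terms condenses into the compact operators above rather than an unstructured sum.
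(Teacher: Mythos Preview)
Your approach is correct and arrives at the same place, but the paper organizes the computation slightly differently. Rather than varying ${\rm S}_{\,\I}$ and $\bar Q$ separately and integrating by parts on $\bar Q$ during the variation, the paper invokes Proposition~\ref{L-QQ-first}, which already rewrites $2(\bar{\rm S}_{\,\rm mix}-{\rm S}_{\,\rm mix})$ as $\Div\big((\tr^\top(\I-\I^*))^\bot+(\tr^\bot(\I-\I^*))^\top\big)-Q$, with $Q$ in \eqref{E-defQ} purely algebraic in $\I$. The integration by parts you describe is thus packaged once and for all into that proposition; at the variation stage the divergence term drops by \eqref{E-DivThm-2}, and one only has to differentiate the algebraic expression $Q$ linearly in $S=\delta\I$, then read off the eight coefficient blocks. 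Your route---vary $\bar Q$ directly and move $\nabla$ off $\delta\I$ via the Divergence Theorem, picking up the frame derivatives that assemble into $A_U,\T_U,\tA_X,\tT_X,H,\tilde H$---is exactly the computation that proves Proposition~\ref{L-QQ-first} in the first place, so the two arguments differ only in whether that step is isolated as a lemma or folded into the proof. The paper's packaging has the mild advantage that the same algebraic $Q$ is reused later (e.g.\ in Proposition~\ref{propQ} and Lemma~\ref{dtQadapted}), while your direct route avoids citing an auxiliary result.
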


\begin{proof}
Set $S=\dt\I^t_{\,|\,t=0}$ for a one-parameter family $\I^t\ (|t|<\varepsilon)$ of $(1,2)$-tensors.
Using Proposition~\ref{L-QQ-first} and removing integrals of divergences of compactly supported (in a domain $\Omega$) vector fields,
we~get
\begin{eqnarray*}
 &&\quad {\frac{\rm d}{\rm dt}\int_M \bar{\rm S}_{\,\rm mix}(\I^t)\,{\rm d} \vol_g}\,|_{\,t=0} \\
 && =\frac12\int_M \sum\Big\{
 \<S_a E_b,E_c\>\big(\<\tr^\bot{\mathfrak T}^*-\Hb, E_c\>\,\<E_a, E_b\> +\<\tr^\bot{\mathfrak T}+\Hb, E_b\>\,\<E_a, E_c\>\big) \\
 && +\,\<S_a E_b, {\cal E}_i\>\big( \<\tr^\bot{\mathfrak T}^*+\Ht, {\cal E}_i\>\,\<E_a, E_b\>
 -\<({A}_i - {T}^\sharp_i)E_a, E_b\> - \<\I_i E_a, E_b\> \big) \\
 && +\,\<S_a {\cal E}_i, E_b\>\big( \<\tr^\bot{\mathfrak T}-\Ht, {\cal E}_i\>\,\<E_a, E_b\>
 + \<({A}_i + {T}^\sharp_i)E_b, E_a\> - \<\I_i E_b, E_a\>\big) \\
 && +\,\<S_a {\cal E}_i, {\cal E}_j\> \big( \<(\tilde{A}_a - \tilde{T}^\sharp_a) {\cal E}_i, {\cal E}_j\>
 -\<(\tilde{A}_a + \tilde{T}^\sharp_a) {\cal E}_i, {\cal E}_j\> - \<\I_i{\cal E}_j + \I^*_j {\cal E}_i, E_a\> \big) \\
 && +\,\<S_i {\cal E}_j, {\cal E}_k\>\big( \<\tr^\top{\mathfrak T}^*-\Ht, {\cal E}_k\>\,\<{\cal E}_i, {\cal E}_j\>
 +\<\tr^\top{\mathfrak T} +\Ht, {\cal E}_j\>\,\<{\cal E}_i, {\cal E}_k\> \big) \\
 && +\,\<S_i {\cal E}_j, E_a\>\big(\<\tr^\top{\mathfrak T}^*+\Hb, E_a\>\,\<{\cal E}_i, {\cal E}_j\>
 -\<(\tilde{A}_a +\tilde{T}^\sharp_a){\cal E}_j, {\cal E}_i\> -\<\I_a{\cal E}_i, {\cal E}_j\> \big)  \\
 && +\,\<S_i E_a, {\cal E}_j\>\big(\<\tr^\top{\mathfrak T}-\Hb, E_a\>\,\<{\cal E}_i, {\cal E}_j\>
 +\<(\tilde{A}_a + \tilde{T}^\sharp_a){\cal E}_j, {\cal E}_i\> - \<\I_a {\cal E}_j, {\cal E}_i\> \big)  \\ \nonumber
 && +\,\<S_i E_a, E_b\>(\<({A}_i - {T}^\sharp_i) E_a, E_b\>
 {-}\<({A}_i + {T}^\sharp_i) E_a, E_b\>  {-} \<\I_a E_b + \I^*_b E_a, {\cal E}_i\> ) \Big\}\,{\rm d}\vol_g.
\end{eqnarray*}
Since no further assumptions are made about $S$ or $\I$, all the components $\<S_\mu e_{\lambda}, e_{\rho}\>$ are independent and the above formula
gives rise to (\ref{ELconnection1}-h), where $X,Y,Z\in\widetilde\mD$ and $U,V,W\in\mD$
are any vectors from an adapted frame. 
Observe that in every equation from (\ref{ELconnection1}-h) each term contains the same set of those vectors and is trilinear in them, so all these equations hold in fact for all vectors $X,Y,Z\in\widetilde\mD$ and $U,V,W\in\mD$.
Further below, we obtain many other formulas from computations in adapted frames, in the same way.
\end{proof}

Taking difference of symmetric 
(in $X,Y$) parts of (\ref{ELconnection3},e) with $s=0$ yields that $\widetilde{\cal D}$ is totally umbilical -- similar result for $\mD$ follows from dual equations (e.g., \cite{RZconnection}).
For vacuum space-time (${\cal L}=0$), the (\ref{ELconnection1}-h) are simplified to the following equations (\ref{ELconnectionNew1}-j).

\begin{corollary}[see Theorem~1 in \cite{RZconnection}]\label{T-main1}
Let a metric-affine manifold $(M,g,\bar\nabla=\bar\nabla-\nabla)$ be endowed with a non-degenerate distribution~$\widetilde{\mD}$.
Then $\I$ is critical for the action \eqref{actiongSmix} with fixed $g$ for all variations of $\I$ if and only if
$\,\widetilde{\mD}$ and $\mD$ are {totally umbilical} and $\I$ satisfies the following linear algebraic system
for all $X,Y\in\widetilde\mD$ and $U,V\in\mD$: 
\begin{subequations}
\begin{eqnarray}\label{ELconnectionNew1}
 && (\I_U\, V +\I^{*}_V\, U)^\top = -2\, {\widetilde{T}}(U, V), \\
\label{ELconnectionNew2}
 && (\tr^\bot\I^*)^\top = \Hb = -(\tr^\bot\I)^\top \quad {\rm for~} n>1, \\
\label{ELconnectionNew4}
 && \I^\top_U -\I^{*\top}_U = 2\, {T}^\sharp_U, \\
\label{ELconnectionNew5}
 && \I_U^\top + \I_U^{*\top} = \<\tr^\bot(\I +\I^*),\,U\>\id^\top , \\
\label{ELconnectionNew7}
 && (\tr^\bot(\I -\I^*))^\bot = (2-2/n)\,\Ht, \\
\label{ELconnectionNew8}
 && (\I_X\, Y +\I^{*}_Y\, X)^\bot = -2\,\Tb (X, Y), \\
\label{ELconnectionNew9}
 && (\tr^\top\I^*)^\bot = \Ht = -(\tr^\top\I)^\bot \quad {\rm for~} p>1, \\
\label{ELconnectionNew11}
 && \I_X^\bot -\I_X^{*\bot} = 2\, {{\tilde T}}^\sharp_X, \\
\label{ELconnection1ab}
 && {\I}_X^\bot + \I_X^{*\bot} = \<\tr^\top(\I +\I^*),\,X\>\id^\bot , \\
\label{ELconnectionNew14}
 && (\tr^\top(\I -\I^*))^\top = (2-2/p)\,\Hb .
\end{eqnarray}
\end{subequations}
\end{corollary}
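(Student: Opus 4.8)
The plan is to deduce this from the preceding theorem. Since the action \eqref{actiongSmix} is the case $\Lambda={\cal L}=0$ of \eqref{Eq-Smix}, the Euler--Lagrange equations for \eqref{actiongSmix} with fixed $g$ under arbitrary variations of $\I$ are exactly the system (\ref{ELconnection1}-h) with vanishing spin tensor, $s=0$. So I must show that this algebraic system is equivalent to the conjunction of: $\widetilde\mD$ and $\mD$ totally umbilical, together with (\ref{ELconnectionNew1}-j). Each equation in (\ref{ELconnection1}-h) is trilinear in adapted-frame vectors, two of which lie in a common subbundle; the main tool throughout is to split every such equation into its parts symmetric and antisymmetric in that pair and, where useful, to contract over it. Moreover, (\ref{ELconnection2},d,f,h) are dual to (\ref{ELconnection1},c,e,g), and the target relations \eqref{ELconnectionNew8}--\eqref{ELconnectionNew14} are dual to \eqref{ELconnectionNew1}--\eqref{ELconnectionNew7} under interchanging $\widetilde\mD\leftrightarrow\mD$ (and $n\leftrightarrow p$), so it suffices to treat (\ref{ELconnection1},c,e,g) and transcribe the rest by duality.

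For the forward implication I would first recover total umbilicity: in \eqref{ELconnection3} and \eqref{ELconnection5} with $s=0$ (here $X,Y\in\widetilde\mD$, $U\in\mD$), use self-adjointness of the Weingarten operators $A_U$ and anti-self-adjointness of the operators $\T_U$, take the parts symmetric in $X,Y$, and subtract: the $\T_U$-terms vanish and all $\I_U$-terms cancel, leaving $2\,\<A_U X,Y\>=c(U)\,\<X,Y\>$ with $c(U)$ linear in $U$, whence $A_U=\frac12 c(U)\,\id^\top$ for every $U$, i.e.\ $h=\frac1n H\,g^\top$; dually, from \eqref{ELconnection4} and \eqref{ELconnection6} one gets $\tilde h=\frac1p\tH\,g^\bot$. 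Substituting $A_U=\frac1n\<H,U\>\id^\top$ and $\tA_X=\frac1p\<\tH,X\>\id^\bot$ back in, the sum of \eqref{ELconnection3} and \eqref{ELconnection5} is \eqref{ELconnectionNew5}, the antisymmetric part of their difference is \eqref{ELconnectionNew4}, and contracting the symmetric part of their difference over $X,Y$ gives the trace identity \eqref{ELconnectionNew7}. Next, \eqref{ELconnection7} with $X\in\widetilde\mD$, $U,V\in\mD$ yields \eqref{ELconnectionNew1} at once, since $\<\tT_X U,V\>=\<\tilde T(U,V),X\>$, so the equation reads $(\I_U V+\I^*_V U)^\top=-2\,\tilde T(U,V)$. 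Finally, \eqref{ELconnection1} with $X,Y,Z\in\widetilde\mD$: when $n>1$ one separates the coefficients of $\<X,Y\>$ and $\<X,Z\>$ (take frame vectors with $X=Y\neq Z$, then, the first coefficient being now known to vanish, $X=Z$) to obtain \eqref{ELconnectionNew2}; when $n=1$ the equation only yields $(\tr^\bot(\I+\I^*))^\top=0$, which is already implied by \eqref{ELconnectionNew1} upon setting $U=V$ and summing over a frame of $\mD$, so no new condition appears — consistent with the ``$n>1$'' restriction in \eqref{ELconnectionNew2}. Processing \eqref{ELconnection2}, \eqref{ELconnection4}, \eqref{ELconnection6}, \eqref{ELconnection8} dually produces \eqref{ELconnectionNew8}, \eqref{ELconnectionNew9}, \eqref{ELconnectionNew11}, \eqref{ELconnection1ab} and \eqref{ELconnectionNew14}.

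For the converse, assuming $\widetilde\mD,\mD$ totally umbilical and (\ref{ELconnectionNew1}-j), I would substitute $h=\frac1n H\,g^\top$, $\tilde h=\frac1p\tH\,g^\bot$ and \eqref{ELconnectionNew1}--\eqref{ELconnectionNew14} into the left-hand sides of (\ref{ELconnection1}-h) and retrace the above steps in reverse; each identity is then satisfied with $s=0$. The individual manipulations are routine linear algebra; the hard part will be the bookkeeping that makes this a genuine equivalence rather than a one-way implication — keeping separate track, dimension by dimension, of the symmetric/antisymmetric and trace/trace-free components and of the $\I$- versus $\I^*$-contributions so that nothing is lost or double-counted, and in particular treating the degenerate cases $n=1$ and $p=1$, where a line field carries only rank-one bilinear forms (so \eqref{ELconnectionNew2}, resp.\ \eqref{ELconnectionNew9}, becomes vacuous and one must check its residual content is supplied by \eqref{ELconnectionNew5}, \eqref{ELconnectionNew7}, resp.\ their duals) and where the integrability tensor $T$, resp.\ $\tilde T$, vanishes identically.
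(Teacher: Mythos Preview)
Your proposal is correct and follows essentially the same approach as the paper: the paper explicitly notes that taking the difference of the parts of (\ref{ELconnection3},e) symmetric in $X,Y$ with $s=0$ yields that $\widetilde{\mD}$ is totally umbilical (and dually for $\mD$), and defers the remaining algebraic reduction of (\ref{ELconnection1}--h) with $s=0$ to (\ref{ELconnectionNew1}--j) to \cite{RZconnection}. Your splitting into symmetric/antisymmetric parts, traces, and the duality argument reproduces exactly that reduction, including the correct handling of the degenerate cases $n=1$ and $p=1$.
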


\begin{example}\rm
For our $(M^{p+1},g,{\widetilde\mD})$, see Example~\ref{Ex-2-1}, the system (\ref{ELconnection1}-h) reduces to
\begin{eqnarray*}
 \<\tr^\bot({\mathfrak T}^*+{\mathfrak T}), N\> =-(\mathfrak{a}/2)\,\<s(N,N),N\>,\\
 \<\tr^\top{\mathfrak T}^* -\Ht,\, W\>\,\<U,V\> +\<\tr^\top{\mathfrak T} +\Ht,\, V\>\,\<U,W\> =-(\mathfrak{a}/2)\,\<s(U,V),W\>,\\
 \<\tr^\bot{\mathfrak T}^*,\, U\> - 
  \<{\mathfrak{T}}_U\,N, N\> = -(\mathfrak{a}/2)\,\<s(N,N),U\>,\\
 (\<\tr^\top{\mathfrak T}^*, N\>+\tilde\tau_1)\,\<U,V\> -\<(\tilde{A}_N -\tilde{T}^\sharp_N
 + 
  {\mathfrak{T}}_N) U,\, V\> = -(\mathfrak{a}/2)\,\<s(U,V),N\>,\\
 \<\tr^\bot{\mathfrak T},\, U\> - \<{\mathfrak{T}}_U\,N, N\> = -(\mathfrak{a}/2)\,\<s(N,U),N\>,\\
 (\<\tr^\top{\mathfrak T}, N\>-\tilde\tau_1)\,\<U,V\> +\<(\tilde{A}_N +\tilde{T}^\sharp_N-{\mathfrak{T}}_N) V,\, U\> =-(\mathfrak{a}/2)\,\<s(U,N),V\>,\\
 \<\,2\,\tilde{T}   
 (U, V) + {\mathfrak{T}}_U\, V + {\mathfrak{T}}_V^*\,U,\, N\> = (\mathfrak{a}/2)\,\<s(N,U),V\>,\\
 \<({\mathfrak{T}} + {\mathfrak{T}}^*)_N\,N,\, U\> = (\mathfrak{a}/2)\,\<s(U,N),N\>,
\end{eqnarray*}
where $U,V,W\in\mD$.
\end{example}

\section{Main results}
\label{sec:main}

In Section~\ref{sec: 2-1} we consider the total mixed scalar curvature of contorsion tensor for general and particular connections,
 e.g., metric and statistical.
In Section~\ref{sec: 2-2} we consider the total mixed scalar curvature of statistical manifolds endowed with a distribution and metric-affine doubly twisted products.
In Section~\ref{sec: 2-3} we consider the total mixed scalar curvature of Riemann-Cartan manifolds endowed with a distribution.
In~Section~\ref{sec:2-4}, we derive the Euler-Lagrange equations for semi-symmetric connections and present the mixed Ricci tensor explicitly in \eqref{E-Ric-D-semi-sym}.
Our aims are to find out which metrics admit critical points of examined functionals and which components of $\I$ in these particular cases determine whether or not its mixed scalar curvature is critical in its class of connections. 
This might help to achieve better understanding of both mixed scalar curvature invariant and the role played by some components of contorsion tensor.

\subsection{Variational problem with contorsion tensor}
\label{sec: 2-1}

By Proposition~\ref{L-QQ-first} and \eqref{E-SK}, we have the following decomposition \cite{r-affine} (note that these are terms of $-Q$ in the first line of \eqref{E-defQ}):
\[
 2\,{\rm S}_{\,\I} = \<\tr^\top\I,\,\tr^\bot\I^*\> +\<\tr^\bot\I,\,\tr^\top\I^*\> -\< \I^\wedge, \I^* \>_{| V} .
\]
We consider arbitrary variations $\I(t),\ \I(0)=\I,\ |t|<\eps$,
and variations corresponding to metric and statistical connections,
while $\Omega\subset M$ contains supports of infinitesimal variations $\dt\I(t)$.
In such cases, the Divergence Theorem states that if $X\in\mathfrak{X}_M$ is supported in $\Omega$ then \eqref{E-DivThm} holds.

\begin{theorem}\label{propELSmixI}
A pair $(g,\,\I)$ is critical for the action \eqref{actiongISmix} with respect to all variations of $\,\I$ and $g$ if and only if
$\I$ satisfies the following algebraic systems
(for all $X,Y,Z\in\widetilde\mD$ and $U,V,W\in\mD$): 
\begin{subequations}
\begin{eqnarray}\label{ELSmixIadapted}
 && \tr^\top(\I_V \I^\wedge_U) -\frac{1}{2}\,\<\I_V\,U +\I_U\,V,\ \tr^\top \I^* \> = 0,\\
\label{ELSmixImixed}
 \nonumber
 &&\<\tr^\bot \I -\tr^\top \I,\ \I^*_Y\, U\> -\<\I_Y U +\I_U\, Y,\,\tr^\top \I^*\> - \tr^\bot(\I^*_Y (\I^*)^\wedge_{\,U}) \\
 &&\quad +\tr^\top 
 \big( \I^*_Y (\I^*)^\wedge_{\,U} 
  +\I_U \I^\wedge_{\,Y} +\I_Y \I^\wedge_{\,U}\big)
 = 0 \\
\label{ELSmixIadapteddual}
 &&
 \tr^\top(\I_Y \I^\wedge_X) -\frac{1}{2}\,\<\I_Y\, X +\I_X\, Y,\ \tr^\bot \I^*  \> = 0,
\end{eqnarray}
\end{subequations}
and
\begin{subequations}
\begin{eqnarray}\label{E-34}
 &&(\I^*_X\,Y +\I_Y\,X)^\bot =0,\\
 &&  (\I_U\,V +\I^*_V\,U)^\top =0, \\
\label{E-34c}
 &&\<X,Z\>\<\tr^\bot\I, Y\> +\<X,Y\>\,\<\tr^\bot\I^*, Z\> = 0,\\
 &&\<U,V\>\<\tr^\top\I^*,\, W\> +\<U,W\>\<\tr^\top\I,\, V\>=0,\\
 &&\I_U^\top = \<\tr^\bot\I,\, U\>\id^\top, \\
 &&\I_X^\bot  = \<\tr^\top\I^*,\, X\>\id^\bot, \\
 &&(\tr^\bot(\I -\I^*))^\bot =0,\qquad
 (\tr^\top(\I -\I^*))^\top =0.
\end{eqnarray}
\end{subequations}
Moreover, if $n>1$ and $p>1$ then {\rm (\ref{E-34c},d)} read as
\begin{eqnarray} \label{criticaltrIinlargedimensions}
(\tr^\bot\I)^\top = 0 = (\tr^\bot\I^*)^\top,\quad
(\tr^\top\I^*)^\bot = 0 = (\tr^\top\I)^\bot.
\end{eqnarray}
\end{theorem}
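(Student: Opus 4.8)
The plan is to compute the first variation of $I(g,\I)=\int_M {\rm S}_{\,\I}\,{\rm d}\vol_g$ in two independent parts — the variation in $\I$ at fixed $g$, and the variation in $g$ at fixed $\I$ — and then assemble the full Euler--Lagrange system by requiring that the sum of both linear functionals vanish for all compactly supported infinitesimal variations. Since ${\rm S}_{\,\I}$ is a purely algebraic (zeroth-order in derivatives of $\I$) curvature-type invariant, no divergence terms will appear when varying $\I$; the Divergence Theorem \eqref{E-DivThm} is only needed for possible divergence-type pieces arising from the metric variation, but in fact ${\rm S}_{\,\I}$ involves no covariant derivatives at all, so the variational formulas will be entirely algebraic and \eqref{E-DivThm-2} will play essentially no role. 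This is the main structural simplification compared with the variation of $\bar{\rm S}_{\,\rm mix}$ in the earlier theorem, where the term $\bar Q$ forced integration by parts.

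First I would fix $g$ and vary $\I$, writing $S=\dt\I^t_{|t=0}$ and using the decomposition $2{\rm S}_{\,\I} = \<\tr^\top\I,\tr^\bot\I^*\> +\<\tr^\bot\I,\tr^\top\I^*\> -\<\I^\wedge,\I^*\>_{|V}$ given just before the statement. Each of the three bilinear pieces is differentiated by the Leibniz rule; one has to be careful that the operations $\I\mapsto\I^*$, $\I\mapsto\I^\wedge$ are $g$-linear and commute with $\dt$ at fixed $g$, so $\dt(\I^*)=S^*$ and $\dt(\I^\wedge)=S^\wedge$. Expanding $\<S^\mu e_\lambda,e_\rho\>$ over an adapted orthonormal frame and collecting, by the independence of the frame components (exactly as in the proof of the preceding theorem, and invoking Remark~\ref{remarkepsilons} to suppress the $\eps_\mu$), one reads off a block system split according to which of the three subbundles $\widetilde\mD\times\widetilde\mD$, $\mD\times\mD$, $V$ the arguments $(e_\lambda,e_\mu)$ of $S$ lie in. The blocks with both arguments in a single distribution produce the ``adapted'' equations \eqref{ELSmixIadapted} and its dual \eqref{ELSmixIadapteddual}, while the mixed block produces \eqref{ELSmixImixed}; one should double-check that the terms $\tr^\top(\I_V\I^\wedge_U)$ etc. are precisely what falls out of differentiating $\<\I^\wedge,\I^*\>_{|V}$ and the trace-products.

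Next I would fix $\I$ and take a variation $g_t$ of the metric with $B=\dot g$. The subtlety is that ${\rm S}_{\,\I}$ depends on $g$ only through the adapted orthonormal frame used to evaluate the sum \eqref{E-SK}, i.e.\ through the metric on $\widetilde\mD$ (hence on $\mD$), the projections $^\top,^\bot$, and the raising/lowering implicit in $\I^*$. Using the frame evolution of Proposition~\ref{prop-Ei-a} for a $g^\pitchfork$-variation and the companion decomposition $B=B^\pitchfork+\tilde B$ recalled before Proposition~\ref{prop-Ei-a}, one computes $\dt({\rm S}_{\,\I})$ in terms of $B$, collects the coefficient of each independent component of $B$ (on $\mD\times\mD$, on $V$, and on $\widetilde\mD\times\widetilde\mD$), and sets it to zero; this yields the purely algebraic conditions \eqref{E-34}--(\ref{E-34}g). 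The normalization issue is that volume-preserving variations only test the trace-free part of $B$, so a priori each such coefficient equation holds only modulo a multiple of the corresponding $g^\top$ or $g^\bot$; but here the right-hand sides already have the shape $\<\cdot,\cdot\>\<\tr\,\cdot\rangle$ and the pure-trace pieces can be absorbed, so one checks that no free Lagrange multiplier survives (this is the analogue of how $\lambda$ enters Theorem~\ref{T-main00} and must be tracked carefully). The hard part will be organizing the metric variation of ${\rm S}_{\,\I}$ cleanly: because $\I$ is a completely arbitrary $(1,2)$-tensor, differentiating $\I^*$ (which involves $g$ on both the input slot and the output slot) and the projections simultaneously generates many terms, and the bookkeeping needed to match them exactly against the stated right-hand sides \eqref{E-34}--(\ref{E-34}g) is the real work. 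Finally, the last assertion is immediate linear algebra: when $n>1$ one may choose $X,Y,Z$ in \eqref{E-34c} with $\<X,Y\>\neq0$ and $\<X,Z\rangle=0$, forcing $(\tr^\bot\I^*)^\top=0$, and symmetrically $(\tr^\bot\I)^\top=0$; the dual argument with $p>1$ applied to (\ref{E-34}d) gives $(\tr^\top\I)^\bot=(\tr^\top\I^*)^\bot=0$, which is \eqref{criticaltrIinlargedimensions}. One should also remark, as the preceding theorems do, that (\ref{ELSmixIadapteddual}) and the $\widetilde\mD$-versions are dual to their $\mD$-counterparts under interchanging the two distributions, so only half of the computation need be carried out explicitly.
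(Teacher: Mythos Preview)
You have the two blocks of Euler--Lagrange equations attributed to the wrong variations. A quick sanity check exposes the error: since $2{\rm S}_{\,\I}=\<\tr^\top\I,\tr^\bot\I^*\>+\<\tr^\bot\I,\tr^\top\I^*\>-\<\I^\wedge,\I^*\>_{|V}$ is homogeneous of degree two in $\I$, its derivative in the $\I$-direction at fixed $g$ is \emph{linear} in $\I$, so the equations obtained by varying $\I$ must be linear in $\I$. Those are \eqref{E-34}--(\ref{E-34}g), not (\ref{ELSmixIadapted}--c). Conversely, when $g$ varies with $\I$ held fixed, one is differentiating the metric contractions and the projections in a quadratic expression in $\I$; the resulting coefficients of the symmetric tensor $B$ are quadratic in $\I$, and these are exactly (\ref{ELSmixIadapted}--c). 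The paper's proof confirms this: it computes $\partial_t {\rm S}_{\,\I}(g_t)$ for a $g^\pitchfork$-variation (via Lemma~\ref{L-dT-3}), reads off \eqref{ELSmixIadapted} from the $(\mD\times\mD)$-part of $B$, \eqref{ELSmixImixed} from the mixed part, and obtains \eqref{ELSmixIadapteddual} by duality; the system \eqref{E-34}--(\ref{E-34}g) is attributed to the variation of $\I$ with fixed $g$ (citing \cite{RZconnection}).

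Two further remarks. First, your discussion of volume-preserving variations and a possible Lagrange multiplier $\lambda$ is unnecessary here: the theorem is stated for arbitrary (not volume-preserving) variations, and indeed no $\lambda\,g^\perp$ term appears in (\ref{ELSmixIadapted}--c). Second, your argument for \eqref{criticaltrIinlargedimensions} from (\ref{E-34c},d) is correct and matches the intended deduction.
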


\begin{proof}
From Proposition~\ref{L-QQ-first} and Lemma~\ref{L-dT-3}, for a $g^\pitchfork$-variation $g_t$ of metric $g$ we obtain
\begin{eqnarray}\label{Eq-47}
 && 2 \, 
  \dt {\rm S}_{\,\I} (g_t) = \dt \<\tr^\top\I,\,\tr^\bot\I^*\> +\dt \<\tr^\bot\I,\,\tr^\top\I^*\>
 -\dt\<\I^\wedge, \I^* \>_{| V} \nonumber \\
 && = \frac{1}{2} \sum B({\cal E}_i, {\cal E}_j)
 \big(\<\tr^\top\I, \I^*_i {\cal E}_j - \I^*_j {\cal E}_i\> -\<\I_j {\cal E}_i + \I_i {\cal E}_j 
 , \tr^\top\I^*\>
 +2\,\<\I^*_{j} E_a, \I_a {\cal E}_i\>\big)\nonumber \\
 && +\sum B({\cal E}_i, E_b) \big(\<\tr^\bot\I - \tr^\top\I,\, \I^*_b {\cal E}_i\>
 -\<\I_b {\cal E}_i + \I_i E_b, \tr^\top\I^*\> \nonumber \\
 &&
 +\,\<\I^*_a {\cal E}_i, \I_{ b} E_a\> +\<\I^*_{b} E_a, \I_a {\cal E}_i\>
 +\<\I^*_i E_a, \I_a E_b\> -\<\I^*_j {\cal E}_i, \I_b {\cal E}_j\> \big).
\end{eqnarray}
Thus, $\dt {\rm S}_{\,\I}(g_t) =0$ if and only if the right hand side of \eqref{Eq-47} vanishes for all symmetric tensors $B=\partial_t g$.
For the $({\mD}\times{\mD})$-part of $B$ we get
\begin{equation*}
 \sum B({\cal E}_i, {\cal E}_j)\big(\frac{1}{2}\,\<\tr^\top\I, \I^*_i {\cal E}_j -\I^*_j {\cal E}_i\>
 -\frac{1}{2}\,\<\I_j {\cal E}_i +\I_i {\cal E}_j, \tr^\top\I^*\>
 +\tr^\top(\I_j \I^\wedge_i)\big) = 0,
\end{equation*}
but since $B$ is arbitrary and symmetric and $\I^*_i {\cal E}_j -\I^*_j {\cal E}_i$ is skew-symmetric,
this can be written as \eqref{ELSmixIadapted}.
 For the mixed part of $B$ (i.e., 
 $B$ restricted to the subspace $V$) we get the following Euler-Lagrange equation:
\begin{eqnarray*}
 && \sum B({\cal E}_i, E_b) \big( \<\tr^\bot\I, \I^*_b {\cal E}_i\>
 - \<\tr^\top\I, \I^*_b {\cal E}_i\> - \<\I_b {\cal E}_i + \I_i E_b, \tr^\top\I^*\>  \\
 && +\,\<\I^*_a {\cal E}_i, \I_{ b} E_a\> + \<\I^*_i E_a, \I_a E_b\> + \<\I^*_{b} E_a, \I_a {\cal E}_i\>
 - \<\I^*_j {\cal E}_i, \I_b {\cal E}_j\> \big) =  0.
\end{eqnarray*}
From this we obtain \eqref{ELSmixImixed}.
Taking dual equation to \eqref{ELSmixIadapted} with respect to interchanging distributions $\widetilde{\mD}$ and ${\mD}$, we obtain \eqref{ELSmixIadapteddual}, which is the Euler-Lagrange equation for $g^\top$-variations. The proof of (\ref{E-34}-g), see \cite{RZconnection}, is based on calculation of variations with respect to $\I$ of ${\rm S}_{\,\I}$ and using \eqref{E-DivThm-2}.
\end{proof}

\begin{definition}[see Section~4 in \cite{RZconnection}]\rm
The \textit{doubly twisted product} $B\times_{(v,u)} F$ of metric-affine manifolds $(B,g_B,\I_B)$ and $(F, g_F,\I_F)$
(or the \textit{metric-affine doubly twisted product})
is a mani\-fold $M=B\times F$ with the metric $g = g^\top + g^\bot$ and the affine connection, whose contorsion tensor is
$\I={\I}^\top+{\I}^\bot$,~where
\begin{eqnarray*}
 g^\top(X,Y) \eq v^2 g_B(X^\top,Y^\top),\quad g^\bot(X,Y)=u^2 g_F(X^\bot,Y^\bot),\\
 {\I}^\top_XY \eq u^2(\I_B)_{X^\top}Y^\top,\quad {\I}^\bot_XY=v^2(\I_F)_{X^\bot}Y^\bot,
\end{eqnarray*}
and the warping functions $u,v\in C^\infty(M)$ are positive.
\end{definition}

From Theorem~\ref{propELSmixI} we obtain the following

\begin{corollary}
A metric-affine doubly twisted product $B\times_{(v,u)} F$ with $\sum\nolimits_a\eps_a \neq 0 \ne \sum\nolimits_i\eps_i$ is critical for \eqref{actiongISmix} with respect to all variations of $\,\I$ and $g$ if and only if
\begin{equation} \label{BFtraces}
 \tr\I_B=0=\tr\I_F.
\end{equation}
\end{corollary}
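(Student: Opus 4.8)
The plan is to specialize the Euler--Lagrange system of Theorem~\ref{propELSmixI} to a doubly twisted product and watch almost all of it collapse. Observe first that $\mathrm{S}_{\,\I}$ vanishes identically on any doubly twisted product (the brackets $[\I_i,\I_a]E_a$ in \eqref{E-SK} are zero by the block structure recalled below), so \eqref{actiongISmix} is constant along such products; but criticality must still be tested against \emph{all} variations of $g$ and $\I$, which is exactly what Theorem~\ref{propELSmixI} does for us. Two structural facts then do the work. First, for $M=B\times F$ the distributions $\widetilde\mD=TB$ and $\mD=TF$ are orthogonal, non-degenerate and integrable. Second, from $\I=\I^\top+\I^\bot$ with $\I^\top_X Y=u^2(\I_B)_{X^\top}Y^\top$ and $\I^\bot_X Y=v^2(\I_F)_{X^\bot}Y^\bot$, the contorsion tensor is block-diagonal: $\I_X Y\in\widetilde\mD$ when $X,Y\in\widetilde\mD$, $\I_X Y\in\mD$ when $X,Y\in\mD$, and $\I_X Y=0$ when $X,Y$ lie in different distributions. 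By the defining relations of $\I^*$ (this uses that $g$ is non-degenerate on each distribution) and of $\I^\wedge$, the tensors $\I^*$, $\I^\wedge$ and $(\I^*)^\wedge$ are block-diagonal as well. Consequently $\tr^\top\I,\tr^\top\I^*\in\widetilde\mD$, $\tr^\bot\I,\tr^\bot\I^*\in\mD$, and the operators $\I^\top_U$ $(U\in\mD)$ and $\I^\bot_X$ $(X\in\widetilde\mD)$ vanish.

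Substituting this into \eqref{ELSmixIadapted}--\eqref{ELSmixIadapteddual} and into the system (\ref{E-34}-g), I would check term by term that every ``coupling'' term dies. In \eqref{ELSmixIadapted}, \eqref{ELSmixImixed}, \eqref{ELSmixIadapteddual} each composition such as $\I_V\I^\wedge_U$, $\I^*_Y(\I^*)^\wedge_U$, $\I_U\I^\wedge_Y$, $\I_Y\I^\wedge_U$ vanishes (one factor maps a distribution to $0$, or into the one the next factor annihilates), and each pairing $\<\,\cdot\,,\tr^\top\I^*\>$ there pairs a $\mD$-vector with a $\widetilde\mD$-vector; hence these three equations hold identically. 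Among (\ref{E-34}-g): the left-hand sides of the first two equations lie in $\widetilde\mD$ resp.\ $\mD$, so their opposite projections are zero; the third and fourth hold because the partial traces occurring there are orthogonal to $\widetilde\mD$ resp.\ to $\mD$. Since $\I^\top_U=0=\I^\bot_X$, the equations $\I^\top_U=\<\tr^\bot\I,U\>\,\id^\top$ and $\I^\bot_X=\<\tr^\top\I^*,X\>\,\id^\bot$ become $\<\tr^\bot\I,U\>\,\id^\top=0$ and $\<\tr^\top\I^*,X\>\,\id^\bot=0$; tracing these over $\widetilde\mD$ resp.\ over $\mD$ and invoking $\sum_a\eps_a\neq0$ and $\sum_i\eps_i\neq0$ yields $\tr^\bot\I=0$ and $\tr^\top\I^*=0$, while the last equation of (\ref{E-34}-g) forces $\tr^\bot\I=\tr^\bot\I^*$ and $\tr^\top\I=\tr^\top\I^*$. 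So the whole system reduces to the vanishing of the partial traces of $\I$ (and, via that last equation, of $\I^*$).

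It remains to identify these conditions with \eqref{BFtraces}. Passing from a $g$-orthonormal adapted frame $\{E_a,{\cal E}_i\}$ to the frames $\{vE_a\}\subset\widetilde\mD$ and $\{u\,{\cal E}_i\}\subset\mD$, which are orthonormal for $g_B$ resp.\ $g_F$, a one-line computation gives $\tr^\top\I=(u^2/v^2)\,\tr\I_B$ and $\tr^\bot\I=(v^2/u^2)\,\tr\I_F$ (and likewise for the $\I^*$-traces, with $\I_B,\I_F$ replaced by $(\I_B)^*,(\I_F)^*$). As $u,v>0$, these vanish exactly when $\tr\I_B=0$ and $\tr\I_F=0$, which is \eqref{BFtraces}. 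The converse is the same chain read backwards: if $\tr\I_B=\tr\I_F=0$ then all of \eqref{ELSmixIadapted}--(\ref{E-34}-g) hold, and $(g,\I)$ is critical by Theorem~\ref{propELSmixI}.

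I expect the main obstacle to be purely organizational: making sure that in the fairly long system \eqref{ELSmixIadapted}--(\ref{E-34}-g) \emph{no} term survives except through the partial traces, which means checking that each occurrence of $\I^*$, $\I^\wedge$ and $(\I^*)^\wedge$ is block-diagonal and that all the compositions appearing vanish across the two distributions. Keeping track of the warping functions $u,v$ is routine, as they enter only as nonzero factors and hence affect none of the equivalences.
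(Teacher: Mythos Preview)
Your proof is correct and follows essentially the same route as the paper: apply Theorem~\ref{propELSmixI} and observe that, for the block-diagonal $\I$ of a doubly twisted product, the metric-variation equations \eqref{ELSmixIadapted}--\eqref{ELSmixIadapteddual} hold identically while the $\I$-variation system (\ref{E-34}--g) collapses to the vanishing of the partial traces. The paper outsources the $\I$-variation part to \cite[Corollary~13]{RZconnection}; you simply unpack that step explicitly, which is fine.

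One small remark: in deducing $\tr^\bot\I=0$ from $\<\tr^\bot\I,U\>\,\id^\top=0$ you invoke $\sum_a\eps_a\ne0$, but in fact $\tr_g(\id^\top)=\sum_a\eps_a\<E_a,E_a\>=\sum_a\eps_a^2=n$, so $\id^\top\ne0$ holds regardless of signature. This is harmless for the argument (the conclusion stands), and the hypothesis in the statement presumably comes from the formulation in \cite{RZconnection} rather than being needed here.
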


\begin{proof}
It was proven in \cite[Corollary~13]{RZconnection} that a metric-affine doubly twisted product $B\times_{(v,u)} F$
is critical for \eqref{actiongISmix} with fixed $g$ and for variations of $\,\I$ if and only if \eqref{BFtraces} holds.
It can be easily seen that for such doubly twisted product satisfying $\tr\I_B=0=\tr\I_F$ all terms in (\ref{ELSmixIadapted}-c) vanish.
\end{proof}

\begin{corollary} \label{statisticalcritSmixI}
A pair $(g, \I)$, where $\I$ is the contorsion tensor of a statistical connection on $(M, g)$, is critical for the action \eqref{actiongISmix} with respect to all variations of metric, and variations of $\I$
corresponding to statistical connections if and only if $\I$ satisfies the following algebraic system:
\begin{subequations}
\begin{eqnarray}
\label{ELSmixIstat1}
 && (\tr^\top\I)^\top = 0 = (\tr^\bot\I)^\bot, \\
\label{ELSmixIstat2}
 && (\I_X\,Y)^\bot = 0 = (\I_U\,V)^\top,\quad X,Y\in\widetilde\mD,\ \ U,V\in\mD.
\end{eqnarray}
\end{subequations}
\end{corollary}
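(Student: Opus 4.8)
The plan is to specialize Theorem~\ref{propELSmixI} to the case when $\I$ runs over contorsion tensors of statistical connections and variations $\dt\I$ preserve this class. Recall that statistical connections are characterized by $\I^\wedge = \I$ and $\I^* = \I$, so a variation $\I(t)$ stays statistical iff $S=\dt\I$ satisfies $S^\wedge = S$ and $S^* = S$, i.e.\ $S$ is totally symmetric (as a $(0,3)$-tensor after lowering an index). Thus, compared with the computation in the proof of Theorem~\ref{propELSmixI}, the metric variation $B=\dt g$ is still arbitrary and symmetric, but the connection variation $S$ is now constrained to be totally symmetric. First I would substitute $\I^* = \I$ and $\I^\wedge = \I$ into the variational identity \eqref{Eq-47} (and into the $\I$-variation computation underlying (\ref{E-34}-g) from \cite{RZconnection}), and then extract Euler--Lagrange equations only against the symmetric part of each coefficient, since $S_\mu$ is now symmetric rather than free.

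The key steps, in order: (i) From the metric part: in \eqref{ELSmixIadapted}--\eqref{ELSmixIadapteddual} with $\I^*=\I^\wedge=\I$, the terms $\tr^\top(\I_V\I^\wedge_U)=\tr^\top(\I_V\I_U)$ and the symmetrized scalar products already have the right symmetry, and I expect these three equations to reduce (after using the constraints and symmetrizing) to statements controlling $(\I_X Y)^\bot$, $(\I_U V)^\top$ and the traces $(\tr^\top\I)^\top$, $(\tr^\bot\I)^\bot$; this is where \eqref{ELSmixIstat1}--\eqref{ELSmixIstat2} should emerge. (ii) From the $\I$-part, i.e.\ the analogue of (\ref{E-34}-g) restricted to totally symmetric $S$: here I would take equations (\ref{E-34}-g) and keep only the components that survive pairing against a totally symmetric $S$ (so, e.g., skew parts drop out, and equations that were independent now combine). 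With $\I^*=\I$ the pair (\ref{E-34}a,b) becomes $(\I_XY)^\bot = -(\I_YX)^\bot$ combined with its symmetric counterpart, forcing $(\I_XY)^\bot=0$; the trace equations (\ref{E-34c},d) collapse (for the relevant dimensions, via \eqref{criticaltrIinlargedimensions}) to $(\tr^\bot\I)^\top=0$, $(\tr^\top\I)^\bot=0$, which together with (g) and the diagonal trace conditions give \eqref{ELSmixIstat1}. The $\I^\top_U$ and $\I_X^\bot$ equations (e,f) combined with the trace vanishing yield \eqref{ELSmixIstat2} on the mixed components. (iii) Finally I would check the converse: assuming \eqref{ELSmixIstat1}--\eqref{ELSmixIstat2}, verify directly that every term in \eqref{Eq-47} and in the $\I$-variation vanishes — this should be a short bookkeeping check, since \eqref{ELSmixIstat2} says $\I$ maps each distribution into itself, killing all the ``mixed'' brackets like $\<\I^*_jE_a,\I_a{\cal E}_i\>$, and \eqref{ELSmixIstat1} kills the remaining trace terms.

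The main obstacle I anticipate is bookkeeping: correctly identifying, among the eight-plus scalar equations coming from the adapted-frame computation, exactly which linear combinations survive when the test tensor $S$ is forced to be totally symmetric rather than arbitrary, and confirming that no extra equations appear beyond \eqref{ELSmixIstat1}--\eqref{ELSmixIstat2}. In particular one must be careful that the constraint $S=S^*=S^\wedge$ is used \emph{both} to restrict the admissible $S$ and (via $\I^*=\I$) to simplify the coefficients, and that these two uses are not conflated. A secondary point is the dimension hypothesis: equations \eqref{E-34c},d genuinely need $n>1$, $p>1$ to yield the clean trace conditions \eqref{criticaltrIinlargedimensions}; I would state the corollary under that assumption (or note the low-dimensional exceptions), consistently with how Theorem~\ref{propELSmixI} is phrased.
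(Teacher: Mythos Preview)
Your overall plan---combine the metric variation equations \eqref{ELSmixIadapted}--\eqref{ELSmixIadapteddual} with the $\I$-variation restricted to totally symmetric $S$---is the right architecture, and your converse check (iii) is fine. But step (ii) contains a genuine gap that propagates into (i).

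When you restrict the $\I$-variation to totally symmetric $S$, you do \emph{not} obtain (\ref{E-34}a,b) with $\I^*=\I$ substituted. Those equations came from treating all components $\<S_\mu e_\lambda,e_\rho\>$ as independent; once $S$ is totally symmetric, only the fully symmetrized coefficients must vanish, and the correct Euler--Lagrange system (this is \cite[Corollary~7]{RZconnection}) is
\[
(\tr^\top\I)^\perp=0=(\tr^\perp\I)^\top,\qquad
(\I_U V)^\top=\tfrac12\<U,V\>(\tr^\top\I)^\top,\qquad
(\I_X Y)^\perp=\tfrac12\<X,Y\>(\tr^\perp\I)^\perp.
\]
In particular $(\I_XY)^\bot=0$ does \emph{not} follow from the $\I$-variation alone; it is tied to the remaining trace $(\tr^\perp\I)^\perp$. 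Likewise your appeal to \eqref{criticaltrIinlargedimensions} is illegitimate here, since those conclusions come from (\ref{E-34}c,d) for \emph{arbitrary} variations, which you no longer have.

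The missing idea is that the metric variation supplies exactly the extra information needed to kill the two remaining traces. Substituting the system above into \eqref{ELSmixIadapted} collapses it to
\[
\tfrac{n}{4}\,(\tr^\perp\I)^{\perp\flat}\otimes(\tr^\perp\I)^{\perp\flat}
-\tfrac{3}{4}\,\<(\tr^\top\I)^\top,(\tr^\top\I)^\top\>\,g^\perp=0,
\]
and together with its dual this forces $(\tr^\top\I)^\top=0=(\tr^\perp\I)^\perp$ by a short case analysis (choose $W\in\mD$ orthogonal to $(\tr^\perp\I)^\perp$ when $p>1$, use non-degeneracy of $g^\perp$; the cases $n=1$ or $p=1$ are handled by the dual equation, and $n=p=1$ by combining both). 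Only \emph{then} do the equations $(\I_XY)^\perp=\tfrac12\<X,Y\>(\tr^\perp\I)^\perp$ yield $(\I_XY)^\perp=0$. So your worry about needing $n,p>1$ is misplaced: the corollary holds in all dimensions, but via this trace argument rather than via \eqref{criticaltrIinlargedimensions}.
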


\begin{proof}
By 
\cite[Corollary~7]{RZconnection}, $\I$ is critical for the action $\I \mapsto \int_M {\rm S}_{\,\I}\,{\rm d}\vol_g$, see \eqref{actiongISmix}, with respect to variations of $\I$
corresponding to statistical connections if and only if the following equations hold:
\begin{subequations}
\begin{eqnarray}\label{ELSmixIstatI1}
&& (\tr^\top \I)^\perp = 0 = (\tr^\perp \I)^\top , \\
\label{ELSmixIstatI2}
&& (\I_U V)^\top = \frac{1}{2}\,\<U,V\> (\tr^\top \I)^\top , \\
&& (\I_X Y)^\perp = \frac{1}{2}\,\<X,Y\> (\tr^\perp \I)^\perp ,
\end{eqnarray}
\end{subequations}
for all $X,Y\in\widetilde\mD$ and $U,V\in\mD$.
If (\ref{ELSmixIstat1},b)
hold, then also (\ref{ELSmixIstatI1}-c) hold, moreover if \eqref{ELSmixIstat2} is satisfied and $\I$ corresponds to a statistical connection, then all terms in equations (\ref{ELSmixIadapted}-c) vanish.

On the other hand, if (\ref{ELSmixIstatI1}-c) hold, then \eqref{ELSmixIadapted} becomes
\begin{equation} \label{ELSmixIadaptedstat}
\frac{n}{4} (\tr^\perp \I)^{ \perp \flat} \otimes (\tr^\perp \I)^{ \perp \flat}
- \frac{3}{4} \<  (\tr^\top \I)^\top , (\tr^\top \I)^\top \>\, g^\perp =0,
\end{equation}
and \eqref{ELSmixIadapteddual} becomes dual to the above.
If $p>1$ and $\< (\tr^\perp \I)^{ \perp} ,(\tr^\perp \I)^{ \perp}\> \ne 0$, then there is $W \in \mD$ such that $\<W ,W \> \neq 0$ and $\< W , (\tr^\perp \I)^{ \perp} \> =0$, and evaluating \eqref{ELSmixIadaptedstat} on $W \otimes W$ we obtain $(\tr^\top \I)^\top =0$ and then it also follows from \eqref{ELSmixIadaptedstat} that $(\tr^\perp \I)^{ \perp } =0$.
If $p>1$ and $(\tr^\perp \I)^{ \perp} =0$, then we obtain $(\tr^\top \I)^\top =0$ from \eqref{ELSmixIadapted}, as $g^\perp$ is non-degenerate.
If $p>1$ and $\< (\tr^\perp \I)^{\perp}, (\tr^\perp \I)^{\perp} \> = 0$ but $(\tr^\perp \I)^{ \perp} \ne 0$, then \eqref{ELSmixIadapted} evaluated on $(\tr^\perp\I)^{\perp}\otimes W$, where $W\in\mD$, implies that
\[
 \<(\tr^\top\I)^\top, (\tr^\top\I)^\top\>\<(\tr^\perp\I)^{\perp}, W\> =0
\]
and since $W$ here is arbitrary, it follows that $\<(\tr^\top\I)^\top, (\tr^\top\I)^\top\> = 0$, and then it also follows from \eqref{ELSmixIadaptedstat} that $(\tr^\perp \I)^{\perp}=0$.

Equalities $(\tr^\top \I)^\top = 0 = (\tr^\perp \I)^{ \perp }$ together with (\ref{ELSmixIstatI2},c) yield \eqref{ELSmixIstat2}.

If $n>1$ we can similarly use \eqref{ELSmixIadapteddual} for the same effect, and if $n=p=1$ then \eqref{ELSmixIadaptedstat} becomes
\[
 \<(\tr^\perp \I)^{ \perp} , (\tr^\perp \I)^{ \perp} \> = 3 \<  (\tr^\top \I)^{ \top} , (\tr^\top \I)^{ \top} \>,
\]
which together with its dual imply $(\tr^\top \I)^\top =0 = (\tr^\perp \I)^{ \perp }$, and again we obtain \eqref{ELSmixIstat2} from (\ref{ELSmixIstatI2},c).
\end{proof}

Next we consider metric connections. Using (\ref{E-34}-g), we obtain the following.

\begin{corollary}[see \cite{RZconnection}]
A contorsion tensor $\I$ corresponding to a {metric connection} is critical for the action \eqref{actiongISmix}
with fixed $g$ for all variations of $\I$ corresponding to metric connections if and only if
$\I$ satisfies the following linear algebraic system
(for all $X,Y\in\widetilde\mD$ and $U,V\in\mD$): 
\begin{subequations}
\begin{eqnarray}
 \label{Tmetriccrit1}
 && (\I_Y\, X +\I^*_X\, Y)^\bot = 0 = (\I_U\, V +\I^*_V\, U)^\top,\\
 \label{Tmetriccrit3}
 && (\tr^\top\I)^\top = 0 = (\tr^\bot\I)^\bot,\\
 \label{Tmetriccrit5}
 && \I_X^\bot = 0 = \I_U^\top, \\
\label{SmixImetricdimnp}
 && (\tr^\bot\I)^\top=0\quad for\ n>1,\quad
 (\tr^\top\I)^\bot=0\quad for\ p>1.
\end{eqnarray}
\end{subequations}
\end{corollary}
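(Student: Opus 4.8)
The plan is to derive the Euler--Lagrange system by restricting the variational computation for the action \eqref{actiongISmix} with $g$ fixed (from Theorem~\ref{propELSmixI}) to variations of $\I$ preserving metric compatibility. Since $(\,\cdot\,)^*$ is a fixed pointwise linear operation on $(1,2)$-tensors, metric connections form a linear subspace of the space of $(1,2)$-tensors on $M$ with model space $\{S:S^*=-S\}$; hence a variation $\I(t)$ stays among metric connections exactly when $S:=\dt\I(t)|_{t=0}$ satisfies $S^*=-S$. Because ${\rm S}_{\,\I}$ carries no covariant derivatives of $\I$, with $g$ fixed the differential of \eqref{actiongISmix} is represented by a $(1,2)$-tensor $\mathcal G(\I)$ through $\frac{d}{dt}|_{t=0}\int_M{\rm S}_{\,\I(t)}\,{\rm d}\vol_g=\int_M\langle\mathcal G(\I),S\rangle\,{\rm d}\vol_g$, and $\mathcal G(\I)=0$ is precisely the system (\ref{E-34}-g) of Theorem~\ref{propELSmixI}, the block structure coming from $TM\times TM=(\widetilde{\mD}\times\widetilde{\mD})\oplus({\mD}\times{\mD})\oplus{\rm V}$ and from separating the distribution containing the values. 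Since the inner product of $(1,2)$-tensors induced by $g$ makes $(\,\cdot\,)^*$ self-adjoint, $\langle\mathcal G(\I),S\rangle=0$ for every $S$ with $S^*=-S$ if and only if $\mathcal G(\I)^*=\mathcal G(\I)$; equivalently, the Euler--Lagrange system for metric-connection variations is obtained by replacing each equation of (\ref{E-34}-g) with its part antisymmetric in the two slots interchanged by $(\,\cdot\,)^*$ (the last input and the value).

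It then remains to write out these antisymmetrized equations and simplify them using that $\I$ is itself metric, $\I^*=-\I$ -- so $\I^*_X Y=-\I_X Y$, each $\I_\mu$ is $g$-skew-symmetric as an operator, and $\tr^\bot\I^*=-\tr^\bot\I$, $\tr^\top\I^*=-\tr^\top\I$. Block by block one expects: the first two equations of (\ref{E-34}-g), $(\I^*_X Y+\I_Y X)^\bot=0$ and $(\I_U V+\I^*_V U)^\top=0$, already coincide with \eqref{Tmetriccrit1}; the relation $(\tr^\bot(\I-\I^*))^\bot=0=(\tr^\top(\I-\I^*))^\top$ becomes, for $\I^*=-\I$, exactly \eqref{Tmetriccrit3}; the equations $\I^\top_U=\langle\tr^\bot\I,U\rangle\,\id^\top$ and $\I^\bot_X=\langle\tr^\top\I^*,X\rangle\,\id^\bot$, whose antisymmetric parts state that the operators $\I^\top_U$ and $\I^\bot_X$ are $g$-symmetric, then force these -- already $g$-skew-symmetric since $\I$ is metric -- to vanish, which is \eqref{Tmetriccrit5}; and the trace-carrying relations $\langle X,Z\rangle\langle\tr^\bot\I,Y\rangle+\langle X,Y\rangle\langle\tr^\bot\I^*,Z\rangle=0$ and its ${\mD}$-dual reduce, antisymmetrized and with $\I^*=-\I$, to $\langle X,Z\rangle\langle\tr^\bot\I,Y\rangle=\langle X,Y\rangle\langle\tr^\bot\I,Z\rangle$ (resp.\ the dual), which gives \eqref{SmixImetricdimnp} when $n>1$ (resp.\ $p>1$) and is vacuous when $n=1$ (resp.\ $p=1$). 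Conversely, \eqref{Tmetriccrit1}--\eqref{SmixImetricdimnp} together with $\I^*=-\I$ make every antisymmetrized equation hold -- in fact they imply the full system (\ref{E-34}-g) for a metric $\I$, so such an $\I$ is critical even among all variations, a fortiori among metric-connection variations. (Alternatively one may simply invoke the corresponding statement of \cite{RZconnection}, of which the present claim is the fixed-$g$, metric-connection case.)

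The main work, and essentially the only obstacle, is the bookkeeping of the second paragraph: matching the antisymmetrized components of the sizeable system (\ref{E-34}-g) to the four concise equations, checking that no further independent condition survives and that none of \eqref{Tmetriccrit1}--\eqref{SmixImetricdimnp} is redundant, and tracking the degenerate cases $n=1$ and $p=1$ in which \eqref{SmixImetricdimnp} disappears. Once the reduction to $(\,\cdot\,)^*$-antisymmetric equations in the first paragraph is in place, all of this is routine linear algebra in an adapted orthonormal frame.
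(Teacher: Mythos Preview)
The paper does not prove this corollary; it merely cites it from \cite{RZconnection}. Your proposal therefore supplies what the paper omits, and the overall strategy is sound: restrict the unconstrained gradient $\mathcal G(\I)$ (whose vanishing is the system (\ref{E-34}--g)) to the subspace of antisymmetric test tensors $S^*=-S$, which amounts to requiring $\mathcal G(\I)^*=\mathcal G(\I)$, and then simplify using $\I^*=-\I$. Your verification that the resulting conditions are exactly \eqref{Tmetriccrit1}--\eqref{SmixImetricdimnp} is correct, and the observation that these conditions together with $\I^*=-\I$ actually force the \emph{full} system (\ref{E-34}--g) (hence criticality for all variations) is a nice strengthening.

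One point of imprecision worth tightening: the sentence ``the Euler--Lagrange system for metric-connection variations is obtained by replacing each equation of (\ref{E-34}--g) with its part antisymmetric in the two slots interchanged by $(\cdot)^*$'' is not literally accurate. The operation $*$ swaps the second argument and the value, so it maps the block $(\widetilde{\mD},\widetilde{\mD},\mD)$ to $(\widetilde{\mD},\mD,\widetilde{\mD})$ (and similarly for the dual pair), rather than acting within a single block. Thus the equations \eqref{E-34}e,f and the two halves of (\ref{E-34}g) do not sit in self-blocks; together they encode the two paired-block components of $\mathcal G(\I)$, and the condition $\mathcal G(\I)^*=\mathcal G(\I)$ there is a \emph{relation between two equations}, not the antisymmetrization of one. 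Concretely, for $\I^*=-\I$ that relation reads $\I_U^\top=-\langle(\tr^\bot\I)^\bot,U\rangle\,\id^\top$; since $\I_U^\top$ is skew and the right side is symmetric, both sides vanish, yielding simultaneously \eqref{Tmetriccrit5} and \eqref{Tmetriccrit3}. Your final conclusions already reflect this correctly, so the issue is only in the phrasing of the general principle, not in the computations.
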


\begin{corollary}
A pair $(g, \I)$, where $\I$ is the contorsion tensor of a metric connection on $(M, g)$, is critical for \eqref{actiongISmix} with respect to all variations of metric, and variation of $\I$
correspon\-ding to metric connections if and only if {\rm (\ref{Tmetriccrit1}-d)} are satisfied and the following algebraic system
(where $X\in\widetilde\mD$ and $U\in\mD$) holds:
\begin{eqnarray*}
\nonumber
 && \tr^\top((\I_U)^\bot(\I_X^\wedge)^\bot +2\,(\I_U^\wedge)^\top(\I_X)^\top) -\tr^\bot((\I_U^\wedge)^\top(\I_X)^\top)
 +\<\tr^\bot\I,\, (\I_X\, U)^\top\> = 0 .
\end{eqnarray*}
\end{corollary}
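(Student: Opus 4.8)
The plan is to combine two facts. First, by the previous Corollary (see \cite{RZconnection}), a contorsion tensor $\I$ of a metric connection is critical for \eqref{actiongISmix} \emph{with $g$ fixed}, among variations of $\I$ through metric connections, if and only if (\ref{Tmetriccrit1}-d) hold; this already gives the ``$\I$-part'' of the asserted equivalence. Second, one must determine what varying the metric contributes. For this I would repeat the computation in the proof of Theorem~\ref{propELSmixI}: for a $g^\pitchfork$-variation $g_t$ with $B=\dt g$, Proposition~\ref{L-QQ-first} and Lemma~\ref{L-dT-3} yield \eqref{Eq-47}, which writes $2\,\dt{\rm S}_{\,\I}(g_t)$ as a sum of three contributions, one for each of the blocks $B|_{{\mD}\times{\mD}}$, $B|_{V}$ and (after also using $g^\top$-variations) $B|_{\widetilde{\mD}\times\widetilde{\mD}}$. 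Since these three blocks together exhaust all symmetric $B$ (see the decomposition recalled after Definition~\ref{defintionvariationsofg}), criticality for all metric variations is equivalent to the vanishing of the coefficient of $B$ in each block.

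The computation then proceeds in three steps. (i) Substitute the Riemann--Cartan relation $\I^*=-\I$ throughout \eqref{Eq-47}; this makes the $\tr^\top\I$ and $\tr^\top\I^*$ terms combine and turns every factor $\I^*_\mu$ into $-\I^*_\mu$, so the three block-coefficients become polynomial in $\I$ alone. (ii) Impose the constraints (\ref{Tmetriccrit1}-d), which hold because $\I$ is critical among metric connections for fixed $g$; the ones that do the work are the one-sided vanishings $\I_U^\top=0$ and $\I_X^\bot=0$ (that is, $(\I_UY)^\top=0$ for $Y\in\widetilde\mD$ and $(\I_XV)^\bot=0$ for $V\in\mD$), together with $(\tr^\top\I)^\top=0=(\tr^\bot\I)^\bot$ and, when $n,p>1$, $(\tr^\bot\I)^\top=0=(\tr^\top\I)^\bot$. (iii) Check that under (i)--(ii) the $({\mD}\times{\mD})$- and $(\widetilde{\mD}\times\widetilde{\mD})$-block coefficients vanish identically -- so varying $g$ only inside $\mD$, or only inside $\widetilde\mD$, imposes nothing new -- while the surviving terms of the $V$-block, once grouped with the projected operators $(\I_U)^\bot$, $(\I_X^\wedge)^\bot$, $(\I_U^\wedge)^\top$, $(\I_X)^\top$ and the trace $\tr^\bot\I$, assemble exactly into the displayed algebraic system. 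The converse is the same computation read backwards: if (\ref{Tmetriccrit1}-d) and the displayed equation hold, every block-coefficient in \eqref{Eq-47} vanishes, hence $(g,\I)$ is critical.

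I expect step (iii) to be the main obstacle. The bookkeeping in \eqref{Eq-47} is heavy, and after the substitution $\I^*=-\I$ one must keep track of which of the many bilinear terms $\<\I^*_\mu e_\lambda,\I_\rho e_\sigma\>$ are killed by the \emph{one-sided} vanishings $\I_U^\top=0$ and $\I_X^\bot=0$: these do not annihilate $\I_U$ or $\I_X$, only a fixed projection of one of their arguments, so one has to be careful about which entry is projected before applying them, and which terms survive to be reorganized into the compact form of the statement. A secondary point worth recording is that, although the $(\widetilde{\mD}\times\widetilde{\mD})$-block is formally ``dual'' to the $({\mD}\times{\mD})$-block, the final equation carries no dual partner: it comes solely from the $V$-block of $B$, which is a single symmetric $(0,2)$-tensor, and is therefore self-contained. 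Finally, adaptedness of the complement plays no role here, since $g^\pitchfork$- and $g^\top$-variations already span all symmetric $B$.
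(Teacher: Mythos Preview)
Your proposal is correct and follows essentially the same route as the paper. The only cosmetic difference is that the paper does not go back to the raw variation formula \eqref{Eq-47}: it works directly with the Euler--Lagrange equations (\ref{ELSmixIadapted}--c) already derived in Theorem~\ref{propELSmixI}, substitutes $\I^*=-\I$ and the constraints (\ref{Tmetriccrit1}--d), and checks that \eqref{ELSmixIadapted} and \eqref{ELSmixIadapteddual} become identities while \eqref{ELSmixImixed} reduces to the displayed equation. One small point the paper handles explicitly and you leave inside ``step~(iii) bookkeeping'': when $p=1$ (resp.\ $n=1$) the condition $(\tr^\top\I)^\bot=0$ (resp.\ $(\tr^\bot\I)^\top=0$) from \eqref{SmixImetricdimnp} is not available, so the vanishing of the $({\mD}\times{\mD})$-block requires a separate one-line check using \eqref{Tmetriccrit3}; you should flag this rather than relying on the $n,p>1$ trace vanishings across the board.
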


\begin{proof}
In \eqref{ELSmixIadapted}, by \eqref{Tmetriccrit5} we have
$\<\I_a {\cal E}_i, E_b\> =0 = \<\I_a\, {\cal E}_i,\, {\cal E}_k\>$,
and by \eqref{Tmetriccrit3} also $\<\I^*_a\, E_a,\, E_b\>=0$.
Hence, what remains in \eqref{ELSmixIadapted} is
\[
 \<(\I_j\, {\cal E}_i +\I_i\, {\cal E}_j)^\bot,\, \tr^\top\I^*\> = 0,\quad \forall\, i,j.
\]
By \eqref{SmixImetricdimnp}, this is identity if $p>1$. On the other hand, for $p=1$ it reduces to
\[
 2 \<\I_1\,{\cal E}_1, {\cal E}_1\>\<\tr^\top\I^*, {\cal E}_1\> = 0,
\]
and by \eqref{Tmetriccrit3}, $\<\I_1\, {\cal E}_1,\, {\cal E}_1\>=0$.
Therefore, \eqref{ELSmixIadapted} is satisfied if (\ref{Tmetriccrit1}-c) and the second equation in \eqref{SmixImetricdimnp}
are satisfied. Using dual parts of (\ref{Tmetriccrit1}-d) we obtain analogous result for \eqref{ELSmixIadapteddual}.
From (\ref{Tmetriccrit1}-d) we~have for all $b,c,i,k$, 
\begin{eqnarray*}
 && \sum \<\I_a E_a , E_c\>=0,\quad
 \<\I^*_b {\cal E}_i,\,{\cal E}_k\>=0,\quad
 \sum \<\I^*_a E_a, E_c\>=0,\\
 && \<\I^*_b {\cal E}_i,\,{\cal E}_k\>=0,\quad
 \<\I^*_i E_b, E_c\>=0,\quad
 \<\I_b {\cal E}_i,\,{\cal E}_k\>=0. 
\end{eqnarray*}
Thus, in \eqref{ELSmixImixed} we have only the following terms:
\begin{eqnarray*}
 && \sum \<\I_j {\cal E}_j, E_c\> \<\I^*_b {\cal E}_i, E_c\> + \sum \<\I^*_a {\cal E}_i, E_c\> \<\I_{ b} E_a, E_c\>
 + \sum \<\I^*_i E_a, {\cal E}_k\> \<\I_a E_b, {\cal E}_k\>\\
 && +\, \sum \<\I^*_{b} E_a, E_c\> \<\I_a {\cal E}_i, E_c\>
 - \sum \<\I^*_j {\cal E}_i, E_c\> \<\I_b {\cal E}_j, E_c\> = 0
\end{eqnarray*}
for all $b,i$.
Using $\I^* = -\I$ (metric compatibility of $\I$), we obtain that \eqref{ELSmixImixed} is equivalent to
\begin{eqnarray*}
 && \sum \<\I_j {\cal E}_j, E_c\> \<\I_b {\cal E}_i, E_c\> +2 \sum \<\I_a {\cal E}_i, E_c\> \<\I_{ b} E_a, E_c\> \\
 && + \sum \<\I_i E_a, {\cal E}_j\> \<\I_a E_b, {\cal E}_j\> - \sum \<\I_j {\cal E}_i, E_c\> \<\I_b {\cal E}_j, E_c\> = 0
\end{eqnarray*}
for all $b,i$. This completes the proof.
\end{proof}

The results obtained when considering the action \eqref{actiongISmix} on metric-affine doubly twisted products,
allow us to determine which of these structures are critical for the action \eqref{actiongSmix}.

\begin{proposition}
A metric-affine doubly twisted product $B\times_{(v,u)} F$ 
is critical for \eqref{actiongSmix} with respect to all variations of $g$ and $\I$ if and only if \eqref{BFtraces} holds~and
\begin{equation} \label{BFtotallygeodesic}
\nabla^\top u =0 =\nabla^\perp v.
\end{equation}
\end{proposition}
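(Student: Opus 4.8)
The plan is to use that the first variation of $\bar J_{\rm mix}$ at $(g,\I)$ is linear in the pair $(\dot g,\dot{\I})$, so $(g,\I)$ is critical for all variations of $g$ and $\I$ if and only if it is critical for variations of $\I$ with $g$ fixed and for variations of $g$ with $\I$ fixed. For a doubly twisted product $\widetilde{\mD}=TB$ and $\mD=TF$ are the tangent bundles of the factors, hence both are integrable ($T=\widetilde T=0$) and totally umbilical, with mean curvature vectors $H=-n\,\nabla^{\perp}\ln v$ and $\widetilde H=-p\,\nabla^{\top}\ln u$ (a direct Koszul-formula computation); thus \eqref{BFtotallygeodesic} is exactly $H=0=\widetilde H$, i.e.\ $(M,g)$ is a direct metric product with $\widetilde\mD$ and $\mD$ totally geodesic. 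Moreover the contorsion tensor of a doubly twisted product has nonzero components only $\I\colon\widetilde{\mD}\times\widetilde{\mD}\to\widetilde{\mD}$ and $\I\colon\mD\times\mD\to\mD$ and vanishes on mixed arguments; consequently $\I^{\top}_U=\I^{*\top}_U=0$ and $\I^{\perp}_X=\I^{*\perp}_X=0$, the mixed partial traces $(\tr^{\top}\I)^{\perp}$ and $(\tr^{\perp}\I)^{\top}$ (and their $\I^*$-analogues) vanish while $(\tr^{\top}\I)^{\top}$, $(\tr^{\perp}\I)^{\perp}$ and their $\I^*$-analogues are, in an adapted frame, proportional to $\tr\I_B,\tr\I^*_B,\tr\I_F,\tr\I^*_F$, and all mixed-index commutator and $\Upsilon$-type terms vanish; in particular ${\rm S}_{\,\I}=0$ on $B\times_{(v,u)}F$, so by \eqref{barSmix} there $\overline{\rm S}_{\rm mix}={\rm S}_{\rm mix}+\bar Q/2$.

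For variations of $\I$ with $g$ fixed, Corollary~\ref{T-main1} applies (the product is totally umbilical): criticality is equivalent to the algebraic system (\ref{ELconnectionNew1}-j). Substituting the block form of $\I$ and using $T=\widetilde T=0$, equations \eqref{ELconnectionNew1}, \eqref{ELconnectionNew4}, \eqref{ELconnectionNew8}, \eqref{ELconnectionNew11} hold identically; \eqref{ELconnectionNew5} and \eqref{ELconnection1ab} give $\tr^{\perp}(\I+\I^*)=0$ and $\tr^{\top}(\I+\I^*)=0$; \eqref{ELconnectionNew7} and \eqref{ELconnectionNew14} give $(\tr^{\perp}(\I-\I^*))^{\perp}=(2-2/n)H$ and $(\tr^{\top}(\I-\I^*))^{\top}=(2-2/p)\widetilde H$; and \eqref{ELconnectionNew2}, \eqref{ELconnectionNew9} give $\widetilde H=0$ when $n>1$ and $H=0$ when $p>1$. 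Combining the ``$+$'' with the ``$-$'' trace relations, the $\I$-part of criticality becomes $(\tr^{\perp}\I)^{\perp}=(1-1/n)H$, $(\tr^{\top}\I)^{\top}=(1-1/p)\widetilde H$, with the $\I^*$-versions equal to the negatives, together with $\widetilde H=0$ if $n>1$ and $H=0$ if $p>1$. When $n,p>1$ this already forces $H=\widetilde H=0$ and the vanishing of all traces of $\I_B$ and $\I_F$, i.e.\ \eqref{BFtraces} and \eqref{BFtotallygeodesic}; when $n=1$ or $p=1$ it yields only a proportionality between a surviving trace and the corresponding mean curvature, and the remainder must come from the $g$-variation.

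For variations of $g$ with $\I$ fixed, start from $\bar J_{\rm mix}(g,\I)=\int_M({\rm S}_{\rm mix}-\frac12 Q)\,{\rm d}\vol_g$ (up to a boundary term irrelevant to criticality), which follows from Proposition~\ref{L-QQ-first} and the Divergence Theorem, and insert the doubly twisted geometry into the first-variation formulas: for $\int{\rm S}_{\rm mix}$ this is the mixed Ricci curvature of Definition~\ref{D-Ric-D}, and for $\int Q$ one uses \eqref{Eq-47} together with the auxiliary lemmas of Section~\ref{sec:aux}, removing divergences via \eqref{E-DivThm-2}. By the simplifications of the first paragraph most terms vanish or are proportional to $H,\widetilde H,\tr\I_B,\tr\I_F$, so the Euler--Lagrange $(0,2)$-tensor collapses; imposing it together with the $\I$-part relations above, its $\widetilde{\mD}\times\widetilde{\mD}$- and $\mD\times\mD$-blocks force $\widetilde H=0$ and $H=0$ (settling the cases $n=1$, $p=1$), whence all traces of $\I_B,\I_F$ vanish and the mixed block is automatically satisfied; this proves necessity of \eqref{BFtraces} and \eqref{BFtotallygeodesic}. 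Conversely, if \eqref{BFtraces} and \eqref{BFtotallygeodesic} hold, then $(M,g)$ is a direct metric product with block contorsion, so $H=\widetilde H=T=\widetilde T=h=\widetilde h=0$, hence ${\rm S}_{\rm mix}=0$ by \eqref{E-PW-Smix-umb}, ${\rm S}_{\,\I}=0$, $\bar Q=Q=0$, and every Euler--Lagrange expression (for $g$ and for $\I$) vanishes, so $(g,\I)$ is critical.

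The main obstacle is the $g$-variation step. Because the orthogonal complement $\mD$ depends on $g$, the block structure of the fixed tensor $\I$ is not preserved along a $g$-variation, so one must track the $t$-derivatives of the mixed components of $\I$ — which vanish at $t=0$ but not generically at higher order — inside the lengthy variational formula for $\overline{\rm S}_{\rm mix}$. The vanishing at the totally geodesic product point of ${\rm S}_{\,\I}$, of the mixed-index $\Upsilon$- and commutator terms, and of $\bar Q$ and $Q$ greatly reduces the bookkeeping, but verifying that no surviving term obstructs criticality once $H=\widetilde H=0$ and the traces of $\I_B,\I_F$ vanish is the crux of the argument.
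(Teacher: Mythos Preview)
Your overall strategy coincides with the paper's: split the criticality into the $\I$-variation (with $g$ fixed) and the $g$-variation (with $\I$ fixed), handle the former via the algebraic system of Corollary~\ref{T-main1}, and for sufficiency verify that once both \eqref{BFtraces} and \eqref{BFtotallygeodesic} hold, all terms in the variation formulas of Lemma~\ref{L-dT-3} vanish. The sufficiency direction you give is exactly the paper's.

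The difference is in the necessity direction. The paper does not rederive anything from Corollary~\ref{T-main1}; it simply cites \cite{RZconnection} for the fact that, on a doubly twisted product, $\I$-criticality of \eqref{actiongSmix} with fixed $g$ is already equivalent to \emph{both} \eqref{BFtraces} and \eqref{BFtotallygeodesic}, in all dimensions $n,p$. Thus the $g$-variation step in the paper is purely a sufficiency check. You instead work through the system (\ref{ELconnectionNew1}--j) by hand and find that when $n=1$ or $p=1$ only a proportionality between a surviving trace and a mean curvature remains, and you propose to close the gap using the $g$-variation. That is a legitimate alternative, but your $g$-variation step is only a sketch: you assert that the $\widetilde\mD\times\widetilde\mD$- and $\mD\times\mD$-blocks of the Euler--Lagrange tensor ``force $\widetilde H=0$ and $H=0$'' without actually computing those blocks from Definition~\ref{D-Ric-D} and Lemma~\ref{L-dT-3} at the doubly twisted product point. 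If you want to avoid citing \cite{RZconnection}, you should carry out that computation explicitly in the boundary cases (e.g.\ for $n=1,\,p>1$ insert $H=0$, $T=\widetilde T=0$, umbilical $\mD$, and your relation $(\tr^\top\I)^\top=(1-\tfrac1p)\widetilde H$ into \eqref{E-main-0iumbint}--\eqref{E-main-0iiiumbint} together with the $\dt Q$ terms, and extract $\widetilde H=0$). Your remark about the block structure of $\I$ not being preserved under general $g$-variations is well taken, but note that only the first variation at $t=0$ matters, and there the mixed components of $\I$ and all their relevant contractions vanish, which is precisely why the paper can dispose of the $g$-variation by pointing to Lemma~\ref{L-dT-3}.
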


\begin{proof}
It was proven in \cite{RZconnection} that a metric-affine doubly twisted product $B\times_{(v,u)} F$ 
is critical for action \eqref{actiongISmix} with fixed $g$, 
with respect to all variations of $\,\I$, if and only if \eqref{BFtotallygeodesic} and \eqref{BFtraces} hold.
Note that \eqref{BFtotallygeodesic} means that $TB$ and $TF$ as (integrable) distributions on $B\times_{(v,u)} F$ are totally geodesic.
It can be easily seen that if 
\eqref{BFtraces} holds and the distributions are integrable and totally geodesic, then all terms in all variation formulas obtained in Lemma \ref{L-dT-3} vanish.
\end{proof}

\subsection{Statistical connections}
\label{sec: 2-2}

We define a new tensor $\Theta = \I -\I^* +\I^\wedge - \I^{* \wedge }$,
composed of some terms appearing in \eqref{E-defQ}.

\begin{theorem}\label{propstatcrit}
Let $(g, \I)$ correspond to a statistical connection. Then $(g, \I)$ is critical for
\eqref{actiongSmix} with respect to volume-preserving variations of $g$ and
variations of $\I$ among all $(1,2)$-tensors 
if and only if the following conditions are satisfied:

\smallskip
 1. $\widetilde{\mD}$ and ${\mD}$ are both integrable,

 2. $(\tr^\top \I)^\top = 0 = (\tr^\perp \I)^\perp$, see (\ref{ELSmixIstat1},b),

 3. $\I_X : \widetilde{\mD} \rightarrow \widetilde{\mD}$ for all $X\in\widetilde\mD$,

 4. $\I_U : {\mD} \rightarrow {\mD}$ for all $U\in\mD$,

 5. if $n>1$ then ${\tilde H}=0$,

 6. if $p>1$ then $H=0$,

 7. $\widetilde{\mD}$ and ${\mD}$ are both totally umbilical,
\newline
and the following equations $($trivial when $n>1$ and $p>1$, see 5. and 6. above$)$  hold for some $\lambda\in\RR$:
\begin{subequations}
\begin{eqnarray}\label{E-main-0iumbint}
 && \frac{n-1}{n}\,H^\flat\otimes H^\flat -\frac{1}{2} \big(\frac{n-1}{n}\,\<H,H\> +\frac{p-1}{p}\,\<\tH, \tH\>
 +\frac{2(p-1)}{p}\,\Div\tH\big)\,g^\perp = \lambda\,g^\perp, \\
\label{E-main-0iiumbint}
 && \frac{n-1}{n}\,\big({\tilde \delta}_H -\frac{p-1}{p}\,H^\flat \odot \tH^\flat\big) =0, \\
\label{E-main-0iiiumbint}
 && \frac{p-1}{p}\,\tH^\flat \otimes \tH^\flat -\frac{1}{2}\,\big(\frac{p-1}{p}\,\<\tH,\tH\>
 +\frac{n-1}{n}\,\<H, H\> +\frac{2(n-1)}{n}\,\Div H \big) g^\top = \lambda\,g^\top .
\end{eqnarray}
\end{subequations}
\end{theorem}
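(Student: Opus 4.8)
The plan is to exploit that $\bar J_{\rm mix}$ is varied in its two arguments independently, so $(g,\I)$ is critical if and only if it is critical separately: (i) for all variations of $\I$ with $g$ fixed, and (ii) for volume-preserving variations of $g$ with $\I$ fixed. Part (i) is exactly Corollary~\ref{T-main1}, into whose system (\ref{ELconnectionNew1}-j) I would substitute the statistical identities $\I^*=\I$, $\I^\wedge=\I$. Then the left-hand sides of \eqref{ELconnectionNew1} and \eqref{ELconnectionNew8} become symmetric in their arguments while the right-hand sides $-2\widetilde T$, $-2T$ are skew, forcing $T=0=\widetilde T$ (condition~1) and $(\I_U V)^\top=0=(\I_X Y)^\bot$ (conditions~3,~4) --- which, by the total symmetry of a statistical $\I$, also makes its mixed components $\widetilde{\mD}\times{\mD}\to TM$ vanish; \eqref{ELconnectionNew2} and \eqref{ELconnectionNew9} give $\widetilde H=0$ for $n>1$ and $H=0$ for $p>1$ (conditions~5,~6); and \eqref{ELconnectionNew5}, \eqref{ELconnection1ab}, with the mixed part of $\I$ gone (and $\tr^\bot\I\in{\mD}$, $\tr^\top\I\in\widetilde{\mD}$ by conditions~3,~4), collapse to $\tr^\bot\I=0=\tr^\top\I$, i.e. condition~2. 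Total umbilicity is the first assertion of Corollary~\ref{T-main1} (condition~7). The remaining members of (\ref{ELconnectionNew1}-j) become identities or, together with \eqref{ELconnectionNew7} and \eqref{ELconnectionNew14}, yield $H=\widetilde H=0$ whenever $n+p>2$. Thus part (i) is equivalent to conditions~1--7.

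For part (ii), I would use that $\bar{\rm S}_{\,\rm mix}={\rm S}_{\rm mix}+{\rm S}_{\,\I}$ for a statistical connection, so the derivative at $t=0$ of $\int_M\bar{\rm S}_{\,\rm mix}\,{\rm d}\vol_{g_t}$ is the sum of the ${\rm S}_{\rm mix}$-variation --- governed, through Theorem~\ref{T-main00} and Definition~\ref{D-Ric-D}, by the mixed Ricci tensor \eqref{E-main-0ij} --- and the ${\rm S}_{\,\I}$-variation, computed from Proposition~\ref{L-QQ-first} and Lemma~\ref{L-dT-3} as in the proof of Theorem~\ref{propELSmixI}; in both, one splits $B=\dot g$ into its ${\mD}\times{\mD}$-, mixed-, and $\widetilde{\mD}\times\widetilde{\mD}$-parts, removes divergences via \eqref{E-DivThm-2}, and carries the volume constraint with a multiplier $\lambda\in\RR$. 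Then I would insert the structure from part (i): with $T=0=\widetilde T$ and both distributions totally umbilical one has ${\cal T}=\widetilde{\cal T}={\cal K}=\widetilde{\cal K}=0$, $A_{{\cal E}_i}=\frac1n\<H,{\cal E}_i\>\id^\top$ (hence ${\cal A}=\frac1{n^2}\<H,H\>\id^\top$, dually for $\widetilde{\cal A}$), ${\rm S}_{\rm mix}$ is given by \eqref{E-PW-Smix-umb}, and $\Psi$, $\Upsilon_{h,h}$, $\Upsilon_{T,T}$, ${\rm Def}_{\mD}\,H$, $r_{\mD}$, $\alpha$, $\theta$ and their tildes all become explicit in $H$ and $\widetilde H$; since moreover $\I$ is diagonal with vanishing partial traces, the ${\rm S}_{\,\I}$-variation contributes nothing beyond what is already recorded. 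Reading off the three blocks of what remains yields exactly \eqref{E-main-0iumbint}, \eqref{E-main-0iiumbint}, \eqref{E-main-0iiiumbint}; when $n>1$ and $p>1$, conditions~5 and~6 give $H=\widetilde H=0$, so these three equations hold trivially with $\lambda=0$, as the statement notes.

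The converse is the same computation run in reverse: assuming 1--7 and \eqref{E-main-0iumbint}--\eqref{E-main-0iiiumbint}, every equation of (\ref{ELconnectionNew1}-j) collapses --- using $\I^*=\I^\wedge=\I$ and the integrable, totally umbilical, diagonal structure --- either to an identity or to one of 1--7, so $(g,\I)$ is critical for fixed $g$ by Corollary~\ref{T-main1}, and the computed $g$-variation vanishes because its surviving blocks coincide with \eqref{E-main-0iumbint}--\eqref{E-main-0iiiumbint}. I expect the main obstacle to be part (ii): specializing the lengthy variational formulas --- the mixed Ricci tensor \eqref{E-main-0ij}, with its Casorati-type, $\Upsilon$-, and ${\rm Def}$-terms, together with the ${\rm S}_{\,\I}$-variation of Lemma~\ref{L-dT-3} --- to integrable, totally umbilical distributions carrying a diagonal statistical contorsion tensor with vanishing partial traces, and verifying that, after divergences are discarded, every surviving term assembles precisely into \eqref{E-main-0iumbint}--\eqref{E-main-0iiiumbint}, with nothing left over.
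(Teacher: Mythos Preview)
Your treatment of part~(i) matches the paper's: specialize Corollary~\ref{T-main1} to $\I^*=\I=\I^\wedge$ and read off conditions~1--7.

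There is, however, a genuine gap in your plan for part~(ii). You propose to use the identity $\bar{\rm S}_{\,\rm mix}={\rm S}_{\rm mix}+{\rm S}_{\,\I}$ and conclude that $\partial_t\int_M\bar{\rm S}_{\,\rm mix}\,{\rm d}\vol_{g_t}$ splits into the ${\rm S}_{\rm mix}$-variation plus the ${\rm S}_{\,\I}$-variation. But that identity is a consequence of $\I^*=\I$, and $\I^*$ depends on the metric: when $g_t$ varies with $\I$ fixed, the relation $\I^*=\I$ fails for $t\ne 0$, so the decomposition $\bar{\rm S}_{\,\rm mix}={\rm S}_{\rm mix}+{\rm S}_{\,\I}$ is not preserved along the variation and cannot be differentiated termwise. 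The paper flags exactly this point (see \eqref{eqvarstat} and the sentence following it). Concretely, in the general expression \eqref{E-defQ} for $Q$ there are terms built from $\Theta=\I-\I^*+\I^\wedge-\I^{*\wedge}$ and from $\I^*-\I$; at a statistical $\I$ these vanish at $t=0$, but their $g$-derivatives do not vanish a priori, because $\partial_t\I^*=[\I^*,B^\sharp]\ne 0$ (Lemma~\ref{L-dT-2}). Following ``the proof of Theorem~\ref{propELSmixI}'' you would only pick up the three ${\rm S}_{\,\I}$-terms \eqref{dtIIproduct}, \eqref{dttracetopI}, \eqref{dttraceperpI} and miss the contributions \eqref{dtThetaA}, \eqref{dtThetaT}, \eqref{dtThetatildeT}, \eqref{dtThetatildeA}, \eqref{dtIEaH}, \eqref{dtIeiH}.

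What the paper actually does is vary the full $Q$ via all of Lemma~\ref{L-dT-3}, and then check, one formula at a time, that under conditions~1--7 every one of these extra derivatives vanishes (e.g.\ $\partial_t\langle\Theta,A\rangle$ vanishes by conditions~2--4 and total umbilicity; $\partial_t\langle\Theta,T^\sharp\rangle$ and $\partial_t\langle\Theta,\tilde T^\sharp\rangle$ vanish by integrability together with $\Theta=0$; and so on). Only after establishing that $\partial_t Q=0$ does the paper conclude that the $g$-variation of $\bar{\rm S}_{\,\rm mix}$ reduces to that of ${\rm S}_{\rm mix}$, whence Theorem~\ref{T-main00} for integrable totally umbilical distributions yields \eqref{E-main-0iumbint}--\eqref{E-main-0iiiumbint}. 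Your conclusion that ``the ${\rm S}_{\,\I}$-variation contributes nothing'' is correct, but insufficient: you must also show the remaining terms of $\partial_t Q$ contribute nothing, and this is where most of the work in the paper's proof lies.
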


\begin{proof}
For any $\I$ that corresponds to a statistical connection, we have $\I^\wedge = \I$ and $\I^* = \I$.
Condition~1 follows from (\ref{ELconnectionNew1},f)
and $\I = \I^\wedge$. Then (\ref{ELconnectionNew1},f), condition 1 and
\[
 \<\I_i {\cal E}_j, E_a\>=\<\I^*_j {\cal E}_i, E_a\>=\<\I_a {\cal E}_i, {\cal E}_j\>,\quad\forall\, i,j,a,
\]
yield condition 3. We get condition 5 from $\I=\I^*$ and \eqref{ELconnectionNew2}.  Conditions 4 and 6 are dual to conditions 3 and 5, and are obtained analogously. Condition 2 follows from $\I= \I^*$, condition 3 (and its dual condition 5) and \eqref{ELconnectionNew4} (and its dual \eqref{ELconnectionNew9}). Condition 7 follows from Corollary~\ref{T-main1}.

Let $g_t$ be a $g^\pitchfork$-variation of $g$.
Although for statistical manifolds, \eqref{E-Q1Q2-gen} reads as
\begin{equation}\label{eqvarstat}
 \bar{{\rm S}}_{\,\rm mix} -{{\rm S}}_{\,\rm mix}
 = {{\rm S}}_{\I} = \<\tr^\top\I,\,\tr^\bot\I\> +\frac12\,\<\I,\,\I\>_{\,|\,V} ,
\end{equation}
we cannot vary this formula with respect to metric with fixed $\I$, because when $g$ changes, $\I$ may no longer correspond to statistical connections (condition $\I = \I^*$ may not be preserved by the variation). Instead, we use Lemma~\ref{L-dT-3} and derive
from \eqref{dtIIproduct}
for $\I$ corresponding to a statistical connection (for which $\I = \I^* = \I^\wedge$ and $\Theta=0$) that
\begin{eqnarray*}
 && \dt \<\I^*, \I^\wedge\>_{\,|\,V}
 =\sum B({\cal E}_i, E_b)\big(\<\I_j {\cal E}_i, \I_b {\cal E}_j\> -3 \<\I_a {\cal E}_i, \I_{ b} E_a\>\big)
 -\sum B({\cal E}_i, {\cal E}_j) \<\I_{j} E_a, \I_a {\cal E}_i\> .
\end{eqnarray*}
From conditions 3-4:
 $\dt \<\I^*, \I^\wedge\>_{\,|\,V} = 0$. 
From \eqref{dtThetaA} with $\Theta=0$ we have
\begin{eqnarray*}
 && \dt \< \Theta, A \>
  = 2\sum B({\cal E}_j, E_b) \big(\< h (E_a, E_b), {\cal E}_i\> \<\I_a {\cal E}_i, {\cal E}_j\>
  -\< h (E_a, E_c), {\cal E}_j\> \<\I_a E_b, E_c\>\big) \nonumber \\
 && -\,2\sum B({\cal E}_i, {\cal E}_j) \< h (E_a, E_b), {\cal E}_i\> \<\I_a {\cal E}_j, E_b\> .
\end{eqnarray*}
For totally umbilical distribution, the last equation further simplifies to
\begin{eqnarray*}
 && \dt \< \Theta, A \> = \frac{2}{n} \sum B({\cal E}_j, E_b) \big(\< H, {\cal E}_i\> \<\I_b {\cal E}_i, {\cal E}_j\>
 -\< H, {\cal E}_j\> \<\I_a E_b, E_a\> \big) \nonumber \\
 && -\frac{2}{n} \sum B({\cal E}_i, {\cal E}_j) \< H, {\cal E}_i\> \<\I_a {\cal E}_j, E_a\> .
\end{eqnarray*}
From conditions 2-4 and we obtain in the above
 $\dt \< \Theta, A \> = 0$.
For integrable distributions, since $\Theta=0$, we have
\begin{eqnarray*}
 \dt \< \Theta, T^\sharp \> \eq 0,\quad
 \dt \< \Theta, {\tilde T}^\sharp \> = 0,
\end{eqnarray*}
and from \eqref{dtThetatildeA}, with $\Theta=0$ and totally umbilical distributions, we have
\begin{eqnarray*}
 \dt \< \Theta, {\tilde A} \> \eq
  \frac{2}{p} \sum B({\cal E}_j, E_b) \big(\<{\tilde H}, E_a\> \<\I_a {\cal E}_j, E_b\>
  -\<{\tilde H}, E_b\> \<\I_j {\cal E}_i, {\cal E}_i\> \big) \\
 \plus\frac{2}{p} \sum B({\cal E}_i, {\cal E}_j) \<{\tilde H}, E_a\> \<\I_a {\cal E}_j, {\cal E}_i\> .
\end{eqnarray*}
From conditions 3-4 and 2 we get in the above
\begin{equation*}
 \dt \< \Theta, {\tilde A}\> = -\frac{2}{p}\sum B({\cal E}_j, E_b) \<{\tilde H}, E_b\> \<\I_i {\cal E}_i, {\cal E}_j\>=0.
\end{equation*}
From conditions 3-4, using \eqref{dttracetopI} and \eqref{dttraceperpI}, we get
\begin{eqnarray*} 
 && \dt \<\tr^\top \I, \tr^\perp \I^*\> = -\sum B({\cal E}_i, E_b) \<\tr^\top\I, E_c\> \<E_c, \I_b {\cal E}_i\> =0 ,\\
 && \dt \<\tr^\top \I^*, \tr^\perp \I\>
 =\!\sum B({\cal E}_j, E_b)
 \<\tr^\bot\I -2\tr^\top\I,\, \I_b {\cal E}_j\>
 -\!\sum B({\cal E}_i, {\cal E}_j) \<\I_j {\cal E}_i, \tr^\top\I\> .
\end{eqnarray*}
From conditions 3-4 and 2 we get
 $\dt \<\tr^\top \I^*, \tr^\perp \I\> = 0$.
From $\I^* = \I$, using \eqref{dtIEaH}, we obtain
\begin{eqnarray*} 
 && \dt \< \tr^\top(\I^* -\I), \tH -H\>
 =\sum B({\cal E}_i, {\cal E}_j) \<\tr^\top\I, {\cal E}_j\> \<{\cal E}_i, H\>\\
 && +\sum B({\cal E}_j, E_b) \big( \<\I_b {\cal E}_j, \tH -H\>
 +\<\tr^\top\I, E_b\> \<{\cal E}_j, H\> -\<\tr^\top\I, {\cal E}_j\> \<E_b, \tH\> \big) .
\end{eqnarray*}
From conditions 3-4 and 2 we get
 $\dt\, \<\, \sum (\I^*_a -\I_a) E_a, \tH -H\> = 0$.
Similarly, from \eqref{dtIeiH} we obtain
\begin{eqnarray*} 
 && \dt \<\tr^\bot(\I^* -\I), \tH -H\>
 = \sum B({\cal E}_i, {\cal E}_j) \big(\<\I_i {\cal E}_j, \tH -H\> + \<\tr^\bot\I, {\cal E}_i\> \< H, {\cal E}_j\> \big)\\
 && +\!\sum B({\cal E}_j, E_b) \big(\<\tr^\bot\I, E_b\> \< H, {\cal E}_j\>
 + \<\I_j E_b, \tH -H\>  -\<\tr^\bot\I, {\cal E}_j\> \<\tH, E_b\> \big) .
\end{eqnarray*}
From conditions 3-4 and 2 we get in the above
\begin{equation*} 
\dt \<\tr^\bot(\I^* -\I), \tH -H\> = - \sum B({\cal E}_i, {\cal E}_j)\<\I_j\, {\cal E}_i, H\> .
\end{equation*}
By condition 6 we have $H=0$ if $p>1$ and if $p=1$ we only have $i=j=k=1$ and by condition~2,
\[
 \<\I_j {\cal E}_i, {\cal E}_k\>=\<\tr^\perp \I, \,{\cal E}_1\>=0.
\]
Hence,
for $\I$ corresponding to a statistical connection satisfying the assumptions,
any variation of $\overline{{\rm S}}_{\rm mix}$ with respect to $g$ is just a variation of ${\rm S}_{\rm mix}$ with respect to $g$.
Thus, remaining (\ref{E-main-0iumbint}-c) are equations of Theorem~\ref{T-main00}
written for both distributions integrable and umbilical.
\end{proof}

\begin{corollary}
Let $M$ be a closed manifold. Then $(g, \I)$, where $\I$ corresponds to a statistical connection on $(M,g)$, is critical for the action
\eqref{actiongSmix} with respect to all variations of $g$ and $\I$
if and only if $(g, \I)$ satisfy conditions 1-7 of Theorem~\ref{propstatcrit}; furthermore, either $n=p=1$ or $H=0=\tH$.
\end{corollary}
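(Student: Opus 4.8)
The plan is to deduce the corollary from Theorem~\ref{propstatcrit} by upgrading its conclusion from volume-preserving to arbitrary variations; on a closed manifold the only new effect is that the Lagrange multiplier $\lambda$ in {\rm(\ref{E-main-0iumbint}-c)} is forced to vanish.

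First I would split the joint problem. Since $\bar J_{{\rm mix}}$ is jointly smooth in $(g,\I)$, the pair $(g,\I)$ is critical for all variations of $g$ and $\I$ if and only if it is critical for variations of $\I$ with $g$ fixed and for variations of $g$ with $\I$ fixed, separately. Variations of $\I$ leave the volume form unchanged, so (as in the first paragraph of the proof of Theorem~\ref{propstatcrit}, using Corollary~\ref{T-main1} together with $\I=\I^*=\I^\wedge$) the first part is equivalent to conditions 1--7. For the second part I would use the reduction carried out in the proof of Theorem~\ref{propstatcrit}: once conditions 1--7 hold, $\frac{d}{dt}\int_M\overline{\rm S}_{\,\rm mix}\,{\rm d}\vol_{g_t}=\frac{d}{dt}\int_M{\rm S}_{\rm mix}\,{\rm d}\vol_{g_t}$ for every variation $g_t$ of the metric, so criticality for all variations of $g$ means that $g$ is critical for $\int_M{\rm S}_{\rm mix}\,{\rm d}\vol_g$ with respect to all variations. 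On a closed manifold this is equation \eqref{E-gravity} with $\Lambda=0$ and $\Xi=0$ (just as for the Einstein--Hilbert action, removing the volume constraint removes precisely the $\lambda g$ freedom of the volume-preserving case, cf.~\cite{besse}), which under conditions 1--7 becomes the system {\rm(\ref{E-main-0iumbint}-c)} with $\lambda=0$, by the same computation that produces {\rm(\ref{E-main-0iumbint}-c)} in Theorem~\ref{propstatcrit}. Hence $(g,\I)$ is critical for all variations of $g$ and $\I$ if and only if conditions 1--7 hold and {\rm(\ref{E-main-0iumbint}-c)} hold with $\lambda=0$, and it remains to show that, under conditions 1--7, the latter is equivalent to ``$n=p=1$ or $H=0=\tH$''.

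The direction ``$\Leftarrow$'' is immediate: for $n=p=1$ the coefficients $\frac{n-1}{n}$, $\frac{p-1}{p}$ vanish, while for $H=0=\tH$ every term containing $H$ or $\tH$ vanishes, so in either case the left-hand sides of {\rm(\ref{E-main-0iumbint}-c)} are zero and the equations hold with $\lambda=0$. For ``$\Rightarrow$'' I would argue by cases. If $n>1$ and $p>1$, conditions 5 and 6 already give $H=0=\tH$, and if $n=p=1$ there is nothing to prove. In the case $n=1<p$ (the case $p=1<n$ being dual), condition 6 gives $H=0$, hence $\<H,H\>=\Div H=0$, so \eqref{E-main-0iiiumbint} with $\lambda=0$ reduces to $\frac{p-1}{p}\big(\tH^\flat\otimes\tH^\flat-\frac12\<\tH,\tH\>\,g^\top\big)=0$; writing $\widetilde\mD=\RR N$ with $\<N,N\>=\eps_N$ and $\tH=fN$, so that $g^\top=\eps_N\,N^\flat\otimes N^\flat$, $\tH^\flat=fN^\flat$ and $\<\tH,\tH\>=\eps_N f^2$, evaluating the displayed identity on $(N,N)$ gives $\frac{p-1}{2p}f^2=0$, hence $f=0$ and $\tH=0$, i.e. $H=0=\tH$. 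Combining the two directions gives the corollary, with the alternative $H=0=\tH$ automatic whenever $n>1$ and $p>1$.

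The main obstacle is the bookkeeping around the multiplier $\lambda$: one must check that arbitrary metric variations add nothing to Theorem~\ref{propstatcrit} except $\lambda=0$, and that this vanishing is genuinely needed, since from \eqref{E-main-0iiiumbint} with $\lambda$ free one can only read off $\lambda\eps_N=\frac{p-1}{2p}f^2$, not $f=0$. The second delicate point, the rank-one argument, is short but essential: on a non-degenerate one-dimensional $\widetilde\mD$ the symmetric $(0,2)$-tensor $\tH^\flat\otimes\tH^\flat-\frac12\<\tH,\tH\>\,g^\top$ vanishes only if $\tH=0$, and it is precisely this that converts the $\lambda=0$ case of \eqref{E-main-0iiiumbint} into $\tH=0$.
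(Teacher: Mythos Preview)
Your proof is correct, and it takes a genuinely different route from the paper's. You exploit that ``all variations of $g$'' (as opposed to volume-preserving ones) forces the Lagrange multiplier $\lambda$ in {\rm(\ref{E-main-0iumbint}-c)} to vanish; the mixed case $n=1<p$ then collapses to the rank-one identity $\frac{p-1}{2p}f^2=0$ on $\widetilde{\mD}$, and $\tH=0$ follows by pure algebra. The paper instead keeps $\lambda\in\RR$ free (as in Theorem~\ref{propstatcrit}), reads off from \eqref{E-main-0iumbint} that $\frac{n-1}{2n}\<H,H\>=\lambda$ is a constant, and then invokes the integral formula $\int_M\tau_1\,{\rm d}\vol_g=0$ for the codimension-one leaves on the \emph{closed} manifold to force $\tau_1$---and hence $H$---to vanish somewhere, and therefore everywhere. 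Your argument is more elementary and, as you may have noticed, does not actually need the closedness hypothesis beyond the usual compactly supported variations; the paper's argument, by contrast, goes through verbatim even for volume-preserving variations, and it is precisely there that the assumption ``$M$ closed'' does essential work.
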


\begin{proof}
Clearly, (\ref{E-main-0iumbint}-c) hold when $n=p=1$. If $n,p>1$ then conditions 5 and 6 imply $H=\tH =0$. Suppose that $n>1$, $p=1$ and $H \neq 0$ and let $N \in {\cal D}$ be a local unit vector field. Then, evaluating \eqref{E-main-0iumbint} on $N \otimes N$,
we obtain
\begin{equation} \label{H2const}
\frac{n-1}{2n}\,\<H,H\> = \lambda.
\end{equation}
For $p=1$ we have $H = - (\Div N) N$ 
and $\int_M \tau_1 \,{\rm d} \vol_g =0$ for $\tau_1=\<H,N\>$, e.g., \cite{RWa-1}.
The integral formula shows that $\tau_1$ vanishes somewhere on $M$.
On the other hand, \eqref{H2const} yields that $\<H,H\> = \tau_1^2$ is constant on $M$, hence $H=0$.
Since $n>1$, condition 5 in Theorem~\ref{propstatcrit} implies also $\tH=0$.
\end{proof}

Equation \eqref{eqvarstat} and Corollary \ref{statisticalcritSmixI} imply the following
\begin{corollary}
Let $(g, \I)$ correspond to a statistical connection.
Then $(g, \I)$ is critical for the action \eqref{actiongSmix} with respect to all variations of metric and variations of $\I$
corresponding to statistical connections if and only if {\rm(\ref{ELSmixIstat1},b)} and equations of Theorem~\ref{T-main00} hold.
\end{corollary}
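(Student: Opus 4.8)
The plan is to exploit the pointwise identity \eqref{eqvarstat}, valid on any statistical metric-affine structure: $\bar{\rm S}_{\rm mix}={\rm S}_{\rm mix}+{\rm S}_{\I}$. Since here $\I$ is varied only among contorsion tensors of statistical connections, every admissible family $(g_t,\I_t)$ stays statistical, so this identity holds along the whole family and hence
\[
 \bar J_{\rm mix}(g_t,\I_t)=\int_M {\rm S}_{\rm mix}\,{\rm d}\vol_{g_t}+\int_M {\rm S}_{\I}\,{\rm d}\vol_{g_t};
\]
that is, restricted to statistical connections the action \eqref{actiongSmix} is the sum of the total mixed scalar curvature of $\nabla$ (a functional of $g$ alone) and the action \eqref{actiongISmix}. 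I would also record that every symmetric $(0,2)$-tensor $\dot g$ occurs as the initial metric velocity of such a family: encode a statistical structure by its totally symmetric cubic form $C(X,Y,Z)=\langle\I_X Y,Z\rangle$, put $g_t=g+t\dot g$, and let $\I_t$ be the statistical contorsion tensor with cubic form $C$ relative to $g_t$; then $(g_t,\I_t)$ stays statistical, so metric variations inside the statistical class probe the full Euler--Lagrange tensor.

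For the ``if'' direction I would argue directly: if the equations of Theorem~\ref{T-main00} hold, the first summand is stationary under volume-preserving metric variations; if {\rm(\ref{ELSmixIstat1},b)} hold, then by Corollary~\ref{statisticalcritSmixI} the pair $(g,\I)$ is critical for \eqref{actiongISmix} with respect to all metric variations and all statistical variations of $\I$, so the second summand is stationary along every family in the statistical class. Adding the two, $\bar J_{\rm mix}$ is stationary.

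For the ``only if'' direction I would proceed in two steps. First restrict to families with $g$ fixed: the first summand is constant, so $\I$ must be critical for \eqref{actiongISmix} with fixed $g$ among statistical connections, which by \cite[Corollary~7]{RZconnection} (recalled in the proof of Corollary~\ref{statisticalcritSmixI}) gives equations {\rm(\ref{ELSmixIstatI1}-c)}. Second, take metric variations inside the class; the Euler--Lagrange condition reads $\delta\big(\int_M {\rm S}_{\rm mix}\,{\rm d}\vol\big)+\delta\big(\int_M {\rm S}_{\I}\,{\rm d}\vol\big)=0$, where the first term is the Euler--Lagrange tensor of Theorem~\ref{T-main00} (built solely from the $\nabla$-geometry of $\widetilde{\mD},{\mD}$) and the second, thanks to {\rm(\ref{ELSmixIstatI1}-c)}, collapses exactly as \eqref{ELSmixIadapted} collapses to \eqref{ELSmixIadaptedstat} in the proof of Corollary~\ref{statisticalcritSmixI} --- i.e. to a tensor that is algebraic and quadratic in $(\tr^\top\I)^\top$ and $(\tr^\perp\I)^\perp$. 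Running the evaluation-on-$W\otimes W$ and duality arguments of that proof on the combined identity, I would extract $(\tr^\top\I)^\top=0=(\tr^\perp\I)^\perp$, i.e. \eqref{ELSmixIstat1}, and then \eqref{ELSmixIstat2} follows via {\rm(\ref{ELSmixIstatI2},c)}. Finally, once {\rm(\ref{ELSmixIstat1},b)} hold, Corollary~\ref{statisticalcritSmixI} makes $\int_M{\rm S}_{\I}\,{\rm d}\vol$ stationary along every family in the class, so the residual condition is $\delta\big(\int_M{\rm S}_{\rm mix}\,{\rm d}\vol\big)=0$, which is precisely the system of Theorem~\ref{T-main00}.

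The step I expect to be the main obstacle is the decoupling used at the end of the ``only if'' direction: a priori the $\nabla$-geometric tensor of Theorem~\ref{T-main00} and the $\I$-quadratic tensor could compensate each other, so one must verify that the evaluations isolating $(\tr^\top\I)^\top$ and $(\tr^\perp\I)^\perp$ still go through in the presence of the curvature terms. Two tools I would use for this: the homogeneity $\bar J_{\rm mix}(g,s\I)=\int_M{\rm S}_{\rm mix}\,{\rm d}\vol+s^2\int_M{\rm S}_{\I}\,{\rm d}\vol$ (legitimate because $(g,s\I)$ is again statistical), which already at $s=1$ forces $\int_M{\rm S}_{\I}\,{\rm d}\vol=0$ and, combined with {\rm(\ref{ELSmixIstatI1}-c)} and \eqref{eqvarstat}, constrains $\I$ on the mixed subspace $V$ (and, for Riemannian $g$, forces $\langle\I,\I\rangle_{|V}\equiv 0$); and the simultaneous use of the $\widetilde{\mD}\times\widetilde{\mD}$-, ${\mD}\times{\mD}$- and $V$-components of the metric-variation equation to eliminate the metric-only tensors and reduce to a purely algebraic system in $\I$, to which the argument of Corollary~\ref{statisticalcritSmixI} then applies.
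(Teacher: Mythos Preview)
Your overall strategy—split $\bar J_{\rm mix}=J_{\rm mix}+I$ on the statistical locus via \eqref{eqvarstat}, then invoke Corollary~\ref{statisticalcritSmixI} for the $I$-part and Theorem~\ref{T-main00} for the $J_{\rm mix}$-part—is exactly the paper's approach; the paper's proof is literally the one sentence ``Equation~\eqref{eqvarstat} and Corollary~\ref{statisticalcritSmixI} imply the following.'' Your careful parameterization of the statistical locus by $(g,C)$ and your handling of the ``if'' direction are fine and agree with the paper.

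The gap you yourself flag in the ``only if'' direction is real, and your proposed fixes do not close it. Once you have (\ref{ELSmixIstatI1}-c) from the $\I$-variations, the metric Euler--Lagrange equation reads $(\text{tensor of Theorem~\ref{T-main00}})+(\text{tensor reducing to }\eqref{ELSmixIadaptedstat})=0$ on $\mD\times\mD$ (and dually). The extraction argument in Corollary~\ref{statisticalcritSmixI}—evaluate on $W\otimes W$ with $W\perp(\tr^\perp\I)^\perp$—used that the right-hand side was identically zero; here the $\nabla$-geometric tensor of Theorem~\ref{T-main00} is still present and absorbs the evaluation, so you cannot conclude $(\tr^\top\I)^\top=0$ this way. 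Your scaling trick $(g,s\I)$ only yields the integral condition $\int_M{\rm S}_{\I}\,{\rm d}\vol_g=0$; even in the Riemannian case this gives at most $\langle\I,\I\rangle_{|V}=0$ pointwise, which under (\ref{ELSmixIstatI1}-c) indeed forces \eqref{ELSmixIstat2}, but it does not by itself produce \eqref{ELSmixIstat1}, and in the pseudo-Riemannian case it gives nothing pointwise. The paper's terse proof does not address this decoupling either, so you are not missing an argument that is written down; but as stated, your proposal does not complete the ``only if'' direction.
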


\subsection{Metric connections}
\label{sec: 2-3}


Here, we consider $g$ and $\I$ as independent variables in the action \eqref{actiongSmix}, hence for every pair $(g,\I)$ critical for \eqref{actiongSmix} the contorsion tensor $\I$ must be critical for \eqref{actiongSmix} with fixed $g$, and thus satisfy Corollary~\ref{T-main1}. 
Using this fact, we characterize those critical values of \eqref{actiongSmix}, that are attained on the set of pairs $(g, \I)$, where $\I$ is the contorsion tensor of a metric (in particular, adapted) connection for $g$.

%
\begin{proposition}\label{propQ}
Let the contorsion tensor $\I$ of a metric connection $\bar\nabla$ be critical for the action \eqref{actiongSmix} with fixed $g$.
Then $\widetilde{\mD}$ and $\mD$ are both totally umbilical and for $Q$ given in \eqref{E-defQ} we have
\begin{eqnarray} \label{criticalQmetricconnection}
&& \frac12\,Q = \frac{2n-1}{n}\,\<\tr^\top{\I}, H\> + \frac{2p-1}{p}\,\<\tr^\perp \I , \tH\> \nonumber \\
&& +\,\frac{p-1}{p}\,\<\tH,\tH\> + \frac{n-1}{n}\,\<H, H\>
+
\<T, T\> +
\< {\tilde T}, {\tilde T} \> .
\end{eqnarray}
\end{proposition}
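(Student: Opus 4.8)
The first assertion --- that $\widetilde{\mD}$ and $\mD$ are totally umbilical --- needs no new argument: since $\I$ is critical for \eqref{actiongSmix} with $g$ fixed among all $(1,2)$-tensors, Corollary~\ref{T-main1} applies and its first conclusion is exactly the umbilicity of both distributions. So the content is formula \eqref{criticalQmetricconnection} for $Q$ from \eqref{E-defQ}. The plan is to put $\I^*=-\I$ into \eqref{E-defQ} and then substitute the remaining equations of Corollary~\ref{T-main1} in their metric-connection form, together with the umbilicity identities $h=\frac1n H\,g^\top$ and $\tilde h=\frac1p\,\tH\,g^\bot$.

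With $\I^*=-\I$ the defining expression \eqref{E-defQ} collapses: the first two trace-product terms merge into $2\,\<\tr^\top\I,\tr^\bot\I\>$, the term $\<\I^*,\I^\wedge\>_{\,|\,V}$ becomes $-\<\I,\I^\wedge\>_{\,|\,V}$, one has $\tr^\top(\I-\I^*)=2\,\tr^\top\I$ and $\tr^\bot(\I-\I^*)=2\,\tr^\bot\I$, and $\I-\I^*+\I^\wedge-\I^{*\wedge}=2(\I+\I^\wedge)$ since $\I^{*\wedge}=-\I^\wedge$ for a metric $\I$; hence $Q=2\,\<\tr^\top\I,\tr^\bot\I\>-\<\I,\I^\wedge\>_{\,|\,V}-2\,\<\tr^\top\I-\tr^\bot\I,\,H-\tH\>-2\,\<\I+\I^\wedge,\,\tA-\tT+A-\T\>$. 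Into this I would feed the equations of Corollary~\ref{T-main1} specialised to $\I^*=-\I$: they pin down (for $n>1$, resp.\ $p>1$) the components of $\tr^\bot\I$ and $\tr^\top\I$ tangent and normal to $\widetilde{\mD}$ in terms of $H$ and $\tH$, and they identify the mixed components $(\I_XU)^\bot=\tT_XU$ and $(\I_UX)^\top=\T_UX$ for $X\in\mathfrak{X}^\top$, $U\in\mathfrak{X}^\bot$, leaving undetermined only the $\widetilde{\mD}$-part of $\I|_{\widetilde{\mD}\times\mD}$ and the $\mD$-part of $\I|_{\mD\times\widetilde{\mD}}$. I would then expand the two tensorial inner products $\<\I,\I^\wedge\>_{\,|\,V}$ and $\<\I+\I^\wedge,\,\tA-\tT+A-\T\>$ over an adapted orthonormal frame, decomposing each value of $\I$ into its $\widetilde{\mD}$- and $\mD$-parts. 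The undetermined components should occur in these sums only paired with $\T$-type operators --- where they cancel between the two products --- and paired with the mixed Weingarten operators, which by umbilicity are $A_{{\cal E}_i}E_a=\frac1n\<H,{\cal E}_i\>E_a$ and $\tA_{E_a}{\cal E}_i=\frac1p\<\tH,E_a\>{\cal E}_i$; using metric compatibility (skew-adjointness of $Y\mapsto\I_XY$) the latter contributions collapse to $\<\tr^\top\I,H\>$ and $\<\tr^\bot\I,\tH\>$. The squares of the integrability operators give $\sum\<\T_{{\cal E}_i}E_a,\T_{{\cal E}_i}E_a\>=\<T,T\>$ and its dual $\<\tilde T,\tilde T\>$, while the cross terms $\sum\<\T_{{\cal E}_i}E_a,A_{{\cal E}_i}E_a\>$ (and its dual) vanish by antisymmetry of $T$ and $\tilde T$. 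Collecting the surviving terms, with the known forms of the trace components, should produce \eqref{criticalQmetricconnection}.

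The main obstacle is the accounting in the last step: one must verify, term by term in the adapted frame, that every occurrence of the two components of $\I$ not fixed by Corollary~\ref{T-main1} really cancels, and keep the sign and $\eps$-conventions consistent through the expansion of the $(1,2)$-tensor inner products. A secondary subtlety is the low-rank cases $n=1$ or $p=1$, where some of the equations of Corollary~\ref{T-main1} are not available; this is precisely why \eqref{criticalQmetricconnection} is stated with $\<\tr^\top\I,H\>$ and $\<\tr^\bot\I,\tH\>$ left explicit rather than replaced (as one could when $n,p>1$), and one should re-run the collecting step with the reduced set of relations valid in those cases.
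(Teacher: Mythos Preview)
Your proposal is correct and follows essentially the same route as the paper: invoke Corollary~\ref{T-main1} for umbilicity, specialise \eqref{E-defQ} using $\I^*=-\I$, and then substitute the critical-point equations (\ref{ELconnectionNew1}--j) together with the umbilical identities to evaluate each summand. The paper carries this out by listing each piece of $Q$ separately (the two trace products, the two $\<\tr(\I-\I^*),H-\tH\>$ terms, the four pairings with $A,\tA,\T,\tT$, and $\<\I^*,\I^\wedge\>_{|V}$), computing each one directly, and summing; your reduction of \eqref{E-defQ} to $Q=2\<\tr^\top\I,\tr^\bot\I\>-\<\I,\I^\wedge\>_{|V}-2\<\tr^\top\I-\tr^\bot\I,H-\tH\>-2\<\I+\I^\wedge,\tA-\tT+A-\T\>$ is the same computation rearranged.

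One small correction to your cancellation narrative: the undetermined components (the symmetric parts of $(\I_XY)^\bot$ and $(\I_UV)^\top$, which by metric compatibility also control the pieces you name) do not cancel \emph{between} $\<\I,\I^\wedge\>_{|V}$ and $\<\I+\I^\wedge,\T+\tT\>$; rather, they vanish \emph{within} each of these terms separately, because in each case the undetermined symmetric piece ends up contracted against the antisymmetric $T$ or $\tilde T$. Likewise, in the pairings with $A$ and $\tA$ umbilicity reduces everything to traces, exactly as you say. Your remark that the final formula leaves $\<\tr^\top\I,H\>$ and $\<\tr^\bot\I,\tH\>$ explicit to cover the cases $n=1$ or $p=1$ (where \eqref{ELconnectionNew2} or \eqref{ELconnectionNew9} are unavailable) is spot on.
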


\begin{proof}
By Corollary~\ref{T-main1}, both distributions are totally umbilical.
In this case, using (\ref{ELconnectionNew1}-j), we have
\begin{eqnarray*}
\<\tr^\top({\I}- {\I}^*), H -\tH\>
\eq 2\,\< \tr^\top{\I}, H\> -2\,\frac{p-1}{p}\,\<\tH,\tH\>,\\
\<\tr^\perp({\I} -{\I}^*),\, H -\tH\>
\eq 2\frac{n-1}{n}\,\< H, H\> -2\,\<\tr^\perp \I, \tH\>, \\
-\<\tr^\top{\I},\,\tr^\bot{\I}^*\>
\eq \frac{n-1}{n}\,\<\tr^\top \I, H\> +\frac{p-1}{p}\,\<\tr^\perp\I, \tH\> , \\
-\<\tr^\perp{\I}, \tr^\top{\I}^*\>
\eq \frac{p-1}{p}\,\<\tr^\perp \I, \tH\> +\frac{n-1}{n}\,\<\tr^\top \I, H\> .
\end{eqnarray*}
For totally umbilical distributions and critical metric connection, (\ref{ELconnectionNew1}-j)
yield 
\begin{eqnarray*}
-2 \< \I +\I^\wedge, A  \>
\eq 4
\<\tr^\top \I
, H\>,\\
-2 \< \I +\I^\wedge, \tA  \> \eq 4
\<
\tr^\bot \I , \tH\> , \\
%
\< \I +\I^\wedge, \T \> \eq 2 \sum \<\I_a {\cal E}_i +\I_i E_a, \T_i E_a\>
= 4 \<T, T\>,\\
\< \I +\I^\wedge, \tT \> \eq 4\< {\tilde T}, {\tilde T} \> , \\
\<{\I}^*, \I^\wedge \>_{\,|\,V} \eq  - \< \I, \I^\wedge \>_{\,|\,V}
= - 2 \sum \<\I_i E_a, \I_a {\cal E}_i\> = - 2\< T, T\> - 2\< {\tilde T}, {\tilde T} \>.
\end{eqnarray*}
%
Using the above in \eqref{E-defQ},
and simplifying the expression, completes the proof.
\end{proof}

\begin{remark}\rm
Let $n, p >1$. By \eqref{criticaltrIinlargedimensions}, for critical metric connection 
equation \eqref{criticalQmetricconnection} becomes 
\begin{equation*}
\frac12\,Q = -\< H, H\> -\<\tH, \tH\> +
\<T, T\> +
\< {\tilde T}, {\tilde T} \>.
\end{equation*}
By this and \eqref{E-Q1Q2-gen}, for any critical metric connection on a closed manifold $(M,g)$ we have
\begin{eqnarray*}
\int_M \overline{{\rm S}}_{\rm mix}\,{\rm d} \vol_g
&\overset{\eqref{eq-ran-ex}}=& \int_M \big(\frac{2n-1}{n} \<H,H\> +\frac{2p-1}{p} \<\tH,\tH\>
\big)\,{\rm d} \vol_g .
\end{eqnarray*}
Thus, the right hand side of the above equation is the only critical value of the action \eqref{actiongSmix} (with fixed $g$ on a closed manifold $M$) restricted to metric connections for $g$.
Notice that it does not depend on $\I$, but only on the pseudo-Riemannian geometry of distributions on $(M,g)$. Moreover, on a Riemannian manifold it is always non-negative. 
\end{remark}

Consider pairs $(g, \I)$, where $\I$ corresponds to a metric connection, critical for \eqref{actiongSmix} with respect to $g^\perp$-variations. We apply only adapted variations, as they will allow to obtain the Euler-Lagrange equations without explicit use of adapted frame or defining multiple new tensors. The~case of general variations, mostly due to complicated form of tensor $F$ defined by \eqref{formulaF} that appears in variation formulas, is significantly more involved and beyond the scope of this~paper.
Set
\begin{equation}\label{E-chi}
\chi = \sum\nolimits_{a,j} (\I_j E_a)^{\perp \flat} \odot (\tT_a {\cal E}_j)^{\perp \flat},\qquad
\phi(X,Y) =
(\I +\I^\wedge)_{X^\perp} Y^\perp .
\end{equation}
Define also $\phi^\top$ and $\phi^\perp$ by
$\phi^\top(X,Y) = (\phi(X,Y))^\top$
and
$\phi^\perp(X,Y) = (\phi(X,Y))^\perp$ for $X,Y \in \mathfrak{X}_M$.


\begin{theorem}
A pair $(g, \I)$, where
$\I$ corresponds to a metric connection on $M$, is critical for
\eqref{actiongSmix} with respect to $g^\perp$-variations of metric and arbitrary variations of $\,\I$ if and only if all the following conditions hold:
$\widetilde{\mD}$ and $\mD$ are totally umbilical, 
the following Euler-Lagrange equation~holds:
\begin{subequations}
\begin{eqnarray} \label{ELmetric}
 && -\frac{5n-5}{n} H^\flat \otimes H^\flat -\frac12\,\Upsilon_{T,T} +2\,\widetilde{\cal{T}}^\flat
 \nonumber\\
 && +\big(\frac{3p-3}{p}\Div \tH - \frac{2n-1}{n}\,\<\tr^\top {\I},\, H\>
 -\frac{2p-1}{p}\<\tr^\perp \I,\, \tH\>
 -\Div( (\tr^\perp \I )^\top) \big)\, g^{\perp} \nonumber \\
 && -2 \Div \phi^\top +\<\phi, \frac{3}{2}\,\tH -\frac{1}{2}\,H +\frac{1}{2}\,(\tr^\top \I)^\perp \> + 7 \chi
 + \frac{3n+2}{n}\,H^\flat \odot (\tr^\top \I)^{\perp \flat} =0,
\end{eqnarray}
$\I$ satisfies the following linear algebraic system:
\begin{eqnarray} \label{critcontorsion1}
 && (\I_V\, U -\I_U\, V)^\top = 2\,{\tilde T}(U, V), \\ \label{critcontorsiontTab}
 &&
 \I_U^\top = T^\sharp , \\ \label{trperpIperpH}
 && (\tr^\bot \I)^\perp = \frac{n-1}{n} H, \\ \label{critcontorsionTab}
 && (\I_Y\, X -\I_X\, Y)^\perp = 2\,T(X, Y), \\ \label{critcontorsionTij}
 &&
 \I_X^\bot = \tilde T^\sharp_X, \\
\label{critcontorsionlast}
 && (\tr^\top \I)^\top = \frac{p-1}{p} \tH,
\end{eqnarray}
for all $X,Y\in\widetilde\mD$ and $U,V\in\mD$; 
and
\begin{eqnarray}\label{critcontorsionspec1}
 (\tr^\top \I )^\perp = -H,\quad if\ p>1, \qquad
 (\tr^\perp \I )^\top = -{\tilde H},\quad if\ n>1.
\end{eqnarray}
\end{subequations}
\end{theorem}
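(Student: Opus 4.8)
The plan is to decouple the two families of variations: criticality of $(g,\I)$ for \eqref{actiongSmix} under $g^\perp$-variations of $g$ \emph{and} arbitrary variations of $\I$ splits into two independent requirements. The $\I$-direction is immediate. If $\I$ is critical under arbitrary variations of $\I$ with $g$ fixed, then Corollary~\ref{T-main1} forces $\widetilde{\mD}$ and $\mD$ to be totally umbilical and $\I$ to satisfy the linear system \eqref{ELconnectionNew1}--\eqref{ELconnectionNew14}. Since $\bar\nabla$ is metric, $\I^*=-\I$; substituting this makes every equation containing $\I+\I^*$ trivial and halves those containing $\I-\I^*$, and what remains is precisely \eqref{critcontorsion1}--\eqref{critcontorsionlast} together with the dimension-dependent relations \eqref{critcontorsionspec1}. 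Conversely, those relations together with $\I^*=-\I$ reproduce \eqref{ELconnectionNew1}--\eqref{ELconnectionNew14}, so they are exactly what $\I$-criticality means here.

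For the metric direction I would fix $\I$ and differentiate $t\mapsto\int_M\bar{\rm S}_{\,\rm mix}(g_t,\I)\,{\rm d}\vol_{g_t}$ along an arbitrary $g^\perp$-variation $g_t$. Such a variation is adapted, so the splitting $TM=\widetilde{\mD}\oplus\mD$ is preserved and $B=\dt g_t$ is supported in $\mD\times\mD$; this is what lets one avoid the cumbersome tensor $F$ of \eqref{formulaF}. By Proposition~\ref{L-QQ-first} one has $2\,\bar{\rm S}_{\,\rm mix}=2\,{\rm S}_{\,\rm mix}+\Div(\cdots)-Q$ with $Q$ as in \eqref{E-defQ}; the $t$-derivative of the integral of the divergence term vanishes by \eqref{E-DivThm-2}, so $\frac{d}{dt}\int_M\bar{\rm S}_{\,\rm mix}\,{\rm d}\vol_g=\frac{d}{dt}\int_M({\rm S}_{\,\rm mix}-\tfrac12 Q)\,{\rm d}\vol_g$. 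The first term is supplied by Theorem~\ref{T-main00}: since $B$ lives on $\mD\times\mD$ and the variation need not preserve volume, its contribution to the Euler--Lagrange equation is the $\mD\times\mD$-restriction of \eqref{E-main-0i} with $\lambda=0$, which under $h=\frac1n\,H\,g^\top$ and $\tilde h=\frac1p\,\tH\,g^\bot$ (and ${\rm S}_{\,\rm mix}$ taken in the umbilical form \eqref{E-PW-Smix-umb}) collapses to the umbilical $\mD\times\mD$-block of \eqref{E-main-0ij}.

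The second term, $-\tfrac12\,\frac{d}{dt}\int_M Q\,{\rm d}\vol_g$, is where the auxiliary material does the work: one differentiates term by term each constituent of $Q$ in \eqref{E-defQ} using Lemma~\ref{L-dT-3} — the trace pairings $\<\tr^\bot\I,\tr^\top\I^*\>$ and $\<\tr^\top\I,\tr^\bot\I^*\>$, the mixed pairing $\<\I^*,\I^\wedge\>_{\,|\,V}$, the pairing of $\tr^\top(\I-\I^*)-\tr^\bot(\I-\I^*)$ with $H-\tH$, and the pairings of $\Theta=\I-\I^*+\I^\wedge-\I^{*\wedge}$ with each of $A$, $T^\sharp$, $\tA$, $\tT$ — specialising throughout to $\I^*=-\I$ (so $\Theta=2(\I+\I^\wedge)$ and the tensors $\phi$, $\phi^\top$, $\chi$ of \eqref{E-chi} appear), to $B$ supported on $\mD\times\mD$, and to both distributions totally umbilical. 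Adding this to the variation of $\int_M{\rm S}_{\,\rm mix}\,{\rm d}\vol_g$, reading off the coefficient of $B$ as a symmetric $(0,2)$-tensor on $\mD\times\mD$, and using the already-established constraints \eqref{critcontorsion1}--\eqref{critcontorsionspec1} to eliminate the pinned-down components of $\I$ (its partial traces and $\I_U^\top=T^\sharp_U$, $\I_X^\bot=\tT_X$), the surviving terms should assemble into \eqref{ELmetric}; criticality is the vanishing of this tensor, and every step is reversible, which gives the ``if'' direction.

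The main obstacle is precisely this bookkeeping: extracting the exact numerical coefficients of \eqref{ELmetric} — the $\tfrac{5n-5}{n}$, $\tfrac{3p-3}{p}$, $\tfrac{3n+2}{n}$, the factor $7$ in $7\,\chi$, and the combination $\tfrac32\,\tH-\tfrac12\,H+\tfrac12\,(\tr^\top\I)^\perp$ contracted against $\phi$ — from the long variational identities of Lemma~\ref{L-dT-3} after all the simplifications, and checking that the components of $\I$ \emph{not} determined by \eqref{critcontorsion1}--\eqref{critcontorsionspec1}, namely the symmetric parts of $\I|_{\mD\times\mD}$ and of $\I|_{\widetilde{\mD}\times\widetilde{\mD}}$ recorded in $\phi^\top$ and $\chi$, are exactly the ones that remain in \eqref{ELmetric}. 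A structural point worth keeping in mind is that a $g^\perp$-variation only probes the $\mD\times\mD$-slot, so \eqref{ELmetric} is a single tensor equation on $\mD\times\mD$ rather than a system over the three complementary subbundles of $TM\times TM$ as in Definition~\ref{D-Ric-D}.
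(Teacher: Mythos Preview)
Your proposal is correct and follows essentially the same route as the paper's proof: decouple the $\I$-variation (Corollary~\ref{T-main1} specialised to $\I^*=-\I$) from the $g^\perp$-variation, split $\bar{\rm S}_{\,\rm mix}$ via Proposition~\ref{L-QQ-first} into ${\rm S}_{\,\rm mix}-\tfrac12\,Q$ plus a divergence, take the ${\rm S}_{\,\rm mix}$-contribution from the $\mD\times\mD$-block of \eqref{E-main-0i} in umbilical form, and compute $\partial_t Q$ term by term from Lemma~\ref{L-dT-3} under the standing constraints. The only organisational difference is that the paper first packages the specialisation of Lemma~\ref{L-dT-3} to the metric, umbilical, $g^\perp$-case into the intermediate Lemmas~\ref{L-dT-metric} and~\ref{dtQadapted}, and separately records the value of $\tfrac12\,Q$ at a critical metric connection in Proposition~\ref{propQ}; these are then combined with \eqref{E-PW-Smix-umb} to read off the coefficients of \eqref{ELmetric} --- exactly the bookkeeping you flag as the main obstacle.
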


\begin{proof}
By Corollary~\ref{T-main1},
$\I$ is critical for \eqref{actiongSmix} (with fixed $g$) if and only if distributions $\widetilde{\mD}$ and $\mD$ are totally umbilical and (\ref{critcontorsion1}-g) (together with \eqref{critcontorsionspec1} if their respective assumptions on $n$ and $p$ hold) are satisfied.
%
Let $\I$ be critical for the action \eqref{actiongSmix} with fixed $g$. We shall prove that
a pair $(g, \I)$ is critical for the action \eqref{actiongSmix} with respect to $g^\perp$-variations of metric if and only if \eqref{ELmetric} holds.

By Proposition~\ref{L-QQ-first},
for any variation $g_t$ of metric such that ${\rm supp}(B)\subset\Omega$, and $Q$ in \eqref{E-defQ}, we have
\[
 \frac{d}{dt}\int_M \big(2(\bar{\rm S}_{\,\rm mix}-{\rm S}_{\,\rm mix})+Q\big)\,{\rm d}\vol_g =
 \frac{d}{dt}\int_M (\Div X)\,{\rm d}\vol_g,
\]
where $X=(\tr^\top(\I -\I^*))^\bot +(\tr^\bot(\I -\I^*))^\top$.
Although $X$ is not necessarily zero on $\partial\Omega$, we have ${\rm supp}\,(\partial_t X)\subset\Omega$,
thus, $\frac{d}{dt}\int_M(\Div X)\,{\rm d}\vol_g=0$, see \eqref{E-DivThm-2}, and hence: 
\begin{eqnarray*}
 && \frac{d}{dt}\int_M (\,\bar{\rm S}_{\,\rm mix} - {\rm S}_{\,\rm mix})\,{\rm d}\vol_g
 = -\frac{1}{2} \int_M (\dt Q)\,{\rm d}\vol_g -\frac{1}{4} \int_M Q\,\< B, g \> \,{\rm d}\vol_g ,
\end{eqnarray*}
where, up to divergence of a compactly supported vector field, $\dt Q$ is given in Lemma~\ref{dtQadapted}.
For $g^\perp$-variations we get (see \cite[Eq.~(29]{rz-2} for more general case of $g^\pitchfork$-variations),
\begin{eqnarray*}
 \frac{d}{dt}\int_M {\rm S}_{\,\rm mix}\ {\rm d}\vol_g &=& \int_M \big\< -\Div{\tilde h} -\widetilde{\cal K}^\flat
 - H^\flat \otimes H^\flat +\frac{1}{2}\,\Upsilon_{h,h} +\frac12\,\Upsilon_{T,T} +2\,\widetilde{\cal{T}}^\flat  \\
\nonumber
 \plus\frac{1}{2}\,\big({{\rm S}}_{\,\rm mix} +\Div({\tilde H} - H)\big)\, g^{\perp},\ B\big\>\,{\rm d}\vol_g.
\end{eqnarray*}
For totally umbilical distributions we have
\[
 \widetilde{\cal K}^\flat =0,\quad
 \Div{\tilde h} = \frac{1}{p}\,(\Div{\tilde H})\, g^\perp,\quad
 \big\< \frac{1}{2}\,\Upsilon_{h,h}, B \big\> = \big\< \frac{1}{n}\,H^\flat \otimes H^\flat, B\big\> .
\]
Hence,
\begin{eqnarray*}
 && \frac{d}{dt}\int_M {\rm S}_{\,\rm mix}\, {\rm d}\vol_g
 = \int_M \big\< \frac12\,\Upsilon_{T,T} +2\,\widetilde{\cal{T}}^\flat -\frac{n-1}{n}\, H^\flat \otimes H^\flat \\
 \plus\frac{1}{2}\,\big({{\rm S}}_{\,\rm mix} +\Div(\frac{p-2}{p} {\tilde H} - H) -\frac{1}{2} Q\,\big)\, g^{\perp}
 +\frac{1}{2}\,\delta Q,\,B\big\>\, {\rm d} \vol_g ,
\end{eqnarray*}
where $\delta Q$ is defined by the equality $\<\delta Q, B \> = -\dt Q$, see Lemma~\ref{dtQadapted}.
Thus, the Euler-Lagrange equation for $g^\perp$-variations of metric and totally umbilical distributions is the following:
\begin{equation}\label{ELQ}
 -\frac{2n-2}{n} H^\flat \otimes H^\flat +\Upsilon_{T,T} + 4\,\widetilde{\cal{T}}^\flat
 +\big({{\rm S}}_{\,\rm mix} +\Div(\,\frac{p-2}{p} {\tilde H} - H) -\frac{1}{2}\,Q\, \big)\, g^{\perp} +\delta Q = 0 .
\end{equation}
Using Lemma~\ref{dtQadapted}, Proposition~\ref{propQ} and \eqref{E-PW-Smix-umb} in \eqref{ELQ}, we obtain
\begin{eqnarray*}
 && -\frac{5n-5}{n} H^\flat \otimes H^\flat -\frac12\,\Upsilon_{T,T} + 2\,\widetilde{\cal{T}}^\flat
 +\big(\,\frac{3p-3}{p} \Div \tH -\frac{2n-1}{n}\<\tr^\top {\I},\, H\> \\
 && -\frac{2p-1}{p}\,\<\tr^\perp \I,\, \tH\> -\Div((\tr^\perp \I )^\top)  \big  )\, g^{\perp} -2 \Div \phi^\top \\
 && +\< \phi,\, \frac{p+2}{p}\,\tH -\frac{1}{2} H +\frac{1}{2} \tr^\top \I \> + 7 \chi
 + \frac{3n+2}{n}\,H^\flat \odot (\tr^\top \I)^{\perp \flat} =0.
\end{eqnarray*}
By \eqref{critcontorsionlast}, from the above we get \eqref{ELmetric}.
\end{proof}

\begin{remark}\label{remarkvolpreserving}\rm
Note that for volume-preserving variations, the right hand sides of \eqref{ELmetric} and \eqref{ELQ} should be $\lambda\,g^\perp$, with $\lambda\in\mathbb{R}$ being an arbitrary constant \cite{rz-2}.
This obviously applies also to the special cases of the Euler-Lagrange equation \eqref{ELmetric} discussed below.

If $p>1$ and $n>1$ then
\eqref{ELmetric} can be written as
\begin{eqnarray}\label{ELmetricnpbig}
 && \frac{3-8n}{n} H^\flat \otimes H^\flat -\frac12\,\Upsilon_{T,T} +2\,\widetilde{\cal{T}}^\flat
 -2 \Div \phi^\top +\< \phi, \frac{3}{2} \tH - H \> + 7 \chi \nonumber \\
 && +\,\big(\frac{4p-3}{p} \Div \tH +\frac{2n-1}{n}\< H, H\> +\frac{2p-1}{p}\<\tH, \tH\>\big)\,g^{\perp} = 0 .
\end{eqnarray}
\end{remark}

Taking trace of \eqref{ELmetricnpbig} and using (\ref{trperpIperpH},g--i) and
equalities
 $\tr_g\Upsilon_{T,T} = 2\,\<T, T\>$
 and
 $\tr_g\widetilde{\cal{T}}^\flat = -\<{\tilde T}, {\tilde T}\>$,
we obtain the following result.

\begin{corollary}\label{corELtrace}
Let a pair $(g, \I)$, where $g$ is a pseudo-Riemannian metric on $M$ and $\I$ corresponds to a metric connection,
be critical for \eqref{actiongSmix} with respect to $g^\perp$-variations of metric and arbitrary variations of $\,\I$. Then
for $n,p>1$ we have 
\begin{equation}\label{ELmetrictracenpbig}
 \frac{(2n-1)(p-5)}{n}\,\< H,H \> -\<T, T\> -2\<{\tilde T}, {\tilde T}\> + (4p-1) \Div \tH + 2(p-2)\<\tH, \tH\> + 7\tr^\perp\chi = 0,
\end{equation}
and for $n=1$ and $p>1$ we get 
\begin{eqnarray*}
 && ({p-5})\<H,H\> -2\<{\tilde T}, {\tilde T}\> + 3(p-1)\Div\tH \nonumber \\
 && -(p+4)\Div((\tr^\perp \I)^\top) +2(2 - p)\<\tr^\perp \I,\, \tH \> + 7 \tr^\perp \chi =0.
\end{eqnarray*}

\end{corollary}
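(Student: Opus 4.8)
The plan is to derive both identities by applying the metric trace $\tr_g$ to the relevant Euler--Lagrange equation from the preceding theorem and then substituting the algebraic constraints on the partial traces of $\I$ that the theorem provides. For $n,p>1$ the relevant equation is \eqref{ELmetricnpbig}; for $n=1$, $p>1$ one cannot use \eqref{ELmetricnpbig} (it was obtained under the hypothesis $n,p>1$), so instead I would trace \eqref{ELmetric} directly, after setting $n=1$ and using $p>1$ to rewrite $(\tr^\top\I)^\perp$ as $-H$ via \eqref{critcontorsionspec1} wherever it occurs.

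First I would assemble the elementary trace identities needed: $\tr_g(\omega\otimes\eta)=\tr_g(\omega\odot\eta)=\langle\omega^\sharp,\eta^\sharp\rangle$ for $1$-forms $\omega,\eta$; $\tr_g g^\perp=p$ and $\tr_g g^\top=n$; the two equalities quoted in the statement, $\tr_g\Upsilon_{T,T}=2\langle T,T\rangle$ and $\tr_g\widetilde{\cal T}^\flat=-\langle\tilde T,\tilde T\rangle$; the commutation $\tr_g(\Div P)=\Div(\tr_g P)$ for a $(1,2)$-tensor $P$ (checked by evaluating in a frame geodesic at the chosen point); and, for $\phi$, $\chi$ as in \eqref{E-chi}, the evaluation $\tr_g\phi=2\,\tr^\perp\I$ (sum $\phi(X,Y)=\I_{X^\perp}Y^\perp+\I_{Y^\perp}X^\perp$ over an adapted frame). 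The latter gives $\tr_g\phi^\top=2(\tr^\perp\I)^\top$ and $\tr_g\langle\phi,v\rangle=2\langle\tr^\perp\I,v\rangle$ for a vector field $v$, and hence disposes of the traces of $-2\Div\phi^\top$, of $\langle\phi,\cdot\rangle$, of $H^\flat\odot(\tr^\top\I)^{\perp\flat}$, and of $7\chi$ (this last equal to $7\,\tr^\perp\chi$ since $\chi$ is a symmetric product of $\mD$-valued $1$-forms).

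Then I would take $\tr_g$ of \eqref{ELmetricnpbig} term by term and substitute the constraints valid for $n,p>1$: from \eqref{trperpIperpH} and \eqref{critcontorsionspec1}, $\tr^\perp\I=\frac{n-1}{n}\,H-\tilde H$; from \eqref{critcontorsionlast} and \eqref{critcontorsionspec1}, $\tr^\top\I=\frac{p-1}{p}\,\tilde H-H$; together with $\langle H,\tilde H\rangle=0$ since $H\in\mD$ and $\tilde H\in\widetilde\mD$. Grouping the resulting scalar by the basis quantities $\langle H,H\rangle$, $\langle\tilde H,\tilde H\rangle$, $\langle T,T\rangle$, $\langle\tilde T,\tilde T\rangle$, $\Div\tilde H$, $\tr^\perp\chi$ yields \eqref{ELmetrictracenpbig}. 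For $n=1$, $p>1$ I would instead trace \eqref{ELmetric} with $n=1$: now the integrability tensor $T$ of the rank-one distribution $\widetilde\mD$ vanishes, so $\Upsilon_{T,T}=0$ and $\langle T,T\rangle=0$; the $H^\flat\otimes H^\flat$ term drops because $\frac{5n-5}{n}=0$; and the constraint $(\tr^\perp\I)^\top=-\tilde H$ is \emph{not} available here (it needs $n>1$), which is precisely why the terms $\langle\tr^\perp\I,\tilde H\rangle$ and $\Div((\tr^\perp\I)^\top)$ survive in the stated formula. Substituting the remaining constraints $(\tr^\top\I)^\perp=-H$, $(\tr^\top\I)^\top=\frac{p-1}{p}\,\tilde H$ and $(\tr^\perp\I)^\perp=0$, and collecting, gives the second identity.

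The main obstacle is purely computational bookkeeping: tracing the non-scalar terms of \eqref{ELmetric} / \eqref{ELmetricnpbig} --- in particular $\Div\phi^\top$, $\langle\phi,\cdot\rangle$, $\chi$ and $\widetilde{\cal T}^\flat$ --- with the correct signs and coefficients, and being careful about which of the algebraic constraints of the preceding theorem apply when $n,p>1$ as opposed to only $p>1$ (or only $n>1$). No new idea is needed beyond the elementary identities above.
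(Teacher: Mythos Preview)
Your proposal is correct and follows exactly the approach indicated in the paper: take the $g$-trace of the Euler--Lagrange equation (\eqref{ELmetricnpbig} for $n,p>1$, and \eqref{ELmetric} with $n=1$ for the second case), then substitute the algebraic constraints \eqref{trperpIperpH}, \eqref{critcontorsionlast}, \eqref{critcontorsionspec1} on the partial traces of $\I$, using the stated identities $\tr_g\Upsilon_{T,T}=2\langle T,T\rangle$ and $\tr_g\widetilde{\cal T}^\flat=-\langle\tilde T,\tilde T\rangle$. Your care in distinguishing which of the two equations in \eqref{critcontorsionspec1} is available (only $(\tr^\top\I)^\perp=-H$ when $n=1$, $p>1$) is exactly the point that makes the $\langle\tr^\perp\I,\tilde H\rangle$ and $\Div((\tr^\perp\I)^\top)$ terms survive in the second identity.
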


Recall that an \textit{adapted connection} to $(\mD,\widetilde\mD)$, see e.g., \cite{bf}, is defined by
\[
 {\bar \nabla}_Z\, X \in\mathfrak{X}^\bot,\quad
 {\bar \nabla}_Z\, Y \in \mathfrak{X}^\top,\quad
 X\in\mathfrak{X}^\bot,\ Y\in\mathfrak{X}^\top,\ Z \in\mathfrak{X}_M,
\]
and an example is the Schouten-Van Kampen connection with contorsion tensor
\[
 \I_{X} Y = -(\nabla_{X^\top}Y^\bot)^\top -(\nabla_{X^\top}Y^\top)^\bot -(\nabla_{X^\bot}Y^\bot)^\top -(\nabla_{X^\bot}Y^\top)^\bot,
 \quad X,Y \in\mathfrak{X}_M.
\]

%
\begin{proposition}\label{L-3-2old}
Let $\widetilde{\mD}$ and $\mD$ both be totally umbilical.
Then contorsion tensor $\I$ corresponding to an adapted metric connection satisfies {\rm (\ref{ELmetric}-i)} if and only if it satisfies the equations
\begin{subequations}
\begin{eqnarray}\label{adaptedcritconfirst}
&&
\I_U^\top = T^\sharp_U, \\ \label{trperpIperpadapted}
&& (\tr^\perp \I )^\perp = \frac{n-1}{n} H,  \\
&&
\I_X^\bot = \tT_X , \\
\label{adaptedcritconlast}
&& (\tr^\top \I )^\top = \frac{p-1}{p} {\tilde H},\\
\label{ELmetricadapted}
%
&& \frac{3-8n}{n}\,H^\flat \otimes H^\flat - \frac12\,\Upsilon_{T,T} -5\,\widetilde{\cal{T}}^\flat
- \< \phi, H \>  \nonumber \\
&& + \big(\,\frac{4p+1}{p}\, \Div \tH + \frac{2p-4}{p}\, \<\tH, \tH\> + \frac{2n-1}{n}\,\<H, H\>
\big) g^\perp  = 0,
\end{eqnarray}
\end{subequations}
for all $X\in\widetilde\mD$ and $U\in\mD$. 
\end{proposition}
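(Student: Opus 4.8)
The plan is to peel off, from the conditions \mbox{(\ref{ELmetric}-i)} of the preceding theorem, those that are automatic for an adapted connection, and then to simplify the one surviving Euler--Lagrange equation \eqref{ELmetric} using the special form that $\I$ must have. Since $\widetilde{\mD}$ and $\mD$ are totally umbilical by hypothesis, \mbox{(\ref{ELmetric}-i)} amounts, for a metric connection, to \eqref{ELmetric} together with \eqref{critcontorsion1}--\eqref{critcontorsionlast} and \eqref{critcontorsionspec1}. I would show: (1) for an adapted connection \eqref{critcontorsion1}, \eqref{critcontorsionTab} and \eqref{critcontorsionspec1} hold identically, so the algebraic part reduces to \eqref{critcontorsiontTab}, \eqref{trperpIperpH}, \eqref{critcontorsionTij}, \eqref{critcontorsionlast}; and (2) under the same hypotheses \eqref{ELmetric} is equivalent to \eqref{ELmetricadapted}. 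The two implications of the proposition then follow: for ``only if'' one reads (1)--(2) off \mbox{(\ref{ELmetric}-i)}, and for ``if'' one adds back the automatically satisfied equations.

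For (1) I would use that an adapted connection satisfies $(\I_Z Y)^\perp=-(\nabla_Z Y)^\perp$ for $Y\in\mathfrak{X}^\top$ and $(\I_Z V)^\top=-(\nabla_Z V)^\top$ for $V\in\mathfrak{X}^\perp$, because $\bar\nabla_Z=\nabla_Z+\I_Z$ preserves both $\widetilde{\mD}$ and $\mD$. This gives at once $(\I_Y X-\I_X Y)^\perp=(\nabla_X Y-\nabla_Y X)^\perp=2T(X,Y)$ and, dually, $(\I_V U-\I_U V)^\top=2\widetilde T(U,V)$, i.e.\ \eqref{critcontorsion1} and \eqref{critcontorsionTab}; and $(\tr^\top\I)^\perp=-\sum_a h(E_a,E_a)=-H$, $(\tr^\perp\I)^\top=-\tH$, i.e.\ \eqref{critcontorsionspec1} (for all $n,p$). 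The surviving equations \eqref{critcontorsiontTab}, \eqref{trperpIperpH}, \eqref{critcontorsionTij}, \eqref{critcontorsionlast} involve only the components $\I^\top_U$, $(\tr^\perp\I)^\perp$, $\I_X^\perp$ and $(\tr^\top\I)^\top$, which adaptedness does not determine, and they coincide respectively with \eqref{adaptedcritconfirst}, \eqref{trperpIperpadapted}, the equation $\I_X^\perp=\tT_X$, and \eqref{adaptedcritconlast}.

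For (2) I would first evaluate the tensors occurring in \eqref{ELmetric} on the adapted, totally umbilical data. From $(\I_U V)^\top=-\frac{1}{p}\<U,V\>\tH-\widetilde T(U,V)$ the antisymmetric part cancels under symmetrization, giving $\phi^\top=-\frac{2}{p}\,\tH\,g^\perp$ and $\<\phi,\tH\>=-\frac{2}{p}\<\tH,\tH\>\,g^\perp$; from $(\I_U Y)^\perp=\frac{1}{p}\<Y,\tH\>U+\tT_Y U$, plugged into \eqref{E-chi} and simplified with the skew-symmetry of $\tT_{\tH}$ and orthonormality of the frame, one gets $\chi=-\widetilde{\cal T}^\flat$. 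The main work is $\Div\phi^\top$: splitting $\Div=\Div^\top+\Div^\perp$ and using the identity dual to \eqref{E-divN} ($\widetilde{\Div}\,\tH=\Div\tH+\<\tH,\tH\>$) yields $\Div^\perp\phi^\top=\frac{2}{p}\<\tH,\tH\>\,g^\perp$ and $\Div^\top\phi^\top=-\frac{2}{p}(\Div\tH+\<\tH,\tH\>)\,g^\perp$, hence $\Div\phi^\top=-\frac{2}{p}\,\Div\tH\,g^\perp$. Substituting these, together with $(\tr^\top\I)^\perp=-H$ and $(\tr^\perp\I)^\top=-\tH$, into \eqref{ELmetric} (so that the bracket $\frac{3}{2}\tH-\frac{1}{2}H+\frac{1}{2}(\tr^\top\I)^\perp$ becomes $\frac{3}{2}\tH-H$ and the $\<\phi,\cdot\>$ term collapses to $-\frac{3}{p}\<\tH,\tH\>\,g^\perp-\<\phi,H\>$), and collecting the $H^\flat\otimes H^\flat$, $\widetilde{\cal T}^\flat$, $\Upsilon_{T,T}$ and $g^\perp$ contributions, turns \eqref{ELmetric} into \eqref{ELmetricadapted}.

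The hard part is the bookkeeping in (2): one must compute the divergence of the $\widetilde{\mD}$-valued $(1,2)$-tensor $\phi^\top$ correctly through the $\top/\perp$ splitting, and then follow all the scalar coefficients in \eqref{ELmetric} through the substitution so that they come out exactly as in \eqref{ELmetricadapted}. I note that of the components of $\I$ not fixed by adaptedness only the $\mD$-valued symmetric part of $\I|_{\mD\times\mD}$ survives in the end, entering \eqref{ELmetricadapted} solely through $\<\phi,H\>$, which is consistent with that part being a free parameter of the critical connection.
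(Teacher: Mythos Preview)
Your proposal is correct and follows essentially the same route as the paper: both arguments reduce (\ref{critcontorsion1}--i) to (\ref{adaptedcritconfirst}--d) via the explicit form \eqref{adaptedcontorsion} of an adapted contorsion tensor (observing that \eqref{critcontorsionspec1} holds in all dimensions), compute $\phi^\top=-\frac{2}{p}\tH\,g^\perp$ and $\chi=-\widetilde{\cal T}^\flat$ for the adapted, totally umbilical case, and substitute into \eqref{ELmetric}. Your write-up is in fact more explicit than the paper's on the divergence $\Div\phi^\top=-\frac{2}{p}(\Div\tH)\,g^\perp$ and on the final coefficient bookkeeping, which the paper summarizes in one sentence.
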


\begin{proof}
For adapted connection and totally umbilical distribution $\mD$ we have
$\phi^\top = - 2 {\tilde h} = - \frac{2}{p} \tH g^\perp$, see \cite[Section~2.5]{RZconnection},
and 
\begin{eqnarray}\label{adaptedcontorsion}
\I_X Y \eq -(\nabla_{X^\top}\, Y^\top)^\bot -(\nabla_{X^\bot}\, Y^\bot)^\top
+\,({{A}}_{Y^\bot} + {{T}}^\sharp_{Y^\bot}) X^\top + ({\tilde A}_{Y^\top} + {\tilde T}^\sharp_{Y^\top})\, X^\bot
\nonumber \\
&& +\, (\I_X \,Y^\top)^\top + (\I_X \,Y^\bot)^\bot.
\end{eqnarray}
Moreover, an adapted connection is critical for \eqref{actiongSmix} with fixed $g$
if and only if (\ref{adaptedcritconfirst}-d) hold, see \cite{RZconnection}.
Note that for adapted connection from \eqref{adaptedcontorsion} we obtain 
$\chi = - \widetilde{\cal T}^\flat$, as for $X,Y \in \mD$ we have
\[
2\chi(X,Y) = \!\sum (2\<\tT_a{\cal E}_j, X\>\<\tT_a{\cal E}_j, Y\> + \<\tA_a{\cal E}_j, X\> \<\tT_a {\cal E}_j, Y\>
+ \<\tA_a{\cal E}_j, Y\> \<\tT_a{\cal E}_j, X\>) {=} -2\sum \<\tT_a\tT_a X, Y\>
\]
for umbilical distributions.
Also \eqref{critcontorsionspec1} hold, in all dimensions $n,p$. 
Thus, for a critical adapted connection, \eqref{ELmetric}
simplifies to \eqref{ELmetricadapted}.
\end{proof}

If $p>1$ then $\phi^\perp$ is not determined by $(\tr^\perp \I)^\perp$ and
by \eqref{adaptedcontorsion} in Proposition~\ref{L-3-2old}
can be set arbitrary for an adapted metric connection. Using this fact and taking trace of \eqref{ELmetricadapted} yield the following.

\begin{corollary}
Let $\widetilde{\mD}$ and $\mD$ both be totally umbilical.
If a contorsion tensor $\I$, corresponding to an adapted metric connection, satisfies {\rm(\ref{ELmetric}-i)} then the metric $g$ satisfies
\begin{equation}\label{trELmetricadaptedcrit}
 \frac{ 5 -10n + 2np -p}{n}\,\< H,H \> - \< T,T\> + 5 \< \tilde T , \tilde T \>
 + (4p+1) \Div \tH  + (2p-4) \<\tH, \tH\> =0.
\end{equation}
If $p>1$ and at every point of $M$ we have $H \ne 0$, then for a given $(M,g)$ satisfying \eqref{trELmetricadaptedcrit} there exists a metric adapted connection such that $(g, \I)$ is critical for the action \eqref{actiongSmix} with respect to all variations of $\,\I$ and $g^\perp$-variations of metric.
\end{corollary}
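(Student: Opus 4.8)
The plan is to prove the two halves separately: the first is a direct trace computation, the second a construction that uses the trace identity as a solvability condition.

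For the forward implication I would start from Proposition~\ref{L-3-2old}: since $\widetilde{\mD}$ and $\mD$ are totally umbilical, an adapted metric contorsion tensor $\I$ satisfies (\ref{ELmetric}-i) if and only if it satisfies \eqref{adaptedcritconfirst}--\eqref{ELmetricadapted}. It then suffices to apply $\tr_g$ to \eqref{ELmetricadapted}. I would use $\tr_g(H^\flat\otimes H^\flat)=\<H,H\>$, $\tr_g\Upsilon_{T,T}=2\<T,T\>$, $\tr_g\widetilde{\cal T}^\flat=-\<\tilde T,\tilde T\>$ and $\tr_g g^\perp=p$, together with the identity $\tr_g\<\phi,H\>=\sum_i\<\phi({\cal E}_i,{\cal E}_i),H\>=\sum_i 2\<\I_i{\cal E}_i,H\>=2\<\tr^\perp\I,H\>$, which by \eqref{trperpIperpadapted} (and $H\in\mD$) equals $\tfrac{2(n-1)}{n}\<H,H\>$. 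Collecting the coefficient of $\<H,H\>$ gives $\tfrac1n\big(3-8n-2(n-1)+(2n-1)p\big)=\tfrac1n(5-10n+2np-p)$, and the remaining terms assemble exactly into \eqref{trELmetricadaptedcrit}.

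For the converse, suppose $p>1$, $H\neq 0$ everywhere and $g$ satisfies \eqref{trELmetricadaptedcrit}. By Corollary~\ref{T-main1} and Proposition~\ref{L-3-2old}, it is enough to produce an adapted metric contorsion tensor $\I$ satisfying \eqref{adaptedcritconfirst}--\eqref{ELmetricadapted}. The ``off-diagonal'' part of an adapted metric contorsion is forced by $g$ through \eqref{adaptedcontorsion}; I would then impose \eqref{adaptedcritconfirst}--\eqref{adaptedcritconlast} and \eqref{critcontorsionspec1}, which is precisely the requirement that $\I$ be critical for \eqref{actiongSmix} with $g$ fixed. The only piece of $\I$ not yet pinned down is $\phi^\perp$, i.e. the symmetric $\mD$-valued map $(U,V)\mapsto(\I_U V+\I_V U)^\perp$ on $\mD\times\mD$, which is constrained only by metric compatibility of $\I$.

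With these choices, \eqref{ELmetricadapted} reduces to a single symmetric $(0,2)$-tensor equation on $\mD$ of the form $\<\phi,H\>=S$, with $S$ the explicit tensor built from $H,T,\tH,\widetilde{\cal T}$ in \eqref{ELmetricadapted}. Taking $\tr_g$ of this equation reproduces, by the computation of the first half, the scalar identity $\tr_g S=\tfrac{2(n-1)}{n}\<H,H\>$, which is exactly \eqref{trELmetricadaptedcrit}; thus the hypothesis guarantees the necessary trace condition on $S$. Using $p>1$ and $H\neq0$, I would then split $\mD$ as $\mathbb{R}H$ plus the $g$-orthogonal complement of $H$ in $\mD$, prescribe the $H$-component of $\phi^\perp$ from $S$, and use the remaining freedom in $\phi^\perp$ (nontrivial because $p>1$) to arrange metric compatibility, thereby obtaining the desired $\I$. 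The main obstacle, and the point where $p>1$ and $H\neq0$ are genuinely used, is this last construction: exhibiting an admissible $\phi^\perp$ that realizes the full tensor $S$ rather than only its trace, i.e. controlling the traceless part of the Euler--Lagrange equation. I would handle it by a dimension count for metric-compatible $\mD$-valued $(1,2)$-tensors together with an explicit formula for $\phi^\perp$ in terms of $S$ and $H$.
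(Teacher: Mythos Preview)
Your forward implication is exactly the paper's: both simply take the $g$-trace of \eqref{ELmetricadapted}, using $\tr_g\Upsilon_{T,T}=2\<T,T\>$, $\tr_g\widetilde{\cal T}^\flat=-\<\tilde T,\tilde T\>$, and $\tr_g\<\phi,H\>=2\<(\tr^\perp\I)^\perp,H\>=\tfrac{2(n-1)}{n}\<H,H\>$ from \eqref{trperpIperpadapted}; your coefficient bookkeeping is correct.

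For the converse the paper is far terser than your sketch: it merely records, just before the Corollary, that for $p>1$ the tensor $\phi^\perp$ is not determined by $(\tr^\perp\I)^\perp$ and ``can be set arbitrary for an adapted metric connection'' (via \eqref{adaptedcontorsion}), and then says that this together with the trace of \eqref{ELmetricadapted} yields the result. In other words, the paper treats \eqref{ELmetricadapted} as an algebraic equation $\<\phi,H\>=S$ for the undetermined piece $\phi^\perp$, observes it is solvable when $H\ne0$, and identifies \eqref{trELmetricadaptedcrit} as the sole compatibility with the already-imposed trace constraint \eqref{trperpIperpadapted}. Your plan follows the same line but with more structure (the splitting $\mD=\mathbb{R}H\oplus H^\perp$ and a dimension count); that extra scaffolding is not in the paper.

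One phrasing point worth tightening: metric compatibility is a \emph{constraint} on $\phi^\perp$ (for a metric connection each $\I_U$ is skew, which forces the cyclic sum $\<\phi^\perp(U,V),W\>+\<\phi^\perp(V,W),U\>+\<\phi^\perp(W,U),V\>$ to vanish), not something you ``arrange'' afterwards with leftover freedom. So the cleaner way to carry out your plan is to work from the outset inside the space of adapted metric contorsions and show that the map $\I\mapsto\<\phi^\perp,H\>$ already hits the required $S$ subject to $\tr S=\tfrac{2(n-1)}{n}\<H,H\>$; the paper's one-line assertion effectively takes this for granted.
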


\begin{corollary}
Let $(M,g)$ be a closed Riemannian 
manifold endowed with $\mD$ integrable and $\widetilde{\mD}$ integrable and totally geodesic, and let $p \neq 2$.
Then there exists a metric compatible adapted connection such that $(g, \I)$ is critical for the action \eqref{actiongSmix} with respect to all variations of $\,\I$ and $g^\perp$-variations of metric if and only if $\mD$ is totally geodesic.
\end{corollary}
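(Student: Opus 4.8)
The plan is to feed the standing hypotheses into Proposition~\ref{L-3-2old}, which describes exactly when an adapted metric connection is critical, and to observe that they force almost all of the relevant extrinsic tensors to vanish, so that the Euler--Lagrange equation \eqref{ELmetricadapted} collapses to a single scalar identity that a closed Riemannian manifold can satisfy only when $\mD$ is totally geodesic; the converse will then follow by exhibiting an explicit critical connection.

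For the ``only if'' direction, I would first argue that if $(g,\I)$ is critical then, being in particular critical for arbitrary variations of $\I$ with $g$ fixed, Corollary~\ref{T-main1} forces both $\widetilde{\mD}$ and $\mD$ to be totally umbilical. Combined with the hypotheses that $\widetilde{\mD}$ is integrable and totally geodesic and that $\mD$ is integrable, this gives $h=0$, $H=0$, $T=0$ and $\widetilde T=0$ (hence $\widetilde{\cal T}=0$ and $\Upsilon_{T,T}=0$), while $\mD$ remains totally umbilical with $\tilde h=\tfrac1p\,\tH\,g^\perp$. Since $\I$ corresponds to an adapted metric connection, Proposition~\ref{L-3-2old} applies, and in particular its trace consequence \eqref{trELmetricadaptedcrit} holds; substituting the vanishing tensors reduces \eqref{trELmetricadaptedcrit} to the pointwise relation $(4p+1)\,\Div\tH+(2p-4)\,\<\tH,\tH\>=0$ on $M$. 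Integrating over the closed manifold $M$ makes the divergence term drop out, leaving $2(p-2)\int_M\<\tH,\tH\>\,{\rm d}\vol_g=0$; since $p\neq2$ and $g$ is Riemannian, this gives $\tH\equiv0$, so $\tilde h=0$ and $\mD$ is totally geodesic.

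For the ``if'' direction, assuming $\mD$ totally geodesic I would take $\I=0$, i.e.\ the Levi-Civita connection: it is metric compatible, and since now both $\widetilde{\mD}$ and $\mD$ are integrable and totally geodesic it is also adapted (the Levi-Civita connection preserves each distribution, which I would verify by a one-line computation with $h=T=0$ and $\tilde h=\widetilde T=0$). Then I would check that with $H=0$, $T=0$, $\widetilde T=0$, $\tH=0$ and $\phi=0$ the algebraic equations \eqref{adaptedcritconfirst}--\eqref{adaptedcritconlast} and the Euler--Lagrange equation \eqref{ELmetricadapted} of Proposition~\ref{L-3-2old} all hold trivially; by that proposition $\I=0$ then satisfies (\ref{ELmetric}-i), i.e.\ $(g,\I)$ is critical for \eqref{actiongSmix} with respect to $g^\perp$-variations of metric and arbitrary variations of $\I$. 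In contrast with the preceding corollary, where $H\neq0$ was assumed, here no nontrivial contorsion tensor is needed.

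The computations involved are routine tensor bookkeeping; the only delicate point, and the one I expect to be the crux of the argument, is that closedness of $M$, the Riemannian signature, and the hypothesis $p\neq2$ are used together, and essentially only once, to pass from $(4p+1)\,\Div\tH+(2p-4)\,\<\tH,\tH\>=0$ to $\tH\equiv0$ — and that this reduction is available precisely because $\widetilde{\mD}$ being totally geodesic (and integrable) kills the $H$- and $T$-dependent terms of \eqref{ELmetricadapted} that would otherwise obstruct the integration.
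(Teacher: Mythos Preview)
Your proposal is correct and follows essentially the same route as the paper: reduce \eqref{ELmetricadapted} (or its trace \eqref{trELmetricadaptedcrit}) under the hypotheses to $(4p+1)\Div\tH+(2p-4)\<\tH,\tH\>=0$, integrate over the closed Riemannian manifold, and use $p\neq2$ to conclude $\tH=0$. The paper's own proof is terser—it states only the reduction of \eqref{ELmetricadapted} and the integration step—whereas you spell out the ``if'' direction explicitly by taking $\I=0$ and verifying it is adapted (which indeed it is, once both distributions are integrable and totally geodesic); this is a welcome addition, since the paper leaves that direction implicit.
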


\begin{proof}
Under these assumptions we obtain that \eqref{ELmetricadapted} holds if and only if
\[
 (4p+1)\Div {\tilde H} + (2p-4) \<\tH, \tH\> =0.
\]
Integrating this equation on a closed $(M,g)$
and using \eqref{E-DivThm} yields $\tH=0$.
\end{proof}

\begin{example} \rm
In \cite{FriedrichIvanov} it was proved that on a Sasaki manifold $(M, g, \xi , \eta)$
(that is $M$ with a normal contact metric structure)
there exists a unique metric connection with a skew-symmetric, parallel torsion tensor, and its contorsion tensor is given by
 $\<\I_X Y, Z\> = \frac{1}{2}\,(\eta \wedge d \eta)(X,Y,Z)$, where
 $X,Y,Z \in \mathfrak{X}_M$
and $\eta$ is the contact form on $M$. Let $\widetilde{\mD}$ be the one-dimensional distribution spanned by the Reeb field $\xi$.
It follows that for this connection we have $\phi =0$ and for $X,Y \in \mD$
\begin{equation*}
 \chi(X,Y) = -\frac{1}{4}\sum\nolimits_{\,i}\big[\,(\eta\wedge d\eta)(\xi, {\cal E}_i ,X)\cdot\<\,\tT_\xi {\cal E}_i, Y\>
 + (\eta\wedge d\eta)(\xi, {\cal E}_i, Y)\cdot \<\,\tT_\xi{\cal E}_i, X\>\,\big]
 = -\,\widetilde{\cal{T}}^\flat(X,Y),
\end{equation*}
see \eqref{E-chi}, as $d \eta(X,Y) = 2\<X, \tT_\xi\, Y\>$.
Since $g$ is a Sasaki metric, both distributions are totally geodesic, and for volume-preserving variations the Euler-Lagrange equation \eqref{ELmetric} gets $\lambda\,g^\perp$ on the right-hand side (see Remark~\ref{remarkvolpreserving}) and becomes
\begin{equation}\label{ELforFriedrichIvanovConnection}
 -5\,\widetilde{\cal{T}}^\flat
 = \lambda\,g^\perp .
\end{equation}
As on a Sasakian manifolds we have
$\widetilde{\cal{T}}^\flat=-\frac{1}{p}\,\<{\tilde T},{\tilde T}\>g^{\perp}$ and $\<{\tilde T},{\tilde T}\>=p$
(e.g., \cite[Section 3.3]{rz-2}), we see that \eqref{ELforFriedrichIvanovConnection} holds in this case for $\lambda=5$.
We can slightly modify this example to obtain a critical metric connection on any contact manifold $(M, \eta)$ with a contact metric structure $g$, by taking $\I_\xi \xi =0$ and for all $X,Y \in{\cal D}$:
\[
 \<\I_X Y, \xi\> = \frac{1}{2}\,(\eta \wedge d \eta)(X,Y,Z) = -\<\I_X \xi, Y\>,\quad
  \<\I_\xi X, Y\> = -\frac{1}{2}\,(\eta\wedge d\eta)(\xi,X, Y).
\]
For all $X,Y,Z\in{\cal D}$ we can take as $\<\I_X Y, Z\>$ any 3-form.
While no longer with parallel torsion, connection $\nabla + \I$ will then satisfy all Euler-Lagrange equations (\ref{ELmetric}-i).
\end{example}

Corollary \ref{corELtrace} can be viewed as an integrability condition for \eqref{ELmetric}. Below we give examples of $\I$, constructed for metrics $g$ that satisfy \eqref{ELmetrictracenpbig} with particular form of $\chi$, obtaining pairs $(g,\I)$ that are critical points of \eqref{actiongSmix} with respect to variations of $\,\I$ and $g^\perp$-variations of metric.

\begin{proposition} \label{propexample1}
Let $n,p>1$ and $H \neq 0$ everywhere on $M$. For any $g$ such that $\widetilde{\mD}$ and $\mD$ are totally umbilical and \eqref{ELmetrictracenpbig} holds with $\chi=0$, there exists a contorsion tensor $\I$ such that
$\I_X Y \in \mathfrak{X}^\bot$ for all $X,Y\in\mathfrak{X}^\bot$ and $(g, \I)$ is critical for the action \eqref{actiongSmix} with respect to
$g^\perp$-variations of metric and arbitrary variations of $\,\I$. 
\end{proposition}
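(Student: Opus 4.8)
The plan is to construct $\I$ in two stages: first fix the blocks of $\I$ forced by criticality for the fixed-metric problem together with the requirement $\I_X Y\in\mathfrak{X}^\bot$ for $X,Y\in\mathfrak{X}^\bot$, and then choose the one remaining free block so that the $g^\perp$-variational Euler--Lagrange equation holds.

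First I would invoke Corollary~\ref{T-main1}: since $(g,\I)$ must be critical for \eqref{actiongSmix} with fixed $g$ and \emph{all} variations of $\I$, the distributions $\widetilde{\mD}$ and $\mD$ are totally umbilical (which is assumed) and $\I$ satisfies the linear algebraic system \eqref{ELconnectionNew1}--etc. Imposing in addition that $(\I_U V)^\top=0$ for all $U,V\in\mD$, equation \eqref{ELconnectionNew1} becomes $(\I^*_V U)^\top=-2\,\widetilde T(U,V)$, which is consistent and pins down the block $(\I_V Z)^\bot$ for $V\in\mD,\ Z\in\widetilde{\mD}$; together with the remaining equations and the partial-trace constraints this determines every block of $\I$ except the symmetric part of $\I$ restricted to $\mD\times\mD\to\mD$. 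I would set all blocks not touched by these constraints (apart from that symmetric part) equal to zero; then, with $\phi$ as in \eqref{E-chi}, the constructed $\I$ satisfies $\phi^\top=0$ and $\phi^\bot=\phi$ is precisely the free symmetric $(0,2)$-type tensor on $\mD$ to be determined.

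Second, I would substitute this $\I$ into the $g^\perp$-variational formula for $\overline{\rm S}_{\,\rm mix}$ --- combining the variation of ${\rm S}_{\rm mix}$ with the contribution $-\tfrac12\int_M\dt Q\,{\rm d}\vol_g-\tfrac14\int_M Q\,\<B,g\>\,{\rm d}\vol_g$ computed via the general-$\I$ analogues of Lemma~\ref{dtQadapted} and Lemma~\ref{L-dT-3} (the tensor $F$ of \eqref{formulaF} appears here, but since $\I|_{\mathfrak{X}^\bot\times\mathfrak{X}^\bot}$ is $\mD$-valued and both distributions are umbilical, its contribution reduces to terms in $\phi^\bot$, $H$ and $\widetilde T$). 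As for the metric case \eqref{ELmetricnpbig} with $\phi^\top=0$ and $\chi=0$ inserted, the Euler--Lagrange equation should collapse to a single symmetric $(0,2)$-tensor equation on $\mD$,
\[
 \<\phi^\bot,H\> = \Sigma ,
\]
where $\<\phi^\bot,H\>(X,Y)=\<\phi^\bot(X,Y),H\>$ and $\Sigma$ is a fixed symmetric $(0,2)$-tensor on $\mD$ expressed only through $g$ (via $H,\tH,T,\widetilde T,\widetilde{\cal T}$ and $\Div\tH$).

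Finally I would solve for $\phi^\bot$. Since $p>1$ and $H\ne0$ everywhere, the fibrewise-linear functional $v\mapsto\<v,H\>$ on $\mD$ is onto $\RR$ at every point, so one may pick a smooth symmetric $\mD$-valued $\phi^\bot$ with $\<\phi^\bot(X,Y),H\>=\Sigma(X,Y)$ --- e.g.\ $\phi^\bot=\Sigma\otimes H/\<H,H\>$ where $\<H,H\>\ne0$, and a pointwise choice on the set where $\<H,H\>=0$ but $H\ne0$. Contracting $\<\phi^\bot,H\>=\Sigma$ with $g^\bot$ yields the scalar identity $\tr_g\Sigma=\<\tr^\bot\phi^\bot,H\>$; since $\tr^\bot\phi^\bot$ is fixed by the partial-trace constraints already imposed on $\I$ (cf.\ Proposition~\ref{propQ}), this identity --- after using $\tr_g\Upsilon_{T,T}=2\<T,T\>$ and $\tr_g\widetilde{\cal T}^\flat=-\<\widetilde T,\widetilde T\>$ --- is exactly the hypothesis \eqref{ELmetrictracenpbig} with $\chi=0$. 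Hence the equation for $\phi^\bot$ is consistent, a solution exists, and the resulting pair $(g,\I)$ satisfies both Corollary~\ref{T-main1} and the $g^\perp$-variational Euler--Lagrange equation, so it is critical.

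The main obstacle is the second stage: verifying that the general-$\I$ $g^\perp$-variational formula, once the constructed $\I$ is substituted, really reduces to $\<\phi^\bot,H\>=\Sigma$ with the claimed $\Sigma$. One must carefully track how the determined block $(\I_V Z)^\bot$ feeds into the terms built from $\chi$, $\Div\phi^\top$ and $F$, and check that the coefficients assemble into the pattern of \eqref{ELmetricnpbig}; a smaller technical point is the smoothness of $\phi^\bot$ across points where $\<H,H\>$ degenerates while $H\ne0$, relevant only in the genuinely pseudo-Riemannian case.
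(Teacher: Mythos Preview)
Your strategy --- determine all blocks of $\I$ from the fixed-$g$ criticality conditions plus the constraint $(\I_U V)^\top=0$, leaving the symmetric part $\phi^\bot$ of $\I|_{\mD\times\mD}$ free, then read off an algebraic equation $\<\phi^\bot,H\>=\Sigma$ from the $g^\perp$-variational Euler--Lagrange equation and solve it using $H\ne0$ --- is exactly the paper's approach, and your identification of \eqref{ELmetrictracenpbig} as the trace compatibility condition is spot on (indeed more explicit than the paper, which just asserts solvability).

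The one place you take a longer route is in the middle step. The proposition sits in Section~\ref{sec: 2-3} on metric connections, and the paper simply substitutes $\phi^\top=0$, $\chi=0$, $(\tr^\bot\I)^\top=0$ and $\widetilde T=0$ (the last forced by \eqref{critcontorsion1} together with $(\I_UV)^\top=0$) directly into the already-derived metric-connection equation \eqref{ELmetric}, obtaining the explicit algebraic equation \eqref{ELmetricspec1}. You instead propose to re-derive the $g^\perp$-variational formula from ``general-$\I$ analogues'' of Lemmas~\ref{dtQadapted} and~\ref{L-dT-3}, which is unnecessary work and introduces phantom complications: for $g^\perp$-variations $B|_V=0$, so the tensor $F$ of \eqref{formulaF} never contributes; and staying in the metric class ($\I^*=-\I$) means Lemma~\ref{dtQadapted} applies directly. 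Once you commit to the metric setting, your ``main obstacle'' evaporates and the proof is immediate.
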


\begin{proof}
Suppose that $\I_X Y \in \mathfrak{X}^\bot$ for all $X,Y \in \mathfrak{X}^\bot$. Then $\phi^\top = 0$, $\chi =0$,
see definitions \eqref{E-chi} (because $\<\I_j E_a, {\cal E}_i\>=-\<\I_j {\cal E}_i, E_a\>=0$),
$(\tr^\perp \I )^\top=0$, from equations for critical connections it follows that ${\mD}$ is integrable
and \eqref{ELmetric} is an algebraic equation for symmetric (0,2)-tensor $\phi$:
\begin{equation}\label{ELmetricspec1}
 -\frac{8n-3}{n}\,H^\flat \otimes H^\flat
 +\big(\frac{3p-3}{p} \Div \tH + \frac{2n-1}{n}\,\< H, H\>
 \big)\, g^{\perp}
 -\frac12\,\Upsilon_{T,T} -\< \phi, H \>
 =0.
\end{equation}
For $H \ne 0$, we can always find $\phi$ (and then $\I$) satisfying \eqref{ELmetricspec1}. Clearly, such $\phi$ is not unique.
\end{proof}


\begin{proposition} \label{propexample2}
Let $n,p>1$ and $H \neq 0$ everywhere on $M$.
For any $g$ such that $\widetilde{\mD}$ is totally umbilical and $\mD$ is totally geodesic and \eqref{ELmetrictracenpbig} holds with $\chi=-\widetilde{\cal T}^\flat$, there exists a contorsion tensor $\I$ such that
$(\I_X\,\xi)^\perp = \tT_\xi X$ for all $X \in \mathfrak{X}^\bot$, $\xi \in \mathfrak{X}^\top$, and a pair $(g,\, \I)$ is critical for the action \eqref{actiongSmix} with respect to $g^\perp$-variations of metric and arbitrary variations of $\,\I$.
\end{proposition}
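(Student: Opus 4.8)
The plan is to construct over the given $(M,g)$ a metric connection $\I$ carrying the prescribed component $(\I_X\xi)^\perp=\tT_\xi X$, in the spirit of the proof of Proposition~\ref{propexample1}. By Corollary~\ref{T-main1} together with the derivation of \eqref{ELmetric}, a metric connection $\I$ makes $(g,\I)$ critical for \eqref{actiongSmix} with respect to arbitrary variations of $\,\I$ and $g^\perp$-variations of $g$ exactly when $\widetilde{\mD}$ and $\mD$ are totally umbilical, $\I$ solves the linear algebraic system (\ref{critcontorsion1}-g) and \eqref{critcontorsionspec1}, and the Euler--Lagrange equation \eqref{ELmetric} holds. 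Under the hypotheses $\widetilde{\mD}$ is totally umbilical and $\mD$ is totally geodesic, hence also totally umbilical, so the umbilicity is automatic; moreover metric connections satisfying (\ref{critcontorsion1}-g), \eqref{critcontorsionspec1} exist on such $(M,g)$, these equations only prescribing the components of $\I$ with one argument in $\widetilde{\mD}$ and one in $\mD$, together with the partial traces, in terms of $T^\sharp,\tT,T,\tilde T,H$. It thus remains to pick the still-free part of $\I$ so that \eqref{ELmetric} is satisfied.

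First I would impose, on top of (\ref{critcontorsion1}-g) and \eqref{critcontorsionspec1}, the relation $(\I_X\xi)^\perp=\tT_\xi X$ for all $X\in\mathfrak{X}^\bot,\ \xi\in\mathfrak{X}^\top$. Using $\I^*=-\I$ this is equivalent to $(\I_U V)^\top=-\tilde T(U,V)$ for $U,V\in\mD$; its antisymmetric part is precisely \eqref{critcontorsion1}, so the relation is consistent with the critical-connection system, while its symmetric part gives $\phi^\top=0$, so the term $-2\Div\phi^\top$ in \eqref{ELmetric} vanishes. Since $\tT_\xi$ is skew-adjoint on $\mathfrak{X}^\bot$, a short computation in an adapted frame yields $(\I_j E_a)^\perp=\tT_a{\cal E}_j$, and summing over the frame one gets $\chi=-\widetilde{\cal T}^\flat$; this is exactly why the relevant trace identity is \eqref{ELmetrictracenpbig} taken with $\chi=-\widetilde{\cal T}^\flat$. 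Substituting into \eqref{ELmetric} the equalities $\tH=0$ (total geodesy of $\mD$), $\phi^\top=0$, $\chi=-\widetilde{\cal T}^\flat$, and \eqref{critcontorsionspec1} in the form $(\tr^\top\I)^\perp=-H$ (as $p>1$) and $(\tr^\bot\I)^\top=0$ (as $n>1$), all divergence and mixed-trace terms drop out and \eqref{ELmetric} collapses to the single algebraic equation
\[
\langle\phi^\perp,H\rangle\;=\;-\tfrac{8n-3}{n}\,H^\flat\otimes H^\flat-\tfrac12\,\Upsilon_{T,T}-5\,\widetilde{\cal T}^\flat+\tfrac{2n-1}{n}\,\langle H,H\rangle\,g^\perp\;=:\;\Sigma,
\]
where $\phi^\perp(U,V)=(\I_U V+\I_V U)^\perp$ is the only component of $\I$ not yet determined; it is constrained only by the mixed symmetry that $\I^*=-\I$ imposes on the $\mD\times\mD\to\mD$ part of $\I$, and by $\tr_g\phi^\perp=2(\tr^\bot\I)^\perp=\tfrac{2(n-1)}{n}H$ coming from \eqref{trperpIperpH}.

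Finally, since $H\neq0$ everywhere, the displayed equation only prescribes the $H$-component of $\phi^\perp(U,V)$, so I would complete $\phi^\perp$ -- and hence $\I$ -- to a solution (for instance by setting $\phi^\perp=\langle H,H\rangle^{-1}\,\Sigma\otimes H$ corrected by a part transverse to $H$), keeping all remaining components of $\I$ free. Taking $\tr_g$ of the displayed equation reproduces verbatim \eqref{ELmetrictracenpbig} with $\chi=-\widetilde{\cal T}^\flat$ and $\tH=0$, which holds by hypothesis, so the construction is consistent with $\tr_g\phi^\perp=\tfrac{2(n-1)}{n}H$. For the resulting $\I$ all Euler--Lagrange equations hold, hence $(g,\I)$ is critical. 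I expect the main difficulty to be precisely this last step -- showing that $\langle\phi^\perp,H\rangle=\Sigma$ is solvable within the admissible class of tensors $\phi^\perp$ (symmetric, of mixed symmetry, with the stated trace); as in Proposition~\ref{propexample1}, the hypothesis \eqref{ELmetrictracenpbig} provides the needed trace compatibility, and for $H\neq0$ the pointwise solvability then follows.
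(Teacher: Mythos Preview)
Your approach is essentially the same as the paper's: impose the condition $(\I_X\xi)^\perp=\tT_\xi X$, deduce $\phi^\top=0$, $(\tr^\perp\I)^\top=0$ and $\chi=-\widetilde{\cal T}^\flat$, and then reduce \eqref{ELmetric} (using $\tH=0$ and $(\tr^\top\I)^\perp=-H$) to the single algebraic equation $\<\phi,H\>=\Sigma$ for the still-free symmetric part $\phi^\perp$, which is solvable pointwise since $H\ne0$. Your explicit check that the trace of this equation reproduces the hypothesis \eqref{ELmetrictracenpbig} with $\chi=-\widetilde{\cal T}^\flat$ is a nice addition the paper does not spell out; the paper simply states that the resulting algebraic equation ``admits many solutions''.
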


\begin{proof}
For $(\I_i E_a)^\perp = \tT_a {\cal E}_i$ we have for $X,Y \in \mathfrak{X}^\bot$:
\begin{equation*}
 \chi(X,Y) =
 \sum\nolimits_{a,j}\<\tT_a {\cal E}_j, X\>\<\tT_a {\cal E}_j,Y\>
 = -\widetilde{\cal T}^\flat (X,Y).
\end{equation*}
Then, since
 $\<\I_{i}\,{\cal E}_i, E_a\> = -\<\I_{i}\,E_a, {\cal E}_i\> = -\<\tT_a {\cal E}_i, {\cal E}_i\> =0$,
we also get $(\tr^\perp \I )^\top=0 = \tH$ and similarly, $\phi^\top=0$. So, \eqref{ELmetric} has the following form:
\begin{equation}
\label{ELmetricspec2a}
 -\frac{8n-3}{n}\,H^\flat \otimes H^\flat
 -\frac12\,\Upsilon_{T,T} - 5\,\widetilde{\cal{T}}^\flat
 +
 \frac{2n-1}{n}\,\<H, H\>
 \, g^{\perp}
 -\< \phi, H \>
 =0,
\end{equation}
Again, we get an algebraic equation for symmetric tensor $\phi$, which admits many solutions.
\end{proof}

Note that in Propostions \ref{propexample1} and \ref{propexample2} instead of condition $H\ne 0$ everywhere on $M$, we can assume that at those points of $M$, where $H=0$ the metric $g$ satisfies \eqref{ELmetricspec1} and \eqref{ELmetricspec2a} with $H=0$ (then these equations do not contain $\phi$).

\begin{example}\rm
Let
$\widetilde{\mD}$ and $\mD$ be totally umbilical, $n,p>1$, ${\cal D}$ integrable and \eqref{ELmetrictracenpbig} hold.
Then $\chi=0$ holds, since ${\cal D}$ is integrable, so \eqref{ELmetrictracenpbig} does not contain any components of
$\I$.
 With these assumptions we can construct a simple example of $\I$ that satisfies the Euler-Lagrange equations (\ref{critcontorsion1}-i)
and \eqref{ELmetricnpbig} in some domain. Let $U$ be a 
neighborhood of $p \in M$; we choose any 
local adapted orthonormal frame $(E_a , {\cal E}_i)$ on~$U$.
Then, due to $\phi(X,Y) = \phi(X^\perp,  Y^\perp)$, we have
\begin{eqnarray*}
 && (\Div \phi^\top)({\cal E}_i, {\cal E}_j) = \sum\nolimits_a\<\nabla_{E_a}(\phi^\top({\cal E}_i, {\cal E}_j)), E_a\>
 +\sum\nolimits_k \<\nabla_{ {\cal E}_k } ( \phi^\top ({\cal E}_i, {\cal E}_j)  ), {\cal E}_k \> \nonumber \\
 && - \sum\nolimits_{a,m} \<\phi^\top( {\cal E}_i , {\cal E}_m ) , E_a \> \< \nabla_{E_a} {\cal E}_j , {\cal E}_m \>
 - \sum\nolimits_{a,m} \< \phi^\top( {\cal E}_m , {\cal E}_j ) , E_a \> \< \nabla_{E_a} {\cal E}_i , {\cal E}_m \> \nonumber \\
 && -\sum\nolimits_{k,m} \< \phi^\top( {\cal E}_i , {\cal E}_m ) , {\cal E}_k \> \< \nabla_{{\cal E}_k} {\cal E}_j , {\cal E}_m \> - \sum\nolimits_{k,m} \< \phi^\top( {\cal E}_m , {\cal E}_j ) , {\cal E}_k \> \< \nabla_{{\cal E}_k} {\cal E}_i , {\cal E}_m \> . 
\end{eqnarray*}
We define components of $\I$ with respect to the adapted frame on $U$. Let $( \I_i {\cal E}_j - \I_j {\cal E}_i )^\top =0$ for $i \neq j$ and let $(\I_i E_a)^\top$, $(\I_a E_b)^\perp$ and $(\I_a {\cal E}_i)^\perp$ be such that (\ref{critcontorsiontTab},e,f,h) hold on $U$.
%
%
For all $( i,j ) \neq (p,p)$, consider \eqref{ELmetricnpbig} evaluated on $({\cal E}_i , {\cal E}_j)$ as a system of linear, non-homogeneous, first-order PDEs for $\{ \phi({\cal E}_i , {\cal E}_j) , ( i,j ) \neq (p,p) \}$, assume in this system that
 $\phi({\cal E}_p , {\cal E}_p) = \frac{n-1}{n}\, H - \tH - \sum\nolimits_{\,i=1}^{p-1} \phi({\cal E}_i , {\cal E}_i)$,
and let $\{ \phi_{ij} , ( i,j ) \neq (p,p) \}$ be any local solution of this 
system of PDEs on (a subset of) $U$. Let $\I_i {\cal E}_j + \I_j {\cal E}_i = \phi_{ij}$ for $(i,j) \neq (p,p)$ and let $\I_p {\cal E}_p = \frac{1}{2} (\frac{n-1}{n} H - \tH - \sum_{i=1}^{p-1} \phi_{ii})$, then 
(\ref{trperpIperpH},i)
hold. By the assumption that \eqref{ELmetrictracenpbig} holds and the fact that \eqref{ELmetricnpbig} is a linear, non-homogeneous equation for $\phi$, \eqref{ELmetricnpbig} evaluated on $({\cal E}_p , {\cal E}_p)$ will also be satisfied. Thus, equations (\ref{critcontorsion1}-i)
and \eqref{ELmetricnpbig} hold on (a subset of) $U$ for $\I$ constructed above.
\end{example}

Note that when we consider adapted variations, we also have the equation dual (with respect to interchanging $\widetilde{\mD}$ and $\mD$) to \eqref{ELmetric}, so we can mix different assumptions from the above examples for different distributions,
e.g., conditions
$(\I_i\, E_a)^\perp = \tT_a\,{\cal E}_i$ and
$\I_X Y \in \mathfrak{X}^\top$ for $X,Y \in \mathfrak{X}^\top$.

\subsection{Semi-symmetric connections}
\label{sec:2-4}

The following connections are metric compatible, see \cite{Yano}.
Using variations of $\I$ in this class, we obtain example with explicitly given tensor $\overline\Ric_\mD$.

\begin{definition}
\rm
An affine connection $\bar\nabla$ on $M$ is \textit{semi-symmetric} if its torsion tensor $S$ satisfies
$S(X,Y)=\omega(Y)X-\omega(X)Y$, where $\omega$ is a one-form on $M$.
For $(M,g)$ we have
\begin{equation}\label{Uconnection}
\bar\nabla_XY=\nabla_XY + \<U , Y\> X -\<X,Y\>U,
\end{equation}
where $U=\omega^\sharp$ is the dual vector field.
\end{definition}

We find Euler--Lagrange equations of \eqref{Eq-Smix} as a particular case of (\ref{ELconnection1}-h),
using variations of $\I$ corresponding to semi-symmetric connections.
Now we consider variations of a semi-symmetric connection 
only among connections also satisfying \eqref{Uconnection} for some $U$. 

\begin{proposition}\label{corUcriticalforI}
A semi-symmetric connection ${\bar\nabla}$ on $(M,g,\mD)$ satisfying \eqref{Uconnection} 
is critical for the action \eqref{Eq-Smix} with fixed $g$ among all 
semi-symmetric connections if and only if
\begin{equation}\label{UcriticalforI}
 2p(n-1)\,U^\top - (n-p) \tH = -(\mathfrak{a}/2)\,s^\top,\quad
 2n(p-1)\,U^\perp - (p-n) H = -(\mathfrak{a}/2)\,s^\bot,
\end{equation}
where $s^\top=(s(\cdot\,,\cdot))^\top$ and $s^\bot=(s(\cdot\,,\cdot))^\bot$.
In particular, if $\,n=p=1$ and $s=0$ (no spin) then every semi-symmetric connection is critical among all such connections,
because $Q=0$ in this case.
\end{proposition}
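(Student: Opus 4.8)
The plan is to compute the first variation of \eqref{Eq-Smix}, with $g$ fixed, at a semi-symmetric connection through semi-symmetric connections; by Proposition~\ref{L-QQ-first} this reduces to evaluating the scalar $Q$ of \eqref{E-defQ} as a function of the parameterizing vector field $U$, the matter term contributing the spin tensor exactly as in (\ref{ELconnection1}-h). The contorsion tensor of \eqref{Uconnection} is $\I_X Y=\<U,Y\>X-\<X,Y\>U$; since $\bar\nabla$ is metric one has $\I^*=-\I$, and a direct check gives $\I^\wedge_X Y=\<U,X\>Y-\<X,Y\>U$, so that $\I+\I^\wedge$ is symmetric. By Proposition~\ref{L-QQ-first}, $2(\bar{\rm S}_{\,\rm mix}-{\rm S}_{\,\rm mix})=\Div X-Q$ with $X=(\tr^\top(\I-\I^*))^\perp+(\tr^\perp(\I-\I^*))^\top$; as $g$ is fixed, ${\rm S}_{\,\rm mix}$ does not vary and $\frac{d}{dt}\int_M(\Div X)\,{\rm d}\vol_g=0$ by \eqref{E-DivThm-2} ($\dt X$ is supported in $\Omega$, as $X$ depends linearly on $\I$). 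Hence $\frac{d}{dt}\int_M\bar{\rm S}_{\,\rm mix}\,{\rm d}\vol_g=-\tfrac12\int_M\dt Q\,{\rm d}\vol_g$, and it remains to evaluate $Q$ and differentiate it.

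First I would record the partial traces
\[
 \tr^\top\I=(1-n)\,U^\top-n\,U^\perp,\qquad \tr^\perp\I=-p\,U^\top+(1-p)\,U^\perp,
\]
immediate from $\I_{E_a}E_a=\<U,E_a\>E_a-\eps_a U$ and its analogue for ${\cal E}_i$, together with $\tr^\top\I^*=-\tr^\top\I$ and $\tr^\perp\I^*=-\tr^\perp\I$. Substituting into \eqref{E-defQ} and using $\I^*=-\I$, $\I^{*\wedge}=-\I^\wedge$ collapses the formula to
\[
 Q=2\<\tr^\top\I,\tr^\perp\I\>-2\<\tr^\top\I-\tr^\perp\I,\ H-\tH\>-2\<\I+\I^\wedge,\ \tilde A-\tilde T^\sharp+A-T^\sharp\>,
\]
because $\<\I^*,\I^\wedge\>_{\,|\,V}=-\<\I,\I^\wedge\>_{\,|\,V}=0$: on $V$ the vector $\I(E_a,{\cal E}_i)$ lies in $\widetilde\mD$ while $\I^\wedge(E_a,{\cal E}_i)$ lies in $\mD$. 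Pairing the symmetric tensor $\I+\I^\wedge$ against the Weingarten parts and summing over the frame gives $\<\I+\I^\wedge,A\>=\<U,H\>$ and $\<\I+\I^\wedge,\tilde A\>=\<U,\tH\>$, while the integrability parts vanish because $T(E_a,E_a)=0=\tilde T({\cal E}_i,{\cal E}_i)$; so the last term equals $\<U,H+\tH\>$. Collecting, and using $\<U,\tH\>=\<U^\top,\tH\>$, $\<U,H\>=\<U^\perp,H\>$, yields
\[
 Q=2p(n-1)\,\<U^\top,U^\top\>+2n(p-1)\,\<U^\perp,U^\perp\>+2(p-n)\,\<U,\ \tH-H\>.
\]

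Since $g$, hence $U^\top,U^\perp,H,\tH$, is fixed, a variation with $\dt U=V$ satisfies $\dt U^\top=V^\top$, $\dt U^\perp=V^\perp$, so
\[
 \tfrac12\,\dt Q=\<V^\top,\ 2p(n-1)\,U^\top-(n-p)\,\tH\>+\<V^\perp,\ 2n(p-1)\,U^\perp-(p-n)\,H\>.
\]
As $V^\top$ and $V^\perp$ are arbitrary, vanishing of the first variation of \eqref{Eq-Smix} (with $g$ fixed) among semi-symmetric connections is equivalent to the two bracketed vectors equalling the spin-tensor contributions produced by $\dt\int_M{\cal L}\,{\rm d}\vol_g$, namely the terms $-(\mathfrak a/2)\<s(\cdot,\cdot),\cdot\>$ as in (\ref{ELconnection1}-h) --- this is \eqref{UcriticalforI}. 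If $n=p=1$ then all coefficients $p(n-1)$, $n(p-1)$, $p-n$ vanish, so $Q\equiv0$; thus, with $s=0$, the action restricted to semi-symmetric connections is independent of $\I$ and every such connection is critical, while \eqref{UcriticalforI} degenerates to $0=0$.

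The main obstacle is the step pairing $\I+\I^\wedge$ with $\tilde A-\tilde T^\sharp+A-T^\sharp$ in \eqref{E-defQ}: one must pin down which frame components of these $(1,2)$-tensors couple to the symmetric, ``scalar-type'' tensor $\I+\I^\wedge$ and carry out the sums so that exactly $H$ and $\tH$ survive, then match the signs and the $n,p$-dependent coefficients against the stated form of \eqref{UcriticalforI}; the vanishing $\<\I^*,\I^\wedge\>_{\,|\,V}=0$ and the partial-trace products are comparatively routine.
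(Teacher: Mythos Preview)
Your argument follows exactly the paper's route: reduce via Proposition~\ref{L-QQ-first} to the scalar $Q$, compute $Q$ explicitly for the contorsion $\I_XY=\<U,Y\>X-\<X,Y\>U$, and differentiate with respect to $U$. The paper's proof simply quotes \eqref{QforUconnection} from Lemma~\ref{lemmasemisymmetric} and differentiates, so your write-up is in fact more self-contained.

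One bookkeeping remark: your displayed formula for $Q$ is exactly twice the paper's \eqref{QforUconnection}; expanding $np\,\<U,U\>-n\,\<U^\perp,U^\perp\>-p\,\<U^\top,U^\top\>=p(n-1)\<U^\top,U^\top\>+n(p-1)\<U^\perp,U^\perp\>$ shows the discrepancy term by term. Since you then take $\tfrac12\,\dt Q$ and match it to \eqref{UcriticalforI} without tracking the spin-term normalization explicitly, the factor is invisible in your conclusion, and in the vacuum case $s=0$ it is of course irrelevant. But if you want the coefficient $\mathfrak a/2$ on the right-hand side to come out exactly as stated, you should recheck the pairing $\<\I+\I^\wedge,\ A+\tilde A\>$ against the convention used in \eqref{E-defQ} (the paper's inner product there is over the mixed block $V$, and the tensor in \eqref{E-defQ} is $\Theta=\I-\I^*+\I^\wedge-\I^{*\wedge}$, not $\I+\I^\wedge$).
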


\begin{proof}
Let $U_t,\ t\in(-\epsilon, \epsilon)$, be a family of compactly supported vector fields on $M$,
and let $U=U_0$ and $\dot U = \dt U_t |_{t=0}$. Then for a fixed metric $g$,
from \eqref{QforUconnection} we obtain
\[
 \dt Q(U_t) |_{t=0} = (p-n)\< \dot U, \tH\> + 2p(n-1) \< U^\top , \dot U \> + \< \dot U , H \> (n-p) + 2n(p-1) \< U^\perp , \dot U \>.
\]
Separating parts with $(\dot U)^\top$ and $(\dot U)^\perp$, we get
\[
 \dt Q(U_t) |_{\,t=0} = \< \dot U,\  (p-n) \tH + 2p(n-1) U^\top \> + \< \dot U,\ (n-p) H + 2n(p-1) U^\perp \>,
\]
from which \eqref{UcriticalforI} follow.
\end{proof}

\begin{remark}\rm
By Lemma \ref{lemmasemisymmetric}, if a semi-symmetric connection ${\bar\nabla}$ on $(M,g,\mD)$ is critical for the action \eqref{actiongSmix} with fixed $g$, then both $\widetilde{\cal D}$ and ${\cal D}$ are integrable and totally geodesic.
Indeed, let ${\bar \nabla}$ be given by \eqref{Uconnection} and satisfy (\ref{critcontorsion1}-g) and conditions \eqref{critcontorsionspec1},
i.e., it is critical for action \eqref{actiongSmix} with fixed $g$. We find from \eqref{UconnectionImixed} that both $\widetilde{\cal D}$ and $\cal D$ are integrable. Moreover, if $n=p=1$ then \eqref{UconnectiontrtopI} and its dual with (\ref{critcontorsion1}-g) yield $H = 0 = \tH$ and $U=0$ (i.e., the connection ${\bar \nabla}$ becomes the Levi-Civita connection).
If $n>2$ and $p>2$ we also have $H = 0 = \tH$ and $U=0$, in this case using also \eqref{critcontorsionspec1}.
If $n=1$ and $p>1$ we obtain from \eqref{trperpIperpH} that $U^\perp =0$ and from \eqref{critcontorsionspec1}$_1$ that $H=0$, moreover 
as both distributions are totally umbilical by Corollary~\ref{T-main1}, it follows that they are totally geodesic.
\end{remark}

\begin{theorem}\label{propUconnectionEL}
A pair $(g, \I)$, where $g\in{\rm Riem}(M,\widetilde{\mD},{\mD})$ and $\I$ corresponds to a semi-symmetric connection on $M$ defined by \eqref{Uconnection}, is critical for \eqref{actiongSmix} with respect to volume-preserving $g^\pitchfork$-variations of metric and variations of $\,\I$ corresponding to semi-symmetric connections
%
if and only if the following Euler-Lagrange equations are satisfied:
\begin{subequations}
\begin{eqnarray}\label{UELD}
 && {r}_{\mD} -\<\tilde h,\,\tilde H\> +\widetilde{\cal A}^\flat -\widetilde{\cal T}^\flat
 +\Psi  +\widetilde{\cal K}^\flat -{\rm Def}_{\mD}\,H
 + H^\flat\otimes H^\flat -\frac{1}{2}\,\Upsilon_{h,h} -\frac12\,\Upsilon_{T,T} \\
\nonumber
 && -\frac12\,\big({\rm S}_{\rm mix} +\Div(\tilde H -H)\big)\,g^\perp
 -\frac14\,(p-n)( \Div U^\top)\,g^\perp
 +\frac12\,n(p-1) U^{\perp \flat} \otimes U^{\perp \flat} =\lambda\,g^\perp, \\
\label{UELmixed}
 && 4\,\<\theta,\,{\tilde H}\>  +2(\Div(\alpha -\tilde\theta) )_{\,| {\rm V}}
 +2\<{\tilde \theta} - {\tilde\alpha}, H\> + 2\,H^{\flat} \odot {\tilde H}^{\flat} -2\,{\tilde \delta}_{H}
  +4\,\Upsilon_{{\tilde\alpha}, \theta} +2\Upsilon_{\alpha, {\tilde\alpha}}
 +2\,\Upsilon_{{\tilde \theta}, \theta} \nonumber \\
 && +\,\frac12\,(n-p){\tilde \delta}_{U^\perp} + \frac12\,(n-p)\< {\tilde \alpha} - {\tilde \theta} , U^\perp \>
 -(p-n) \< {\theta} , U^\top \> - p(n-1) U^{\top \flat} \otimes U^{\perp \flat} = 0,
\end{eqnarray}
\end{subequations}
and
\begin{equation}\label{UcriticalforI-0}
 2p(n-1)\,U^\top - (n-p) \tH = 0,\quad
 2n(p-1)\,U^\perp - (p-n) H = 0.
\end{equation}
\end{theorem}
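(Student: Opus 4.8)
The plan is to reduce the statement to an application of the variational machinery already assembled in the paper, namely Theorem~\ref{T-main00}, Proposition~\ref{L-QQ-first}, Proposition~\ref{corUcriticalforI}, and the lemmas in Section~\ref{sec:aux} that give $\dt Q$ and $Q$ for semi-symmetric connections. The key structural fact is the decomposition $\bar{\rm S}_{\,\rm mix} = {\rm S}_{\,\rm mix} + \frac12\big(\Div X - Q\big)$ with $X = (\tr^\top(\I-\I^*))^\bot + (\tr^\bot(\I-\I^*))^\top$ from \eqref{E-Q1Q2-gen}; since for a metric connection $\I^*=-\I$, the field $X = 2\big((\tr^\top\I)^\bot + (\tr^\bot\I)^\top\big)$ still has $\mathrm{supp}(\dt X)\subset\Omega$ along any admissible variation, so by \eqref{E-DivThm-2} the $\Div X$ term contributes nothing to $\frac{d}{dt}\int_M \bar{\rm S}_{\,\rm mix}\,{\rm d}\vol_g$. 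Hence the whole variational problem splits: first varying $\I$ among semi-symmetric connections with $g$ fixed gives exactly \eqref{UcriticalforI-0} by Proposition~\ref{corUcriticalforI} with $s=0$; then varying $g$ (volume-preservingly, via $g^\pitchfork$-variations) contributes the ${\rm S}_{\,\rm mix}$-variation from Theorem~\ref{T-main00} together with $-\tfrac12\dt Q - \tfrac14 Q\,\langle B,g\rangle$.

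First I would record, from \eqref{Uconnection}, the explicit form of the contorsion tensor $\I_XY = \langle U,Y\rangle X - \langle X,Y\rangle U$ and its partial traces: $\tr^\top\I = n\,U^\top$ roughly (carefully, $\tr^\top\I = \sum_a \I_a E_a = \sum_a(\langle U,E_a\rangle E_a - U) $, yielding the $\top$- and $\bot$-components in terms of $U^\top$, $U^\bot$, $n$, $p$), and similarly $\tr^\bot\I$. These feed into the formula for $Q$ in \eqref{E-defQ}; the paper refers to \eqref{QforUconnection} in Section~\ref{sec:aux} for $Q$ evaluated on a semi-symmetric connection, which I would cite directly rather than re-derive. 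From that lemma one reads off that, for a \emph{critical} semi-symmetric connection, the distributions are totally geodesic (as noted in the Remark preceding the theorem, via \eqref{UconnectionImixed}, \eqref{UconnectiontrtopI}, \eqref{trperpIperpH} and \eqref{critcontorsionspec1}), which kills $h$, $\tilde h$, $H$, $\tilde H$ in the $g$-variation — but since we want the \emph{general} Euler-Lagrange system before imposing total geodesy, I would instead keep $Q$ and $\dt Q$ symbolic and just substitute into \eqref{ELQ}-type computations. Next I would compute $\dt Q$ under a $g^\pitchfork$-variation: this is the lemma (call it Lemma~\ref{dtQadapted} / the semi-symmetric analogue) producing a $(0,2)$-tensor $\delta Q$ with $\langle\delta Q,B\rangle = -\dt Q$, contributed together with the $-\tfrac14 Q\,g^\bot$ term coming from $\dt({\rm d}\vol_g) = \tfrac12(\tr_g B){\rm d}\vol_g$.

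Then I would assemble: the Euler-Lagrange equation for volume-preserving $g^\pitchfork$-variations of $\bar J_{\rm mix}$ is
\[
\big(\text{LHS of }\eqref{E-main-0i}\big) \;-\; \tfrac12\,\delta Q \;-\; \tfrac14\,Q\,g^\bot \;=\; 0
\]
on the $\mD\times\mD$- and $V$-blocks respectively, with the usual $\lambda g^\bot$ freedom on the diagonal block. Substituting the explicit $\delta Q$ and $Q$ for the semi-symmetric case — in which $\tr^\top\I, \tr^\bot\I$ are multiples of $U^\top, U^\bot$ — the contorsion-dependent terms collapse to the displayed $-\tfrac14(p-n)(\Div U^\top)g^\bot + \tfrac12 n(p-1)U^{\bot\flat}\otimes U^{\bot\flat}$ on the $\mD\times\mD$-block and the corresponding $\tilde\delta_{U^\bot}$, $\langle\tilde\alpha-\tilde\theta,U^\bot\rangle$, $\langle\theta,U^\top\rangle$, $U^{\top\flat}\otimes U^{\bot\flat}$ terms on the $V$-block of \eqref{UELmixed}; the $\tilde h,\tilde H$-terms and $\widetilde{\cal K}^\flat$, ${\rm Def}$-terms are exactly those already present in \eqref{E-main-0i}. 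The signs of the $V$-block relative to \eqref{E-main-0i} are flipped because \eqref{UELmixed} is written for the dual/opposite orientation — I would double-check that against Theorem~\ref{T-main00}. Finally, adjoining \eqref{UcriticalforI-0} from the $\I$-variation completes the system; conversely, if all three hold, reversing the computation shows $(g,\I)$ is critical.

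\textbf{Main obstacle.} The hard part is purely bookkeeping: correctly expanding $\dt Q$ for a $g^\pitchfork$-variation when $\I$ is semi-symmetric — i.e.\ getting every coefficient of $U^\top, U^\bot, H, \tilde H$ and of the mixed operators $\alpha,\theta,\tilde\delta$ right — and then checking that after substitution the many cross-terms cancel down to the compact form displayed in \eqref{UELD}--\eqref{UELmixed}. This relies on the auxiliary lemmas (\eqref{QforUconnection}, the $\dt Q$ lemma, and the identities for $\tr^\top\I$ etc.\ under \eqref{Uconnection}), so the real content is verifying those computations are consistent; no conceptually new idea beyond the split "vary $\I$, then vary $g$, with the $\Div X$ term discarded by \eqref{E-DivThm-2}" is needed.
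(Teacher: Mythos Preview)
Your proposal is correct and follows essentially the same route as the paper's own proof: split $\bar{\rm S}_{\rm mix}$ via Proposition~\ref{L-QQ-first}, discard the $\Div X$ contribution by \eqref{E-DivThm-2}, invoke Theorem~\ref{T-main00} for the ${\rm S}_{\rm mix}$-part, add the $-\tfrac12\,\dt Q$ contribution, and obtain \eqref{UcriticalforI-0} from Proposition~\ref{corUcriticalforI} with $s=0$. The only adjustment is that the ``semi-symmetric analogue'' of Lemma~\ref{dtQadapted} you allude to is precisely Lemma~\ref{lemmasemisymmetric}\,(b), equation \eqref{dtQgforUconnection}, which the paper cites directly; you should point there rather than to Lemma~\ref{dtQadapted} (which handles general metric connections under $g^\perp$-variations and is not what is used here). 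Also note that the paper's short proof does not explicitly display the $-\tfrac14\,Q\,g^\perp$ volume-form term you mention---it combines \eqref{dtQgforUconnection} with (\ref{E-main-0i},b) and reads off (\ref{UELD},b) immediately---so in your write-up just make sure the coefficients you get from $-\tfrac12\,\dt Q$ (and whatever part of $-\tfrac14\,Q\,g^\perp$ survives) land exactly on the $U$-terms shown in \eqref{UELD}--\eqref{UELmixed}; the sign flip you noticed on the $V$-block relative to (\ref{E-main-0i}b) is simply an overall multiplication by $-1$, not a duality.
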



\begin{proof} By Proposition~\ref{L-QQ-first} and \eqref{dtQgforUconnection}, we obtain
\begin{eqnarray*}
 \dt \int_M (\bar{{\rm S}}_{\,\rm mix} - {\rm S}_{\,\rm mix})\,{\rm d}\vol_g \eq
  \int_M \big\<
  \frac14\,(p-n)(\Div U^\top ) g^\perp -(p-n)\< {\theta}, U^\top \> \nonumber \\
 &-& \!\!\frac12\,n(p-1) U^{\perp \flat} \otimes U^{\perp \flat}
 -p(n-1) U^{\top \flat} \otimes U^{\perp \flat} ,\ B\big\>\,{\rm d}\vol_g .
\end{eqnarray*}
Using (\ref{E-main-0i},b) give rise to (\ref{UELD},b).
Finally, notice that \eqref{UcriticalforI-0} is \eqref{UcriticalforI} for vacuum space-time.
\end{proof}

Although generally $\overline\Ric_{\,\mD}$ in \eqref{E-gravity-gen} has a long expression and is not given here,
for particular case of  semi-symmetric connections,
due to Theorem~\ref{propUconnectionEL}, we present the mixed Ricci tensor explicitly~as
\begin{equation}\label{E-Ric-D-semi-sym}
\left\{\begin{array}{c}
\overline\Ric_{\,\mD\,|\,\mD\times\mD} = \Ric_{\,\mD\,|\,\mD\times\mD}
+\frac12\,n(p-1) U^{\perp \flat} \otimes U^{\perp \flat}
-\frac14\,(p-n)( \Div U^\top)\,g^\perp + \frac{Z}{2-n-p}\,g^\perp, \\
\overline\Ric_{\mD\,|\,V} = \Ric_{\mD\,|\,V}
-\frac12\,(n-p)\big({\tilde \delta}_{U^\perp} +\< {\tilde \alpha} - {\tilde\theta}, U^\perp \>\big)
+(p-n)\<\theta, U^\top\> + p(n-1) U^{\top\flat}\otimes U^{\perp\flat},\\
\overline\Ric_{\,\mD|\,\widetilde\mD\times\widetilde\mD} = \Ric_{\,\mD|\,\widetilde\mD\times\widetilde\mD}
+\frac12\,p(n-1) U^{\top\flat} \otimes U^{\top\flat} -\frac14\,(n-p)(\Div U^\bot)\,g^\top + \frac{Z}{2-n-p}\,g^\top,
\end{array} \right.
\end{equation}
also $\overline{\rm S}_{\mD}=\tr_g\overline\Ric_{\,\mD} = {\rm S}_{\mD} + \frac{2}{2-n-p}\,Z$,
where $\Ric_{\,\mD}$ and ${\rm S}_{\mD}$ as in Definition~\ref{D-Ric-D}, $n+p>2$ and
\begin{equation*}
 Z=\frac12\,n(p-1)\|U^{\perp}\|^2 +\frac12\,p(n-1)\|U^{\top}\|^2 -\frac14\,p(p-n)\Div U^\top -\frac14\,n(n-p)\Div U^\bot.
\end{equation*}
This is because $\overline\Ric_{\,\mD} - \frac{1}{2} \tr (\overline\Ric_{\,\mD}) g = 0$ is equivalent to all three Euler-Lagrange equations for \eqref{actiongSmix}.

\begin{example}\rm
For a \textit{space-time} $(M^{p+1},g)$ endowed with ${\widetilde\mD}$ spanned by a timelike unit vector field $N$,
see Example~\ref{Ex-2-1}, the tensor $\overline\Ric_{\mD}$ has the following particular form
(i.e., \eqref{E-Ric-D-semi-sym} with $n=1$):
\begin{equation*}
\left\{\begin{array}{c}
 \overline\Ric_{\,\mD\,|\,\mD\times\mD} = \Ric_{\,\mD\,|\,\mD\times\mD}
 +\frac12\,(p-1) U^{\perp \flat}\otimes U^{\perp\flat}
 -\frac14\,(p-1)(\Div U^\top)\,g^\perp + \frac{Z}{1-p}\,g^\perp, \\
 \overline\Ric_{\mD\,|\,V} = \Ric_{\mD\,|\,V}
 -\frac12\,(1-p)\big({\tilde \delta}_{U^\perp} +\< {\tilde \alpha} - {\tilde\theta}, U^\perp \>\big)
 ,\\
 \overline\Ric_{\,\mD\,|\,\widetilde\mD\times\widetilde\mD} = \Ric_{\,\mD|\,\widetilde\mD\times\widetilde\mD}
 -\frac14\,\eps_N(1-p)(\Div U^\bot) + \eps_N\frac{Z}{1-p},
\end{array} \right.
\end{equation*}
and $\overline{\rm S}_{\,\mD} = {\rm S}_{\,\mD}
+\frac{2\,\eps_N Z}{1-p}$, see \eqref{E-RicD-flow-S},
where
$Z=\frac14\,(p-1)\big(\,2\,\|U^{\perp}\|^2 -p\,\Div U^\top +\Div U^\bot\big)$.
Note that $\theta=0$ and $2\,{\tilde\delta}_{U^\perp}(N,\cdot)=(\nabla_N\,(U^\bot))^{\bot\flat}$.
\end{example}

\begin{remark}\rm
 By Proposition~\ref{corUcriticalforI}, also \eqref{UcriticalforI} holds, which allows to simplify the Euler-Lagrange equations of Theorem~\ref{propUconnectionEL} as discussed below.
If $n=p=1$ then \eqref{UcriticalforI} does not give any restrictions for $U$ and all terms containing $U$ vanish in (\ref{UELD},b) -- as expected from the last sentence in Proposition~\ref{corUcriticalforI}.

If $n=1$ and $p>1$ then by \eqref{UcriticalforI} we have $\tH=0$ and $U^\perp=\frac{1}{2}H$, while $U^\top$ can be arbitrary.
We~also have $-\frac{1}{2} \Upsilon_{h,h} = - H^\flat \otimes H^\flat$, and \eqref{UELD} becomes
\begin{equation*}
 -\Div{\tilde h} -\widetilde{\cal K}^\flat
 +2\,\widetilde{\cal{T}}^\flat +\frac{1}{2}\,\big({{\rm S}}_{\,\rm mix} +\Div({\tilde H} - H)
  + \frac{p-1}{2}\,\Div U^\top\big)\,g^\perp - \frac{p-1}{4}\,H^{\flat} \otimes H^{\flat} = \lambda\,g^\perp,
\end{equation*}
where we replaced $r_\mD$ by $\Div\tilde h$ (with additional terms) according to \eqref{E-genRicN},
and for \eqref{UELmixed} we have
\begin{equation}\label{UELmixedn1}
 2\,(\Div(\alpha -\tilde\theta) )_{\,| {\rm V}} + \frac{7+p}{4} \<{\tilde \theta} - {\tilde\alpha}, H\>
 -\frac{7+p}{4} \, {\tilde \delta}_{H} + 2\Upsilon_{\alpha, {\tilde\alpha}} = 0.
\end{equation}
Let $N \in \widetilde{\cal D}$ and $X \in {\cal D}$.
Using results and notation from \cite{rz-2}, we have the following:
\begin{eqnarray*}
 2 (\Div {\tilde\theta})(X,N) \eq (\Div \tT_N)(X) + \<\tT_N H, X\>, \\
 2 (\Div \alpha)(X, N) \eq \<\nabla_N H - {\tilde \tau}_1 H, X\>, \\
 2 \Upsilon_{\alpha, {\tilde\alpha}}(X,N) \eq \<\tA_N H, X\>, \\
 2 {\tilde\delta}_H (X,N) \eq \<\nabla_N H, X\>, \\
 2 \<{\tilde \theta} -{\tilde \alpha}, H \> (X,N) \eq -\<\tT_N H + \tA_N H, X\> ,
\end{eqnarray*}
where ${\tilde \tau}_1 = \tr\tA_N$.
Hence, \eqref{UELmixedn1} holds if and only if for unit $N\in\widetilde{\cal D}$ and all $X \in {\cal D}$ we have
\begin{eqnarray*}
 \frac{1-p}{8}\,\<\nabla_N H, X\> - \<{\tilde \tau}_1 H, X\> - (\Div \tT_N)(X) - \frac{15+p}{8}\,\<\tT_N H, X\>
 +\frac{1-p}{8}\,\<\tA_N H, X\>  = 0 .
\end{eqnarray*}
 If $n>1$ and $p>1$, then using \eqref{UcriticalforI} we reduce \eqref{UELD} to the following:
\begin{eqnarray*}
 && -\Div{\tilde h} -\widetilde{\cal K}^\flat
 +\frac{1}{2}\,\Upsilon_{h,h}+\frac12\,\Upsilon_{T,T} +2\,\widetilde{\cal{T}}^\flat
 +\frac{1}{2}\,\big({{\rm S}}_{\,\rm mix} +\Div({\tilde H} - H)\big)\,g^{\perp} \nonumber \\
 && - \frac{(p-n)^2}{8p(n-1)}\,(\Div \tH )\,g^\perp
 - \frac{(p-n)^2 + 8n(p-1)}{8n(p-1)}\,H^\flat \otimes H^\flat = \lambda\,g^\perp,
\end{eqnarray*}
and we reduce \eqref{UELmixed} to the following:
\begin{eqnarray*}
 && 4\,\Upsilon_{{\tilde\alpha}, \theta} +2(\Div(\alpha -\tilde\theta))_{\,|\,{\rm V}} +2\,\Upsilon_{\alpha, {\tilde\alpha}}
 +2\,\Upsilon_{{\tilde \theta}, \theta} -\frac{(p-n)^2 + 8n(p-1)}{4n(p-1)}\,{\tilde \delta}_H \\
 &&\hskip-17mm -\frac{(p-n)^2 + 8n(p-1)}{4n(p-1)}\,\<{\tilde \alpha} - {\tilde\theta}, H \>
 +\frac{(n-p)^2 + 8p(n-1)}{2p(n-1)}\,\< \theta, \tH \> + \frac{(p-n)^2}{4n(p-1)}\,H^\flat \odot \tH^\flat = 0.
\end{eqnarray*}
Note that for vacuum space-time the distributions $\widetilde{\cal D}$ and ${\cal D}$ don't need to be umbilical 
to admit $(g , \I)$ critical for \eqref{actiongSmix} among all metrics and semi-symmetric connections.
\end{remark}

\section{Auxiliary lemmas}
\label{sec:aux}


\begin{lemma}\label{L-divX}
For any variation $g_t$ of metric and a $t$-dependent vector field $X$ on $M$, we have
\begin{equation*}
 \dt\,(\Div X) = \Div (\dt X) +\frac{1}{2}\,X(\tr_{g} B).
\end{equation*}
\end{lemma}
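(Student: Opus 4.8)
The plan is to compute $\partial_t(\Div X)$ directly from the local formula for the divergence in an adapted orthonormal frame, carefully tracking both the variation of the frame and the variation of the inner products. First I would recall that for any $t$-dependent vector field $X$ and any $(\widetilde{\mD},\mD)$-adapted frame $\{E_a,\mathcal{E}_i\}$ that is orthonormal at $t=0$,
\[
 \Div X=\sum\nolimits_{a}\eps_a\,\<\nabla_a X, E_a\> +\sum\nolimits_{i}\eps_i\,\<\nabla_i X, \mathcal{E}_i\>,
\]
and then differentiate this with respect to $t$. This produces three kinds of terms: those where $\partial_t$ hits $X$ (giving $\Div(\partial_t X)$ after reassembling), those where $\partial_t$ hits the Levi-Civita connection $\nabla$ itself (contributing the variation $\partial_t\Gamma$ of the Christoffel symbols), and those where $\partial_t$ hits the frame vectors and the metric pairing $\<\cdot,\cdot\>$.

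The key technical input is the standard formula for the variation of the Levi-Civita connection: $\<(\partial_t\nabla)_Y Z, W\> = \frac12\big((\nabla_Y B)(Z,W)+(\nabla_Z B)(Y,W)-(\nabla_W B)(Y,Z)\big)$, where $B=\partial_t g$. When this is contracted over an orthonormal frame in the divergence expression, the first two terms combine with the frame/metric variation contributions and telescope, while the surviving trace is exactly $\frac12\sum_\lambda \eps_\lambda (\nabla_{e_\lambda}B)(X,e_\lambda) - \frac12\sum_\lambda\eps_\lambda(\nabla_X B)(e_\lambda,e_\lambda)$ up to cancellation; a short bookkeeping argument, using symmetry of $B$ and the fact that $\sum_\lambda\eps_\lambda(\nabla_{e_\lambda}B)(X,e_\lambda)$ pairs against the terms coming from $\partial_t\<\nabla_\lambda X,e_\lambda\>$ through $\partial_t g$, reduces everything to the single term $\frac12 X(\tr_g B)$, since $\tr_g B=\sum_\lambda\eps_\lambda B(e_\lambda,e_\lambda)$ and $X(\tr_g B)=\sum_\lambda\eps_\lambda(\nabla_X B)(e_\lambda,e_\lambda)$ at the frame.

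An alternative, cleaner route that I would actually prefer to write is the coordinate-free identification via the volume form: from \eqref{eq:div} we have $(\Div X)\,{\rm d}\vol_g=\mathcal{L}_X({\rm d}\vol_g)$, so applying $\partial_t$ and using \eqref{E-dotvolg} together with the commutativity of $\partial_t$ and $\mathcal{L}_X$ on forms (for $t$-independent $X$), one gets $\partial_t(\Div X)\,{\rm d}\vol_g + (\Div X)\cdot\frac12(\tr_g B)\,{\rm d}\vol_g = \mathcal{L}_X\big(\frac12(\tr_g B)\,{\rm d}\vol_g\big)=\frac12 X(\tr_g B)\,{\rm d}\vol_g + \frac12(\tr_g B)\,\mathcal{L}_X({\rm d}\vol_g)$. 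The last term equals $\frac12(\tr_g B)(\Div X)\,{\rm d}\vol_g$, which cancels the matching term on the left, yielding $\partial_t(\Div X)=\frac12 X(\tr_g B)$ in the $t$-independent case; the general $t$-dependent $X$ then follows by writing $\Div X = \Div(\partial_t X|_{t=\text{fixed split}}) + \cdots$, i.e. by the Leibniz rule splitting $\partial_t(\Div X)$ into the part from varying $g$ (with $X$ frozen) plus $\Div(\partial_t X)$ (with $g$ frozen), the latter being immediate since $\Div$ is linear in $X$.

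The main obstacle I anticipate is the careful cancellation in the frame-based computation: one must make sure that the contributions from $\partial_t\mathcal{E}_i$ (which are nonzero even for $g^\pitchfork$-variations, cf. \eqref{E-frameE}) and from varying the metric pairings exactly offset the ``extra'' terms in the variation of the Christoffel symbols, leaving only $\frac12 X(\tr_g B)$ and no curvature or second-fundamental-form terms. Using the volume-form argument sidesteps this bookkeeping entirely, so I would present that as the proof and relegate the frame computation to a remark or omit it.
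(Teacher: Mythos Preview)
Your volume-form argument is correct and is exactly the approach the paper takes: differentiate the identity $(\Div X)\,{\rm d}\vol_g=\mathcal{L}_X({\rm d}\vol_g)$ using \eqref{E-dotvolg}, expand both sides, and cancel the common $\frac12(\tr_g B)(\Div X)\,{\rm d}\vol_g$ term. The only cosmetic difference is that the paper handles the $t$-dependent $X$ in one step (using that $\iota_X$, hence $\mathcal{L}_X$ on top forms, is linear in $X$, so $\partial_t\mathcal{L}_{X_t}=\mathcal{L}_{\partial_t X}+\mathcal{L}_X\partial_t$), whereas you first treat $t$-independent $X$ and then invoke the Leibniz rule; both are fine, and your frame-based sketch can indeed be omitted.
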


\begin{proof}
Differentiating the formula \eqref{eq:div} and using \eqref{E-dotvolg}, we get
\begin{eqnarray*}
&& \dt\big((\Div X)\,{\rm d}\vol_g\big) = \big(\dt\,(\Div X) +\frac12\,(\Div X)\tr_{g} B\big)\,{\rm d}\vol_{g},\\
&& \dt\big({\cal L}_{X}({\rm d}\vol_g)\big) = \big(\Div (\dt X) +\frac{1}{2}\,X(\tr_{g} B)+\frac12\,(\Div X)\tr_{g} B\big)\,{\rm d}\vol_{g}.
\end{eqnarray*}
From this the
claim follows.
\end{proof}

Define symmetric $(1,2)$-tensors $L,G,F$, 
by the following formulas:
\begin{eqnarray}
\nonumber
L(X,Y) \eq \frac{1}{4} (\Theta^*_{X^\perp} Y^\perp +\Theta^{\wedge*}_{X^\perp} Y^\perp +
\Theta^*_{Y^\perp} X^\perp +\Theta^{\wedge*}_{Y^\perp} X^\perp) ,\\
\nonumber
 G(X,Y) \eq \frac{1}{4}({\Theta}^*_{ X^\perp} Y^\top + {\Theta}^{\wedge*}_{ X^\perp} Y^\top + {\Theta}^{\wedge*}_{Y^\perp} X^\top + {\Theta}^*_{ Y^\perp} X^\top ),\\
\nonumber
 F(X,Y) \eq \frac{1}{4}(\Theta^*_{X^\top} Y^\perp \!+\Theta^{\wedge*}_{X^\top} Y^\perp \!- \Theta_{X^\top} Y^\perp
 \!- \Theta^\wedge_{X^\top} Y^\perp \! \\ \label{formulaF}
 &&  +\Theta^*_{ Y^\top} X^\perp \!+\Theta^{\wedge*}_{Y^\top}X^\perp \!- \Theta_{Y^\top} X^\perp
 \!- \Theta^\wedge_{Y^\top} X^\perp),
\end{eqnarray}
where $\Theta = \I -\I^* +\I^\wedge - \I^{* \wedge }$ and $(\Theta^\wedge)_X Y = \Theta_Y X$ for all $X,Y \in \mathfrak{X}_M$.

The~following equalities
(and similar formulas for $\Upsilon_{\alpha,  {\tilde \alpha}}$, $\Upsilon_{\theta,  {\tilde \alpha}}$, etc.)
will be used (recall Remark~\ref{remarkepsilons} for notational conventions):
\begin{eqnarray*}
 \<\,\<\alpha, {\tilde H}\>, S\> \eq \sum\nolimits_{\,a,i} 
  \<A_{i}(E_a), {\tilde H}\> S(E_a,{\cal E}_i),\quad
 \<\Upsilon_{\alpha, \theta}, S\> = \sum\nolimits_{\,a,i} 
 S(A_{i}(E_a), T^{\sharp}_i(E_a)) ,\\
  \Upsilon_{\alpha, {\tilde \theta}}(X,Y) \eq \frac{1}{2}\sum\nolimits_{a,i} 
  \<X, A_{ i} E_a\>\,
  \< Y, {\tilde T}^\sharp_{a} {\cal E}_i \>,\quad X \in \mathfrak{X}^\top,\ \ Y \in \mathfrak{X}^\bot .
\end{eqnarray*}
The variations of components of $Q$ in \eqref{E-defQ} (used in previous sections) are collected in the following three lemmas;
the results for $g^\top$ variations are dual to $g^\bot$-parts in results for $g^\pitchfork$-variations.

\begin{lemma}\label{L-dT-2}
For any $g^\pitchfork$-variation of metric $g\in{\rm Riem}(M,\,\widetilde{\mD},\,{\mD})$ we have
\begin{eqnarray*}
 && \dt\tr^\top\I = 0,\quad
 \dt\tr^\bot\I = -\sum\nolimits_{\,i} 
 \big(\frac12\,(\I_i+\I^\wedge_i)(B^\sharp{\cal E}_i)^\bot
 + (\I_i+\I^\wedge_i)(B^\sharp{\cal E}_i)^\top\big),\\
 && \dt\tr^\top\I^* = \sum\nolimits_{\,a} 
  [\I^*_a, B^\sharp]\,E_a,\\
 && \dt\tr^\bot\I^* = \sum\nolimits_{\,i} 
 \big( [\I^*_i,B^\sharp]\,{\cal E}_i
 -\frac12\,(\I^*_i +\I^{* \wedge}_i)(B^\sharp{\cal E}_i)^\bot -(\I^*_i +\I^{* \wedge}_i)(B^\sharp{\cal E}_i)^\top\big).
\end{eqnarray*}
\end{lemma}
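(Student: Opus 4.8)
The plan is to differentiate each of the four partial traces in \eqref{E-defTT} term by term in an adapted orthonormal frame evolving as in Proposition~\ref{prop-Ei-a}, exploiting the fact that $\I$ and $\I^\wedge$ do not involve the metric (hence are $t$-independent as tensors), while $\I^*$ does, through the defining relation $\<\I^*_X Y,Z\>=\<\I_X Z,Y\>$. The key preliminary computation is the variation of $\I^*$ on frozen arguments: writing $g_t(\I^{*,t}_X Y,Z)=g_t(\I_X Z,Y)$ for $t$-independent $X,Y,Z$ and differentiating at $t=0$, the right-hand side has zero derivative (since $\I,X,Z$ are fixed), so $\<\dt(\I^*_X Y),Z\>=B(\I_X Z,Y)-B(\I^*_X Y,Z)$ with $B=\dt g_t|_{t=0}$. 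Since $B^\sharp$ is $g$-self-adjoint and, again by the definition of $\I^*$, $B(\I_X Z,Y)=\<\I_X Z,B^\sharp Y\>=\<\I^*_X(B^\sharp Y),Z\>$, one gets $\dt(\I^*_X Y)=[\,\I^*_X,B^\sharp\,]Y$ for $t$-fixed $X,Y$ (commutator of operators, as in the definition of $\mathcal{K}$).

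Next I would assemble the four formulas. By Proposition~\ref{prop-Ei-a} the frame evolves by $\dt E_a=0$ and $\dt {\cal E}_i=-\tfrac12(B^\sharp {\cal E}_i)^\bot-(B^\sharp {\cal E}_i)^\top$, while $\eps_a,\eps_i$ are constant in $t$. For $\tr^\top\I=\sum_a \I_{E_a}E_a$ both the tensor $\I$ and the frame on $\widetilde{\mD}$ are $t$-fixed (equivalently, $\tr^\top\I$ is the contraction of $\I$ with $g|_{\widetilde{\mD}}$, which a $g^\pitchfork$-variation preserves), hence $\dt\tr^\top\I=0$. For $\tr^\bot\I=\sum_i \I_{{\cal E}_i}{\cal E}_i$ only the frame moves; the product rule together with $\I_{\dt {\cal E}_i}{\cal E}_i=\I^\wedge_{{\cal E}_i}(\dt {\cal E}_i)$ gives $\dt\tr^\bot\I=\sum_i(\I_i+\I^\wedge_i)(\dt {\cal E}_i)$, and substituting $\dt {\cal E}_i$ yields the stated expression. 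For $\tr^\top\I^*=\sum_a \I^*_{E_a}E_a$ the frame on $\widetilde{\mD}$ is frozen, so only $\dt\I^*$ contributes, producing $\sum_a[\,\I^*_a,B^\sharp\,]E_a$. Finally, for $\tr^\bot\I^*=\sum_i \I^*_{{\cal E}_i}{\cal E}_i$ the two effects combine: the $\dt\I^*$-term gives $[\,\I^*_i,B^\sharp\,]{\cal E}_i$, while the moving frame contributes $\I^*_{\dt {\cal E}_i}{\cal E}_i+\I^*_{{\cal E}_i}(\dt {\cal E}_i)=(\I^*_i+\I^{*\wedge}_i)(\dt {\cal E}_i)$, and substituting $\dt {\cal E}_i$ completes the formula.

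I do not expect a deep obstacle here; the proof is routine bookkeeping once the evolution of $\I^*$ is in hand. The points requiring care are: keeping track of which tensors depend on $g$ (only $\I^*$ and $\I^{*\wedge}$, not $\I$ or $\I^\wedge$); the order of composition and the sign in the commutator $[\,\I^*_\bullet,B^\sharp\,]$; the use of $B^\sharp$ being self-adjoint with respect to $g=g_0$; and the observation that for a $g^\pitchfork$-variation $B$ vanishes on $\widetilde{\mD}\times\widetilde{\mD}$, so that $B^\sharp {\cal E}_i$ genuinely has both a $\top$- and a $\bot$-component, which is why both enter the answer. The corresponding formulas for ${g}^\top$-variations follow by the $\widetilde{\mD}\leftrightarrow{\mD}$ duality noted before the lemma.
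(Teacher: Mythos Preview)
Your proposal is correct and follows essentially the same approach as the paper: the paper's proof also first establishes $(\dt\I^*)_X=[\I^*_X,B^\sharp]$ (via the same manipulation of $\<\I^*_X Y,Z\>=\<\I_X Z,Y\>$) and notes $\dt\I=\dt\I^\wedge=0$, then appeals to the frame evolution \eqref{E-frameE} to finish. Your write-up is in fact more detailed than the paper's, which simply says ``Using the above and \eqref{E-frameE} completes the proof.''
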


\begin{proof}
 For any variation $g_t$ of metric and $X,Y\in\mathfrak{X}_M$ we have
\begin{equation*}
 (\dt\I^\wedge)_X Y = (\dt\I)_Y X =0,\quad
 (\dt\I^*)_X = [\I^*_X, B^\sharp]\,,
\end{equation*}
where the first formula is obvious, the second one follows from \eqref{E-defTT}$_1$, equality $\dt\I=0$ and
\begin{eqnarray*}
 \<\I^*_X B^\sharp(Y),Z\> \eq \<\I_X Z,B^\sharp(Y)\> = B(\I_X Z,Y) =\dt \<\I_X Z,Y\> = \dt \<\I^*_X Y,Z\> \\
 \eq B(\I^*_X Y,Z) +\<\dt\I^*_X Y,Z\> = \<B^\sharp\I^*_X Y,Z\> +\<\dt\I^*_X Y,Z\>.
\end{eqnarray*}
Using the above and  \eqref{E-frameE} completes the proof.
\end{proof}

Lemma~\ref{L-dT-2} is used in the proof of the following

\begin{lemma}\label{L-dT-3}
For $g^\pitchfork$-variation $g_t$ of metric on $(M,\widetilde{\mD},g,\bar\nabla=\nabla+\I)$ we have
\begin{eqnarray}\label{dtIIproduct}
 \nonumber
 &&\dt \<\I^*, \I^\wedge\>_{\,|\,V} = -\sum B({\cal E}_i, {\cal E}_j) \<\I^*_{j} E_a, \I_a {\cal E}_i\> \\
 && +\sum B({\cal E}_i, E_b) \big(\<\I^*_j {\cal E}_i, \I_b {\cal E}_j\>
 -\<\I^*_a {\cal E}_i, \I_{ b} E_a\> - \<\I^*_{b} E_a, \I_a {\cal E}_i\> - \<\I^*_i E_a, \I_a E_b \>\big) ,
\end{eqnarray}
\begin{eqnarray}
\label{dtThetaA}
\nonumber
 && \dt \<\Theta, A \>  = -2 \sum B({\cal E}_i, {\cal E}_j) \big(\< h (E_a, E_b), {\cal E}_i \> \<{\cal E}_j, \I_a\,E_b \> \\
 && -\frac{1}{2} \<h(E_a,E_b),{\cal E}_j\> (\< \Theta_a {\cal E}_i +\Theta_i E_a, E_b \>
 +\<\Theta_b {\cal E}_i +\Theta_i E_b, E_a \>)\big) \nonumber \\
 && +\sum B({\cal E}_i, E_b) \big(\<\Theta_a {\cal E}_j +\Theta_j E_a, {\cal E}_i \> \< h (E_a, E_b), {\cal E}_j \> \nonumber \\
 && -\< \Theta_a E_b +\Theta_b E_a, E_c \> \< h (E_a, E_c), {\cal E}_i \>
 + 2\,\< h (E_a, E_b), {\cal E}_j \> \<{\cal E}_j, \I_a\,{\cal E}_i \>\nonumber \\
 && -\frac{1}{2}\,\<({\tilde A}- {\tilde T}^\sharp )_a {\cal E}_i, {\cal E}_j \>
 (\<\Theta_a {\cal E}_j +\Theta_j E_a, E_b \> +\< \Theta_b {\cal E}_j +\Theta_j E_b, E_a \>) \nonumber \\
 && -2\,\< h (E_b, E_a), {\cal E}_j \> \<{\cal E}_i, \I_{j} E_a \>
 + 2\,\< h (E_a, E_b), {\cal E}_j \> \<E_a, \I_j\,{\cal E}_i \> \nonumber\\
 && -2\,\< h (E_a, E_c), {\cal E}_i \> \<E_b, \I_a E_c \> \big)
 +\Div^\top \< B_{| V}, G\> -\< B_{| V}, \Div^\top G \> ,
\end{eqnarray}
\begin{eqnarray}
\label{dtThetaT}
\nonumber
 && \dt \<\Theta, T^\sharp\> = -2 \sum B({\cal E}_i, {\cal E}_j) \< T (E_a, E_b), {\cal E}_i \> \<{\cal E}_j, \I_a E_b \> \\
 && +\sum B({\cal E}_i, E_b) \big(\< \Theta_a {\cal E}_j +\Theta_j E_a, {\cal E}_i \> \< T (E_a, E_b), {\cal E}_j \>
 -2\,\< T (E_a, E_c), {\cal E}_i \> \<E_b, \I_a E_c \> \nonumber \\
 &&  -\< \Theta_a E_b +\Theta_{b} E_a, E_c \> \< T (E_a, E_c), {\cal E}_i \>
 + 2\< T (E_a, E_b), {\cal E}_j \> \<{\cal E}_j, \I_a {\cal E}_i \> \nonumber \\
 && -2\< T (E_b, E_a), {\cal E}_j \> \<{\cal E}_i, \I_{j} E_a \>
 + 2\< T (E_a, E_b), {\cal E}_j \> \<E_a, \I_j {\cal E}_i \>\big),
\end{eqnarray}
\begin{eqnarray}
\label{dtThetatildeT}
\nonumber
 && \dt \<\Theta, {\tilde T}^\sharp\> =
 \sum B({\cal E}_i, {\cal E}_j) \big( 2\,\<{\tilde T}({\cal E}_k, {\cal E}_j), E_a\> \<{\cal E}_k, \I_a {\cal E}_i\>  \\
\nonumber
 && -2\<{\tilde T}({\cal E}_i, {\cal E}_k), E_a\>  \<{\cal E}_j, \I_a {\cal E}_k\>
 + 2\<{\tilde T}({\cal E}_k, {\cal E}_j ), E_a\> \<{E}_a, \I_k {\cal E}_i\>\\
 && -\frac{1}{2}\,\< \Theta_a {\cal E}_j +\Theta_{j} E_a, {\cal E}_k \> \<{\tilde T}({\cal E}_i, {\cal E}_k ), E_a \>
 +\frac{1}{2}\,\< \Theta_a {\cal E}_k +\Theta_k E_a, {\cal E}_i \> \< {\tilde T}({\cal E}_k, {\cal E}_j ), E_a \>
 \nonumber \\
 && -\frac{1}{2}\,(\< \Theta_a {\cal E}_i +\Theta_i E_a, {\cal E}_k \> -\< \Theta_a {\cal E}_k +\Theta_k E_a, {\cal E}_i) \> \<E_a,  {\tilde T}({\cal E}_j, {\cal E}_k) \> \big)\nonumber \\
 && +\sum B({\cal E}_i, E_b)\big( \frac{1}{2}\,(\< \Theta_a {\cal E}_i +\Theta_i E_a, {\cal E}_j \> -\< \Theta_a {\cal E}_j +\Theta_j E_a, {\cal E}_i) \> \<E_a,(A + T^\sharp )_j E_b \> \nonumber \\
 && -2\,\<{\tilde T}({\cal E}_i, {\cal E}_j ), E_a\>  \<E_b, \I_a{\cal E}_j\>
 + 2\,\<{\tilde T}({\cal E}_j, {\cal E}_i ), E_a\> \<{E}_a, \I_j E_b\> \nonumber \\
 && + 2\,\<{\tilde T}({\cal E}_j, {\cal E}_i ), E_a\> \<{\cal E}_j, \I_a E_b\>
 -2\,\<{\tilde T}({\cal E}_k, {\cal E}_j), E_b\> \<{\cal E}_i, \I_{k}{\cal E}_j\> \nonumber \\
 && -\< \Theta_a E_b +\Theta_{b} E_a, {\cal E}_j \> \< {\tilde T}({\cal E}_i, {\cal E}_j ), E_a\> \big)
 +\Div^\perp\< B_{|V}, F \> -\< B_{| V}, \Div^\perp F \> ,
\end{eqnarray}
\begin{eqnarray}
\label{dtThetatildeA}
 && \dt \<\Theta, {\tilde A} \> =  \sum B({\cal E}_i, {\cal E}_j) \big(\frac{1}{2}\<\Theta_k E_a +\Theta_a {\cal E}_k, {\cal E}_i\>
 \<{\tilde h}({\cal E}_k, {\cal E}_j ), E_a\> \nonumber \\
\nonumber
 && -\frac{1}{2}\<\Theta_{j} E_a +\Theta_a{\cal E}_j, {\cal E}_k\>\<{\tilde h}({\cal E}_i, {\cal E}_k), E_a\>
 -2\<{\tilde h}({\cal E}_i, {\cal E}_k ), E_a\> \<{\cal E}_j, \I_a {\cal E}_k\>\\
 && -(\<\Theta_i E_a +\Theta_a{\cal E}_i,{\cal E}_k\> +\<\Theta_k E_a +\Theta_a{\cal E}_k, {\cal E}_i\>)\<{\tilde h}({\cal E}_j, {\cal E}_k), E_a\> \nonumber \\
 && +(\<\Theta_k E_a +\Theta_a {\cal E}_k, {\cal E}_j\> +\<\Theta_j E_a +\Theta_a {\cal E}_j, {\cal E}_k )\> \<({\tilde A}_a -{\tilde T}^\sharp_a) {\cal E}_i, {\cal E}_k\> \nonumber \\
 && + 2\<{\tilde h}({\cal E}_k, {\cal E}_j), E_a\> \<{\cal E}_k, \I_a {\cal E}_i\>
 +2\<{\tilde h}({\cal E}_k, {\cal E}_j ), E_a\>\<{E}_a, \I_k {\cal E}_i \> \big) \nonumber \\
 && +\sum B({\cal E}_i, E_b)\big((\<\Theta_j E_a +\Theta_a {\cal E}_j, {\cal E}_i\>+\<\Theta_i E_a +\Theta_a{\cal E}_i, {\cal E}_j)\>
 \<(A_j + T^\sharp_j) E_b, E_a\> \nonumber \\
 && -(\<\Theta_i E_a +\Theta_a {\cal E}_i, {\cal E}_j\> +\<\Theta_j E_a +\Theta_a {\cal E}_j, {\cal E}_i )\> \< (A_j - T^\sharp_j) E_b, E_a\> \nonumber \\
 && -2\<{\tilde h}({\cal E}_i, {\cal E}_j ), E_a\> \<E_b, \I_a {\cal E}_j\>
 +2\<{\tilde h}({\cal E}_j, {\cal E}_i ), E_a\> \<{E}_a, \I_j E_b\> \nonumber \\
 \nonumber
 && + 2\<{\tilde h}({\cal E}_j, {\cal E}_i ), E_a\> \<{\cal E}_j, \I_a E_b\>
 -2\<{\tilde h}({\cal E}_k, {\cal E}_j), E_b\> \<{\cal E}_i, \I_{k}{\cal E}_j\> \\
  && -\<\Theta_{b} E_a +\Theta_a  E_b, {\cal E}_j\>\<{\tilde h}({\cal E}_i, {\cal E}_j), E_a\> \big)
  -2 \Div^\top \< B, L \> + 2 \< B, \Div^\top L \> ,
\end{eqnarray}
\begin{equation}\label{dttracetopI}
 \dt \<\tr^\top \I, \tr^\perp \I^*\>
 = \frac{1}{2} \sum B({\cal E}_i, {\cal E}_j) \big(\<\tr^\top\I,\, \I^*_i {\cal E}_j {-} \I^*_j {\cal E}_i\>\big)
 -\sum B({\cal E}_i, E_b) \<\tr^\top\I,\, \I^*_b {\cal E}_i\>,
\end{equation}
\begin{eqnarray} \label{dttraceperpI}
\nonumber
 && \dt \<\tr^\top \I^*, \tr^\perp \I\> = -\frac{1}{2} \sum B({\cal E}_i, {\cal E}_j)
 \<\I_j {\cal E}_i + \I_i {\cal E}_j, \tr^\top\I^* \> \\
 && + \sum B({\cal E}_i, E_b)\big(\<\tr^\bot\I, \I^*_b {\cal E}_i\> - \<\I_b {\cal E}_i + \I_i E_b, \tr^\top\I^*\> \big),
\end{eqnarray}
\begin{eqnarray} \label{dtIEaH}
 && \dt\<\,\tr^\top(\I^*-\I),\, \tH - H\>
 = \sum B({\cal E}_i, {\cal E}_j) \big( -\frac{1}{2}\,\delta_{ij} \Div((\tr^\top(\I^*-\I) )^\top) \nonumber \\
 && \nonumber
 -\<H, {\cal E}_j\> \< \tr^\top(\I^*-\I), {\cal E}_i\> -\<H, {\cal E}_j\> \<\tr^\top(\I^*-\I), {\cal E}_i\>
  +\<\tr^\top\I^*,  {\cal E}_j\> \<{\cal E}_i, H\>\big) \nonumber \\
 && +\sum B({\cal E}_i, E_b) \big(\<\tH, E_b\> \< \tr^\top(\I^*-\I), {\cal E}_i\>
 -\<H, {\cal E}_i\> \< \tr^\top(\I^*-\I), E_b\> \nonumber \\
 && +\,\<\I^*_b {\cal E}_i, \tH - H\> +\<\tr^\top\I^*,  E_b\> \<{\cal E}_i, H\> + 2\<\T_i E_b , \tr^\top(\I^*-\I)\>  \nonumber \\
 && +\,\<{\cal E}_i,(\tr^\top(\I^*-\I))^\perp\> \<\tilde H, E_b\> -\< H, {\cal E}_i\> \<\tr^\top(\I^*-\I), E_b\> \nonumber \\
 && -\,\<\tr^\top\I^*,  {\cal E}_i\> \<E_b, \tH\> -\<\nabla_b((\tr^\top(\I^*-\I))^\perp ), {\cal E}_i\>
 -\<\tA_b {\cal E}_i -\tT_b {\cal E}_i, \tr^\top(\I^*-\I)\> \big) \nonumber \\
 && +\Div\big(( B^\sharp((\tr^\top(\I^*-\I))^\perp))^\top -\frac12\, (\tr_{\mD}B)(\tr^\top(\I^*-\I))^\top\big),
\end{eqnarray}
\begin{eqnarray} \label{dtIeiH}
 && \dt \<\,\tr^\bot(\I^*-\I), \tH - H\>
 = \sum B({\cal E}_i, {\cal E}_j) \big(\<\I^*_j {\cal E}_i, \tH - H\> +\<\tr^\bot\I^*, {\cal E}_i\> \< H, {\cal E}_j\>\nonumber \\
 && -\<H, {\cal E}_j\> \< \tr^\bot(\I^*-\I), {\cal E}_i\> -\frac{1}{2}\,\delta_{ij} \Div((\tr^\bot(\I^*-\I))^\top)
 -\<H, {\cal E}_j\> \<\tr^\bot(\I^*-\I), {\cal E}_i\> \big) \nonumber \\
 && +\sum B({\cal E}_i, E_b) \big(\<\tH, E_b\> \<\tr^\bot(\I^*-\I), {\cal E}_i\>
 -\<H, {\cal E}_i\> \<\tr^\bot(\I^*-\I), E_b\>  \nonumber \\
 && +\,\<\tr^\bot\I^*,  E_b\> \< H, {\cal E}_i\> +\<\I^*_i E_b, \tH - H\> -\<\tr^\bot\I^*, {\cal E}_i\> \<\tH, E_b\>
 +2\<\T_i E_b , \tr^\bot(\I^*-\I)\> \nonumber \\
 && +\<{\cal E}_i, \tr^\bot(\I^*-\I)\> \<\tilde H, E_b\>
 -\<\nabla_b((\tr^\bot(\I^*-\I))^\perp), {\cal E}_i\> \nonumber \\
 && -\<\tA_b {\cal E}_i - \tT_b {\cal E}_i, \tr^\bot(\I^*-\I) \>
 -\< H, {\cal E}_i\> \<\tr^\bot(\I^*-\I), E_b\> \big) \nonumber \\
 && + \Div\big(( B^\sharp((\tr^\bot(\I^*-\I))^\perp))^\top
 -\frac12\,(\tr_{\mD}B)(\tr^\bot(\I^*-\I))^\top\big) .
\end{eqnarray}
\end{lemma}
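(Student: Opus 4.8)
The plan is to differentiate each scalar contraction in the statement term by term over a local $(\widetilde{\mD},{\mD})$-adapted orthonormal frame $\{E_a,{\cal E}_i\}$ that evolves according to Proposition~\ref{prop-Ei-a}, i.e., by \eqref{E-frameE}, and then to collect the outcome according to the components of $B=\dt g$. Since $g_t$ is a $g^\pitchfork$-variation we have $B(E_a,E_b)=0$, so only the $B({\cal E}_i,{\cal E}_j)$ and $B({\cal E}_i,E_b)$ terms survive — which is exactly the shape of the right-hand sides in \eqref{dtIIproduct}--\eqref{dtIeiH}. By the Leibniz rule, $\partial_t$ of a generic summand produces three kinds of contributions: (i) those in which $\partial_t$ hits a frame vector, governed by $\dt E_a=0$ and $\dt{\cal E}_i=-\frac12(B^\sharp{\cal E}_i)^\perp-(B^\sharp{\cal E}_i)^\top$, together with the induced $\dt(X^\top)=(\dt X)^\top+(B^\sharp(X^\perp))^\top$, $\dt(X^\perp)=(\dt X)^\perp-(B^\sharp(X^\perp))^\top$; (ii) those in which $\partial_t$ hits a contorsion-type tensor, governed by Lemma~\ref{L-dT-2} together with the pointwise identities $\dt\I=0$, $(\dt\I^\wedge)_XY=0$ and $(\dt\I^*)_X=[\I^*_X,B^\sharp]$ established in its proof; and (iii) those in which $\partial_t$ hits an extrinsic-geometry tensor $h,T,\tilde h,\tilde T,A,\tilde A,H,\tilde H$, for which I would quote the variational formulas for $g^\pitchfork$-variations from \cite{rz-2}.

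First I would dispose of $\<\I^*,\I^\wedge\>_{|V}$: here only types (i) and (ii) occur and no covariant derivatives appear, so substituting \eqref{E-frameE} and the commutator formula for $\dt\I^*$ and regrouping yields \eqref{dtIIproduct} directly. For $\<\Theta,A\>,\<\Theta,T^\sharp\>,\<\Theta,\tilde T^\sharp\>,\<\Theta,\tilde A\>$ I would additionally use $\dt\Theta=-[\I^*_{\,\cdot},B^\sharp]-(\dt\I^*)^\wedge$ (since $\Theta=\I-\I^*+\I^\wedge-\I^{*\wedge}$ with $\dt\I=\dt\I^\wedge=0$) for part (ii), and the $g^\pitchfork$-variation formulas of \cite{rz-2} for $\dt A,\dt T^\sharp,\dt\tilde A,\dt\tilde T^\sharp$ and for $\dt\tilde h,\dt h$ for part (iii). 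The summands in which $\partial_t$ lands on the Levi-Civita covariant derivative implicit in $A$, $\tilde h$ or $\tilde T^\sharp$ generate expressions of the form $\sum_a\<(\nabla_a P)(\cdot,\cdot),E_a\>$ (or the $\Div^\perp$ analogue) contracted with $B$; applying the Leibniz rule for $\Div^\top$ (resp. $\Div^\perp$) I would split these into a total divergence of a compactly supported vector field — discarded by \eqref{E-DivThm-2} after multiplying by ${\rm d}\vol_g$ and using \eqref{E-dotvolg} — plus the retained pieces $\Div^\top\<B_{|V},G\>-\<B_{|V},\Div^\top G\>$, $\Div^\perp\<B_{|V},F\>-\<B_{|V},\Div^\perp F\>$ and $-2\Div^\top\<B,L\>+2\<B,\Div^\top L\>$; this is exactly the role of the symmetrized tensors $L,G,F$ from \eqref{formulaF}. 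Finally, $\<\tr^\top\I,\tr^\perp\I^*\>$ and $\<\tr^\top\I^*,\tr^\perp\I\>$ are obtained by contracting the four identities of Lemma~\ref{L-dT-2}, and the two expressions $\<\tr^{\top/\perp}(\I^*-\I),\tH-H\>$ combine Lemma~\ref{L-dT-2} with the variation formulas for $H$ and $\tH$ and with the same divergence bookkeeping, producing \eqref{dttracetopI}--\eqref{dtIeiH}.

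The main obstacle is purely organizational: the computation is long and each summand carries up to three simultaneous sources of $t$-dependence. In particular, because the complement ${\mD}(t)$ moves while $\widetilde{\mD}$ stays fixed, one must carefully propagate $\dt(X^\perp)=-(B^\sharp(X^\perp))^\top$ and $\dt(X^\top)=(B^\sharp(X^\perp))^\top$ through the projections hidden inside $A$, $\tilde A$, $T^\sharp$, $\tilde T^\sharp$, $\Theta^\wedge$ and inside the very decomposition of $TM\times TM$ into the $V$, $\widetilde{\mD}\times\widetilde{\mD}$ and ${\mD}\times{\mD}$ pieces. The one genuinely delicate point is the identification of the divergence terms: one must verify that every surviving term carrying a covariant derivative assembles — modulo a total divergence of a compactly supported field — into the displayed $\Div^\top G$, $\Div^\perp F$ and $\Div^\top L$ combinations, and it is precisely this requirement that forces the exact symmetrized form of $L,G,F$ in \eqref{formulaF} (built from $\Theta,\Theta^\wedge$ and their $*$-adjoints restricted to the appropriate sub-bundles).
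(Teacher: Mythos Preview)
Your plan is essentially the same as the paper's: expand each scalar over the evolving adapted frame of Proposition~\ref{prop-Ei-a}, use $\dt\I=0$, $(\dt\I^*)_X=[\I^*_X,B^\sharp]$ (hence $(\dt\Theta)_X Y=-[\I^*_X,B^\sharp]Y-[\I^*_Y,B^\sharp]X$), invoke the $g^\pitchfork$-variation formulas for $h,\tilde h,H,\tilde H$ from \cite{rz-2}, and regroup by the components $B({\cal E}_i,{\cal E}_j)$ and $B({\cal E}_i,E_b)$.

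One clarification on your divergence bookkeeping: in this lemma nothing is ``discarded''. The terms carrying a covariant derivative of $B$ (arising from $\dt h$ or $\dt\tilde h$) are rewritten \emph{exactly}, via the Leibniz rule, as the displayed combinations $\Div^\top\<B_{|V},G\>-\<B_{|V},\Div^\top G\>$, $\Div^\perp\<B_{|V},F\>-\<B_{|V},\Div^\perp F\>$, and $-2\Div^\top\<B,L\>+2\<B,\Div^\top L\>$; there is no extra total divergence left over. The discarding you have in mind happens only later, when the lemma is used inside an integral (e.g.\ in Theorem~\ref{propstatcrit} or Lemma~\ref{dtQadapted}). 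Keep the $\Div^\top$/$\Div^\perp$ pieces in the formulas here, since the statement of Lemma~\ref{L-dT-3} is pointwise and exact.
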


\begin{proof}
To obtain $\dt \Theta$, we compute for $X,Y,Z \in \mathfrak{X}_M$:
\begin{equation*}
 \dt \<\I^{* \wedge}_X Y, Z\> = B(\I^{* \wedge}_X Y, Z) +\<(\dt \I^{* \wedge})_X Y, Z\>.
\end{equation*}
On the other hand,
\begin{equation*}
 \dt \<\I^{* \wedge}_X Y, Z\> = \dt \<\I^*_Y X, Z\> = B(\I^*_Y X, Z) +\<\dt (\I^*_Y X), Z\>
 = \<\I^*_Y B^\sharp X, Z \>,
\end{equation*}
so
\[
 (\dt \I^{*\wedge})_X Y = \I^*_Y B^\sharp X - B^\sharp\,\I^{*}_Y X.
\]
From this we obtain
\begin{equation}\label{Eq-dt-Theta}
 (\dt \Theta)_X Y = -(\dt \I^*)_X Y -\dt (\I^{*\wedge})_X Y
 = -\I^*_X B^\sharp Y + B^\sharp \I^*_X Y -\I^{*}_Y\,B^\sharp X + B^\sharp \I^{*}_Y X .
\end{equation}
We shall use Proposition~\ref{prop-Ei-a} and the fact that for $g^\perp$-variations $B(X,Y)=0$ for $X,Y \in \mathfrak{X}^\top$. 

\textbf{Proof of \eqref{dtIIproduct}}.
We have $\<\I^*, \I^\wedge\>_{\,|\,V} = \sum \<\I^*_a {\cal E}_i, \I_i E_a\> +\sum \<\I^*_i E_a, \I_a {\cal E}_i \>$, so
\begin{eqnarray*}
 && \dt \<\I^*, \I^\wedge\>_{\,|\,V} = \sum\big[ B(\I^*_a {\cal E}_i, \I_i E_a)
 + B(\I^*_i E_a, \I_a {\cal E}_i) + \<\I^*_a \dt {\cal E}_i, \I_i E_a\> \\
 &&\hskip-4mm + \<\I^*_a {\cal E}_i, \I_{\dt {\cal E}_i} E_a\>
 + \<\I^*_{\dt {\cal E}_i} E_a, \I_a {\cal E}_i\>
 + \<\I^*_i E_a, \I_a \dt {\cal E}_i\>
 + \<(\dt \I^*)_a {\cal E}_i, \I_i E_a\>
 + \<(\dt \I^*)_i E_a, \I_a {\cal E}_i\> \big].
\end{eqnarray*}
We compute 8 terms above separately:
\begin{eqnarray*}
 && \sum B(\I^*_a {\cal E}_i, \I_i E_a) = \sum \big[ B({\cal E}_j, E_b) \<\I^*_a {\cal E}_i, E_b\> \<\I_i E_a, {\cal E}_j\> \\
 &&\quad + B({\cal E}_j, E_b) \<\I^*_a {\cal E}_i, {\cal E}_j\> \<\I_i E_a, E_b\>
 + B({\cal E}_j, {\cal E}_k) \<\I^*_a {\cal E}_i, {\cal E}_k\> \<\I_i E_a, {\cal E}_j\> \big],\\
 && \sum B(\I^*_i E_a, \I_a {\cal E}_i) = \sum \big[ B({\cal E}_j, E_b) \<\I^*_i E_a, E_b\> \<\I_a {\cal E}_i, {\cal E}_j\> \\
 &&\quad + B({\cal E}_j, E_b) \<\I^*_i E_a, {\cal E}_j\> \<\I_a {\cal E}_i, E_b\>
 + B({\cal E}_k, {\cal E}_j) \<\I^*_i E_a, {\cal E}_j\> \<\I_a {\cal E}_i, {\cal E}_k\> \big],\\
 &&\sum \<\I^*_a \dt {\cal E}_i, \I_i E_a\> = -\sum \big[ B({\cal E}_i,E_b) \<\I^*_a E_b, \I_i E_a\>
 +\frac{1}{2} B({\cal E}_i, {\cal E}_j)  \<\I^*_a {\cal E}_j, \I_i E_a\> \big],\\
 &&\sum \<\I^*_a {\cal E}_i, \I_{\dt {\cal E}_i} E_a\> = -\sum \big[ B({\cal E}_i, E_b) \<\I^*_a {\cal E}_i, \I_{ b} E_a\>
 +\frac{1}{2} B({\cal E}_i, {\cal E}_j) \<\I^*_a {\cal E}_i, \I_{ j} E_a\> \big],\\
 &&\sum \<\I^*_{\dt {\cal E}_i} E_a, \I_a {\cal E}_i\> = -\sum \big[ B({\cal E}_i, E_b) \<\I^*_{b} E_a, \I_a {\cal E}_i\>
 +\frac{1}{2} B({\cal E}_i, {\cal E}_j) \<\I^*_{j} E_a, \I_a {\cal E}_i\> \big],\\
 &&\sum \<\I^*_i E_a, \I_a \dt {\cal E}_i\> = -\sum \big[ B({\cal E}_i, E_b) \<\I^*_i E_a, \I_a E_b\>
 +\frac{1}{2} B({\cal E}_i, {\cal E}_j) \<\I^*_i E_a, \I_a {\cal E}_j\>\big] ,\\
 && \sum \< (\dt \I^*)_a {\cal E}_i, \I_i E_a\>
 =\sum \big[ B({\cal E}_i, E_b) \<\I^*_a E_b, \I_i E_a\>
 + B({\cal E}_i, {\cal E}_j) \<\I^*_a {\cal E}_j, \I_i E_a\> \\
 &&\, -B({\cal E}_j, E_b) \<\I^*_a {\cal E}_i, E_b\> \<{\cal E}_j, \I_i E_a\>
 {-}B({\cal E}_i, {\cal E}_j) \<\I^*_a {\cal E}_k, {\cal E}_j\> \<{\cal E}_i, \I_k E_a\>
 {-}B({\cal E}_j, E_b) \<\I^*_a {\cal E}_i, {\cal E}_j\> \< E_b, \I_i E_a\> \big] ,\\
 && \sum \< (\dt \I^*)_i E_a, \I_a {\cal E}_i\>
 = \sum \big[ B({\cal E}_j, E_a) \<\I^*_i {\cal E}_j, \I_a {\cal E}_i\>
 - B({\cal E}_j, E_b) \<\I^*_i E_a, E_b\> \<{\cal E}_j, \I_a {\cal E}_i\> \\
 &&\  - B({\cal E}_j, E_b) \<\I^*_i E_a, {\cal E}_j\> \< E_b, \I_a {\cal E}_i\>
 - B({\cal E}_j, {\cal E}_k) \<\I^*_i E_a, {\cal E}_j\> \<{\cal E}_k, \I_a {\cal E}_i\> \big].
\end{eqnarray*}
Summing the 8 terms computed above and simplifying, we obtain \eqref{dtIIproduct}.

\textbf{Proof of \eqref{dtThetaA}}.
We have
\begin{eqnarray*}
%
%
 && \< \Theta, A \>
 = \sum \<\Theta_a {\cal E}_i +\Theta_i E_a, E_b\> \< h (E_a, E_b), {\cal E}_i \>.
\end{eqnarray*}
So
\begin{eqnarray*}
 && \dt \< \Theta, A \> = \sum \big[ B(\Theta_a {\cal E}_i +\Theta_i E_a, E_b) \< h (E_a, E_b), {\cal E}_i\>
 + \<\Theta_a {\cal E}_i +\Theta_i E_a, E_b\> B( h (E_a, E_b), {\cal E}_i) \\
 && +\, \<\Theta_a (\dt {\cal E}_i) +\Theta_{\dt {\cal E}_i} E_a, E_b\> \< h (E_a, E_b), {\cal E}_i\>
 + \<\Theta_a {\cal E}_i +\Theta_i E_a, E_b\> \<\dt h (E_a, E_b), {\cal E}_i\> \\
 && +\, \<\Theta_a {\cal E}_i +\Theta_i E_a, E_b\> \< h (E_a, E_b), \dt {\cal E}_i\>
 + \<(\dt\Theta)_a {\cal E}_i + (\dt \Theta)_i E_a, E_b\> \< h (E_a, E_b), {\cal E}_i \> \big].
\end{eqnarray*}
We start from the fourth term of the 6 terms above.
Then, from \cite{rz-2}, 
\begin{eqnarray*}
 &&\sum \<\Theta_a {\cal E}_i +\Theta_i E_a, E_b\> \<\dt h (E_a, E_b), {\cal E}_i\>
  = \sum \frac{1}{2}\big[ \<\Theta_a {\cal E}_i +\Theta_i E_a, E_b\> \\
 && +\<\Theta_b {\cal E}_i +\Theta_i E_b, E_a\>\big]
 \big(\nabla_a B(E_b, {\cal E}_i) - B( h(E_a, E_b ), {\cal E}_i) + B(\nabla_i E_a, E_b) \big).
\end{eqnarray*}
We have
\begin{eqnarray*}
  \frac{1}{2}\sum \big(\<\Theta_a {\cal E}_i +\Theta_i E_a, E_b\> +\<\Theta_b {\cal E}_i +\Theta_i E_b, E_a \>\big)
 \nabla_a B(E_b, {\cal E}_i)
  = \Div^\top \< B_{| V}, G \> -\< B_{| V}, \Div^\top G \>,
\end{eqnarray*}
because
\[
 \< B_{| V}, \Div G \> = \frac{1}{2} \sum \big(\<\nabla_a {\Theta}^{\wedge*}_i E_b, E_a\>
  + \<\nabla_a {\Theta}^*_i E_b, E_a\>\big) B(E_b, {\cal E}_i) .
\]
We also have
\begin{eqnarray*}
 && -\sum\frac{1}{2}\big( \<\Theta_a {\cal E}_i +\Theta_i E_a, E_b\> +\<\Theta_b {\cal E}_i +\Theta_i E_b, E_a \>\big)
 B( h(E_a, E_b ), {\cal E}_i) \\
 && = -\frac{1}{2} \sum B({\cal E}_i, {\cal E}_j) \<{\cal E}_j, h(E_a,E_b)\>
 ( \<\Theta_a {\cal E}_i +\Theta_i E_a, E_b\>  +\<\Theta_b {\cal E}_i +\Theta_i E_b, E_a \>),\\
 && \sum \frac{1}{2}\big(\<\Theta_a {\cal E}_i +\Theta_i E_a, E_b\> +\<\Theta_b {\cal E}_i +\Theta_i E_b, E_a\>\big) B(\nabla_i E_a, E_b) \\
 && = -\frac{1}{2} \sum B({\cal E}_j, E_b) \<({\tilde A}- {\tilde T}^\sharp )_a {\cal E}_j, {\cal E}_i \>
 (\<\Theta_a {\cal E}_i +\Theta_i E_a, E_b\> +\<\Theta_b {\cal E}_i +\Theta_i E_b, E_a \>).
\end{eqnarray*}
Now we consider other terms of $\dt \<\Theta, A \>$.
 For the fifth term we have
\begin{equation*}
 \sum \<\Theta_a {\cal E}_i +\Theta_i E_a, E_b\> \< h (E_a, E_b), \dt {\cal E}_i\>
 = -\frac{1}{2} \sum  B({\cal E}_i, {\cal E}_j) \<\Theta_a {\cal E}_i +\Theta_i E_a, E_b\> \< h (E_a, E_b), {\cal E}_j\>.
\end{equation*}
For the first, second and third terms we have
\begin{eqnarray*}
 &&\hskip-7mm \sum B(\Theta_a {\cal E}_i +\Theta_i E_a, E_b) \< h (E_a, E_b), {\cal E}_i\>
 = \sum B({\cal E}_j, E_b) \<\Theta_a {\cal E}_i +\Theta_i E_a, {\cal E}_j\> \<h (E_a, E_b), {\cal E}_i\> ,\\
 &&\hskip-7mm \sum \<\Theta_a {\cal E}_i +\Theta_i E_a, E_b\> B( h (E_a, E_b), {\cal E}_i) =
 \sum B({\cal E}_i, {\cal E}_j) \<\Theta_a {\cal E}_i +\Theta_i E_a, E_b\> \< h (E_a, E_b), {\cal E}_j\>,\\
 &&\hskip-7mm \<\Theta_a (\dt {\cal E}_i) {+}\Theta_{\dt {\cal E}_i} E_a, E_b\>
 {=} -\frac{1}{2} \sum B({\cal E}_i, {\cal E}_j) \<\Theta_a {\cal E}_j {+}\Theta_j E_a, E_b\>
 {-}\sum B({\cal E}_i, E_c) \<\Theta_a E_c {+}\Theta_c E_a, E_b\>
 .
\end{eqnarray*}
Using \eqref{Eq-dt-Theta}, we have
\begin{eqnarray*}
 && \sum \<(\dt \Theta)_a {\cal E}_i + (\dt \Theta)_i E_a, E_b\>
 = \sum\big[-2 B({\cal E}_i, E_c) \<\I^*_a E_c, E_b\> -2 B({\cal E}_i, {\cal E}_j) \<\I^*_a {\cal E}_j, E_b\> \\
 && + 2 B({\cal E}_j, E_b) \<\I^*_a {\cal E}_i, {\cal E}_j\>
 -2 B({\cal E}_j, E_a) \<\I^{*}_{i} {\cal E}_j,E_b\>
 + 2 B({\cal E}_j, E_b) \<\I^{*}_i {E}_a, {\cal E}_j\>\big] .
\end{eqnarray*}
Hence, for the sixth term of $\dt \< \Theta, A \>$, we have
\begin{eqnarray*}
 && \sum \< (\dt \Theta)_a {\cal E}_i + (\dt \Theta)_i E_a, E_b\> \< h (E_a, E_b), {\cal E}_i\>
 = \sum[ -2 B({\cal E}_i, E_c) \< h (E_a, E_b), {\cal E}_i\> \<\I^*_a E_c, E_b\> \\
 && -2 B({\cal E}_i, {\cal E}_j) \< h (E_a, E_b), {\cal E}_i\> \<\I^*_a {\cal E}_j, E_b\>
 + 2 B({\cal E}_j, E_b) \< h (E_a, E_b), {\cal E}_i\> \<\I^*_a {\cal E}_i, {\cal E}_j\> \\
 && -2 B({\cal E}_j, E_a) \< h (E_a, E_b), {\cal E}_i\> \<\I^{*}_{i} {\cal E}_j,E_b\>
 + 2 B({\cal E}_j, E_b) \< h (E_a, E_b), {\cal E}_i\> \<\I^{*}_i {E}_a, {\cal E}_j\> \big].
\end{eqnarray*}
So finally
we get \eqref{dtThetaA}.

\textbf{Proof of \eqref{dtThetaT}}.
We have
\begin{equation*}
 \<\Theta, T^\sharp \>
 = \sum \<\Theta_a {\cal E}_i +\Theta_i E_a, E_b\> \< T (E_a, E_b), {\cal E}_i \>,
\end{equation*}
thus
\begin{eqnarray*}
 && \dt \< \Theta, T^\sharp \> = \sum \big[ B(\Theta_a {\cal E}_i +\Theta_i E_a, E_b) \< T (E_a, E_b), {\cal E}_i\> \\
 && + \<\Theta_a {\cal E}_i +\Theta_i E_a, E_b\> B( T (E_a, E_b), {\cal E}_i)
 + \<\Theta_a (\dt {\cal E}_i) +\Theta_{\dt {\cal E}_i} E_a, E_b\> \<T(E_a, E_b), {\cal E}_i\> \\
 && + \<\Theta_a {\cal E}_i +\Theta_i E_a, E_b\> \< T (E_a, E_b), \dt {\cal E}_i\>
 + \<(\dt \Theta)_a {\cal E}_i + (\dt \Theta)_i E_a, E_b\> \< T (E_a, E_b), {\cal E}_i\> \big],
\end{eqnarray*}
because $\dt T =0$.
We compute 5 terms above separately:
\begin{eqnarray*}
 && \sum B(\Theta_a {\cal E}_i +\Theta_i E_a, E_b) =
 \sum B({\cal E}_j, E_b) \<\Theta_a {\cal E}_i +\Theta_i E_a, {\cal E}_j\> \< T (E_a, E_b), {\cal E}_i\> ,\\
 &&\sum \<\Theta_a {\cal E}_i +\Theta_i E_a, E_b\> B( T (E_a, E_b), {\cal E}_i) =
 \sum B({\cal E}_i, {\cal E}_j) \<\Theta_a {\cal E}_j +\Theta_j E_a, E_b\> \< T (E_a, E_b), {\cal E}_i\> ,\\
 && \sum \<\Theta_a (\dt {\cal E}_i) +\Theta_{\dt{\cal E}_i} E_a, E_b\> \< T (E_a, E_b), {\cal E}_i\> =
 -\frac{1}{2} \sum B({\cal E}_i, {\cal E}_j) \<\Theta_a {\cal E}_j +\Theta_j E_a, E_b\> \<T (E_a, E_b), {\cal E}_i\> \\
 && -\sum B({\cal E}_i, E_b) \<\Theta_a E_b +\Theta_b E_a, E_c\> \<T (E_a, E_c), {\cal E}_i\> ,\\
 && \sum \<\Theta_a {\cal E}_i +\Theta_i E_a, E_b\> \< T (E_a, E_b), \dt {\cal E}_i\> =
 -\frac{1}{2}\sum B({\cal E}_i, {\cal E}_j) \<\Theta_a {\cal E}_i +\Theta_i E_a, E_b\>\< T (E_a, E_b), {\cal E}_j\> \big],\\
 && \sum \< (\dt \Theta)_a {\cal E}_i + (\dt \Theta)_i E_a, E_b\>\< T (E_a, E_b), {\cal E}_i\>
 = \sum \big[ -2 B({\cal E}_i, E_c) \<\I^*_a E_c, E_b\>\< T (E_a, E_b), {\cal E}_i\> \\
 && -2 B({\cal E}_i, {\cal E}_j) \<\I^*_a {\cal E}_j, E_b\>\< T (E_a, E_b), {\cal E}_i\>
 + 2 B({\cal E}_j, E_b) \<\I^*_a {\cal E}_i, {\cal E}_j\>\< T (E_a, E_b), {\cal E}_i\> \\
 && -2 B({\cal E}_j, E_a) \<\I^{*}_{i} {\cal E}_j,E_b\>\< T (E_a, E_b), {\cal E}_i\>
 + 2 B({\cal E}_j, E_b) \<\I^{*}_i {E}_a, {\cal E}_j\>\< T (E_a, E_b), {\cal E}_i\> \big].
\end{eqnarray*}
Finally,
we get \eqref{dtThetaT}.

\textbf{Proof of \eqref{dtThetatildeT}}.
We have
 $\< \Theta, {\tilde T}^\sharp \>
 = \sum \<\Theta_a {\cal E}_i +\Theta_i E_a, {\cal E}_j\> \<{\tilde T}({\cal E}_i, {\cal E}_j ), E_a\>$.
Now we compute
\begin{eqnarray}\label{Enew-6terms}
\nonumber
 && \dt \< \Theta, {\tilde T}^\sharp \> =
 \sum \big[ B(\Theta_a {\cal E}_i +\Theta_i E_a, {\cal E}_j) \<{\tilde T}({\cal E}_i, {\cal E}_j ), E_a\>
 + \<\Theta_a {\cal E}_i +\Theta_i E_a, {\cal E}_j\> B({\tilde T}({\cal E}_i, {\cal E}_j ), E_a) \\
\nonumber
 && + \<\Theta_a (\dt {\cal E}_i) +\Theta_{\dt {\cal E}_i} E_a, {\cal E}_j\> \<{\tilde T}({\cal E}_i, {\cal E}_j ), E_a\>
 + \<\Theta_a {\cal E}_i +\Theta_i E_a, \dt {\cal E}_j\> \<{\tilde T}({\cal E}_i, {\cal E}_j ), E_a\> \\
 && + \<\Theta_a {\cal E}_i +\Theta_i E_a, {\cal E}_j\> \<\dt {\tilde T}({\cal E}_i, {\cal E}_j ), E_a\>
 + \< (\dt \Theta)_a {\cal E}_i + (\dt \Theta)_i E_a, {\cal E}_j\> \<{\tilde T}({\cal E}_i, {\cal E}_j ), E_a\> \big].
\end{eqnarray}
Let $U : \mD \times \mD \rightarrow \widetilde{\mD}$ be a $(1,2)$-tensor, given by
$\<U_i {\cal E}_j, E_a\>=\frac{1}{2}\,(\<\Theta_a{\cal E}_i +\Theta_i E_a, {\cal E}_j\>-\<\Theta_a{\cal E}_j +\Theta_j E_a, {\cal E}_i\>)$.
We compute the fifth term in $\dt \< \Theta, {\tilde T}^\sharp \>$:
\begin{eqnarray*}
 &&\hskip-4mm 2 \sum\<\Theta_a {\cal E}_i +\Theta_i E_a, {\cal E}_j\> \<\dt {\tilde T}({\cal E}_i, {\cal E}_j ), E_a\>
 =  2 \sum \<U_i {\cal E}_j, E_a\> \<\dt {\tilde T}({\cal E}_i, {\cal E}_j ), E_a\> \\
 && =\sum \<U_i {\cal E}_j, E_a\> \big( 2\<{\tilde T}(-\frac{1}{2}(B^\sharp {\cal E}_i )^\perp, {\cal E}_j ), E_a\>
 + 2\<{\tilde T}( {\cal E}_i, -\frac{1}{2}(B^\sharp {\cal E}_j )^\perp ), E_a\> \\
 && +\< \nabla_{(B^\sharp {\cal E}_j)^\top} {\cal E}_i -\nabla_{(B^\sharp {\cal E}_i  )^\top} {\cal E}_j , E_a \>
 +\< \nabla_{j}((B^\sharp {\cal E}_i )^\top) -\nabla_{i}((B^\sharp {\cal E}_j )^\top) , E_a \> \big),\\
 &&\hskip-4mm \sum \<U_i {\cal E}_j, E_a\> \<{\tilde T}(-( B^\sharp {\cal E}_i )^\perp, {\cal E}_j ), E_a\>
 = -\sum B({\cal E}_i, {\cal E}_k) \<U_i {\cal E}_j, {\tilde T}({\cal E}_k, {\cal E}_j )\>\\
 && = -\frac{1}{2} \sum B({\cal E}_i, {\cal E}_k) \big(\<\Theta_a {\cal E}_i +\Theta_i E_a, {\cal E}_j\> -\<\Theta_a {\cal E}_j +\Theta_j E_a, {\cal E}_i \>\big) \<E_a,  {\tilde T}({\cal E}_k, {\cal E}_j )\>, \\
 &&\hskip-4mm -\sum \<U_i {\cal E}_j, E_a\> \<\nabla_{(B^\sharp {\cal E}_i  )^\top} {\cal E}_j, E_a\>
 = \sum B({\cal E}_i, E_b) \< U_i {\cal E}_j,(A + T^\sharp )_j E_b\> \\
 && = \frac{1}{2} \sum B({\cal E}_i, E_b)\big(\<\Theta_a {\cal E}_i +\Theta_i E_a, {\cal E}_j\>
 -\<\Theta_a {\cal E}_j +\Theta_j E_a, {\cal E}_i\>\big)\<E_a,(A + T^\sharp )_j E_b\>,\\
 &&\hskip-4mm -\sum \<U_i {\cal E}_j, E_a\> \<\nabla_{i}((B^\sharp {\cal E}_j )^\top), E_a\>
 = \sum \big[\<\nabla_i(B({\cal E}_j, E_a ) U^*_j E_a) , {\cal E}_i\>
 - B({\cal E}_j, E_a ) \<\nabla_i  U^*_j E_a, {\cal E}_i \> \big],
\end{eqnarray*}
where
 $\<U_j {\cal E}_i, E_a\> = \<U^*_j E_a, {\cal E}_i \>$.
Note that
\begin{equation*}
 \<U^*_j E_a, {\cal E}_i\>
 = \frac{1}{2}\,\<\Theta_a^* {\cal E}_j +\Theta^{\wedge*}_a {\cal E}_j -\Theta_a {\cal E}_j -\Theta^\wedge_a {\cal E}_j, {\cal E}_i\>,
\end{equation*}
thus, using
(1,2)-tensor $F$ defined in \eqref{formulaF},
we can write
\begin{equation*}
 -\sum \<U_i {\cal E}_j, E_a\> \<\nabla_{i}((B^\sharp {\cal E}_j )^\top), E_a\>
 = \Div^\perp (\< B_{|V}, F \>) -\< B_{| V}, \Div^\perp F \> .
\end{equation*}
For the first four terms of $\dt \< \Theta, {\tilde T}^\sharp \>$, see \eqref{Enew-6terms}, we obtain:
\begin{eqnarray*}
 &&
 B(\Theta_a {\cal E}_i +\Theta_i E_a, {\cal E}_j)
 = B({\cal E}_j, {\cal E}_k)\<\Theta_a {\cal E}_i +\Theta_i E_a, {\cal E}_k\>
 +B({\cal E}_j, E_b) \<\Theta_a {\cal E}_i +\Theta_i E_a, E_b\>
 ,\\
 && \sum \<\Theta_a {\cal E}_i +\Theta_i E_a, {\cal E}_j\> B({\tilde T}({\cal E}_i, {\cal E}_j ), E_a) =
 \sum B(E_a, E_b)\<\Theta_a {\cal E}_i +\Theta_i E_a, {\cal E}_j\> \<{\tilde T}({\cal E}_i, {\cal E}_j ), E_b\> = 0,\\
 && \<\Theta_a (\dt {\cal E}_i) +\Theta_{\dt {\cal E}_i} E_a, {\cal E}_j\>
 = -\frac{1}{2}\,B({\cal E}_i, {\cal E}_k) \<\Theta_a {\cal E}_k +\Theta_k E_a, {\cal E}_j\>
 -B({\cal E}_i, E_b) \<\Theta_a E_b +\Theta_b E_a, {\cal E}_j\>
 ,\\
 && \<\Theta_a {\cal E}_i +\Theta_i E_a, \dt {\cal E}_j\>
 = -\frac{1}{2}\,B({\cal E}_j, {\cal E}_k) \<\Theta_a {\cal E}_i +\Theta_i E_a, {\cal E}_k\>
 -B({\cal E}_j, E_b) \<\Theta_a {\cal E}_i +\Theta_i E_a, E_b\>
 .
\end{eqnarray*}
Using \eqref{Eq-dt-Theta},
we consider
\begin{eqnarray*}
 && \< (\dt \Theta)_a {\cal E}_i + (\dt \Theta)_i E_a, {\cal E}_j\>
 =\<-\I^*_a B^\sharp{\cal E}_i + B^\sharp\I^*_a {\cal E}_i -\I^{*}_i\,B^\sharp E_a + B^\sharp\I^{*}_i {E}_a, {\cal E}_j\> \\
 && +\<-\I^*_i B^\sharp E_a + B^\sharp\I^*_i E_a -\I^{*}_a\,B^\sharp{\cal E}_i + B^\sharp\I^{*}_a {\cal E}_i, {\cal E}_j \>,
\end{eqnarray*}
which can be simplified to the following:
\begin{eqnarray*}
 && \<(\dt\Theta)_a {\cal E}_i + (\dt \Theta)_i E_a, {\cal E}_j\> =
 -2\sum\nolimits_{\,k} B({\cal E}_i, {\cal E}_k ) \<\I^*_a {\cal E}_k, {\cal E}_j\> \\
 && -2\sum\nolimits_{\,b} B({\cal E}_i, E_b ) \<\I^*_a E_b, {\cal E}_j\>
 + 2\sum\nolimits_{\,k} B({\cal E}_k , {\cal E}_j)\<\I^*_a {\cal E}_i, {\cal E}_k\>
 + 2\sum\nolimits_{\,b} B({\cal E}_j, E_b) \<\I^*_a {\cal E}_i, E_b\> \\
 &&
 -2\sum\nolimits_{\,k} B({\cal E}_k, E_a) \<\I^{*}_{i} {\cal E}_k, {\cal E}_j\>
 + 2\sum\nolimits_{\,b} B({\cal E}_j, E_b) \<\I^{*}_i {E}_a, E_b\>
 + 2\sum\nolimits_{\,k} B({\cal E}_k, {\cal E}_j) \<\I^{*}_i {E}_a, {\cal E}_k\> .
\end{eqnarray*}
Hence, the sixth term in $\dt \< \Theta, {\tilde T}^\sharp \>$ is:
\begin{eqnarray*}
 && \sum \< (\dt \Theta)_a {\cal E}_i + (\dt \Theta)_i E_a, {\cal E}_j\> \<{\tilde T}({\cal E}_i, {\cal E}_j ), E_a\>
 = 
 \sum\big[ -2 B({\cal E}_i, {\cal E}_k ) \<{\tilde T}({\cal E}_i, {\cal E}_j ), E_a\>  \<\I^*_a {\cal E}_k, {\cal E}_j\> \\
 && -2 B({\cal E}_i, E_b ) \<{\tilde T}({\cal E}_i, {\cal E}_j ), E_a\>  \<\I^*_a E_b, {\cal E}_j\>
 + 2 B({\cal E}_k , {\cal E}_j) \<{\tilde T}({\cal E}_i, {\cal E}_j ), E_a\>  \<\I^*_a {\cal E}_i, {\cal E}_k\>  \\
 && +2 B({\cal E}_j, E_b) \<{\tilde T}({\cal E}_i, {\cal E}_j ), E_a\> \<\I^*_a {\cal E}_i, E_b\>
 -2 B({\cal E}_k, E_a) \<{\tilde T}({\cal E}_i, {\cal E}_j ), E_a\> \<\I^{*}_{i} {\cal E}_k, {\cal E}_j\> \\
 && +2 B({\cal E}_j, E_b) \<{\tilde T}({\cal E}_i, {\cal E}_j ), E_a\> \<\I^{*}_i {E}_a, E_b\>
 +2 B({\cal E}_k, {\cal E}_j) \<{\tilde T}({\cal E}_i, {\cal E}_j ), E_a\> \<\I^{*}_i {E}_a, {\cal E}_k\> \big].
\end{eqnarray*}
Finally,
we get \eqref{dtThetatildeT}.

\textbf{Proof of \eqref{dtThetatildeA}}.
We have
\begin{equation*}
 \< \Theta, {\tilde A} \>
 = \sum \<\Theta_i E_a +\Theta_a {\cal E}_i, {\cal E}_j\> \<{\tilde h}({\cal E}_i, {\cal E}_j ), E_a\>.
\end{equation*}
Hence
\begin{eqnarray*}
 && \dt \<\Theta, {\tilde A} \> =
 \sum \big[ B(\Theta_i E_a +\Theta_a {\cal E}_i, {\cal E}_j) \<{\tilde h}({\cal E}_i, {\cal E}_j ), E_a\>
 +\<\Theta_i E_a +\Theta_a {\cal E}_i, {\cal E}_j\> B({\tilde h}({\cal E}_i, {\cal E}_j ), E_a) \\
 && +\<\Theta_{\dt {\cal E}_i} E_a +\Theta_a (\dt {\cal E}_i ), {\cal E}_j\> \<{\tilde h}({\cal E}_i, {\cal E}_j ), E_a\>
 +\<\Theta_i E_a +\Theta_a {\cal E}_i, \dt {\cal E}_j\> \<{\tilde h}({\cal E}_i, {\cal E}_j ), E_a\> \\
 && +\<\Theta_i E_a +\Theta_a {\cal E}_i, {\cal E}_j\> \<\dt {\tilde h}({\cal E}_i, {\cal E}_j ), E_a\>
 +\<(\dt \Theta)_i E_a + (\dt \Theta)_a {\cal E}_i, {\cal E}_j\> \<{\tilde h}({\cal E}_i, {\cal E}_j ), E_a\> \big].
\end{eqnarray*}
We shall denote by (h) the fifth of the above 6 terms, and write it as sum of seven terms (h1) to (h7):
\begin{eqnarray*}
 && \sum \<\Theta_i E_a +\Theta_a {\cal E}_i, {\cal E}_j\> \<\dt {\tilde h}({\cal E}_i, {\cal E}_j ), E_a\> \\
 && = \sum\big[ -\frac{1}{2}(\<\Theta_i E_a +\Theta_a {\cal E}_i, {\cal E}_j\>
 +\<\Theta_j E_a +\Theta_a {\cal E}_j, {\cal E}_i \>)\nabla_{a} B({\cal E}_i, {\cal E}_j)\\
 && -\frac{1}{2}(\<\Theta_i E_a +\Theta_a {\cal E}_i, {\cal E}_j\> +\<\Theta_j E_a +\Theta_a {\cal E}_j, {\cal E}_i \>)
 \<{\tilde h}( B^\sharp {\cal E}_i, {\cal E}_j) + {\tilde h}({\cal E}_i, B^\sharp {\cal E}_j ), E_a\>  \\
 && -\frac{1}{2}\big(\<\Theta_i E_a +\Theta_a {\cal E}_i, {\cal E}_j\> +\<\Theta_j E_a +\Theta_a {\cal E}_j, {\cal E}_i \>\big)
 \<\nabla_{i}((B^\sharp {\cal E}_j )^\top) +\nabla_{j}((B^\sharp {\cal E}_i )^\top ), E_a\> \\
 && -\frac{1}{2}(\<\Theta_i E_a +\Theta_a {\cal E}_i, {\cal E}_j\> +\<\Theta_j E_a +\Theta_a {\cal E}_j, {\cal E}_i \>)
 \<\nabla_{( B^\sharp {\cal E}_j )^\top} {\cal E}_i +\nabla_{(B^\sharp {\cal E}_i )^\top} {\cal E}_j, E_a\> \\
 && +\frac{1}{2}(\<\Theta_i E_a +\Theta_a {\cal E}_i, {\cal E}_j\> +\<\Theta_j E_a +\Theta_a {\cal E}_j, {\cal E}_i\>)
 (\nabla_{i} B({\cal E}_j,E_a) +\nabla_{j} B({\cal E}_i, E_a) )\\
 && -\frac{1}{2}(\<\Theta_i E_a +\Theta_a {\cal E}_i, {\cal E}_j\> +\<\Theta_j E_a +\Theta_a {\cal E}_j, {\cal E}_i \>)
 (B(\nabla_{i} E_a, {\cal E}_j) + B(\nabla_{j} E_a, {\cal E}_i )) \\
 && +\frac{1}{2}(\<\Theta_i E_a +\Theta_a {\cal E}_i, {\cal E}_j\> +\<\Theta_j E_a +\Theta_a {\cal E}_j, {\cal E}_i \>)
 ( B(\nabla_{a} {\cal E}_i, {\cal E}_j ) + B(\nabla_{a} {\cal E}_j, {\cal E}_i )) \big].
\end{eqnarray*}
We have for the term (h1) above:
\begin{eqnarray*}
 && -\frac{1}{2} \sum(\<\Theta_i E_a +\Theta_a {\cal E}_i, {\cal E}_j\> +\<\Theta_j E_a +\Theta_a {\cal E}_j, {\cal E}_i \>)
 \nabla_{a} B({\cal E}_i, {\cal E}_j )\\
 && = \sum B({\cal E}_i, {\cal E}_j) \<\nabla_a(\Theta^*_i {\cal E}_j +\Theta^{\wedge*}_i {\cal E}_j), E_a \>
 -\sum \<\nabla_a\big(B({\cal E}_i, {\cal E}_j) (\Theta^*_i {\cal E}_j +\Theta^{\wedge*}_i {\cal E}_j) \big), E_a\>,
\end{eqnarray*}
which can be written as
\begin{eqnarray*}
 && -\frac{1}{2} \sum(\<\Theta_i E_a +\Theta_a {\cal E}_i, {\cal E}_j\> +\<\Theta_j E_a +\Theta_a {\cal E}_j, {\cal E}_i \>)
 \nabla_{a} B({\cal E}_i, {\cal E}_j )\\
 && = -2 \Div^\top \< B, L \> + 2 \< B, \Div^\top L \> .
\end{eqnarray*}
For (h2):
\begin{eqnarray*}
 && -\frac{1}{2} \sum(\<\Theta_i E_a +\Theta_a {\cal E}_i, {\cal E}_j\> +\<\Theta_j E_a +\Theta_a {\cal E}_j, {\cal E}_i \>)
 \<{\tilde h}( B^\sharp {\cal E}_i, {\cal E}_j) + {\tilde h}({\cal E}_i, B^\sharp {\cal E}_j ), E_a\> \\
 && = -\sum B({\cal E}_i, {\cal E}_k)
 (\<\Theta_i E_a +\Theta_a {\cal E}_i, {\cal E}_j\> +\<\Theta_j E_a +\Theta_a {\cal E}_j, {\cal E}_i \>)
  \<{\tilde h}({\cal E}_k, {\cal E}_j ), E_a\>.
\end{eqnarray*}
Note that for (h3) we can assume $\nabla_X E_a \in \mD$ for all $X \in TM$ at a point, where we compute the formula,
and hence
\begin{eqnarray*}
 && -\frac{1}{2} \sum(\<\Theta_i E_a +\Theta_a {\cal E}_i, {\cal E}_j\> +\<\Theta_j E_a +\Theta_a {\cal E}_j, {\cal E}_i \>)
 \<\nabla_{i}((B^\sharp {\cal E}_j )^\top) +\nabla_{j}((B^\sharp {\cal E}_i )^\top ), E_a\>  \\
 && = -\sum(\<\Theta_i E_a +\Theta_a {\cal E}_i, {\cal E}_j\> +\<\Theta_j E_a +\Theta_a {\cal E}_j, {\cal E}_i \>)
 \nabla_{i} B( E_a, {\cal E}_j).
\end{eqnarray*}
For (h5), analogously,
\begin{eqnarray*}
 && \frac{1}{2} \sum(\<\Theta_i E_a +\Theta_a {\cal E}_i, {\cal E}_j\> +\<\Theta_j E_a +\Theta_a {\cal E}_j, {\cal E}_i \>)
 (\nabla_{i} B({\cal E}_j,E_a) +\nabla_{j} B({\cal E}_i, E_a))\\
 &&  = \sum(\<\Theta_i E_a +\Theta_a {\cal E}_i, {\cal E}_j\> +\<\Theta_j E_a +\Theta_a {\cal E}_j, {\cal E}_i \>)
 \nabla_{i} B({\cal E}_j, E_a),
\end{eqnarray*}
so (h3)+(h5)=0. For (h4) we have
\begin{eqnarray*}
 && -\frac{1}{2} \sum(\<\Theta_i E_a +\Theta_a {\cal E}_i, {\cal E}_j\> +\<\Theta_j E_a +\Theta_a {\cal E}_j, {\cal E}_i \>)
 \<\nabla_{( B^\sharp {\cal E}_j )^\top} {\cal E}_i +\nabla_{(B^\sharp {\cal E}_i )^\top} {\cal E}_j, E_a\> \\
 && = \sum B({\cal E}_j,E_b)
 (\<\Theta_i E_a +\Theta_a {\cal E}_i, {\cal E}_j\> +\<\Theta_j E_a +\Theta_a {\cal E}_j, {\cal E}_i \>)
  \< (A_i + T^\sharp_i) E_b, E_a\>.
\end{eqnarray*}
For (h6) term we have
\begin{eqnarray*}
 && -\frac{1}{2} \sum(\<\Theta_i E_a +\Theta_a {\cal E}_i, {\cal E}_j\> +\<\Theta_j E_a +\Theta_a {\cal E}_j, {\cal E}_i \>)
 ( B(\nabla_{i} E_a, {\cal E}_j) + B(\nabla_{j} E_a, {\cal E}_i ))\\
 && =  \sum B({\cal E}_k, {\cal E}_j)
 (\<\Theta_i E_a +\Theta_a {\cal E}_i, {\cal E}_j\> +\<\Theta_j E_a +\Theta_a {\cal E}_j, {\cal E}_i \>)
  \<({\tilde A}_a -{\tilde T}^\sharp_a) {\cal E}_k, {\cal E}_i\>,
\end{eqnarray*}
and (h7) term can be written as
\begin{eqnarray*}
 && \frac{1}{2}\sum(\<\Theta_i E_a +\Theta_a {\cal E}_i, {\cal E}_j\> +\<\Theta_j E_a +\Theta_a {\cal E}_j, {\cal E}_i \>)
 (B(\nabla_{a} {\cal E}_i, {\cal E}_j ) + B(\nabla_{a} {\cal E}_j, {\cal E}_i )) \\
 && = -\sum B(E_b, {\cal E}_i)
 (\<\Theta_i E_a +\Theta_a {\cal E}_i, {\cal E}_j\> +\<\Theta_j E_a +\Theta_a {\cal E}_j, {\cal E}_i\>)
  \< (A_j - T^\sharp_j) E_b, E_a\> .
\end{eqnarray*}
Now we compute other terms of $\dt \< \Theta, {\tilde A} \>$. Recall that those 6 terms are
\begin{eqnarray*}
 && \dt\<\Theta, {\tilde A}\>=
 \sum\big[ B(\Theta_i E_a+\Theta_a {\cal E}_i,{\cal E}_j)\<{\tilde h}({\cal E}_i,{\cal E}_j),E_a\> \\
 && +\,\<\Theta_i E_a +\Theta_a {\cal E}_i, {\cal E}_j\> B({\tilde h}({\cal E}_i, {\cal E}_j ), E_a)
 +\<\Theta_{\dt {\cal E}_i} E_a +\Theta_a (\dt {\cal E}_i ), {\cal E}_j\> \<{\tilde h}({\cal E}_i, {\cal E}_j ), E_a\> \\
 && +\,\<\Theta_i E_a +\Theta_a {\cal E}_i, \dt {\cal E}_j\> \<{\tilde h}({\cal E}_i, {\cal E}_j ), E_a\>
 +\<\Theta_i E_a +\Theta_a {\cal E}_i, {\cal E}_j\> \<\dt {\tilde h}({\cal E}_i, {\cal E}_j ), E_a\> \\
 && +\,\< (\dt \Theta)_i E_a + (\dt \Theta)_a {\cal E}_i, {\cal E}_j\> \<{\tilde h}({\cal E}_i, {\cal E}_j ), E_a\> \big].
\end{eqnarray*}
For the first and second terms of the above $\dt \< \Theta, {\tilde A} \>$ we have
\begin{eqnarray*}
 && B(\Theta_i E_a +\Theta_a {\cal E}_i, {\cal E}_j)
 =\sum B({\cal E}_j, {\cal E}_k)\<\Theta_i E_a +\Theta_a {\cal E}_i,{\cal E}_k\>
 + \sum B({\cal E}_j, E_b) \<\Theta_i E_a +\Theta_a {\cal E}_i, E_b\> ,\\
 && \sum \<\Theta_i E_a +\Theta_a {\cal E}_i, {\cal E}_j\> B({\tilde h}({\cal E}_i, {\cal E}_j ), E_a) = 0,
\end{eqnarray*}
because $B=0$ on $\widetilde{\mD} \times \widetilde{\mD}$.
 For the third and fourth terms we have:
\begin{eqnarray*}
 && \<\Theta_{\dt {\cal E}_i} E_a {+}\Theta_a (\dt {\cal E}_i ), {\cal E}_j\>
 = \sum[-\frac{1}{2}\,B({\cal E}_i, {\cal E}_k)\<\Theta_k E_a +\Theta_a{\cal E}_k, {\cal E}_j\>
 -B({\cal E}_i, E_b) \<\Theta_b E_a +\Theta_a  E_b, {\cal E}_j\>
 ],\\
 && \<\Theta_i E_a +\Theta_a {\cal E}_i, \dt {\cal E}_j\>
 = \sum\big[-\frac{1}{2}\,B({\cal E}_j, {\cal E}_k)\<\Theta_i E_a +\Theta_a {\cal E}_i, {\cal E}_k\>
 -B({\cal E}_j, E_b) \<\Theta_i E_a +\Theta_a {\cal E}_i,  E_b\>
 \big].
\end{eqnarray*}
For the sixth term, note that
\begin{eqnarray*}
 && \sum \<(\dt\Theta)_a\, {\cal E}_i + (\dt\Theta)_i E_a, {\cal E}_j\> \<{\tilde h}({\cal E}_i, {\cal E}_j), E_a\> =
 \sum\big[ -2 B({\cal E}_i, {\cal E}_k ) \<{\tilde h}({\cal E}_i, {\cal E}_j ), E_a\> \<\I^*_a {\cal E}_k, {\cal E}_j\> \\
 && -\,2 B({\cal E}_i, E_b ) \<{\tilde h}({\cal E}_i, {\cal E}_j ), E_a\> \<\I^*_a E_b, {\cal E}_j\>
 + 2 B({\cal E}_k , {\cal E}_j) \<{\tilde h}({\cal E}_i, {\cal E}_j ), E_a\> \<\I^*_a {\cal E}_i, {\cal E}_k\>  \\
 && +\,2 B({\cal E}_j, E_b) \<{\tilde h}({\cal E}_i, {\cal E}_j ), E_a\> \<\I^*_a {\cal E}_i, E_b\>
 -2 B({\cal E}_k, E_a) \<{\tilde h}({\cal E}_i, {\cal E}_j ), E_a\> \<\I^{*}_{i} {\cal E}_k, {\cal E}_j\> \\
 && +\,2 B({\cal E}_j, E_b) \<{\tilde h}({\cal E}_i, {\cal E}_j ), E_a\> \<\I^{*}_i {E}_a, E_b\>
 + 2 B({\cal E}_k, {\cal E}_j) \<{\tilde h}({\cal E}_i, {\cal E}_j ), E_a\> \<\I^{*}_i {E}_a, {\cal E}_k\> \big].
\end{eqnarray*}
Finally,
we get \eqref{dtThetatildeA}.

\textbf{Proof of \eqref{dttracetopI} and \eqref{dttraceperpI}} is straightforward.

\textbf{Proof of \eqref{dtIEaH} and \eqref{dtIeiH}}.
The variation formulas for these terms appear in the following part of $Q$ in \eqref{E-defQ}:
\begin{eqnarray*}
 && -\<\tr^\top\I -\tr^\bot\I +\tr^\bot\I^* -\tr^\top\I^*,\, \tH - H\>  \\
 && = \< \tr^\top(\I^* -\I), \tH - H\> +\<\tr^\bot(\I^* -\I), \tH - H\> .
\end{eqnarray*}
We have
\begin{eqnarray*}
 && \dt \<\, \tr^\top(\I^* -\I), \tH - H\>
 = B( \tr^\bot(\I^* -\I), \tH - H) \\
 && +\sum \< (\dt \I^*)_k {\cal E}_k, \tH - H\>
 +\< \tr^\bot(\I^* -\I), \dt \tH\> - \< \tr^\bot(\I^* -\I), \dt H\> ,\\
 && B( \tr^\top(\I^* -\I), \tH - H)
 = \sum B({\cal E}_i, E_b) \big(\<\tH, E_b\> \< \tr^\bot(\I^* -\I), {\cal E}_i\> \\
 && - \<H, {\cal E}_i\> \< \tr^\bot(\I^* -\I), E_b\> \big)
 -\sum B({\cal E}_i, {\cal E}_j) \<H, {\cal E}_j\> \< \tr^\bot(\I^* -\I), {\cal E}_i\> .
\end{eqnarray*}
Then we have
\begin{eqnarray*}
 && \sum \< (\dt \I^*)_a E_a, \tH - H\>
%
 = \sum B({\cal E}_i, E_b) \big(\<\I^*_b {\cal E}_i, \tH - H\>
  + \<\tr^\top\I^*, E_b\> \<{\cal E}_i, H\> \\
 && - \<\tr^\top\I^*, {\cal E}_i\> \<E_b, \tH\>\big)
 +\sum B({\cal E}_i, {\cal E}_j) \<\tr^\top\I^*, {\cal E}_j\> \<{\cal E}_i, H\>, \\
 && \sum \< (\dt \I^*)_i {\cal E}_i, \tH - H\>
%
 = \sum B({\cal E}_i, {\cal E}_j) \big( \<\I^*_j {\cal E}_i, \tH - H\>
 + \<\tr^\bot\I^*, {\cal E}_i\> \< H, {\cal E}_j\> \big)\\
 && +\sum B({\cal E}_i, E_b) \big(\<\I^*_i E_b, \tH - H \>  + \<\tr^\bot\I^*, E_b\> \< H, {\cal E}_i\>
 - \<\tr^\bot\I^*, {\cal E}_i\> \<\tH, E_b\> \big) .
\end{eqnarray*}
Next, we shall use equations (20) and (21) from \cite{rz-2}:
\begin{eqnarray*}
 \<\dt\tilde H, X\> \eq \<\,2\<\theta, X^\top\>,\,B\> -\frac12\,X^\top(\tr_{\mD}B),\\
 \<\dt H, X\> \eq \Div(B^\sharp (X^\perp) )^\top +\< B^\sharp (X^\perp), \tilde H\> -\<B^\sharp (X^\perp), H\>
 -\< {\tilde \delta}_{X^\perp}, B \> \nonumber \\
 && -\< \< {\tilde \alpha} -{\tilde \theta}, X^\perp \>, B \> - B(H, X^\top).
\end{eqnarray*}
We have
\begin{eqnarray*}
 && \< \tr^\top(\I^* -\I), \dt \tH\>
 = 2 \sum B({\cal E}_i, E_b) \<\T_i E_b , \tr^\bot(\I^* -\I)\> \\
 && -\frac{1}{2} \sum B({\cal E}_i, {\cal E}_j) \<{\cal E}_i, {\cal E}_j\> \Div((\tr^\bot(\I^* -\I))^\top)
 -\Div (\frac12\, (\tr_{\mD}B) (\tr^\bot(\I^* -\I) )^\top) .
\end{eqnarray*}
Finally,
\begin{eqnarray*}
 &&\qquad \<\dt H, \tr^\top(\I^* -\I)\>
 =\Div(( B^\sharp((\tr^\top(\I^* -\I))^\perp) )^\top) \\
 && +\sum \big[ B({\cal E}_i, E_b) \<{\cal E}_i, \tr^\top(\I^* -\I)\> \<\tilde H, E_b\>
 - B({\cal E}_i, {\cal E}_j) \<H, {\cal E}_j\> \<\tr^\top(\I^* -\I), {\cal E}_i\> \\
 && - B({\cal E}_i, E_b) \<H, {\cal E}_i\> \<\tr^\top(\I^* -\I), E_b\>
 - B({\cal E}_i, E_b) \<\nabla_b((\tr^\top(\I^* -\I) )^\perp ), {\cal E}_i\> \\
 && - B({\cal E}_i, E_b) \<(\tA_b -\tT_b){\cal E}_i, \tr^\top(\I^* -\I)\>
 - B({\cal E}_i, E_b) \< H, {\cal E}_i\> \<\tr^\top(\I^* -\I), E_b\> \big],\\
 && \qquad \<\dt H, \tr^\bot(\I^* -\I) \> = \Div(( B^\sharp((\tr^\bot(\I^* -\I))^\perp) )^\top) \\
 && +\sum \big[ B({\cal E}_i, E_b) \<{\cal E}_i, \tr^\bot(\I^* -\I)\> \<\tilde H, E_b\>
 - B({\cal E}_i, {\cal E}_j) \<H, {\cal E}_j\> \<\tr^\bot(\I^* -\I), {\cal E}_i\> \\
 && - B({\cal E}_i, E_b) \<H, {\cal E}_i\> \<\tr^\bot(\I^* -\I) , E_b\>
 - B({\cal E}_i, E_b) \<\nabla_b((\tr^\bot(\I^* -\I))^\perp ), {\cal E}_i\> \\
 && - B({\cal E}_i, E_b) \<(\tA_b -\tT_b){\cal E}_i, \tr^\bot(\I^* -\I)\>
 - B({\cal E}_i, E_b) \< H, {\cal E}_i\> \<\tr^\bot(\I^* -\I), E_b\> \big].
\end{eqnarray*}
Summing
$\dt \< \tr^\top(\I^* -\I), \tH - H \>$ and $\dt \<\tr^\bot(\I^* -\I), \tH - H\>$,
we obtain \eqref{dtIEaH} and \eqref{dtIeiH}.
\end{proof}



We have the following results for critical metric connections and 
$g^\perp$-variations (see Definition \ref{defintionvariationsofg}), that can be considered as a special case of Lemma~\ref{L-dT-3}.


\begin{lemma} \label{L-dT-metric}
Let $\widetilde{\mD}$ and $\mD$ be both totally umbilical distributions on $(M,g)$.
Let $g_t$ be a
$g^\perp$-variation of metric $g$ and $\nabla +\I$ be a metric connection: $\I^* = -\I$.
If $\I$ is a critical point for \eqref{actiongSmix} with fixed $g$, then, up to divergences of compactly supported vector fields, the following 
formulas~hold:
\begin{subequations}
\begin{eqnarray}\label{metriccon1}
 && \dt \<\tr^\top(\I^* -\I),\ \tH - H\> = \< B,\ {3}\,H^\flat \odot(\tr^\top \I)^{\perp \flat}
 +\frac{p-1}{p}\, (\Div\tH) g^\perp \> ,\\
\label{metriccon2}
 && \dt \<\tr^\bot(\I^* -\I), \tH{-}H\> =\< B,\, 3\frac{n{-}1}{n} H^\flat\otimes H^\flat -\frac{1}{2}\<\phi, \tH{-}H\> +\Div((\tr^\perp \I)^\top) g^\perp\> , \\
 \label{metriccon3}
 && \dt \<\tr^\top \I, \tr^\perp \I^*\> = 0 ,\\
\label{metriccon4}
 && \dt \<\tr^\top \I^*, \tr^\perp \I\> =  \< B,\ \frac{1}{2}\,\<\,\phi, \tr^\top \I\,\>\, \> , \\
 \label{metriccon5}
 && \dt \< \Theta, A \> = \< B,\ \frac{2}{n}\,H^\flat \odot (\tr^\top \I)^{\perp \flat} \> , \\
 \label{metriccon6}
 && \dt \< \Theta, {\tilde A} \> =  \< B,\ \frac{1}{p}\, \< \phi, {\tilde H} \> + 2 \Div L^\top + 8 \chi + 8 \widetilde{\cal{T}}^\flat \>,\\
\label{metriccon7} 
 && \dt \< \Theta, T^\sharp \> = \< B,\, \,\Upsilon_{T,T} \>, \\
 \label{metriccon8}
 && \dt \< \Theta, {\tilde T}^\sharp \> = \< B,\ 12\,\widetilde{\cal{T}}^\flat + 2 \chi  \> ,\\
 \label{metriccon9}
 && \dt \< \I^*, \I^\wedge \>_{\,|\,V} = \< B,\, \frac12\,\Upsilon_{T,T} -2 \widetilde{\cal{T}}^\flat -\chi \> .
\end{eqnarray}
\end{subequations}
\end{lemma}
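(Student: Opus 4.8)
The plan is to derive each of \eqref{metriccon1}--\eqref{metriccon9} by taking the corresponding general formula of Lemma~\ref{L-dT-3} and collapsing it with three successive reductions. First I would use that $g_t$ is a $g^\perp$-variation, so that $B=\partial_t g$ is supported on $\mathcal{D}\times\mathcal{D}$ and in particular $B_{|V}=0$: this immediately kills every summand containing $B(\mathcal{E}_i,E_b)$, kills the terms $\Div^\top\langle B_{|V},G\rangle$, $\Div^\perp\langle B_{|V},F\rangle$ and their partners, and — since we work modulo divergences of compactly supported vector fields — also lets me drop $-2\Div^\top\langle B,L\rangle$ in \eqref{dtThetatildeA} and the $\Div(\cdot)$ terms in \eqref{dtIEaH}--\eqref{dtIeiH}, leaving only $B(\mathcal{E}_i,\mathcal{E}_j)$-sums plus the genuinely tensorial divergence term $\Div^\top L$. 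Next I would substitute metric compatibility $\I^{*}=-\I$, which gives $\I^{*\wedge}=-\I^\wedge$, hence $\Theta=2(\I+\I^\wedge)$, so that $\Theta_a\mathcal{E}_i+\Theta_i E_a=4(\I_a\mathcal{E}_i+\I_i E_a)$ and the $\mathcal{D}\times\mathcal{D}$-components of $\Theta$ are read off from $\phi$ of \eqref{E-chi}. Finally I would invoke total umbilicity to replace $h,\tilde h,A_i,\tilde A_a$ by traces, e.g.\ $\langle h(E_a,E_b),\mathcal{E}_i\rangle=\tfrac1n\langle E_a,E_b\rangle\langle H,\mathcal{E}_i\rangle$ and $\tilde A_a=\tfrac1p\langle\tilde H,E_a\rangle\,\id^\perp$.

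After these reductions each right-hand side is a $B(\mathcal{E}_i,\mathcal{E}_j)$-sum plus at most one divergence term, and the remaining work is recognition. Here I would use that $\I$ is critical for \eqref{actiongSmix} with fixed $g$: by Corollary~\ref{T-main1} with $\I^{*}=-\I$ this is exactly the system \eqref{critcontorsion1}--\eqref{critcontorsionlast} together with \eqref{critcontorsionspec1}, i.e.\ $\I_U^\top=T^\sharp_U$, $\I_X^\bot=\tilde T^\sharp_X$, $(\tr^\bot\I)^\perp=\tfrac{n-1}{n}H$, $(\tr^\top\I)^\top=\tfrac{p-1}{p}\tilde H$, the skew relations $(\I_Y X-\I_X Y)^\perp=2T(X,Y)$ and $(\I_V U-\I_U V)^\top=2\tilde T(U,V)$, and (in the respective dimensions) $(\tr^\top\I)^\perp=-H$, $(\tr^\bot\I)^\top=-\tilde H$. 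Feeding these in, the antisymmetric-in-$(a,b)$ parts of $\langle\mathcal{E}_j,\I_a E_b\rangle$ and of $\langle(\I_a\mathcal{E}_i)^\top,E_b\rangle$ become $T$-bilinears assembling into $\Upsilon_{T,T}$; the contractions of $(\I_j E_a)^\perp$ against $\tilde T^\sharp$ produce $\chi$ together with extra $\widetilde{\cal T}^\flat$-pieces, using that each $\tilde T^\sharp_a$ is skew-adjoint so $\sum_a(\tilde T^\sharp_a)^2=\widetilde{\cal T}$; the traces $(\tr^\top\I)^\top$, $(\tr^\bot\I)^\perp$ paired with $H,\tilde H$ give the $H^\flat\odot(\tr^\top\I)^{\perp\flat}$, $H^\flat\otimes H^\flat$ and $(\Div\tilde H)\,g^\perp$ terms; and $\phi$ enters through the remaining $\mathcal{D}\times\mathcal{D}$-components. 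Formulas \eqref{metriccon3}, \eqref{metriccon4}, \eqref{metriccon5}, \eqref{metriccon7} come out almost at once: e.g.\ in \eqref{dttracetopI} the surviving sum pairs a symmetric tensor against the $(i,j)$-antisymmetric vector $\I_i\mathcal{E}_j-\I_j\mathcal{E}_i$ and hence vanishes, giving \eqref{metriccon3}; and in \eqref{dtThetaT} antisymmetrizing over $(a,b)$ turns $\langle\mathcal{E}_j,\I_a E_b\rangle$ into $-\langle T(E_a,E_b),\mathcal{E}_j\rangle$, so the sum becomes $\langle B,\Upsilon_{T,T}\rangle$, which is \eqref{metriccon7}.

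I expect the main obstacle to be the numerical coefficient bookkeeping in \eqref{metriccon6} and \eqref{metriccon8} (the formulas for $\partial_t\langle\Theta,\tilde A\rangle$ and $\partial_t\langle\Theta,\tilde T^\sharp\rangle$) and in the three-term right-hand side of \eqref{metriccon9}: there the umbilicity substitution for $\tilde h$ interacts with several copies of $\Theta$ and with the critical relations $\I_X^\bot=\tilde T^\sharp_X$ and $(\tr^\top\I)^\top=\tfrac{p-1}{p}\tilde H$, so the $\widetilde{\cal T}^\flat$- and $\chi$-contributions arrive with multiplicity (the coefficients $8$, $12$, and $8$, $2$, $-2$, $-1$) and are easy to miscount. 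Useful sanity checks are the trace identities $\tr_g\Upsilon_{T,T}=2\langle T,T\rangle$ and $\tr_g\widetilde{\cal T}^\flat=-\langle\tilde T,\tilde T\rangle$ used later, and consistency of the assembled $\partial_t Q$ with Proposition~\ref{propQ}, into which this lemma feeds.
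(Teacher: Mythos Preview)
Your approach is essentially the paper's own: start from the general formulas of Lemma~\ref{L-dT-3}, restrict to $g^\perp$-variations so that only $B(\mathcal{E}_i,\mathcal{E}_j)$-terms survive, substitute $\I^*=-\I$ (hence $\Theta=2(\I+\I^\wedge)$), use total umbilicity to replace $h,\tilde h$ by their traces, and then simplify using the critical-connection equations (\ref{ELconnectionNew1}--j) rewritten for metric connections. Your sample computations for \eqref{metriccon3} and \eqref{metriccon7} match the paper's exactly, and your anticipation that the bookkeeping for $\widetilde{\cal T}^\flat$ and $\chi$ in \eqref{metriccon6}, \eqref{metriccon8}, \eqref{metriccon9} is the delicate part is well placed --- the paper handles this by introducing the auxiliary tensor $\psi(X,Y)=\tfrac12\sum_a(\langle\I_{X^\perp}E_a,\tilde T^\sharp_a Y^\perp\rangle+\langle\I_{Y^\perp}E_a,\tilde T^\sharp_a X^\perp\rangle)$ and proving the identity $\psi=-2\widetilde{\cal T}^\flat-\chi$, which organizes the counting.

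One small caution: you list the dimension-dependent relations \eqref{critcontorsionspec1} (i.e.\ $(\tr^\top\I)^\perp=-H$ for $p>1$ and $(\tr^\bot\I)^\top=-\tilde H$ for $n>1$) among the tools, but the paper explicitly refrains from using them in this proof so that the lemma holds in all dimensions; this is why $(\tr^\top\I)^{\perp\flat}$ and $(\tr^\bot\I)^\top$ appear unsubstituted in \eqref{metriccon1}, \eqref{metriccon2}, \eqref{metriccon5}. Make sure you do not invoke them when simplifying.
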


\begin{proof}
First we adapt the results of Lemma~\ref{L-dT-3} to the case of
$g^\perp$-variation and totally umbilical distributions $\widetilde{\mD}$ and $\mD$. Then we shall use the Euler-Lagrange equations 
(\ref{ELconnectionNew1}-j), which for a metric connection have the following form:
\begin{subequations}
\begin{eqnarray} \label{metricconcrit1}
 && (\I_V\, U -\I_U\, V)^\top = 2\,{\tilde T}(U, V), \\ \label{metricconcrit2}
 && \<(\I_U-\T_U) X,\, Y\> = 0, \\ \label{metricconcrit3}
 && (\tr^\perp \I)^\perp = \frac{n-1}{n} H, \\ \label{metricconcrit4}
 && (\I_Y\, X -\I_X\, Y)^\perp = 2\, T(X, Y), \\ \label{metricconcrit5}
 && \<(\I_X-\T_X)\,U,\, V\> = 0, \\
\label{metricconcrit6}
 && (\tr^\top \I)^\top = \frac{p-1}{p}\,\tH,
\end{eqnarray}
\end{subequations}
for all $X,Y\in\widetilde\mD$ and $U,V\in\mD$, 
and
\begin{equation*}
 (\tr^\perp \I)^\top = -\tH\quad {\rm for}\quad n>1,\qquad
 (\tr^\top \I)^\perp = -H\quad {\rm for}\quad p>1.
\end{equation*}
The last two equations require special assumptions on dimensions of the distributions -- we shall not use them in this proof.
%
For metric connections we also have
\[
\Theta = \Theta^\wedge = 2\,(\I + \I^\wedge).
\]

For metric connections, 
$g^\perp$-variations of metric and totally umbilical distributions,
using \eqref{metricconcrit6}, we obtain 
\begin{eqnarray*}
 && \dt \<\tr^\top(\I^* -\I), \tH -H\>
 = \sum B({\cal E}_i, {\cal E}_j) \big(\,3\<H, {\cal E}_j\> \<\tr^\top\I, {\cal E}_i\> \\
 && +\,\frac{p-1}{p} \delta_{ij} \Div\tH\,\big)
  +\Div\big(\frac{p-1}{p}(\tr_{\mD}B)\tH - 2( B^\sharp (\tr^\top\I)^\perp )^\top\big) .
\end{eqnarray*}
Writing divergence of compactly supported vector field as $\Div Z$, we finally get
\begin{equation*}
 \dt \<\tr^\top(\I^* -\I), \tH -H\> = \sum B({\cal E}_i, {\cal E}_j) \big(\,3\,\<H, {\cal E}_j\> \<\tr^\top\I, {\cal E}_i\>
  +\frac{p-1}{p}\, \delta_{ij} \Div\tH \big) +\Div Z .
\end{equation*}
Without explicitly using the orthonormal frame, we can write the above as \eqref{metriccon1}.

For metric connections, 
$g^\perp$-variations of metric and totally umbilical distributions,
using \eqref{metricconcrit3}, 
we have:
\begin{eqnarray*}
 && \dt \<\tr^\bot(\I^* -\I), \tH -H\>
 =\sum B({\cal E}_i, {\cal E}_j) \big(\,3\,\frac{n-1}{n}\,\<H, {\cal E}_j\> \<H, {\cal E}_i\>
 -\frac{1}{2}\,\<\I_j {\cal E}_i, \tH -H\>  \\
 && -\,\frac{1}{2}\,\<\I_i {\cal E}_j, \tH -H\> +\delta_{ij} \Div((\tr^\bot\I)^\top)\,\big)
  +\Div\big((\tr_{\mD}B) (\tr^\bot\I)^\top -2( B^\sharp (\tr^\bot\I)^\perp )^\top\big) .
\end{eqnarray*}
Writing divergence of compactly supported vector field as $\Div Z$, we finally get
\begin{eqnarray*}
 && \dt \<\tr^\bot(\I^* -\I), \tH - H\> =
  \sum B({\cal E}_i, {\cal E}_j) \big(\,3 \frac{n-1}{n}\,\<H, {\cal E}_j\> \<H, {\cal E}_i\> \\
 && -\frac{1}{2}\,\<\I_j {\cal E}_i +\I_i {\cal E}_j, \tH - H\>
 +\delta_{ij} \Div( (\tr^\bot\I)^\top)\big) +\Div Z.
\end{eqnarray*}
Without explicitly using the orthonormal frame, we can write the above as \eqref{metriccon2}.

For metric connections, 
$g^\perp$-variations of metric and totally umbilical distributions:
\begin{equation*}
 \dt \<\tr^\top \I, \tr^\perp \I^*\>
 = \frac{1}{2} \sum B({\cal E}_i, {\cal E}_j )\<\tr^\top\I,\ \I^*_i {\cal E}_j -\I^*_j {\cal E}_i\> = 0,
\end{equation*}
as $B({\cal E}_i, {\cal E}_j )$ is symmetric and $\I^*_i {\cal E}_j -\I^*_j {\cal E}_i$ is antisymmetric in $i,j$.

For metric connections, 
$g^\perp$-variations of metric and totally umbilical distributions:
\begin{equation*}
 \dt \<\tr^\top \I^*, \tr^\perp \I\>
 =\frac{1}{2} \sum B({\cal E}_i, {\cal E}_j) \<\phi({\cal E}_i, {\cal E}_j ), \tr^\top \I \> .
\end{equation*}
Without explicitly using the orthonormal frame, we can write the above as \eqref{metriccon4}.

For metric connections, 
$g^\perp$-variations of metric and totally umbilical distributions,
using \eqref{metricconcrit2},
we have:
\begin{eqnarray*}
 \dt \<\Theta, A \>
 \eq \sum B({\cal E}_i, {\cal E}_j) \big(\,2\,\<{\cal E}_j, \Hn\> \<\tr^\top\I, {\cal E}_i\>
 - 4 \<{\cal E}_j, \Hn\> \<\T_i E_a, E_a\> \,\big)\\
\eq 2 \sum B({\cal E}_i, {\cal E}_j) \<{\cal E}_j, \Hn\> \<\tr^\top\I, {\cal E}_i\> .
\end{eqnarray*}
Without explicitly using the orthonormal frame, we can write the above as \eqref{metriccon5}.

For metric connections, 
$g^\perp$-variations of metric and totally umbilical distributions:
\begin{eqnarray*}
 && \dt \< \Theta, {\tilde A} \> = -2 \Div^\top \< B, L \> + 2 \< B, \Div^\top L \> \\
 && +\,4 \sum  B({\cal E}_i, {\cal E}_j) \big(\,
 \<\I_k {\cal E}_i + \I_i {\cal E}_k, E_a\> \<{\tilde T}^\sharp_a  {\cal E}_j, {\cal E}_k\>
 -2 \<\I_j E_a, {\cal E}_i\> \<\tHp, E_a\> \big) .
\end{eqnarray*}
Using symmetry $B(X,Y) = B(Y,X)$ for $X,Y \in \mathfrak{X}^\bot$, 
we obtain:
\begin{eqnarray*}
 && \dt\< \Theta, {\tilde A}\> =  -2 \Div^\top \< B, L \> + 2 \< B, \Div^\top L \>\\
 && + \sum B({\cal E}_i, {\cal E}_j) \big[\,\frac{1}{p}\,\<\I_j {\cal E}_i +\I_i {\cal E}_j, {\tilde H}\>
 + 4 \<\I_k {\cal E}_i +\I_i {\cal E}_k, {\tilde T}({\cal E}_j, {\cal E}_k)\> \,\big].
\end{eqnarray*}
Note that
\begin{equation*}
 \<\I_k {\cal E}_i +\I_i {\cal E}_k, {\tilde T}({\cal E}_j, {\cal E}_k)\>
 =
 \<\I_k E_a, {\cal E}_i\> \<\tT_a {\cal E}_k, {\cal E}_j\>
 -\<\I_i E_a, \tT_a {\cal E}_j\>
  .
\end{equation*}
By the above, we can write $\dt \< \Theta, {\tilde A} \>$
as
\begin{equation}\label{dttildeAmetricadaptedlemma}
 \dt \< \Theta, {\tilde A} \>  = -2 \Div^\top \< B, L \>
 +\< B,\ \frac{1}{p} \< \phi, {\tilde H} \> + 2 \Div^\top L - 4\,\psi
 + 4\sum\nolimits_{a,j} (\I_j E_a)^{\perp \flat} \odot (\tT_a {\cal E}_j)^{\perp \flat}
  \>,
\end{equation}
where 
\begin{equation*}
 \psi(X,Y) = \frac{1}{2}\sum\nolimits_{\,a} \big(\<\I_{X^\perp}\, E_a,\, \tT_a (Y^\perp)\> +\<\I_{Y^\perp}\, E_a,\, \tT_a (X^\perp)\> \big) .
\end{equation*}
 We claim that $\psi$ can be written in terms of tensor $\chi$ introduced in \eqref{E-chi}. Indeed, for arbitrary symmetric (0,2)-tensor $B : {\cal D} \times {\cal D} \rightarrow \mathbb{R}$ we have
\begin{equation*}
	\< B, \psi \>
	= \< B, \ -2 \widetilde{\cal{T}}^\flat - \sum\nolimits_{a,j} (\I_j E_a)^{\perp \flat} \odot (\tT_a {\cal E}_j)^{\perp \flat}
	\>.
\end{equation*}
Using \eqref{E-chi}, 
we obtain
\begin{equation}\label{psiandchi}
 \psi = -2\,\widetilde{\cal{T}}^\flat -\chi.
\end{equation}
Using the following computation:
\begin{eqnarray*}
	&& \< B, \Div^\top L \> = \< B, \Div^\top L^\top +\Div^\top L^\perp \>
	= \< B, \Div L^\top \> +\< B, \< L^\top, {\tilde H} \> \> -\< B, \< L^\perp, H \> \> ,\\
	&& \Div^\top \< B, L \> = \Div \< B, L \> -\Div^\perp \< B, L \>
	= \Div \< B, L \> +\< B, \< L^\top, {\tilde H} \> \> -\Div^\perp \< B, L^\perp \> ,
\end{eqnarray*}
we obtain
\begin{equation*}
	 -\Div^\top \< B, L \> +\< B, \Div^\top L \>
	%
	%
	= -\Div \< B, L^\top \> +\< B, \Div L^\top \> ,
\end{equation*}
which, together with \eqref{dttildeAmetricadaptedlemma}--\eqref{psiandchi}, up to divergence of a compactly supported vector field, yields~\eqref{metriccon6}.

For metric connections, 
$g^\perp$-variations of metric and totally umbilical distributions we have
\begin{equation*}
 \dt \< \Theta, T^\sharp \> = 2 \sum B({\cal E}_i, {\cal E}_j) \< T (E_a, E_b), {\cal E}_i\> \<\I_a {\cal E}_j, E_b\> .
\end{equation*}
Using \eqref{metricconcrit4}, 
we obtain:
\begin{equation*}
 \dt \< \Theta, T^\sharp \> = 2 \sum B({\cal E}_i, {\cal E}_j) \< T (E_a, E_b), {\cal E}_i\> \<T(E_a,E_b), {\cal E}_j\> .
\end{equation*}
Without explicitly using the orthonormal frame, we can write the above as \eqref{metriccon7}.

For metric connections, 
$g^\perp$-variations of metric and totally umbilical distributions we have
\begin{equation*}
 \dt\<\Theta, {\tilde T}^\sharp\> = -\sum B({\cal E}_i, {\cal E}_j)\<{\tilde T}({\cal E}_i,{\cal E}_k), E_a\>
 \big(\,4\<\I_a{\cal E}_j, {\cal E}_k\> + 4\<\I_j E_a, {\cal E}_k\> -2\,\<\I_k E_a, {\cal E}_j\> \,\big).
\end{equation*}
Using \eqref{metricconcrit5}, 
we obtain:
\begin{equation*}
 \dt\< \Theta, {\tilde T}^\sharp\> = \sum B({\cal E}_i, {\cal E}_j) \big(\,4\<(\tT_a)^2 {\cal E}_j, {\cal E}_i\>
 - 4 \<\I_j E_a, \tT_a {\cal E}_i\> -2 \<\tT_a {\cal E}_j, {\cal E}_i\> \<\I_j E_a, {\cal E}_k\> \,\big).
\end{equation*}
Next, we have
\begin{equation*}
 \dt \< \Theta, {\tilde T}^\sharp \> = \< B,\ 4 \widetilde{\cal{T}}^\flat - 4 \psi
 -2\sum\nolimits_{a,j} (\I_j E_a)^{\perp \flat} \odot (\tT_a {\cal E}_j)^{\perp \flat} \> .
\end{equation*}
 For metric connections, 
 $g^\perp$-variations of metric and totally umbilical distributions we have:
\[
 \dt \<\I^*, \I^\wedge\>_{\,|\,V} = \sum B({\cal E}_i, {\cal E}_j) \<\I_j E_a, \I_a {\cal E}_i\> .
\]
Using (\ref{metricconcrit2},d,e), 
we obtain the following:
\begin{equation*}
 \dt \<\I^*, \I^\wedge\>_{\,|\,V} = \sum B({\cal E}_i, {\cal E}_j)\,\big(\,\< T(E_a, E_b),{\cal E}_j\> \< T(E_a,E_b), {\cal E}_i\>
 +\<\I_j E_a, \tT_a {\cal E}_i\> \,\big).
\end{equation*}
We can write,
 $\dt \<\I^*, \I^\wedge\>_{\,|\,V} = \< B,\, \frac12\,\Upsilon_{T,T} +\psi\>$ ,
which, together with \eqref{psiandchi}, yields \eqref{metriccon9}.
\end{proof}

\begin{lemma}
\label{dtQadapted}
Let $g_t$ be a $g^\perp$-variation of $g\in{\rm Riem}(M,\widetilde{\mD},{\mD})$, let $\I$ be the contorsion tensor 
of a metric connection that is critical for \eqref{actiongSmix} with fixed $g$, and let $\widetilde{\mD}$ and $\mD$ be totally umbilical distributions.
Then, up to divergences of compactly supported vector fields, for $Q$ given by \eqref{E-defQ} we have
\begin{eqnarray*}
 && -\dt Q = \big\<
 \< \phi,\, \frac{p+2}{2\,p}\,\tH -\frac{1}{2} H +\frac{1}{2} \tr^\top \I \> -2 \Div \phi^\top + 7 \chi
 +\frac{3n+2}{n}\,H^\flat \odot (\tr^\top \I)^{\perp \flat} \\
 &&\quad -\Div( (\tr^\perp \I )^\top)\,g^\perp
 +\frac{p-1}{p} (\Div \tH) g^\perp - 3 \frac{n-1}{n} H^\flat \otimes H^\flat
+ 2 \widetilde{\cal{T}}^\flat +\frac{3}{2}\,\Upsilon_{T,T},\  B\big\> .
\end{eqnarray*}
\end{lemma}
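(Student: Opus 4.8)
The plan is to obtain this identity as an immediate consequence of Lemma~\ref{L-dT-metric} together with the linearity of $\dt$, the only real work being the bookkeeping of signs and the collection of like terms. First I would rewrite $Q$ from \eqref{E-defQ} as a sum of exactly the nine scalar functions whose variations (under a $g^\perp$-variation of a critical metric connection) are recorded in Lemma~\ref{L-dT-metric}. Using the symmetry of the inner product of $(0,2)$-tensors, the identities $\I-\I^*=-(\I^*-\I)$ and $H-\tH=-(\tH-H)$, and the abbreviation $\Theta=\I-\I^*+\I^\wedge-\I^{*\wedge}$, the five groups of terms in \eqref{E-defQ} become
\begin{align*}
 Q &= -\<\tr^\top\I^*,\tr^\bot\I\> -\<\tr^\top\I,\tr^\bot\I^*\> +\<\I^*,\I^\wedge\>_{\,|\,V} \\
 &\quad -\<\tr^\top(\I^*-\I),\tH-H\> +\<\tr^\bot(\I^*-\I),\tH-H\> \\
 &\quad -\<\Theta,\tilde A\> +\<\Theta,{\tilde T}^\sharp\> -\<\Theta,A\> +\<\Theta,T^\sharp\> ,
\end{align*}
which is precisely the list of functions appearing in \eqref{metriccon1}--\eqref{metriccon9}.

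Next I would differentiate $Q$ term by term in $t$, insert the formulas \eqref{metriccon1}--\eqref{metriccon9} with the signs dictated by the display above (noting $\dt\<\tr^\top\I,\tr^\perp\I^*\>=0$ by \eqref{metriccon3}), pass from $\dt Q$ to $-\dt Q$, and discard every divergence of a compactly supported vector field. Dropping the latter is legitimate: such terms are absorbed anyway when $\dt Q$ is integrated over $M$, cf.\ \eqref{E-DivThm-2}, and they are compactly supported because $B=\dt g$ is supported in $\Omega$ and every divergence that occurs has the shape $\Div(\text{contraction of }B^\sharp\text{ with tensors})$ --- this is already built into the statement of Lemma~\ref{L-dT-metric}. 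It then remains to assemble the coefficients of the tensors $H^\flat\otimes H^\flat$, $H^\flat\odot(\tr^\top\I)^{\perp\flat}$ (from \eqref{metriccon1} and \eqref{metriccon5}), $(\Div\tH)\,g^\perp$, $\Div\!\big((\tr^\perp\I)^\top\big)g^\perp$, $\chi$, $\widetilde{\cal T}^\flat$, $\Upsilon_{T,T}$, and of $\<\phi,\cdot\>$, where the pieces $\tfrac1p\tH$ from \eqref{metriccon6}, $\tfrac12(\tH-H)$ from \eqref{metriccon2} and $\tfrac12\tr^\top\I$ from \eqref{metriccon4} combine into $\frac{p+2}{2p}\tH-\tfrac12 H+\tfrac12\tr^\top\I$.

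The one step that is not pure bookkeeping is rewriting the term $2\,\Div L^\top$ appearing in \eqref{metriccon6} in terms of $\Div\phi^\top$ modulo compactly supported divergences: here I would use the explicit formula \eqref{formulaF} for $L$ and the definition \eqref{E-chi} of $\phi$, specialised to a metric connection (so that $\Theta=\Theta^\wedge=2(\I+\I^\wedge)$ and $\I^*=-\I$), together with the Euler--Lagrange equations for a critical metric connection quoted in the proof of Lemma~\ref{L-dT-metric} (in particular \eqref{metricconcrit3} and \eqref{metricconcrit6}, which fix the mixed components of $\I$), so that $L^\top$ is identified with $-\phi^\top$ up to tensors whose $\Div^\top$ contributes only to terms already on the list above. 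The main obstacle is therefore the sheer length and intricacy of the computation: nine variation formulas, each itself a sum of several tensorial summands, with signs to be tracked carefully through the expansion of $Q$ and through the negation $\dt Q\mapsto-\dt Q$. Once the like terms are gathered and the $L^\top$--$\phi^\top$ substitution is carried out, the asserted expression for $-\dt Q$ follows.
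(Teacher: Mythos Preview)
Your approach is essentially the same as the paper's: decompose $Q$ into the nine scalars governed by \eqref{metriccon1}--\eqref{metriccon9}, differentiate, and collect. The paper's proof is in fact shorter than your sketch because the only non-bookkeeping step is simpler than you anticipate. You write that $L^\top$ should be identified with $-\phi^\top$ ``up to tensors whose $\Div^\top$ contributes only to terms already on the list'' and invoke the Euler--Lagrange equations \eqref{metricconcrit3}, \eqref{metricconcrit6} for this. In fact no critical-point information is needed here at all: for any metric connection one has $L=-\phi$ exactly. The paper obtains this by a two-line algebraic computation using only $\I^*=-\I$ (and the consequence $\Theta=\Theta^\wedge=2(\I+\I^\wedge)$), showing directly that $4\<L(X,Y),Z\>=-4\<Z,\I_XY+\I^\wedge_XY\>$. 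Thus $2\,\Div L^\top=-2\,\Div\phi^\top$ on the nose, with no residual terms to track. (Also, the label \eqref{formulaF} is attached to $F$, not to $L$; the definition of $L$ is the unlabelled display just above it.) With this simplification the rest is, as you say, pure bookkeeping.
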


\begin{proof}

Recall that
\begin{eqnarray*}
\nonumber
 && L(X,Y) = \frac{1}{4} (\Theta^*_{X^\perp} Y^\perp +\Theta^{\wedge*}_{X^\perp} Y^\perp +
 \Theta^*_{Y^\perp} X^\perp +\Theta^{\wedge*}_{Y^\perp} X^\perp) ,
\end{eqnarray*} 
and let
 $L^\perp(X,Y) =(L(X,Y) )^\perp$
 and
 $L^\top(X,Y) =(L(X,Y) )^\top$ for
 $X,Y \in \mathfrak{X}_M$.
We have $L = L^\top + L^\perp$.
Note that $\<L^\perp(X,Y), Z\> = \<L^\perp(X^\perp,Y^\perp), Z^\perp\>$ and for metric connections
\begin{eqnarray*}
	&& \<\I^{\wedge*}_X Y, Z\> = \<\I^\wedge_X Z, Y\> = \<\I_Z X, Y\> = -\<\I_Z Y, X\> \\
	&& = -\<\I^\wedge_Y Z, X\> = -\<\I^{\wedge*}_Y X, Z\> = -\<\I^{\wedge*\wedge}_X\, Y,Z\>,
\end{eqnarray*}
for all $X,Y,Z \in \mathfrak{X}_M$, so
\begin{equation*}
	4 \<L(X,Y), Z) = \< Z, \Theta^*_{X} Y +\Theta^{\wedge*}_{X} Y +\Theta^*_{Y} X +\Theta^{\wedge*}_{Y} X\>
	= -4\<Z, \I_X Y +\I^\wedge_X Y\>.
\end{equation*}
Hence,
 $L^\perp = -(\I +\I^\wedge)^\perp$ and $L^\top = -(\I +\I^\wedge)^\top$
and for metric connections we obtain $L = - \phi$, see \eqref{E-chi}, which together with Lemma~\ref{L-dT-metric} yields the claim.
\end{proof}

\begin{lemma} \label{lemmasemisymmetric}
Let $\bar\nabla$ be a semi-symmetric connection on $(M,g,\mD)$.
a)~Then \eqref{E-defQ} reduces to
\begin{equation} \label{QforUconnection}
 Q = (n-p) \< U , H - \tH \> + np \< U , U \> - n \< U^\perp , U^\perp \> - p \< U^\top , U^\top \>.
\end{equation}
b)~For any $g^\pitchfork$-variation of metric $g$ and $Q$ given by \eqref{QforUconnection} we have
\begin{eqnarray}\label{dtQgforUconnection}
\nonumber
 \dt Q(g_t) |_{\,t=0} \eq \<\, B ,\
 -(n-p) {\tilde \delta}_{U^\perp} - (n-p) \< {\tilde \alpha} - {\tilde \theta} , U^\perp \>
 +2(p-n) \< {\theta}, U^\top \> \nonumber \\
 &-& \!\frac12\,(p-n)( \Div U^\top ) g^\perp
 +n(p-1) U^{\perp \flat} \otimes U^{\perp \flat}
 +2p(n-1) U^{\top \flat} \odot U^{\perp \flat}\, \> .
\end{eqnarray}
\end{lemma}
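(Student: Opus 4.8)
The plan is to prove part (a) by brute-force substitution of the semi-symmetric contorsion tensor into the general formula \eqref{E-defQ}, and then to prove part (b) by differentiating the (much simpler) resulting expression \eqref{QforUconnection} with respect to a $g^\pitchfork$-variation, using the variation formulas for the extrinsic geometry of $\widetilde{\mD}$ and $\mD$ that are recalled or quoted from \cite{rz-2} in Section~\ref{sec:prel}.

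\textbf{Part (a).}
First I would read off the contorsion tensor of a semi-symmetric connection from \eqref{Uconnection}: $\I_X Y = \<U,Y\>X - \<X,Y\>U$, hence $\I_X = \<U,\cdot\>X - X^\flat\otimes U$ as a $(1,1)$-tensor for each $X$. From this I would compute all the auxiliary tensors entering \eqref{E-defQ}: the adjoint $\I^*$ (via $\<\I^*_X Y, Z\> = \<\I_X Z, Y\> = \<U,Z\>\<X,Y\> - \<X,Z\>\<U,Y\>$, so $\I^*_X Y = \<X,Y\>U - \<U,Y\>X = -\I_X Y$, confirming metric compatibility $\I^* = -\I$), and the ``hat'' tensor $\I^\wedge_X Y = \I_Y X = \<U,X\>Y - \<X,Y\>U$. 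Next I would compute the partial traces: $\tr^\top\I = \sum_a \I_a E_a = \sum_a(\<U,E_a\>E_a - U) = U^\top - nU$, and similarly $\tr^\bot\I = U^\bot - pU$; since $\I^* = -\I$ we get $\tr^\top\I^* = -\tr^\top\I$, etc. Then I would substitute everything into the five groups of terms in \eqref{E-defQ}, namely $-\<\tr^\bot\I,\tr^\top\I^*\>$, $-\<\tr^\top\I,\tr^\bot\I^*\>$, $\<\I^*,\I^\wedge\>_{|V}$, the $H-\tilde H$ term, and the $\widetilde A - \widetilde T^\sharp + A - T^\sharp$ term. The last is the cleanest: since $\<\Theta, \cdot\>$ with $\Theta = \I - \I^* + \I^\wedge - \I^{*\wedge} = 2(\I + \I^\wedge)$ pairs against a $(1,2)$-tensor supported on $V$ and both $\I_{X^\top}$ and $\I^\wedge_{X^\top}$ hit $X^\top\in\widetilde\mD$ or $U$, a short computation shows that term's contribution vanishes or collapses — I would check this carefully because the $A, T^\sharp, \widetilde A, \widetilde T^\sharp$ tensors are all $h$- or $T$-type, and $\I$ being ``algebraic in $U$'' means the pairing picks out only $\<U,H\>$-type scalars. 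Collecting the scalar coefficients and using $\<U,U\> = \<U^\top,U^\top\> + \<U^\bot,U^\bot\>$ I expect precisely \eqref{QforUconnection}. The main bookkeeping obstacle here is keeping track of which of $n$ or $p$ multiplies each $\<U,U\>$, $\<U^\top,U^\top\>$, $\<U^\bot,U^\bot\>$, $\<U,H\>$, $\<U,\tilde H\>$ term — a sign or index error would propagate into part (b).

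\textbf{Part (b).}
Once \eqref{QforUconnection} is in hand, differentiating along a $g^\pitchfork$-variation $g_t$ with $B = \dot g$ is a matter of applying known variation formulas. For a $g^\pitchfork$-variation the metric on $\widetilde\mD$ is fixed, so $\widetilde\mD$-components of vectors behave simply while $\mD(t)$ rotates. I would use: $\dt({\rm d}\vol_g) = \tfrac12(\tr_g B)\,{\rm d}\vol_g$ from \eqref{E-dotvolg} (though $Q$ itself is what is differentiated, not $Q\,{\rm d}\vol$, so only the pointwise $\dt Q$ is needed here); the formulas $\<\dt\tilde H, X\> = \<2\<\theta,X^\top\>, B\> - \tfrac12 X^\top(\tr_\mD B)$ and $\<\dt H, X\>$ from equations (20)–(21) of \cite{rz-2} (quoted inside the proof of Lemma~\ref{L-dT-3}); and the rule for differentiating $\<U,U\>$, $\<U^\top,U^\top\>$, $\<U^\bot,U^\bot\>$ when $U$ is held fixed as a vector field but the metric changes, i.e. $\dt\<V,W\> = B(V,W)$, together with $\dt(U^\bot) = -(B^\sharp(U^\bot))^\top$ and $\dt(U^\top) = (B^\sharp(U^\bot))^\top$ from the displayed evolution equations after Proposition~\ref{prop-Ei-a}. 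So $\dt\<U^\bot,U^\bot\> = B(U^\bot,U^\bot) + 2\<U^\bot, \dt(U^\bot)\> = B(U^\bot,U^\bot) - 2\<U^\bot,(B^\sharp U^\bot)^\top\>$; since $B^\pitchfork$ pairs $\mD$ with $\widetilde\mD$, the second term involves the mixed block of $B$. Writing each contribution as $\<B, \cdot\>$ for an appropriate symmetric $(0,2)$-tensor, and recognizing $\tilde\delta_{U^\bot}$, $\<\tilde\alpha - \tilde\theta, U^\bot\>$, $\<\theta, U^\top\>$, $(\Div U^\top) g^\bot$ as the tensors built from $\nabla U^\bot$, $\nabla U^\top$ restricted appropriately (by their definitions in Section~\ref{sec:prel}), I would assemble the right-hand side of \eqref{dtQgforUconnection}. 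Divergence-type terms (from $\dt\<U,H-\tilde H\>$, which contains $\Div$ of $B^\sharp U^\bot$) I would absorb into ``up to divergences of compactly supported vector fields,'' consistent with the lemma's statement and with \eqref{E-DivThm-2}.

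\textbf{Main obstacle.}
The genuinely delicate step is part (b): correctly identifying, term by term, which symmetric $(0,2)$-tensor each piece of $\dt Q$ equals when paired against $B$. The tensors $\tilde\delta_{U^\bot}$, $\<\tilde\alpha - \tilde\theta, U^\bot\>$, $\theta$ appearing in the target \eqref{dtQgforUconnection} are defined only implicitly (through traces over an adapted frame or through $\nabla U$), so I must unpack their definitions and match coefficients, being careful that the $(n-p)$ and $p(n-1)$, $n(p-1)$ prefactors — inherited from part (a) — come out right, and that the antisymmetric/mixed-block structure of $B^\pitchfork$ is respected when computing $\dt(U^\top)$ and $\dt(U^\bot)$. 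Part (a) is long but mechanical; part (b) is where an off-by-sign or a missed $\theta$-term would hide, so I would cross-check the trace of \eqref{dtQgforUconnection} against what an independent computation of $\dt\int_M Q\,{\rm d}\vol_g$ predicts via \eqref{E-DivThm-2}.
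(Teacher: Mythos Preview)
Your proposal is correct and follows essentially the same route as the paper: for part (a) you compute $\I$, $\I^*=-\I$, $\I^\wedge$, the partial traces, and substitute into \eqref{E-defQ} (the paper finds $\<\I^*,\I^\wedge\>_{|V}=0$ and $\<\I+\I^\wedge,\tilde A-\tilde T^\sharp+A-T^\sharp\>=\<H+\tilde H,U\>$, so your ``collapses'' instinct is right); for part (b) you differentiate \eqref{QforUconnection} directly using the $\dt H$, $\dt\tilde H$ formulas from \cite{rz-2} and the projection-evolution rules, exactly as the paper does. One small slip: $\<U^\bot,(B^\sharp U^\bot)^\top\>=0$ by orthogonality, so $\dt\<U^\bot,U^\bot\>=B(U^\bot,U^\bot)$ with no mixed contribution---the mixed terms in \eqref{dtQgforUconnection} come instead from $\dt\<U^\top,U^\top\>$ and from $\dt\<U,H-\tilde H\>$.
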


\begin{proof}
a) From \eqref{Uconnection} we obtain
\begin{equation}\label{UconnectiontrtopI}
 \tr^\top \I
 = \sum\nolimits_a \<U, E_a\> E_a - \sum\nolimits_a \<E_a , E_a\> U = U^\top - n\,U.
\end{equation}
Similarly,
 $\tr^\perp \I = U^\perp - p\,U$.
We also have
\begin{equation}\label{UconnectionImixed}
 \I_a {\cal E}_i = \<U, {\cal E}_i\> E_a ,\quad \I_i E_a = \<U , E_a\> {\cal E}_i,
\end{equation}
so we obtain $\< \I , \I^\wedge \>_{| V} =0$. Next, we have
\begin{eqnarray*}
 \< \tr^\top \I - \tr^\perp \I , H - \tH \> \eq (p-n-1) \< U^\perp , H \> + (n-p-1) \< U^\top , \tH \>.
\end{eqnarray*}
We have
 $(\I + \I^\wedge)_i E_a
 = \<U, E_a\>{\cal E}_i + \<U, {\cal E}_i\> E_a$.
Also
\begin{eqnarray*}
 \< \tr^\top \I , \tr^\perp \I \>
 \eq np \< U , U \> - n \< U^\perp , U^\perp \> - p \< U^\top , U^\top \>.
\end{eqnarray*}
Thus,
 $\< \I + \I^\wedge, \tA - \tT + A - \T \>
 =\< H + \tH ,\ U \>$.
\ b)~By \cite[Lemma~3]{rz-2}, we have:
\begin{eqnarray*}
 \<U^\perp,\ \dt U^\perp\> \eq \<U^\perp,\ - (B^\sharp (U^\perp) )^\top\> =0,\\
 \< U^\top,\ \dt U^\top\> \eq \<U^\top,\,B^\sharp (U^\perp)\>
 = \< B,\, U^{\top \flat} \odot U^{\perp \flat}\, \> .
\end{eqnarray*}
Similarly, by \cite[Eq.~(20) and Eq.~(21)]{rz-2}, we have
\begin{eqnarray*}
 \<\dt \tH,\, U \> \eq \Div ( (\tr_{\cal D} B ) U^\top )
 + \<\,B,\, 2 \< {\theta}, U^\top \> - \frac{1}{2}\,(\Div U^\top ) g^\perp \>,\\
 \<\dt H , U \> \eq \Div((B^\sharp(U^\perp))^\top) + \<B, U^\perp\odot(\tH -H) - U^\top\odot H
 - {\tilde\delta}_{U^\perp} - \<{\tilde\alpha} - {\tilde\theta}, U^\perp\>\,\>.
\end{eqnarray*}
Omitting divergences of compactly supported vector fields and using $B|_{\widetilde{\cal D} \times \widetilde{\cal D}} =0$, we obtain
\begin{eqnarray*}
 \dt Q (g_t) |_{\,t=0} \eq (n-p) B(U, H-\tH) + (n-p)\<U , \dt H\>
 - (n-p)\<\dt \tH , U\> + np\, B(U,U)\\
 &-& n B(U^\perp , U^\perp) - 2n \<\dt U^\perp , U\> - p B(U^\top , U^\top) - 2p \<\dt U^\top , U^\top\>,
\end{eqnarray*}
that reduces to \eqref{dtQgforUconnection}.
\end{proof}

\baselineskip=12.7pt

\end{document}